\definecolor{darkred}{rgb}{1,0,0} 
\definecolor{darkgreen}{rgb}{0,0.8,0}
\definecolor{darkblue}{rgb}{0,0,1}
\newtheorem{Theorem}{Theorem}[section] 
\newtheorem {Proposition}[Theorem]{Proposition}
\newtheorem {Lemma}[Theorem]    {Lemma}
\newtheorem {Corollary}[Theorem]{Corollary}
\theoremstyle{definition} 
\newtheorem {Conjecture}[Theorem]    {Conjecture}
\newtheorem{Definition}[Theorem]{Definition}
\theoremstyle{remark} 
\newtheorem{Example}[Theorem]{Example}
\newtheorem{Remark}[Theorem]{Remark}
\newcommand{\Aa}{{\mathcal A}}
\newcommand{\Cc}{{\mathcal C}}
\newcommand{\Jj}{{\mathcal J}}
\newcommand{\Hh}{{\mathcal H}}
\newcommand{\Mm}{{\mathcal M}}
\newcommand{\Pp}{{\mathcal P}}
\newcommand{\Rr}{{\mathcal R}}
\newcommand{\Tt}{{\mathcal T}}
\def    \R      {{\mathbb R}}
\def    \Z      {{\mathbb Z}}
\def    \N      {{\mathbb N}}
\def    \CP     {{\mathbb C}{\mathbb P}}
\def    \p      {\partial}
\def    \HF     {\operatorname{HF}}
\def    \CF     {\operatorname{CF}}
\begin{document}


\setlength{\smallskipamount}{6pt}
\setlength{\medskipamount}{10pt}
\setlength{\bigskipamount}{16pt}





\title[Rigid constellations of closed Reeb orbits]{Rigid constellations of closed Reeb orbits}

\author{Ely Kerman}
\email{ekerman@math.uiuc.edu}
\address{Department of Mathematics, University of Illinois at 
Urbana-Champaign, Urbana, IL 61801, USA}

\subjclass[2010]{53D40, 53D10, 37J45, 70H12}
\thanks{This work was partially supported by a grant from the Simons Foundation.}

\begin{abstract} 
We  use Hamiltonian Floer theory to recover and generalize a classic rigidity theorem of Ekelend and Lasry from \cite{el}. That theorem  can be rephrased as an assertion about the existence of multiple closed Reeb orbits for certain tight contact forms on the sphere that  are close, in a suitable sense, to the standard contact form. We first generalize this result to Reeb flows of contact forms on prequantization spaces that are suitably close to Boothby-Wang forms. We then establish, under an additional  nondegeneracy assumption,  the same rigidity phenomenon for Reeb flows on any  closed contact manifold. 

A natural obstruction to obtaining sharp multiplicity results for closed Reeb orbits is the possible existence of fast closed orbits. To complement the existence results established here, we also show that the existence of such fast orbits can not be precluded by any condition which is invariant under contactomorphisms, even for nearby contact forms.

\end{abstract}

\maketitle

\section{Introduction}

The following  theorem of Ekeland and Lasry appeared in 1980.
\begin{Theorem}\label{el}(\cite{el})
Let $\Sigma \subset \R^{2n}$ be a $C^2$-smooth hypersurface which forms the boundary of a compact convex neighborhood of the origin. If there are positive numbers $r \leq R$  such that $R < \sqrt{2}r$ and 
\begin{equation*}
\label{ }
r \leq \|x\| \leq R 
\end{equation*}
for all $x \in \Sigma$, then $\Sigma$ carries at least $n$ geometrically distinct closed characteristics with actions in $[\pi r^2, \pi R^2].$ 
\end{Theorem}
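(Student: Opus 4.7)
The plan is to organize the argument as a min-max for the classical Hamiltonian action functional on the loop space of $\R^{2n}$, in a way that parallels the Floer-theoretic framework used in the rest of the paper. First I would fix a positively homogeneous Hamiltonian $H\colon\R^{2n}\to\R$ with $\Sigma=H^{-1}(1)$ and whose Hamiltonian flow on $\Sigma$ is a reparametrization of the Reeb flow of the standard contact form restricted to $\Sigma$, so that closed characteristics of $\Sigma$ with action $T$ correspond to $1$-periodic orbits of $TH$. The squeezing hypothesis $r\le\|x\|\le R$ is equivalent to the pointwise sandwich $\pi\|x\|^2/R^2\le H(x)\le \pi\|x\|^2/r^2$, and hence the action functional of $H$ is bracketed by those of the two quadratic Hamiltonians $\pi\|x\|^2/r^2$ and $\pi\|x\|^2/R^2$.

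Next I would exploit the fact that for the quadratic Hamiltonian $\pi\|x\|^2$, the $1$-periodic orbits are precisely the Hopf fibers, forming a smooth family diffeomorphic to $\CP^{n-1}$, all of common action $\pi$. The nested subspaces $\CP^0\subset\CP^1\subset\cdots\subset\CP^{n-1}$ have $S^1$-equivariant cohomological indices $1,2,\ldots,n$, and using them as min-max classes on the loop space, together with monotonicity under pointwise comparison of Hamiltonians, I would produce $n$ critical values $c_1\le c_2\le\cdots\le c_n$ of the action functional of $H$, each confined to the window $[\pi r^2,\pi R^2]$. Each $c_i$ is then realised by a closed characteristic of $\Sigma$ whose action lies in that range.

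The main obstacle is that distinct min-max levels need not a priori correspond to geometrically distinct orbits: a simple characteristic contributes an entire tower of iterates to the action spectrum. Here the pinching $R<\sqrt2\,r$ does precisely the job it was designed for. If $\gamma$ is any closed characteristic of $\Sigma$ with action at most $\pi R^2$, then its $k$-fold iterate has action at least $2\pi r^2>\pi R^2$ for $k\ge 2$, so no orbit in the window can be a nontrivial iterate of another. Hence the $n$ critical values are realised by simple closed characteristics, and a standard $S^1$-equivariant index argument, to the effect that a single $S^1$-family of geometrically coincident orbits accounts for only one unit of equivariant cohomological index, upgrades any coincidences among the $c_i$ to multiplicity of the underlying geometric orbits, yielding the required $n$. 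The technical heart that remains is verifying the requisite compactness of the relevant min-max, equivalently of the continuation maps in the Floer-theoretic implementation, which is where the careful choice of admissible Hamiltonians and the comparison to the filtered $S^1$-equivariant symplectic homology of the standard ball become essential.
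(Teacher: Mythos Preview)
Your outline follows the classical Ekeland--Lasry strategy (min-max over $S^1$-equivariant classes coming from the $\CP^{n-1}$ of Hopf fibers), whereas the paper recovers the theorem by a different route: it first proves Theorem~\ref{kss} via Hamiltonian Floer theory on the symplectization, and then derives Theorem~\ref{el} from it by translating the convexity hypothesis into a lower bound on the period spectrum using the Croke--Weinstein theorem.

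There is, however, a genuine gap in your treatment of geometric distinctness. You argue that if $\gamma$ lies in the action window $[\pi r^2,\pi R^2]$ then $\gamma^k$ has action at least $2\pi r^2>\pi R^2$ for $k\ge 2$, so no orbit in the window is an iterate of another orbit in the window. That much is correct, but it does not rule out the possibility that two of your detected orbits are distinct iterates $\gamma_0^{a}$ and $\gamma_0^{b}$ (with $2\le a<b$) of a single simple orbit $\gamma_0$ whose action lies \emph{below} $\pi r^2$. For instance, when $R^2/r^2$ is close to $2$ there is room for both $\gamma_0^{2}$ and $\gamma_0^{3}$ to sit in the window while $\gamma_0$ itself does not. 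This is exactly where convexity enters in the paper's derivation: by Croke--Weinstein, a convex hypersurface containing the ball of radius $r$ has no closed characteristic of action less than $\pi r^2$, so no such fast $\gamma_0$ can exist and the orbits in the window are forced to be simple and hence geometrically distinct. Your sketch never invokes convexity beyond the pointwise sandwich $\pi\|x\|^2/R^2\le H(x)\le \pi\|x\|^2/r^2$, which holds for any star-shaped hypersurface between the two spheres and does not by itself preclude fast orbits (indeed Theorem~\ref{fast} shows it cannot).

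A secondary issue: you work with the direct action functional, which is strongly indefinite, and defer compactness to a final remark. The original argument uses Clarke's dual action precisely because it is bounded below on the relevant subsets and satisfies a Palais--Smale condition there; convexity of $H$ (equivalently of $\Sigma$) is what makes the Legendre transform, and hence the dual functional, available in the first place. So convexity is used twice---once for compactness of the variational problem and once to exclude fast orbits---and your outline accounts for neither use.
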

This result and the  ideas developed in its proof have been highly influential. Much of the subsequent progress on the problem of detecting closed characteristics on convex hypersurfaces, such as the remarkable results of Long and Zhu in \cite{lz},  is built on the foundation laid down in \cite{el}.  Indeed Theorem \ref{el} is still one of the most compelling facts supporting the following well known conjecture.

\begin{Conjecture}\label{n}
Every compact convex hypersurface in $\R^n$ carries at least $n$ geometrically distinct closed characteristics.\footnote{Given the results of \cite{crh} it seems reasonable to conjecture the same lower bound for star-shaped hypersurfaces.}
\end{Conjecture}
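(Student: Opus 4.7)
The conjecture remains open in general, so I can only sketch a plausible route consistent with the techniques of the paper. After approximating by smooth strictly convex hypersurfaces, I would work with the contact-type hypersurface $\Sigma = \partial K$ in $(\R^{2n},\omega_0)$. Replacing $\Sigma$ by the unit level of the gauge function $j_K$ produces a convex, positively $2$-homogeneous Hamiltonian whose flow on the level reparametrizes the Reeb flow. On the space of $L^2$-loops I would then deploy Clarke's dual action functional $\Psi$, whose nonzero critical points are reparametrizations of closed characteristics of $\Sigma$ and whose critical values equal their actions. The $S^1$-action by loop rotation preserves $\Psi$ and is free off the constants, so a Fadell--Rabinowitz type equivariant minimax supplies a nondecreasing sequence of critical values $c_1 \leq c_2 \leq \cdots$ diverging to infinity by Ekeland--Hofer index growth on convex hypersurfaces.

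The objective would then be to show that $c_1,\ldots,c_n$ are realized by $n$ geometrically distinct orbits. The central tool is Ekeland--Hofer index theory: each primitive closed characteristic $\gamma$ on a convex hypersurface in $\R^{2n}$ has mean index $\overline{i}(\gamma) \geq n$, and the index assigned by the minimax to $c_k$ can be compared directly with the indices of iterates $\gamma^m$. If a single primitive orbit were to account for several of the first $n$ minimax levels, its iterates would have to realize them, forcing $c_n \geq n\, A(\gamma)$, in tension with an upper bound for $c_n$ obtained by comparing $\Psi$ to a model functional on a round sphere inscribed in $K$. In the nondegenerate case this inequality chain can be refined via common-index-jump estimates in the spirit of Long and Zhu in order to push the multiplicity count upward.

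The heart of the obstacle, and the reason the conjecture persists, lies precisely in this iteration problem: there is no purely convex-geometric mechanism to separate the minimax levels from iterates of a single primitive orbit without a quantitative hypothesis like the Ekeland--Lasry pinching $R < \sqrt{2}\, r$, which is engineered exactly so that the first $n$ minimax values must lie below the action of any double cover. In its absence the index arithmetic stubbornly admits scenarios in which $\lceil n/2\rceil + 1$, but not $n$, distinct orbits can be extracted. A second natural strategy, closer to the Floer-theoretic viewpoint of this paper, would be to deform $\Sigma$ through contact forms toward a Boothby--Wang model on a prequantization bundle and invoke the rigidity produced in the body of the paper; but transporting the multiplicity count back along such a deformation without losing geometric distinctness of orbits appears to be an equally serious open problem.
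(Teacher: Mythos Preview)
You have correctly identified that this is an open conjecture, not a theorem, and the paper does not claim to prove it. The paper states it explicitly as a ``well known conjecture'' supported by the Ekeland--Lasry theorem, and later remarks that Long and Zhu proved it under the additional assumption that all closed characteristics are strongly nondegenerate. There is therefore no proof in the paper to compare your proposal against.

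Your sketch of the classical dual-action and index-theoretic approach is accurate and consistent with the history the paper alludes to, and your diagnosis of the central obstruction---that without a pinching condition or strong nondegeneracy one cannot rule out that several minimax levels are realized by iterates of a single primitive orbit---is exactly right. Your second paragraph's suggestion of deforming toward a Boothby--Wang model and invoking the rigidity results of this paper is also a fair reading of what the paper's machinery can and cannot do: the results here (Theorems~\ref{kss}, \ref{gutt+}, \ref{persist}) all require either a pinching hypothesis or control on fast orbits precisely to avoid the iteration problem you describe, so they do not resolve the conjecture either.
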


The basic idea underlying the proof of Theorem \ref{el} is the following: \textit{Closed characterisics on a convex hypersurface $\Sigma$ are critical points of Clarke's dual action principle which is invariant under the natural $S^1$-action on loops. If  $\Sigma$ satisfies the stated pinching condition, then the  $S^{2n-1}$'s worth of closed characteristics (critical points) on the sphere of radius $r$ influences the topology of the negative sublevels of the dual action principle for $\Sigma$. In particular, the  $S^1$-action is free on these sublevels and they must also contain an invariant copy of $S^{2n-1}$. This forces the restriction of the dual action  to these sublevels to have at least  $\mathrm{cuplength}(\CP^{n-1}) +1 =n$ critical points.}  

In the present  work we detect such influences using tools from Hamiltonian Floer theory. With these tools we recover Theorem \ref{el}  and generalize the rigidity phenomenon underlying it to Reeb flows on any closed contact manifold.

\subsection{Recovery} We begin by recovering Theorem \ref{el} in a different but equivalent setting. Let $\lambda$ be a contact form  on $M^{2n-1}$. It defines a unique Reeb vector field $R_{\lambda}$ on $M$ via the equations $$i_{R_{\lambda}}d\lambda=0 \quad \text{    and   }  \quad \lambda(R_{\lambda})=1.$$ 
It also defines a contact structure $\xi= \ker(\lambda)$. Any other contact form defining the same contact structure is of the form $f\lambda$ for some nonvanishing smooth function $f \colon M \to \R$.

Let $\lambda_0$ be the standard contact form on the unit sphere  $\mathbb{S}^{2n-1} \subset \R^{2n}$ obtained by restricting the form $\frac{1}{2}\sum_{j=1}^n(p_j dq_j -q_jdp_j)$. 
The following result is equivalent to Theorem \ref{el} for the case of smooth hypersurfaces.
\begin{Theorem}\label{kss}
Let $\lambda =f \lambda_0$  for some positive function $f$. If \begin{equation}
\label{less}
\frac{\max(f)}{\min(f)}<2 
\end{equation}then there are at least $n$ distinct closed orbits of $R_{\lambda}$ with periods in the interval $[\pi\min(f), \pi \max(f)].$
\end{Theorem}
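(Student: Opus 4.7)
The plan is to reformulate Theorem \ref{kss} as a statement about Hamiltonian $1$-periodic orbits in $\R^{2n}$ and execute a cup-length min-max argument in filtered Hamiltonian Floer theory, paralleling the Ekeland--Lasry argument sketched in the introduction. The $S^1$-equivariant topology of the round Hopf sphere $S^{2n-1}$, which there drives the cup-length estimate via the quotient $\CP^{n-1}$, is here encoded by the action of $H^*(\CP^{n-1})$ on filtered Floer homology.

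First, I would associate to $\lambda = f\lambda_0$ the star-shaped hypersurface $\Sigma_\lambda = \{\sqrt{f(x)}\,x : x \in S^{2n-1}\} \subset \R^{2n}$. Its induced contact form equals $\lambda$ under the radial identification with $S^{2n-1}$, so closed Reeb orbits of $\lambda$ correspond bijectively and action-preservingly to closed characteristics on $\Sigma_\lambda$. The hypothesis becomes $R<\sqrt{2}\,r$ with $r=\sqrt{\min(f)}$ and $R=\sqrt{\max(f)}$, and $\Sigma_\lambda$ is sandwiched between the round spheres $S_r$ and $S_R$. Choose an admissible Hamiltonian $H\colon\R^{2n}\to\R$ that agrees near $\Sigma_\lambda$ with a fixed function of the Minkowski gauge of $\Sigma_\lambda$ (so its nonconstant $1$-periodic orbits on the relevant level recover the closed Reeb orbits of $\lambda$, with period equal to action), lies pointwise between radial reference Hamiltonians $H_r$ and $H_R$ modelled on $S_r$ and $S_R$, and is linear in $\pi|x|^2$ at infinity with slope just above $\pi\max(f)$, chosen to avoid $\pi\Z$.

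The core step is to build a chain of spectral invariants $c_1(H)\leq c_2(H)\leq\cdots\leq c_n(H)$ by min-maxing the Hamiltonian action over cycles representing the $n$ classes $1,u,u^2,\ldots,u^{n-1}$ under the $H^*(\CP^{n-1})$-module structure on filtered Floer homology. Each $c_k(H)$ is automatically realised as the action of a $1$-periodic orbit of $H$, hence the period of a closed Reeb orbit of $\lambda$. Using continuation maps and pointwise monotonicity of spectral invariants, I would establish the sandwich
\[
\pi\min(f) = c_k(H_r) \leq c_k(H) \leq c_k(H_R) = \pi\max(f) \qquad (k=1,\ldots,n).
\]
The decisive equalities $c_k(H_r)=\pi r^2$ and $c_k(H_R)=\pi R^2$ reflect the fact that on a round sphere the shortest Reeb orbits form a single Morse--Bott $S^{2n-1}$-family at action $\pi r^2$ (respectively $\pi R^2$), whose contribution to the Floer complex after an equivariant Morse--Bott perturbation realises all of $H^*(\CP^{n-1})$ concentrated at that single action level.

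Consequently each $c_k(H)$ is the period of some closed Reeb orbit of $\lambda$ in $[\pi\min(f),\pi\max(f)]$. Monotonicity also forces the shortest Reeb period of $\lambda$ to be at least $\pi\min(f)$, and the pinching $\pi\max(f)<2\pi\min(f)$ therefore prevents any iterate $\gamma^m$ with $m\geq 2$ from having action in this window; the orbits detected by the $c_k(H)$ are all simple, so distinct spectral values yield geometrically distinct closed Reeb orbits. The expected main obstacle is the degenerate case $c_k(H)=c_{k+1}(H)$: as in the classical cup-length argument, this equality forces the Morse--Bott critical set of the action functional at the common value to have positive dimension, and an equivariant Morse--Bott perturbation then supplies an extra geometrically distinct simple closed Reeb orbit at that level (the critical locus contains strictly more than one $S^1$-orbit). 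Assembling these contributions yields the required $n$ geometrically distinct closed Reeb orbits of $\lambda$ with periods in $[\pi\min(f),\pi\max(f)]$.
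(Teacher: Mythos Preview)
Your approach is genuinely different from the paper's, and the comparison is instructive. The paper obtains Theorem~\ref{kss} as the special case $Q=\CP^{n-1}$ of Theorem~\ref{gutt+}, whose proof works entirely in the symplectization $\R\times M$ (not in a filling), and implements the cup-length estimate in the style of Albers--Momin and Albers--Hein: one builds moduli spaces of Floer continuation trajectories carrying $n$ simultaneous evaluation constraints into lifts of stable manifolds of generic Morse functions on $Q$, tracks these through a cobordism as the homotopy of Hamiltonians is stretched, and in the limit extracts $n$ distinct $1$-periodic orbits of the Hamiltonian $G$ encoding $\lambda$. No spectral invariants or explicit $\mathbb{S}^1$-equivariant module structure are invoked, and crucially no nondegeneracy hypothesis on $\lambda$ is needed. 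Your route---spectral invariants in the filling $\R^{2n}$, capped by an $H^*(\CP^{n-1})$-action---is closer to Gutt's approach via positive $\mathbb{S}^1$-equivariant symplectic homology, which the paper cites and notes recovers Theorem~\ref{el} only under an additional strong nondegeneracy assumption.

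There is a concrete gap in your handling of the degenerate case. The sentence ``Monotonicity also forces the shortest Reeb period of $\lambda$ to be at least $\pi\min(f)$'' is false: monotonicity of capacities gives $c_1(H)\geq\pi\min(f)$, but this says nothing about the genuine systole of $\lambda$. Indeed Theorem~\ref{fast} of the paper shows that for \emph{every} $c_1,c_2>0$ there is an $f$ with $\max(f)/\min(f)<1+c_1$ for which $\lambda=f\lambda_0$ has a closed Reeb orbit of period less than $c_2$. So you cannot conclude that the orbits carrying the $c_k(H)$ are simple, and your Lusternik--Schnirelmann step ``the critical locus contains strictly more than one $S^1$-orbit'' is unjustified when $c_k(H)=c_{k+1}(H)$: a single isolated, degenerate, multiply-covered $\R/\Z$-family can in principle carry several equivariant local homology classes. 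Since Theorem~\ref{kss} asks only for \emph{distinct} (not geometrically distinct) orbits, your over-claim of geometric distinctness is harmless in itself, but the mechanism you rely on to produce extra orbits at a repeated spectral value genuinely breaks. The paper's moduli-space-with-constraints argument sidesteps this by assuming finitely many $\R/\Z$-families (else done), then forcing each limit trajectory $v^j$ to be $s$-dependent via a transversality statement (Corollary~\ref{away}) between the Reeb orbits of $\lambda$ and the closures of the stable manifolds---a purely topological obstruction that does not need simplicity.
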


Recall that a closed orbit $\gamma$  of $R_{\lambda}$ (closed Reeb orbit of $\lambda$) is said to be \textbf{distinct} from another such orbit $\widetilde{\gamma}$, if for all $k\in\N$ and $c\in\R$ there is a $t\in\R$ such that $$\gamma(t) \neq\widetilde{\gamma}(kt+c).$$ Two closed  orbits  are said to be \textbf{geometrically distinct} if they have disjoint images. While, geometrically distinct orbits are distinct the converse does not hold since two distinct orbits can both be relatively prime multiples of a third closed orbit.

To see that Theorem \ref{kss} implies Theorem \ref{el},  consider a smooth hypersurface $\Sigma$ as in the statement of Theorem \ref{el}. It can be described in the form
$$
\Sigma=\{ z\sqrt{f(z)} \mid z \in \mathbb{S}^{2n-1}\},
$$ 
where $f$ is a positive smooth function positive on $\mathbb{S}^{2n-1}$. There is a bijective correspondence between the simple  closed Reeb orbits of $\lambda =f \lambda_0$ with period $T$ (modulo translation), and the closed characteristics on $\Sigma$ with action equal to $T$. To show that Theorem \ref{el} follows from Theorem \ref{kss} it therefore suffices to show that the convexity of $\Sigma$  implies that the closed Reeb orbits of $\lambda$ with period in $[\pi \min(f), \pi \max(f)]$ are simple and hence geometrically distinct. Since $\max(f) < 2\min(f)$, it suffices to show that the convexity of $\Sigma$ imples that $R_{\lambda}$ has no closed orbits of period less than $\pi \min(f)$, or equivalently, that $\Sigma$  has no closed characteristics with action less than $\pi \min(f)$. This last condition follows immediately from the main result of Croke and Weinstein in \cite{cw} which asserts that the closed characteristics of convex hypersurfaces containing the ball of radius $r$ have action at least $\pi r^2$.  

\begin{Remark}
Note that the convexity assumption for hypersurfaces is not invariant under symplectomorphisms and that no assumption like convexity appears in statement of Theorem \ref{kss}.
\end{Remark}

\begin{Remark}  
\label{metric}
Given a contact structure $\xi$ on $M$  there is a natural coarse pseudo-metric on the space of contact forms defining $\xi$. For any two such contact forms, say $\lambda$ and $\tilde{\lambda}$  there is a smooth nonvanishing function $f_{\scriptscriptstyle{\lambda}/{\tilde{\lambda}}}$ on $M$ such that $$\lambda =f_{\scriptscriptstyle{\lambda}/{\tilde{\lambda}}} \tilde{\lambda}.$$ The {\it distance} between them can then be defined as
\begin{equation}
\label{dist}
d(\lambda,\tilde{\lambda}) = \ln \left(\frac{\max\left(f_{\scriptscriptstyle{\lambda}/{\tilde{\lambda}}}\right)}{\min\left(f_{\scriptscriptstyle{\lambda}/{\tilde{\lambda}}}\right)}\right) .
\end{equation}
It is easy to verify that $d$ is a pseudo-metric and that its degeneracy only reflects the fact that it is invariant under scalar multiplication of the contact forms. In these terms, condition \eqref{less} can be rephrased geometrically as $d(\lambda, \lambda_0) < \ln 2.$
\end{Remark}

\subsection{Prequantization spaces} 
Our first generalization of Theorem \ref{kss} establishes the same rigidity phenomenon for all prequantization spaces. Consider a closed symplectic manifold $(Q, \omega)$ such that the de Rham cohomology class 
$-[\omega]/2\pi $ is the image of an integral class $\mathrm{ \mathbf{e}} \in H^2(Q; \mathbb{Z})$.
Let $p \colon M \to Q$ be an $\mathbb{S}^1$-bundle over $Q$ with first Chern class equal to $\mathrm{ \mathbf{e}}$. Denote  the corresponding Boothby-Wang contact form on $M$ by $\lambda_Q$ and the corresponding contact structure by $\xi_Q$. We then have $d \lambda_Q = p^* \omega$ and  the Reeb vector field of $\lambda_Q$ generates the circle action on $M$, with period $2\pi$. 

Let $\alpha_\mathbf{f} \in [\mathbb{S}^1 ,M]$ be the free homotopy class corresponding to the fibres of the bundle $p \colon M \to Q$ and denote its order by $|\alpha_\mathbf{f}|$.

\begin{Theorem}\label{gutt+}
Let  $\lambda =f\lambda_Q$ for some positive function $f$. If  
$$
\frac{\max(f)}{\min(f)}< 2
$$
then there are at least $\frac{1}{2}\dim(Q)+1$ distinct closed Reeb orbits of $\lambda$ which represent the class $\alpha_{\mathbf{f}}$ and have period in the interval  $$[2\pi \min(f), 2\pi \max(f)].$$ 
These orbits are geometrically distinct from one another if the class  $\alpha_{\mathbf{f}}$ is either primitive or of infinite order. Otherwise, they are geometrically distinct if there are no  closed Reeb orbits of $\lambda$  which have period less than or equal to $$\frac{2 \pi}{|\alpha_{\mathbf{f}} |} \left(\max(f)- \min(f)\right)$$ and which represent a class $\beta$ such that $\beta^k =\alpha_{\mathbf{f}}$ for some integer $k>1$.
\end{Theorem}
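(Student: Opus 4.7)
My plan is to use filtered Hamiltonian Floer theory on a symplectic filling of $(M,\xi_Q)$. Concretely, the associated complex line bundle $\pi\colon L\to Q$ with $c_1(L)=\mathbf{e}$ carries a symplectic form $\Omega$ for which $M$ sits as a contact-type hypersurface whose induced contact form is $\lambda_Q$. I would set up a one-parameter family of admissible Hamiltonians $H_f$ on the completion of $L$ whose radial profile realizes $\lambda=f\lambda_Q$ near $M$, and whose $1$-periodic orbits in the free homotopy class $\alpha_{\mathbf{f}}$ with action in $[a_-,a_+]:=[2\pi\min(f),2\pi\max(f)]$ are in bijection with the closed Reeb orbits of $\lambda$ in class $\alpha_{\mathbf{f}}$ with period in $[2\pi\min(f),2\pi\max(f)]$.

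The pinching assumption $\max(f)/\min(f)<2$ closes this window against $k$-fold covers for $k\ge 2$: any orbit in class $\alpha_{\mathbf{f}}$ that $k$-fold covers a shorter $\alpha_{\mathbf{f}}$-orbit has period at least $2k\pi\min(f)>2\pi\max(f)$. I would next compute the filtered Floer cohomology $HF^{[a_-,a_+]}_{\alpha_{\mathbf{f}}}(H_f)$ for the reference choice $f\equiv 1$: there the Reeb flow of $\lambda_Q$ is the free $\mathbb{S}^1$-action of the principal bundle $p\colon M\to Q$, so the set of period-$2\pi$ orbits is a Morse--Bott manifold diffeomorphic to $Q$, all representing $\alpha_{\mathbf{f}}$. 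A standard Morse--Bott calculation identifies the resulting contribution with $H^{*}(Q)$ up to a degree shift, and $[\omega]^{\dim Q/2}\neq 0$ gives $\mathrm{cuplength}(Q)\ge \dim Q/2$, hence total rank at least $\tfrac12\dim Q+1$. A monotone continuation through the family $f_s=(1-s)+sf$ then transports this rank to $HF^{[a_-,a_+]}_{\alpha_{\mathbf{f}}}(H_f)$, since the endpoints of the action window move with $\min(f_s)$ and $\max(f_s)$ in such a way that no $\alpha_{\mathbf{f}}$-orbit crosses them.

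To convert this algebraic rank into a count of \emph{distinct} periodic orbits, I would invoke a Lusternik--Schnirelmann cup-length mechanism inside filtered Floer theory, using the module structure of Floer cohomology over the (quantum or singular) cohomology of $Q$, in the spirit of the Ekeland--Hofer spectral invariants recalled in the introduction; this produces at least $\mathrm{cuplength}(Q)+1\ge\tfrac12\dim Q+1$ distinct closed orbits of $R_{\lambda}$ in class $\alpha_{\mathbf{f}}$ with the prescribed period. Geometric distinctness then follows from the stated hypotheses: if $\alpha_{\mathbf{f}}$ is primitive or of infinite order, no $\alpha_{\mathbf{f}}$-orbit is an iterate $\beta^{k}$ with $k>1$, so distinct orbits have disjoint images; in the remaining case, the extra assumption rules out exactly the potential $\beta$ whose multiple cover could cause two distinct algebraic orbits to coincide geometrically, since such a $\beta$ would necessarily have period at most $(2\pi/|\alpha_{\mathbf{f}}|)(\max(f)-\min(f))$. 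The hardest step will be the Morse--Bott reference computation together with the cup-length argument in the filtered setting: I will need to track the module structure through continuation, handle Novikov-completeness issues in the non-exact filling $L$, and ensure that the orbit count really comes from distinct Floer generators rather than a plain rank bound.
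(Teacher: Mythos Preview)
Your approach differs substantially from the paper's, and it has a gap that you will need to confront directly. The paper does \emph{not} work in a filling at all: it stays in the symplectization $(\mathbb{R}\times M, d(e^{\tau}\lambda_Q))$ and uses the cuplength method of Albers--Momin and Albers--Hein. Concretely, the paper picks $n=\tfrac12\dim Q$ generic Morse functions $F_1,\dots,F_n$ on $Q$, uses $[\omega]^n\neq 0$ to find index-$2$ critical points $q_j$ with $W^s(q_1)\cap\cdots\cap W^s(q_n)$ of odd cardinality, lifts this intersection to $M$, and then interprets it as a moduli space of Floer solutions with $n$ evaluation constraints. Stretching the homotopy (replacing $H^0\to H^1$ by a homotopy that lingers on the Hamiltonian $G$ built from $\lambda$) and taking limits of solutions at the $n$ separated evaluation times produces $n+1$ distinct $1$-periodic orbits of $G$; a transversality condition (stable manifolds of index-$2$ critical points miss the projected Reeb orbits) forces each limit to be non-constant in $s$, hence genuinely new. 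No filling, no Novikov ring, no nondegeneracy of $R_\lambda$ are needed.

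Your route via the line bundle $L$ is closer in spirit to Gutt's work, which the paper cites as related. But there is a genuine obstruction to what you wrote: in the disc-bundle filling the fibre class $\alpha_{\mathbf f}$ becomes \emph{contractible} (each circle fibre bounds a disc in $L$), so you cannot restrict Floer cohomology of the filling to the free homotopy class $\alpha_{\mathbf f}$ as you propose --- all these orbits are now mixed in with every other contractible orbit, including constants. You would have to isolate the relevant generators another way (action window plus a winding or Conley--Zehnder grading, or by passing to positive $\mathbb{S}^1$-equivariant symplectic homology), and this interacts badly with the non-exactness of $\Omega$ that you already flagged. Even granting that, the Lusternik--Schnirelmann step --- deducing $\tfrac12\dim Q+1$ \emph{distinct} orbits without any nondegeneracy hypothesis on $R_\lambda$ --- needs more than a rank bound: you need the cup-product module structure on filtered Floer groups to survive continuation and to separate critical values. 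The paper's evaluation-constraint technique is designed precisely to avoid this, which is why it works without either a filling or a nondegeneracy assumption.
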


Note that we can detect geometrically distinct orbits in many cases without a  geometric assumption like convexity, or an equivalent substitute like dynamical convexity.

\begin{Remark}
It is shown in \cite{ggm}, that the class  $\alpha_{\mathbf{f}}$ is of infinite order if $(Q, \omega)$ is symplectically aspherical, and is primitive if $\pi_1(Q)$ is torsion free.
Under the assumption that both these conditions hold  it is also shown in \cite{ggm} that if all the closed  Reeb orbits of $\lambda =f\lambda_Q$  are nondegenerate and $\lambda$ has no contractible closed Reeb orbits with Conley-Zehnder index within one of $2-\frac{1}{2}\dim Q$, then $\lambda$ has infinitely many closed Reeb orbits with contractible projections to $Q$.
\end{Remark}

\subsection{Rigid Constellations} We now present a more extensive generalization of the Ekeland-Lasry rigidity phenomenon. Let $(M, \lambda_0)$ be any closed contact manifold and let  $\alpha \in [\mathbb{S}^1 ,M]$ be any free homotopy class. (The trivial class will be denoted here by $e$.) We first identify collections of closed orbits  of $\lambda_0$ in class $\alpha$ which can meaningfully influence the Reeb flows of contact forms which are nearby in the sense of Remark \ref{metric}. As the original proof of Theorem \ref{el} suggests these collections should consist of simple orbits (so that the natural $\R /\Z$-action is free) and their periods should be isolated in the period spectrum. 

Let $\Tt(\lambda_0)$ be the set of periods of all closed Reeb orbits of $\lambda_0$ and, assuming it is nonempty,  let $T_{\min}(\lambda_0)$ be the smallest such period. Restricting to orbits in class $\alpha$ we get the similarly defined set  $\Tt(\lambda_0, \alpha)$ and minimal $\alpha$-period, $T_{\min}(\lambda_0, \alpha).$ 

Given a $T$ in $\Tt(\lambda_0, \alpha)$ let  $\Cc_{\lambda_0, \alpha}(T)$ be the collection of closed orbits of $R_{\lambda_0}$ which represent the class $\alpha$ and have  period  in  the interval $[T_{\min}(\lambda_0, \alpha), T]$.  
\begin{Definition}\label{rigid} The collection of closed orbits $\Cc_{\lambda_0, \alpha}(T)$ is a {\bf rigid constellation} if 
every orbit  in $\Cc_{\lambda_0, \alpha}(T)$ is simple, no decreasing sequence in $\Tt(\lambda_0, \alpha)$  converges to $T$, and 
\begin{equation}
\label{Tbound}
T <T_{\min}(\lambda_0,\alpha)+ T_{\min}(\lambda_0).
\end{equation}
\end{Definition}
Given a rigid constellation $\Cc_{\lambda_0, \alpha}(T)$ we set  $$T^+ = \min \{T' \in  \Tt(\lambda_0, \alpha)  \colon T' > T\}.$$ This number is strictly greater than $T$ by the second condition of the definition above. 


Since Reeb vector fields are autonomous, the elements of a rigid constellation $\Cc_{\lambda_0, \alpha}(T)$ can be divided into separate $\R /\Z$-families of closed Reeb orbits, the elements of which differ only by simple translation reparameterizations.  It follows from the simplicity condition in Definition \ref{rigid} that closed orbits belonging to different  $\R /\Z$-families of a rigid constellation are geometrically distinct from one another.

\begin{Example}\label{sphere} Let $\lambda_0$ be the standard contact form on the unit sphere $M= \mathbb{S}^{2n-1} \subset \R^{2n}$. The time-$t$ Reeb flow of $\lambda_0$ is $$(z_1, \dots,z_n) \mapsto (e^{i2t}z_1, \dots, e^{i2t}z_n),$$ and so every Reeb orbit is closed with (minimal) period $\pi$. For the choice
 $T =\pi$ the set $\Cc_{\lambda_0, e}(\pi)$ is then a rigid constellation (diffeomorphic to $\mathbb{S}^{2n-1}$) with  $T^+=2\pi$.
\end{Example}

\begin{Example}\label{ellipsoid}
Consider an $n$-tuple $\mathbf{r} = (r_1, \dots, r_n)$ in $\R^n$ such that 
$$0< r_1 < r_2 < \cdots < r_n< \sqrt{2}r_1.$$
Equip the unit sphere $\mathbb{S}^{2n-1}$ with the contact form $$\lambda_{\mathbf{r}}(z) =\left( \frac{|z_1|^2}{r_1^2} + \dots + \frac{|z_n|^2}{r_n^2}\right)^{-1} \lambda_0(z)$$
where $\lambda_0$ is the form from Example \ref{sphere}. The time-$t$ Reeb flow of $\lambda_{\mathbf{r}}$ is $$(z_1, \dots,z_n) \mapsto \left(e^{i2t/r_1^2}z_1, \dots, e^{{i2t}/{r_n^2}}z_n\right).$$ We then have $T_{\min} =\pi r_1^2$ and are free to choose a $T$ of the form $T_k =\pi r_k^2$ for some $k =1, \dots, n.$ For these choices we have 
$$\Cc_{\lambda_{\mathbf{r}}, e}(T_k) =\{\Gamma_1, \dots, \Gamma_k \}$$ where  each $\Gamma_i$ is the $\R /\Z$-family of closed Reeb orbits with image equal to the intersection of $\mathbb{S}^{2n-1}$ with the $z_i$-plane.  Each  $\Cc_{\lambda_{\mathbf{r}},e} (T_k)$ is a rigid constellation with 
$$
T_k^+ = \begin{cases}
  \pi r^2_{k+1}    & \text{if $k<n$ }, \\
   2 \pi r_1^2   & \text{if $k=n$}.
\end{cases}
$$
\end{Example}

\begin{Example}\label{negative} Let $(B,g)$ be a Riemannian manifold with  negative sectional curvature. Let $\Sigma_{g^*} \subset T^*B$ be the unit cosphere bundle and let $\lambda_{g^*}$ be the restriction  to $\Sigma_{g^*} $ of the  tautological  one-form on $T^*B$. Then $\lambda_{g^*} $ is a contact form whose Reeb vector field generates the cogeodesic flow on  $\Sigma_{g^*} $. Let $\tilde{\alpha}$ be a nontrivial primitive element of $[\mathbb{S}^1,B]$ and let $\alpha$ be its  lift to $[\mathbb{S}^1,\Sigma_{g^*} ]$. The  assumption of negative sectional curvature implies that there is a unique ($\R /\Z$-family of) closed Reeb orbits of $\lambda_{g^*} $ in the class $\alpha$. Thus, for  $T =T_{\min}(\lambda_{g^*} ,\alpha)$ the set  $\Cc_{\lambda_{g^*}, \alpha}(T)$ is a rigid constellation with $T^+=+\infty$.
\end{Example}

A collection of closed Reeb orbits of a contact form $\lambda$ is said to be {\bf nondegenerate} if each of its elements $\gamma(t)$ is nondegenerate (and hence isolated) in the usual sense. The rigid constellation in Example \ref{sphere} is  degenerate (Morse-Bott nondegenerate) while those described in Examples \ref{ellipsoid} and \ref{negative} are nondegenerate.

In Section \ref{floer}, we will associate to each nondegenerate rigid constellation $\Cc_{\lambda_0, \alpha}(T)$ a version of $\mathbb{S}^1$-equivariant Floer homology. With this we will define the rank of $\Cc_{\lambda_0, \alpha}(T)$. At this point  we can now state our broadest generalization of Theorem \ref{el}.

\begin{Theorem}\label{persist} Let $(M, \lambda_0)$ be a closed contact manifold and let  $\Cc_{\lambda_0, \alpha}(T)$ be a nondegenerate rigid constellation. Let $\lambda =f\lambda_0$ for some positive function $f$.  If 
\begin{equation}
\label{ass1}
\frac{\max(f)}{\min(f)} < \min\left\{\frac{T^+}{T}, \frac{T_{\min}(\lambda_0)+T_{\min}(\lambda_0 ,\alpha)}{T}\right\}
\end{equation}
and every closed Reeb orbit of $\lambda$ in class $\alpha$ and with period in the interval 
$$\left[\min(f)T_{\min}, \,\max(f)T\right]$$ is nondegenerate, then there are at least $\mathrm{rank}(\Cc_{\lambda_0,\alpha}( T)) $ such orbits which are distinct from one another. These orbits are geometrically distinct from one another if the class  $\alpha$ is either primitive or is of infinite order. Otherwise, these orbits are geometrically distinct if there are no (fast) closed Reeb orbits of $\lambda$ with period less than or equal to $$\frac{1}{|\alpha|}\left(T\max(f)-T_{\min} \min(f)\right)$$ that represent a class $\beta$ such that $\beta^k =\alpha$ for some integer $k>1$.
\end{Theorem}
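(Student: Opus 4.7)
The plan is to prove Theorem \ref{persist} by constructing a filtered $S^1$-equivariant Hamiltonian Floer homology associated to the rigid constellation $\Cc_{\lambda_0,\alpha}(T)$ and showing that its rank persists under the perturbation $\lambda=f\lambda_0$ whenever \eqref{ass1} holds. Following the construction foreshadowed for Section \ref{floer}, I associate to $\lambda_0$ an autonomous Hamiltonian on the symplectization $\R\times M$ whose $1$-periodic orbits in class $\alpha$ with action in a chosen window $(a,b)$ correspond bijectively to the elements of $\Cc_{\lambda_0,\alpha}(T)$. The window $(a,b)$ is selected using the defining properties of a rigid constellation: the isolation of $T$ in $\Tt(\lambda_0,\alpha)$ allows one to enlarge $T$ slightly to some $T<T'<T^+$ without picking up any new $\alpha$-orbit of $\lambda_0$, while the simplicity of the constellation orbits guarantees that the natural $S^1$-reparameterization action on generators is free. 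After a small equivariant perturbation to achieve transversality, the resulting group $\HF^{S^1}_{(a,b)}(H_{\lambda_0};\alpha)$ has rank equal to $\mathrm{rank}(\Cc_{\lambda_0,\alpha}(T))$ by definition.

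For the form $\lambda=f\lambda_0$ I build a Hamiltonian $H_\lambda$ and sandwich it between Hamiltonians for the two constant rescalings $\min(f)\lambda_0$ and $\max(f)\lambda_0$, whose Reeb flows are those of $\lambda_0$ up to linear time changes and whose filtered Floer homologies on suitably rescaled action windows are canonically identified with $\HF^{S^1}_{(a,b)}(H_{\lambda_0};\alpha)$. Monotone continuation then yields a pair of maps
$$\HF^{S^1}_{(a_2,b_2)}(H_{\max(f)\lambda_0};\alpha)\longrightarrow\HF^{S^1}_{(a_\lambda,b_\lambda)}(H_{\lambda};\alpha)\longrightarrow\HF^{S^1}_{(a_1,b_1)}(H_{\min(f)\lambda_0};\alpha)$$
whose composition is, by standard arguments, the canonical rescaling isomorphism, so that the middle term must surject onto $\HF^{S^1}_{(a,b)}(H_{\lambda_0};\alpha)$. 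The first pinching bound $\max(f)/\min(f)<T^+/T$ ensures that after rescaling no $\alpha$-orbit of $\lambda_0$ with period $\geq T^+$ slips into either outer action window, so that both outer groups are genuinely identified with $\HF^{S^1}_{(a,b)}(H_{\lambda_0};\alpha)$. The second bound $\max(f)/\min(f)<(T_{\min}(\lambda_0)+T_{\min}(\lambda_0,\alpha))/T$ plays the role of the classical Ekeland-Lasry action estimate: it prevents a continuation trajectory asymptotic at its negative end to an $\alpha$-orbit of minimal period from breaking off an additional short Reeb orbit of $\lambda_0$ (period $\geq T_{\min}(\lambda_0)$), which is the only mechanism by which the continuation maps could otherwise fail to agree with the rescaling isomorphism on the relevant windows.

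The nondegeneracy hypothesis on closed $\lambda$-orbits of period in $[\min(f)T_{\min},\max(f)T]$ implies that the middle Floer complex is generated by finitely many isolated $\R/\Z$-families of such orbits; the rank inequality above therefore yields at least $\mathrm{rank}(\Cc_{\lambda_0,\alpha}(T))$ families. The second inequality in \eqref{ass1} is precisely what prevents one family from being a high iterate in class $\alpha$ of another (in class $\alpha$ again, iterates repeat only after jumping by $|\alpha|$, and such a jump would force the shorter orbit's period below $\min(f)T_{\min}(\lambda_0,\alpha)$), so the families give orbits of $\lambda$ that are distinct from one another in the sense recalled after Theorem \ref{kss}. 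Promotion to geometric distinctness follows exactly as in the proof of Theorem \ref{gutt+}: if $\alpha$ is primitive or of infinite order, distinct $\alpha$-orbits automatically have disjoint images; otherwise the only failure mode is that two distinct $\alpha$-orbits are iterates of a common simple $\beta$-orbit with $\beta^k=\alpha$ for some $k>1$, whose period would then be at most $\tfrac{1}{|\alpha|}(T\max(f)-T_{\min}\min(f))$, which is exactly the fast-orbit threshold excluded by hypothesis.

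The principal obstacle is the careful bookkeeping of the three action windows under the rescalings between $\lambda_0$ and $\lambda$, ensuring that all three exclude the same $\lambda_0$-orbits and that the continuation chain maps genuinely factor through the canonical rescaling isomorphism; this is the point at which both inequalities in \eqref{ass1} are needed simultaneously, the first controlling the upper end of the window and the second controlling breaking at the lower end. A secondary technical difficulty is setting up filtered $S^1$-equivariant Floer homology in the generality of an arbitrary closed contact manifold, where no Liouville filling is available, including a well-defined Conley-Zehnder/Maslov grading on the class $\alpha$ and the compactness estimates for continuation moduli; these are standard but delicate ingredients that must be handled in Section \ref{floer} before the argument above can be executed.
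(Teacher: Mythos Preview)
Your overall sandwich strategy---squeezing a Hamiltonian for $\lambda$ between Hamiltonians for $\min(f)\lambda_0$ and $\max(f)\lambda_0$, then using that the composed continuation map is an isomorphism---is exactly the shape of the paper's argument. But there is a genuine gap: you assume that a filtered Floer homology group $\HF^{S^1}_{(a_\lambda,b_\lambda)}(H_\lambda;\alpha)$ can be defined for the middle Hamiltonian and that the two arrows in your display are honest chain maps. The paper explicitly avoids this. In the autonomous Bourgeois--Oancea model used here, transversality for the moduli spaces $\Mm(X,Y;F)$ and the compactification needed for $\partial^2=0$ both require at least one (respectively, both) of the asymptotic families to consist of \emph{simple} orbits. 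The closed Reeb orbits of $\lambda$ in class $\alpha$ with period in $[\min(f)T_{\min},\max(f)T]$ are only assumed nondegenerate, not simple, so one cannot in general define a differential on the complex generated by them. The paper therefore does \emph{not} define $\HF(G;\alpha)$; it uses Chekanov's trick instead. One takes only the vector space $V^a(G;\alpha)$ spanned by the relevant $\R/\Z$-families of $1$-periodic orbits of $G$, and defines linear maps $\chi_{F_0^s}\colon\CF(H^0;\alpha)\to V^a(G;\alpha)$ and $\chi_{F_1^s}\colon V^a(G;\alpha)\to\CF(H^1;\alpha)$ by moduli counts. These counts are well-defined precisely because the orbits of $H^0$ and $H^1$ (coming from the rigid constellation) \emph{are} simple, so one end of each moduli space is simple and Proposition \ref{trans} applies. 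One then proves directly, via a half-open homotopy of homotopies, that $\chi_{F_1^s}\circ\chi_{F_0^s}$ is chain homotopic to the continuation map $\theta_{F^{0,s}_{0\#1}}$, which is shown to induce an isomorphism. The rank inequality $\dim V^a(G;\alpha)\geq\mathrm{rank}\,\HF(H^0;\alpha)$ then follows from elementary linear algebra, with no Floer homology for $G$ ever entering.

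A second, smaller correction: your account of the role of the bound $\max(f)/\min(f)<(T_{\min}(\lambda_0)+T_{\min}(\lambda_0,\alpha))/T$ is slightly off. It is not used to control breaking of continuation trajectories in an SFT sense; rather, it guarantees (via Lemma \ref{spectra}) that the action gap $\Delta(H)$ of a finely tuned Hamiltonian, and the mixed gap $\Delta(H^0,H^1)$, stay below $T_{\min}(\lambda_0)$. This is what feeds into Proposition \ref{C0}, the $C^0$-bound that prevents Floer cylinders from escaping toward $\tau\to-\infty$ in the symplectization. Without a filling this is the essential compactness ingredient, and it is needed already for the differential and continuation maps on $H^0$ and $H^1$, not only at the composition step.
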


The primary point of this  result is that it establishes the rigidity of closed Reeb orbits without assumptions on the ambient contact manifold. In particular, it does not assume the existence of strong symplectic fillings. The price of this generality is the presence of the term  $$\frac{T_{\min}(\lambda_0)+T_{\min}(\lambda_0 ,\alpha)}{T}.$$ While this term constrains the range of the rigidity phenomenon, it is needed  to achieve compactness for the relevant moduli spaces of Floer trajectories  used here to detect it. Despite the fact that the term is expressed in the language of dynamics  we do not know whether it represents an actual boundary to the generalized  Ekeland-Lasry rigidity  phenomenon. When $(M, \lambda_0)$ does admit strong symplectic fillings the proof of Theorem \ref{persist} simplifies greatly and yields rigidity results with larger, sometimes infinite, range. These results are described below in Section \ref{seconds}.


In Theorem \ref{persist} we have imposed  nondegeneracy assumptions on certian closed Reeb orbits of $\lambda$ while in Theorem \ref{gutt+}  no such assumptions are made. The lower bounds of Theorem \ref{gutt+}  correspond to cuplength estimates in the spirit of those predicted in the strong form of Arnold's conjecture for Hamiltonian diffeomorphisms. Theorem \ref{persist} is more in the spirit of the nondegenerate form of Arnold's conjecture. Note that rather than assuming the nondegeneracy of all closed Reeb orbits of $\lambda$ (strong nondegeneracy) we have only assumed nondegeneracy for the closed Reeb orbits of $\lambda$ in fixed range of periods. For the Ekeland-Lasry rigidity phenomenon, this seems to be the appropriate  assumption to obtain Morse-type inequalities. The difference between this nondegeneracy assumption and the strong form is analogous, in the setting of Arnold's Conjecture,  to the difference between the assumption  that the fixed points of a  Hamiltonian diffeomorphism are nondegenerate and  the  assumption  (clearly irrelevant in that case) that all its periodic points are nondegenerate.  This point its captured nicely by the contact forms $\lambda_{\mathbf{r}}$ from Example \ref{ellipsoid}. Given $\mathbf{r} =(r_1,r_2)$ with $r_1\leq r_2 < \sqrt{2}r_1$ we know that all contact forms $\lambda_{\mathbf{r}}$ have at least two closed Reeb orbits with periods
in $[\pi r_1^2, \pi r_2^2]$ and of course we would like our theorems to see this too. For $r_1=r_2$ these orbits are detected by Theorem \ref{kss}, and for $r_1<r_2$ they are detected by Theorem \ref{persist}. In contrast, contact forms $\lambda_{\mathbf{r}}$ satisfy the strong nondegeneracy assumption if and only if $r_1$ and $r_2$ are rationally independent.

\begin{Remark} Under the strong nondegeneracy assumption one can, in certain cases, detect more dramatic rigidity phenomena. This is because tools such as the index iteration formulas for Maslov indices become much sharper,  and the rich machinery of symplectic and contact homology may also become available. For example, Conjecture \ref{n} was proven for the case of convex hypersurfaces all of whose closed characteristics are strongly nondegenerate by Long and Zhu in \cite{lz} (starting from from the analytic framework of \cite{el}). More recently this was reproved under weaker \emph{dynamical} convexity assumptions by Gutt and Kang in \cite{gk} using $\mathbb{S}^1$-equivariant symplectic homology, and Abreu and Macarini  in \cite{am} using contact homology. 
\end{Remark}

\subsection{Applications (with no strong symplectic fillings)}
Theorem \ref{persist} can be applied to any Reeb flow on a closed manifold  for which one has a basic understanding of its fastest closed orbits. 
One does not need the contact manifold to admit a strong symplectic filling, nor must  one preclude the existence of certain closed Reeb orbits. These points are illustrated by the following three applications.

 \medskip

\noindent $(\mathrm{\mathbf{ A}}).$
Consider the three dimensional torus  $\mathbb{T}^3 = \R /\Z \times \R /\Z\times \R /\Z$ with angular coordinates $(x,y, \theta)$ and the familiar family of contact forms 
\begin{equation}
\label{tk}
\lambda_k = \cos(k\theta)\, dx + \sin(k\theta)\, dy
\end{equation}
for $k \in \N$.
The underlying contact structures $\xi_k =\ker{\lambda_k}$ are all weakly symplectically fillable (and hence tight). However,  Eliashberg proved in \cite{el} that for $k\geq 2$ the contact structures $\xi_k$ are not strongly symplectically fillable. Hence one can not define symplectic cohomology or linearized contact homology for the contact forms they support. Nevertheless, Theorem \ref{persist} can be applied to  all such forms.  

The Reeb vector field of $\lambda_k$ is 
$$ \cos(k\theta)\, \frac{\p}{\p x} + \sin(k\theta)\, \frac{\p}{\p y}.$$
Hence the Reeb flow is linear on each $xy$-torus. The  closed Reeb orbits  represent nonzero classes of the form $$(m,n,0) \in \Z \times \Z \times \Z =\mathrm{H}_1(\mathbb{T}^3;\Z).$$
For a fixed pair of integers $(m,n)$ these orbits all have  period $\sqrt{m^2 +n^2}$. They foliate  $k$ subtori of the form $$\mathbb{T}^2 \times \{\theta^j_{m,n}\} \subset \mathbb{T }^3$$   where $\theta^1_{m,n}, \dots, \theta^k_{m,n}$ are the solutions of the two equations
\begin{equation*}
\label{periodic1}
\cos\left( k \theta\right)=\frac{m}{\sqrt{m^2 +n^2}}
\end{equation*} 
and 
 \begin{equation*}
\label{periodic2}
\sin\left( k\theta \right)=\frac{n}{\sqrt{m^2 +n^2}}.
\end{equation*}

The orbits on these tori are simple if and only if  $m$ and $n$ are relatively prime. In this case, for $T=\sqrt{m^2 +n^2}$ 
the collection  $\Cc_{\lambda_k, (m,n,0)}(T)$ is a rigid constellation with $T^+ =+\infty$. (Here we have identified $\mathrm{H}_1(\mathbb{T}^3;\Z)$ with $[\mathbb{S}^1, \mathbb{T}^3]$ in the obvious way.) A simple perturbation argument, see for example \cite{bou},  shows that each torus contributes $2$ to the rank of $\Cc_{\lambda_k, (m,n,0)}(T)$. Hence, $\mathrm{rank}(\Cc_{\lambda_k, (m,n,0)}(T))= 2k$. With this, Theorem \ref{persist} implies the following result.

\begin{Theorem}\label{t3}
Let $\lambda =f \lambda_k$ where $f \colon \mathbb{T}^3 \to \R$ is a positive function.  Let $m$ and $n$ be relatively prime integers. If  
$$
\frac{\max(f)}{\min(f)} < \frac{\sqrt{m^2 +n^2}+1}{\sqrt{m^2 +n^2}}
$$
and the closed Reeb orbits of $\lambda$ in class $(m,n,0)$ and with period in the interval 
$$\left[\min(f)\sqrt{m^2 +n^2}, \,\max(f)\sqrt{m^2 +n^2}\right]$$ are nondegenerate, then there are at least $2k$ such orbits which are geometrically distinct from one another. 
\end{Theorem}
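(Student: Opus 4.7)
The plan is a direct application of Theorem \ref{persist} to the rigid constellation $\Cc_{\lambda_k, (m,n,0)}(T)$ with $T = \sqrt{m^2+n^2}$, once the period data and rank have been recorded. Since the Reeb field of $\lambda_k$ is a unit horizontal vector at each fixed $\theta$, any closed orbit representing the class $(p,q,0) \in H_1(\mathbb{T}^3;\mathbb{Z})$ has period $\sqrt{p^2+q^2}$; hence $T_{\min}(\lambda_k) = 1$ and $T_{\min}(\lambda_k, (m,n,0)) = \sqrt{m^2+n^2}$. The coprimality of $m$ and $n$ forces every orbit of class $(m,n,0)$ to be simple and forces $\Tt(\lambda_k, (m,n,0)) = \{\sqrt{m^2+n^2}\}$ (iterating a primitive orbit of class $(p,q,0)$ a number of times $j > 1$ produces the class $(jp,jq,0)$, which cannot equal $(m,n,0)$ under the coprimality assumption). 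In particular $T^+ = +\infty$, no decreasing sequence in $\Tt(\lambda_k, (m,n,0))$ converges to $T$, and the bound $T < 1 + \sqrt{m^2+n^2}$ holds trivially, so $\Cc_{\lambda_k, (m,n,0)}(T)$ is a rigid constellation.

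Next I would justify the rank. The constellation is the Morse--Bott union of $k$ tori $\mathbb{T}^2 \times \{\theta^j_{m,n}\}$, each foliated by parallel closed orbits whose quotient by the Reeb reparametrization $\mathbb{R}/\mathbb{Z}$-action is a circle. Applying the standard Morse--Bott perturbation of Bourgeois \cite{bou} with a perfect Morse function on each orbit-space circle produces, after perturbation, exactly two nondegenerate $\mathbb{S}^1$-equivariant Floer generators per torus. Invariance of the $\mathbb{S}^1$-equivariant Floer homology from Section \ref{floer} under small perturbations then yields $\mathrm{rank}(\Cc_{\lambda_k, (m,n,0)}(T)) = 2k$.

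Plugging these inputs into Theorem \ref{persist}: since $T^+ = +\infty$, the pinching bound \eqref{ass1} reduces to $\max(f)/\min(f) < (1 + \sqrt{m^2+n^2})/\sqrt{m^2+n^2}$, which is precisely the hypothesis of Theorem \ref{t3}; the nondegeneracy hypothesis also matches, since $T_{\min}(\lambda_k, (m,n,0)) = T$. Theorem \ref{persist} therefore produces at least $2k$ distinct closed Reeb orbits of $\lambda$ in class $(m,n,0)$ with periods in $[\min(f)\sqrt{m^2+n^2}, \max(f)\sqrt{m^2+n^2}]$. Since $(m,n,0)$ is primitive in $\pi_1(\mathbb{T}^3) \cong \mathbb{Z}^3$ (and has infinite order), Theorem \ref{persist} further guarantees that these distinct orbits are geometrically distinct. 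The main potential obstacle is the rank calculation: one must verify that the $\mathbb{S}^1$-equivariant (rather than non-equivariant) Morse--Bott reduction is used, so that each torus contributes the rank of $H_*(\mathbb{S}^1) = 2$ rather than the rank of $H_*(\mathbb{T}^2) = 4$.
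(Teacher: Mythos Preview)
Your proposal is correct and follows exactly the route the paper takes: identify $\Cc_{\lambda_k,(m,n,0)}(\sqrt{m^2+n^2})$ as a rigid constellation with $T^+=+\infty$, compute its rank as $2k$ via the Morse--Bott perturbation of \cite{bou} (each of the $k$ tori has orbit space a circle, hence contributes two generators), and then invoke Theorem~\ref{persist} together with the primitivity of $(m,n,0)$ in $\pi_1(\mathbb{T}^3)$ to upgrade distinctness to geometric distinctness. Your additional care about the equivariant versus non-equivariant count is well placed and matches the chain complex of Section~\ref{floer}, whose generators are $\R/\Z$-families rather than individual orbits.
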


 \medskip

\noindent $(\mathrm{\mathbf{ B}}).$ Next we apply Theorem \ref{persist} to an overtwisted contact three manifold. Consider the manifold
$M = \R /\Z \times \R /\Z \times [0,2\pi]$ with coordinates $(x, y, t)$, and the family of contact forms
$$\eta_k =\cos\left( k t\right)\, d x + \sin\left( k t\right)\, dy$$
for $k \in \N \cup \{0\}$.
Each $\eta_k$ is invariant under the free $\R /\Z$-action generated by the vector field $\frac{\p}{\p y}$. Thus we can perform the contact cut operation, as defined by Lerman in \cite{le}, with respect to the restriction of this action to the boundary of $M$ (see Example 2.12 of \cite{le}). We obtain in this way contact forms $\widetilde{\eta}_k$ on $\R /\Z \times \mathbb{S}^2$. Here $x$ can  still be viewed as a coordinate parameterizing the $\R /\Z$-fibres of the product $\R /\Z \times \mathbb{S}^2$ and we can  identify $\widetilde{\eta}_k$ with $\eta_k$ away from the fibres over the poles of $\mathbb{S}^2$ (which correspond to $t=0$ and $t=2\pi$). As shown in \cite{le}, the contact structures $\ker(\widetilde{\eta}_k)$ are overtwisted and contactomorphic to one another for all $k>0$. Hence, the contact manifolds $( \R /\Z \times \mathbb{S}^2,\widetilde{\eta}_k)$ have no strong symplectic fillings for $k>0$.

The Reeb vector field of $\widetilde{\eta}_k$ ($\eta_k$) is
$$\cos\left( k t\right)\,  \frac{\p}{\p x} + \sin\left( k t\right)\, \frac{\p}{\p y}$$
and so the  closed Reeb orbits in class $$m \in  \Z =\mathrm{H}_1(\R /\Z \times \mathbb{S}^2;\Z)$$
occur for values of $t$ such that  
\begin{equation}
\label{periodic3}
\cos\left( k t\right)=\frac{m}{\sqrt{m^2 +n^2}}.
\end{equation} 
and 
 \begin{equation}
\label{periodic4}
\sin\left( k t\right)=\frac{n}{\sqrt{m^2 +n^2}}
\end{equation}for some $n \in \Z.$ 
For a solution $t$ of these equations located in $(0,2\pi)$,  the corresponding orbits  occur in an $\mathbb{S}^1$-family that foliates the two-dimensional $xy$-tori, $\mathbb{T}^2 \times \{t\}$,  as in the previous example. For $t=0$ and $t=2\pi$ the corresponding orbits are isolated and correspond to the fibres of $\R /\Z \times \mathbb{S}^2$ over the poles of $\mathbb{S}^2$. 

The fastest simple closed orbits of $\widetilde{\eta}_k$ all have period $2\pi$ and correspond to the four cases  $(m,n) =(\pm1,0)$ and $(m,n) =(0, \pm1)$.
For the case $(m,n)=(1,0)$ there are $k+1$  solutions  to equations \eqref{periodic3} and \eqref{periodic4},  $$0, \frac{2\pi}{k}, \dots, \frac{2\pi(k-1)}{k}, 2\pi.$$ As described above, the values $t=0$ and $t=1$ correspond to isolated simple closed Reeb orbits, and the other $k-1$ values of $t$ correspond to subtori  foliated by simple closed Reeb orbits (in class $(1,0)$). The collection of all these orbits forms a rigid constellation of rank $2k$ with $T=2\pi$ and $T^+ = 2\sqrt{2} \pi$.

For the case $(m,n)=(-1,0)$, there are $k$ solutions of equations  \eqref{periodic3} and \eqref{periodic4},  $$\frac{\pi}{k}, \frac{3\pi}{k}, \dots,2\pi- \frac{\pi}{k}.$$ These each correspond to unique subtori foliated by simple closed Reeb orbits (in class $(-1,0)$). The collection of these tori again form a rigid constellation of rank $2k$ with $T=2\pi$ and $T^+ = 2\sqrt{2} \pi$.

Finally, for the cases $(m,n)=(0,\pm 1)$, there are again $k$ solutions of equations \eqref{periodic3} and \eqref{periodic4} each of which  corresponds to a subtorus foliated by simple closed Reeb orbits which are contractible in $\R /\Z \times \mathbb{S}^2$. These also form a rigid constellation of rank $2k$, this time with $T=2\pi$ and $T^+ = 4 \pi$.

Applying Theorem \ref{persist} to these three rigid constellations we get the following rigidity theorem.

\begin{Theorem}\label{s2s1}
Consider a contact form on $\R /\Z \times \mathbb{S}^2$ of the form  $\lambda =f \widetilde{\eta}_k$ where $f$ is a positive function on $\R /\Z \times \mathbb{S}^2 $. 
\begin{enumerate}
\item[(i)] 
If $
\max(f) < \sqrt{2}\min(f)
$
and every closed Reeb orbit of $\lambda$ in class $$\pm1 \in \Z =\mathrm{H}_1(\R /\Z \times \mathbb{S}^2;\Z)$$ and with period in the interval 
$\left[\min(f)2\pi, \,\max(f)2\pi\right]$ is nondegenerate, then there are at least $2k$ such orbits which are geometrically distinct from one another. 
\item[(ii)] If $\max(f) < 2\min(f)
$
and every contractible closed Reeb orbit of $\lambda$ with period in the interval 
$\left[\min(f)2\pi, \,\max(f)2\pi\right]$ is nondegenerate, then there are at least $4k$ such orbits which are distinct from one another. They are geometrically distinct if there are no contractible  closed Reeb orbits of $\lambda$ with period less than or equal to $2\pi \left(\max(f)-\min(f)\right).$

\end{enumerate}
\end{Theorem}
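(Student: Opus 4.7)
The proof of both parts consists of direct applications of Theorem \ref{persist} to the rigid constellations of $\widetilde{\eta}_k$ exhibited in the paragraphs preceding the statement, with $\widetilde{\eta}_k$ playing the role of $\lambda_0$. Throughout one uses that every simple closed Reeb orbit of $\widetilde{\eta}_k$ has period $2\pi$, so $T_{\min}(\widetilde{\eta}_k) = 2\pi$, and that in each application $T = 2\pi$.

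For part~(i), I take $\alpha = +1 \in \Z = H_1(\R/\Z \times \mathbb{S}^2;\Z)$ (the argument for $\alpha = -1$ being symmetric) and the rigid constellation of rank $2k$, $T = 2\pi$, $T^+ = 2\sqrt{2}\,\pi$ identified above. Since $T_{\min}(\widetilde{\eta}_k, \pm 1) = 2\pi$, the right-hand side of \eqref{ass1} equals $\min\{\sqrt{2}, 2\} = \sqrt{2}$, matching the pinching hypothesis $\max(f) < \sqrt{2}\,\min(f)$. As $\pm 1$ is primitive and of infinite order in $\Z$, the primitivity clause of Theorem \ref{persist} immediately produces $2k$ pairwise geometrically distinct orbits in each of the two classes, with periods in $[2\pi\min(f), 2\pi\max(f)]$.

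For part~(ii), I take $\alpha = e$ and form the contractible rigid constellation assembled from all subtori in the classes $(m,n) = (0, \pm 1)$; this constellation has rank $4k$, $T = 2\pi$, and $T^+ = 4\pi$. Since $T_{\min}(\widetilde{\eta}_k, e) = 2\pi$, the threshold in \eqref{ass1} equals $\min\{2, 2\} = 2$, matching $\max(f) < 2\min(f)$. Because $\pi_1(\R/\Z \times \mathbb{S}^2) = \Z$ is torsion free, the only $\beta$ with $\beta^m = e$ for some $m > 1$ is $\beta = e$ itself, so the second (``otherwise'') clause of Theorem \ref{persist} specializes precisely to the stated absence of contractible fast orbits of $\lambda$ with period at most $2\pi(\max(f) - \min(f))$.

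No substantive obstacle is expected: once Theorem \ref{persist} is in hand, the proof reduces to verifying the pinching thresholds and the primitivity/torsion-freeness alternatives, plus the standard Morse--Bott rank computation for the two constellations (cf.\ \cite{bou}), where each $\R/\Z$-family of orbits on a subtorus contributes $2$ to the equivariant Floer rank and each isolated simple orbit at a pole of $\mathbb{S}^2$ contributes $1$.
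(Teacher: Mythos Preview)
Your proposal is correct and follows exactly the paper's own argument: the paper simply states that Theorem~\ref{persist} applied to the three rigid constellations identified in the preceding paragraphs yields Theorem~\ref{s2s1}, and you have carried out precisely that verification (checking the pinching thresholds, the primitivity/infinite-order clause for $\alpha=\pm1$, and the torsion-freeness reduction for $\alpha=e$). Note also that you correctly compute the rank of the contractible constellation as $4k$ (combining the $2k$ subtori from $(0,+1)$ and $(0,-1)$), which matches the conclusion of part~(ii) and resolves what appears to be a slip in the paper's text where ``rank $2k$'' is written.
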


 \medskip

\noindent $(\mathrm{\mathbf{C}}).$ Now we apply Theorem \ref{persist} to a family of overtwisted contact forms on $\mathbb{S}^3$. We  start again with  the manifold
$M = \R /\Z \times \R /\Z \times [0,2\pi]$ with coordinates $(x, y, t)$.
For $k \in \N \cup \{0\}$ we now consider the contact forms 
$$\zeta_k=\cos\left(\left(k+\frac{1}{4}\right) t\right)\, d x + \sin\left(\left(k+\frac{1}{4}\right) t\right)\, dy.$$
Each $\zeta_k$ is invariant under the free $\R /\Z$-actions generated by the vector fields $\frac{\p}{\p x}$ and  $\frac{\p}{\p y}$. This time we perform a different contact cut operation by choosing the $\R /\Z$-action generated by $\frac{\p}{\p y}$ at the boundary component  $\R /\Z \times \R /\Z \times \{0\}$, and the $\R /\Z$-action generated by $\frac{\p}{\p x}$ at the boundary component  $\R /\Z \times \R /\Z \times \{2\pi\}$. The resulting manifold is $\mathbb{S}^3$ and we denote the resulting contact form by $\widetilde{\zeta}_k$.  For all $k>0$ the contact structures $\ker(\widetilde{\zeta}_k)$ are overtwisted and contactomorphic to one another, (see the proof of Theorem 3.1, in \cite{le}).

Away from the fibres of the Hopf fibration that lie over the two poles in $\mathbb{S}^2$ we can identify $\widetilde{\zeta}_k$ with $\zeta_k$. Thus we can analyze the closed Reeb orbits  as before. Here 
the set of the fastest simple closed orbits of $\widetilde{\zeta}_k$ is a rigid constellation with $T=2\pi$, $T^+= 2\sqrt{2}\pi$. It is comprised of $2$ isolated orbits and $4k$, $\mathbb{S}^1$-families of closed orbits and so has rank $8k+1$. In this setting Theorem \ref{persist} implies the following result.

\begin{Theorem}\label{s3}
Consider a contact form on $\mathbb{S}^3$ of the form  $\lambda =f \widetilde{\zeta}_k$ where $f$ is a positive function.
 If $\max(f) < \sqrt{2}\min(f)
$
and every contractible closed Reeb orbit of $\lambda$ with period in the interval 
$\left[\min(f)2\pi, \,\max(f)2\pi\right]$ is nondegenerate, then there are at least $8k+2$ such orbits which are distinct from one another. They are geometrically distinct if there are no  closed Reeb orbits of $\lambda$ with period less than or equal to $2\pi \left(\max(f)-\min(f)\right).$
 
\end{Theorem}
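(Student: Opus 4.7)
The plan is to apply Theorem \ref{persist} directly to the rigid constellation $\Cc_{\widetilde{\zeta}_k, e}(2\pi)$ identified in the paragraph preceding the theorem statement. First I would verify that this collection satisfies Definition \ref{rigid}: every orbit is simple (being one of the fastest closed Reeb orbits of $\widetilde{\zeta}_k$), the period $T=2\pi$ is isolated from below in $\Tt(\widetilde{\zeta}_k,e)$ since the next smallest period is $2\sqrt{2}\pi$ (arising from the classes $(\pm 1,\pm 1)$), and
\begin{equation*}
T = 2\pi < T_{\min}(\widetilde{\zeta}_k)+T_{\min}(\widetilde{\zeta}_k,e) = 4\pi,
\end{equation*}
so \eqref{Tbound} holds. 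Since $\pi_1(\mathbb{S}^3)=0$, every closed Reeb orbit is contractible, so the relevant free homotopy class is $\alpha=e$. The pinching hypothesis $\max(f) < \sqrt{2}\min(f)$ matches condition \eqref{ass1} because
\begin{equation*}
\min\left\{\frac{T^+}{T},\, \frac{T_{\min}(\widetilde{\zeta}_k)+T_{\min}(\widetilde{\zeta}_k,e)}{T}\right\} = \min\{\sqrt{2},\,2\} = \sqrt{2}.
\end{equation*}

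Next, I would justify the rank count using a Morse-Bott perturbation for $\mathbb{S}^1$-equivariant Floer homology along the lines of \cite{bou}. The two isolated Reeb orbits, namely the Hopf fibers over the two poles of $\mathbb{S}^2$ produced by the two distinct contact cuts (one with respect to $\partial/\partial y$ at $t=0$ and the other with respect to $\partial/\partial x$ at $t=2\pi$), each contribute $1$ to the rank. Each of the $4k$ interior $\mathbb{S}^1$-families --- corresponding to the $k$ solutions in $(0,2\pi)$ of the characteristic equations for each of the four classes $(\pm 1,0)$ and $(0,\pm 1)$ --- contributes $2$ via the Morse theory of a small perturbing function on its parameterizing circle. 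Together this yields the rank recorded in the discussion preceding the statement.

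Finally, Theorem \ref{persist} produces at least that many distinct closed Reeb orbits of $\lambda = f\widetilde{\zeta}_k$ in class $e$ with periods in $[2\pi\min(f),\, 2\pi\max(f)]$, each nondegenerate by hypothesis. Because the trivial class $e$ is neither primitive nor of infinite order, the geometric distinctness of the detected orbits requires the fast-orbit clause of Theorem \ref{persist}; however, since $\pi_1(\mathbb{S}^3)=0$, every free homotopy class $\beta$ satisfies $\beta^k=e$ trivially, so the clause reduces exactly to the stated hypothesis excluding closed Reeb orbits of $\lambda$ of period at most
\begin{equation*}
\frac{1}{|e|}\bigl(T\max(f) - T_{\min}\min(f)\bigr) = 2\pi(\max(f)-\min(f)).
\end{equation*}

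The substantive analytic content is packaged inside Theorem \ref{persist}; the only real task here is the organizational one of producing the correct bookkeeping for the two differently chosen contact cuts, identifying the Morse-Bott components of the rigid constellation, and matching the constants $T=2\pi$ and $T^+=2\sqrt{2}\pi$ against the pinching hypothesis. The main point requiring care --- and likely the only nontrivial step --- is the rank computation itself, specifically the verification that the Morse-Bott contributions of the two collapsed fiber orbits together with the $4k$ interior families exhaust the fastest simple closed Reeb orbits of $\widetilde{\zeta}_k$ and yield the correct total rank via the perturbation scheme of \cite{bou}.
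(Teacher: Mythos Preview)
Your proposal is correct and follows the same approach as the paper: the paper likewise deduces Theorem \ref{s3} by applying Theorem \ref{persist} to the rigid constellation of fastest simple closed Reeb orbits of $\widetilde{\zeta}_k$, with $T=2\pi$, $T^+=2\sqrt{2}\pi$, and the same Morse--Bott rank count ($2$ isolated orbits plus $4k$ circle-families, giving $8k+2$; the paper's text in fact contains an arithmetic slip writing $8k+1$, which you implicitly correct). One tiny wording issue: the condition in Definition \ref{rigid} is that no \emph{decreasing} sequence in $\Tt(\lambda_0,\alpha)$ converges to $T$ (i.e., $T$ is isolated from above), not from below as you wrote, but since the period spectrum here is discrete this is immediate either way.
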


\bigskip
\subsection{Applications (with strong symplectic fillings)} \label{seconds}
 
For contact manifolds which  admit a symplectic filling, Theorem \ref{gutt+}
and Theorem \ref{persist} both hold as stated but with improved ranges (if $\alpha$ is interpreted as a free homotopy class of the filling).
For example, if we assume that the prequantization space $(M, \lambda_Q)$ admits an exact symplectic filling, then 
Theorem \ref{gutt+} holds, as stated, with the larger upper bound  
$$
\frac{\max(f)}{\min(f)}< |\alpha_{\mathbf{f}}| +1. 
$$
Similarly, assuming there is an exact symplectic filling of $(M, \lambda_0)$, 
Theorem \ref{persist} holds, as stated, with the upper bound $$
\frac{\max(f)}{\min(f)}< \frac{T^+}{T}. 
$$ As the next set of applications demonstrates, this version of Theorem \ref{persist} allows one to view a variety of previously known rigidity phenomena from a single perspective. 
\bigskip

\noindent $(\mathrm{\mathbf{D}}).$ Applying Theorem \ref{persist} to the contact forms in Example \ref{ellipsoid} one can recover the various pinching theorems of Berestycki, Lasry, Mancini and Ruf established in \cite{blmr} up to some extra nondegeneracy assumptions.

\medskip

\noindent $(\mathrm{\mathbf{E}}).$ Returning to the setting of Example \ref{negative}, consider a  closed Riemannian manifold $(B,g)$ with  negative sectional curvature and the corresponding unit cosphere bundle  $\Sigma_{g^*} \subset T^*B$. Let $\tilde{\alpha}$ be a nontrivial primitive element of $[\R /\Z,B]$ and denote  its lift to $[\R /\Z,\Sigma_{g^*} ]$ by $\alpha$. As described in Example \ref{negative} the rigid constellation $\Cc_{\lambda_{g^*}, \alpha}(T_{\min}(\lambda_{g^*}, \alpha))$ has $T^+ =+ \infty$. It also has rank $1$ and so  Theorem \ref{persist} implies that every hypersurface $\Sigma$ in $T^*B$ which is fibrewise star-shaped about the zero-section must carry at least one  closed characteristic in class $\alpha$. A similar but stronger result in this direction is proved in \cite{bps} as Corollary 3.4.2 (see also \cite{hv}).

\medskip

\noindent $(\mathrm{\mathbf{F}}).$ Let  $g_n$ be the standard flat metric on the $n$-dimensional torus $\mathbb{T}^n$ . Denote its cosphere bundle by $\Sigma_{g_n^*} \subset T^*\mathbb{T}^n$ and the restriction of the tautological one-form to $\Sigma_{g_n^*}$ by  $\lambda_{g_n^*}$. Every nontrivial primitive class 
 $\tilde{\alpha}$ in  $[\R /\Z,\mathbb{T}^n]$  lifts to a class $\alpha$ in  $[\R /\Z,\Sigma_{g_n^*} ]$ which is again nontrivial and primitive. There is a unique $\mathbb{T}^n$-family of closed Reeb orbits in class $\alpha$. Denoting their period by $T(\alpha)$, the constellation $\Cc_{\lambda_{g_n^*}, \alpha }(T(\alpha))$ is  rigid, with  $T^+ =+\infty$ and $\mathrm{rank}(\Cc_{\lambda_{g_n^*}, \alpha }( T))=2^n$. With this, Theorem \ref{persist} implies the following.
\begin{Theorem} 
Let  $\Sigma$ be a smooth hypersurface in $T^*\mathbb{T}^n$ which is fibrewise star-shaped about the zero-section. Let $\lambda_{\Sigma}$ be the restriction of the tautological one-form to $\Sigma$.  If the closed Reeb  orbits of $\lambda_{\Sigma}$  in class $\alpha$ are all nondegenerate then there must be at least $2^n$ such obits which are geometrically distinct. 
\end{Theorem}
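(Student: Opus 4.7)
The plan is to read the theorem as a direct application of the strong-filling version of Theorem~\ref{persist} from Section~\ref{seconds}, applied to the rigid constellation $\Cc_{\lambda_{g_n^*}, \alpha}(T(\alpha))$ described in the paragraph immediately preceding the theorem statement. First I would reduce an arbitrary star-shaped $\Sigma$ to a radial rescaling of $\Sigma_{g_n^*}$; then I would verify the input hypotheses of Theorem~\ref{persist}; finally I would invoke that theorem to read off the count.

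For the reduction, fibrewise star-shapedness provides a unique positive smooth function $r$ on $\Sigma_{g_n^*}$ such that $\phi(q,p):=(q, r(q,p)p)$ is a diffeomorphism $\Sigma_{g_n^*}\to\Sigma$. A short calculation with the tautological one-form $\theta=p\,dq$ on $T^*\mathbb{T}^n$ gives $\phi^*\theta=r\theta$, and restricting this identity to $\Sigma_{g_n^*}$ yields $\phi^*\lambda_\Sigma=r\lambda_{g_n^*}$. Setting $f:=r$ puts us in the setup of Theorem~\ref{persist} with $\lambda_0=\lambda_{g_n^*}$ and $\lambda=f\lambda_0$, and with the bijective correspondence of closed Reeb orbits preserving periods and free homotopy classes.

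Next I would check the input to the enhanced Theorem~\ref{persist}. The unit disk bundle $(D^*\mathbb{T}^n, p\,dq)$ is an exact symplectic filling of $(\Sigma_{g_n^*}, \lambda_{g_n^*})$, so the relevant pinching bound reduces to $\max(f)/\min(f)<T^+/T$; since $T^+=+\infty$ for our constellation, this bound is vacuous for any star-shaped $\Sigma$. The class $\alpha$ is primitive and of infinite order because $\pi_1(\mathbb{T}^n)=\Z^n$ is torsion free, so no auxiliary ``no fast orbit'' hypothesis is required to upgrade distinct orbits to geometrically distinct ones. Finally, the hypothesis that every class-$\alpha$ closed Reeb orbit of $\lambda_\Sigma$ is nondegenerate subsumes the period-restricted nondegeneracy demanded by Theorem~\ref{persist}.

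Theorem~\ref{persist} then produces at least $\mathrm{rank}(\Cc_{\lambda_{g_n^*}, \alpha}(T(\alpha)))=2^n$ geometrically distinct class-$\alpha$ closed Reeb orbits of $\lambda_\Sigma$, as required. The only substantive, non-bookkeeping input is the rank equality itself, which is asserted in the paragraph preceding the theorem and which I would justify by a Morse--Bott computation: the class-$\alpha$ closed geodesics on the flat torus form a clean family parameterized by $\mathbb{T}^n$, and the sum of Betti numbers of this family is $\sum_{k=0}^n \binom{n}{k}=2^n$, producing that many generators for the $\mathbb{S}^1$-equivariant Floer homology of Section~\ref{floer} (or, equivalently, choosing a Morse function on the $\mathbb{T}^n$-family to split it into $2^n$ nondegenerate orbits after a small metric perturbation and invoking persistence under the pinched deformation). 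This Morse--Bott identification, rather than the formal application of Theorem~\ref{persist}, is where I would expect the main technical work to lie.
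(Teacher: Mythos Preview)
Your proposal is correct and follows exactly the paper's approach: the theorem is stated as a direct application of the strong-filling version of Theorem~\ref{persist} to the rigid constellation $\Cc_{\lambda_{g_n^*},\alpha}(T(\alpha))$, using that $T^+=+\infty$ (so the pinching condition is vacuous), that $\alpha$ is primitive (so distinct orbits are geometrically distinct), and that $\mathrm{rank}=2^n$. You have simply made explicit the details the paper leaves to the reader---the star-shaped reduction $\phi^*\lambda_\Sigma=f\lambda_{g_n^*}$, the exact filling by the unit disk bundle, and the Morse--Bott justification for the rank via the Betti numbers of the $\mathbb{T}^n$-family.
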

This complements a theorem of Cielebak from \cite{ci1}, which establishes the existence of at least $$\left\lfloor  \frac{n}{2}\right\rfloor +1$$ geometrically distinct closed Reeb orbits with no nondegeneracy assumption.

 \medskip

\noindent $(\mathrm{\mathbf{G}}).$
The finite collection of prime closed geodesics in Katok's famous examples of Finsler metrics from \cite{ka}
also yield a rich source of rigid constellations. Consider for example a Katok metric $g_K$ on the sphere $P=\mathbb{S}^{2n}$ or $\mathbb{S}^{2n-1}$. It has precisely $2n$ prime closed geodesics and can be constructed so that for any $\epsilon \in (0,1)$ the  longest prime geodesic  has length $1+\epsilon$ and the shortest has length $1-\epsilon$.  For $\epsilon<\frac{1}{2}$ and $T= 1+\epsilon$ the collection of prime closed geodesics is then a  rigid constellation with $T^+ = 2(1-\epsilon)$ and rank $2n$. If, as above, we let $\Sigma_{g_K^*} \subset T^*P$ be the cosphere bundle of $g_K$, then for any other hypersurface $\Sigma$ of  $T^*\mathbb{S}^{2n}$ which is fibrewise star-shaped about the zero-section there 
 is a a unique smooth function $f_{\scriptscriptstyle{\Sigma}} \colon \Sigma_{g_K^*} \to (0, \infty)$ such that $$\Sigma=\{(q,f_{\scriptscriptstyle{\Sigma}}(q,p) p) \colon (q,p) \in \Sigma_{g_K^*}\}.$$ Let $\lambda_{\Sigma}$ be the restriction of the tautological one-form to $\Sigma$. In this setting Theorem \ref{persist} implies the following result.
 
\begin{Theorem}\label{katoky}
If 
\begin{equation*}
\label{ }
\frac{\max(f_{\scriptscriptstyle{\Sigma}})}{\min(f_{\scriptscriptstyle{\Sigma}})}< \frac{2(1-\epsilon)}{1+\epsilon}
\end{equation*}
and all the closed Reeb orbits of $\lambda_{\Sigma}$ with period in the interval $$\left[\frac{2\pi\min\big(f_{\scriptscriptstyle{\Sigma}}\big)}{1+\epsilon}, \frac{2\pi\max\big(f_{\scriptscriptstyle{\Sigma}}\big)}{1-\epsilon}\right]$$ are nondegenerate, then $\lambda_{\Sigma}$ has at least $2n$ distinct closed Reeb orbits with periods in this interval. These orbits are geometrically distinct if there are no closed Reeb orbits of  $\lambda_{\Sigma}$ with period less than or equal to the length of this interval.
\end{Theorem}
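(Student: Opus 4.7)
The plan is to derive Theorem~\ref{katoky} as a direct application of the exact-symplectic-filling version of Theorem~\ref{persist} described in Section~\ref{seconds}. The reduction needs three preparatory observations.

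First, one transfers the problem from $\Sigma$ to the Katok cosphere bundle $\Sigma_{g_K^*}$. The fibrewise scaling map $\Phi\colon\Sigma_{g_K^*}\to\Sigma$ given by $(q,p)\mapsto(q,f_\Sigma(q,p)p)$ is a diffeomorphism, and because the base projection $\pi\colon T^*P\to P$ satisfies $\pi\circ\Phi=\pi$, a direct calculation with the tautological one-form yields $\Phi^*\lambda_\Sigma=f_\Sigma\,\lambda_{g_K^*}$. Hence the Reeb flow of $\lambda_\Sigma$ on $\Sigma$ is smoothly conjugate via $\Phi$, period-for-period, to the Reeb flow of the rescaled contact form $\lambda:=f_\Sigma\,\lambda_{g_K^*}$ on $\Sigma_{g_K^*}$. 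Second, $(\Sigma_{g_K^*},\lambda_{g_K^*})$ admits a canonical exact symplectic filling by the unit codisc bundle inside $T^*P$, which is the hypothesis unlocking the stronger upper bound from Section~\ref{seconds}. Third, since $P$ is either $\mathbb{S}^{2n}$ or $\mathbb{S}^{2n-1}$ of dimension at least two, $\Sigma_{g_K^*}$ is simply connected and the only free homotopy class to consider is the trivial one $\alpha=e$.

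With these reductions, the paragraph immediately preceding Theorem~\ref{katoky} identifies the set of prime closed Katok geodesics as a nondegenerate rigid constellation $\Cc_{\lambda_{g_K^*},e}(T)$ with $T=1+\epsilon$, $T^+=2(1-\epsilon)$, and rank $2n$ (under the stated condition $\epsilon<1/2$, which secures the defining conditions of Definition~\ref{rigid} including~\eqref{Tbound}). The filling-strengthened form of Theorem~\ref{persist} now applies with $\lambda_0=\lambda_{g_K^*}$, $f=f_\Sigma$ and $\alpha=e$: its pinching hypothesis $\max(f)/\min(f)<T^+/T=2(1-\epsilon)/(1+\epsilon)$ is exactly the pinching hypothesis of Theorem~\ref{katoky}, and the assumed nondegeneracy of the closed Reeb orbits of $\lambda_\Sigma$ in the stated period interval transfers through $\Phi$ to the nondegeneracy required for $\lambda$. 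Theorem~\ref{persist} then produces at least $2n$ mutually distinct closed Reeb orbits of $\lambda_\Sigma$ in that interval. Since $\alpha=e$ is neither primitive nor of infinite order, geometric distinctness is not automatic; it follows from the no-fast-orbit exclusion stated as the final hypothesis of Theorem~\ref{katoky}.

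The main obstacle beyond invoking Theorem~\ref{persist} is the verification of the preceding paragraph's rigid-constellation data for the Katok metric: that the $2n$ prime geodesics are isolated and nondegenerate with pinched periods in $[1-\epsilon,1+\epsilon]$, that the next element of $\Tt(\lambda_{g_K^*},e)$ past $T$ is the double cover $2(1-\epsilon)$ of the shortest prime, and that each isolated nondegenerate simple orbit contributes exactly one generator to the equivariant Floer homology to be defined in Section~\ref{floer}. All three are standard consequences of Katok's construction in~\cite{ka} combined with a routine local equivariant Floer calculation; granted these, Theorem~\ref{katoky} is an immediate translation of Theorem~\ref{persist}.
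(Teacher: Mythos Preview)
Your approach is correct and matches the paper's: Theorem~\ref{katoky} is simply stated as an application of the filling-strengthened Theorem~\ref{persist} to the rigid constellation described in the preceding paragraph, and you have spelled out the verification the paper leaves implicit.

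One small inaccuracy worth fixing: your claim that $\Sigma_{g_K^*}$ is simply connected fails when $P=\mathbb{S}^2$, since the unit cosphere bundle of $\mathbb{S}^2$ is $\mathbb{R}P^3$ (and the prime geodesics lift to the nontrivial class there). This does not break the argument, because in the filling version of Theorem~\ref{persist} the class $\alpha$ is taken in the filling, namely the codisc bundle, which deformation retracts onto $P$; since $P=\mathbb{S}^{2n}$ or $\mathbb{S}^{2n-1}$ is simply connected, the orbits are contractible in the filling and $\alpha=e$ for that reason. Replace the sentence about $\Sigma_{g_K^*}$ with this observation and the proof goes through unchanged.
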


If one restricts Theorem \ref{katoky} to hypersurfaces that are  cosphere bundles of Finsler metrics, it then yields multiplicity results for closed geodesics which complement recent work of Rademacher in \cite{ra} and of Wang in \cite{wa6}. 

\bigskip

\subsection{Fast Orbits}
The possible presence of periodic orbits with small periods sometimes obstructs our ability to conclude that the distinct orbits detected by Theorem \ref{gutt+} and Theorem \ref{persist} are in fact geometrically distinct. This is a fundamental, and somewhat notorious, difficulty common to such multiplicity problems. In certain settings one can preclude the existence of such fast orbits by imposing geometric restrictions,  like the convexity of the hypersurfaces in \cite{el}.
It is natural then to ask if one can find conditions on the contact structure which preclude fast orbits and are invariant under contactomorphisms. We prove here that this is not possible. This is one implication of our final  theorem which can also be viewed as a {\it soft} compliment to Theorem \ref{gutt+} and Theorem \ref{persist}.
 
\begin{Theorem}\label{fast}
Let $(M, \lambda_0)$ be a  contact manifold. For any free homotopy class $\alpha \in [\mathbb{S}^1 ,M]$, and any positive constants $c_1, \,c_2 >0$, there is a contact  form $\lambda=f \lambda_0$ on $M$ such that $\min(f)=1$, 
$
\max(f)<1+c_1
$
and $\lambda$ has a closed Reeb orbit in class $\alpha$ of period less than $c_2.$
\end{Theorem}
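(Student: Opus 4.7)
The plan is to construct, given any class $\alpha$ and $c_1,c_2>0$, a positively transverse loop in $\alpha$ of arbitrarily small $\lambda_0$-action, and then a function $f$ close to $1$ in sup norm (though with large first derivatives) that realizes this loop as a closed Reeb orbit of $f\lambda_0$.

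First I will produce the loop. Since the contact distribution $\xi_0$ is bracket-generating, the Chow-Rashevsky theorem (or in dimension $\geq 5$ the $h$-principle for isotropic immersions in subcritical dimension) ensures every free homotopy class contains a smoothly embedded isotropic loop; let $\gamma_L$ be such a representative of $\alpha$. The isotropic neighbourhood theorem contactomorphically identifies a tubular neighbourhood of $\gamma_L$ with a neighbourhood of the zero-section in the model $J^1(\mathbb{S}^1)\times \C^{n-2}$ equipped with
\[
\lambda_{\mathrm{std}} \ = \ dz - p\,dq + \tfrac{1}{2}\sum_{j=1}^{n-2}\bigl(x_j\,dy_j - y_j\,dx_j\bigr),
\]
with $q \in \R/\Z$ and $\gamma_L$ corresponding to $\{p=z=0,\,x=y=0\}$. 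For small $\epsilon>0$ I define
\[
\gamma_\epsilon(t) \ = \ \bigl(q=t,\ p=-\epsilon,\ z=0,\ x=y=0\bigr),\qquad t\in \R/\Z.
\]
A direct computation gives $\lambda_{\mathrm{std}}(\dot\gamma_\epsilon)\equiv\epsilon>0$, so $\gamma_\epsilon$ is positively transverse with action $\int_{\gamma_\epsilon}\lambda_{\mathrm{std}}=\epsilon$; because $\gamma_\epsilon$ is isotopic to $\gamma_L$ through transverse loops inside the model, it represents $\alpha$ in $M$.

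Next I will construct $f$ so that $\gamma_\epsilon$ is a closed Reeb orbit of $f\lambda_0$. Decomposing
\[
R_{f\lambda_{\mathrm{std}}} \ = \ \tfrac{1}{f}\,R_{\lambda_{\mathrm{std}}} + \tfrac{1}{f^2}\,Y_{df},
\]
where $Y_{df}\in\xi$ is determined by $i_{Y_{df}}d\lambda_{\mathrm{std}}|_\xi=df|_\xi$, and splitting $\dot\gamma_\epsilon=\partial_q=\epsilon\,\partial_z+(\partial_q-\epsilon\,\partial_z)$ into Reeb and contact parts, the condition $\dot\gamma_\epsilon\parallel R_{f\lambda_{\mathrm{std}}}$ on $\gamma_\epsilon$ reduces to: $f|_{\gamma_\epsilon}$ is a free positive constant $f_0$, and $df|_\xi = (f_0/\epsilon)\,dp|_\xi$ along $\gamma_\epsilon$. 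Set $f_0 = 1+c_1/2$ and
\[
f \ = \ 1 + \phi(r)\,\psi(p+\epsilon),
\]
where $r$ is the Euclidean distance from $\gamma_\epsilon$ in the transverse coordinates, $\phi\colon[0,\infty)\to[0,1]$ is a smooth radial bump equal to $1$ on $[0,\delta/2]$ and vanishing on $[\delta,\infty)$, and $\psi(s)=c_1/2+(1+c_1/2)\,s/\epsilon+K\,s^2$ is the nonnegative quadratic with $K$ just large enough to force $\psi\geq 0$. A direct estimate shows that taking $\delta$ of order $c_1\epsilon$ gives $|f-1|<c_1$ throughout the tube while matching the prescribed value and $C^1$-data of $f$ on $\gamma_\epsilon$. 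Extending by $f\equiv 1$ outside the tube yields $\min f=1$ and $\max f<1+c_1$, and in the Reeb-time reparameterization $\gamma_\epsilon$ is a closed Reeb orbit of $f\lambda_0$ in class $\alpha$ of period
\[
T \ = \ \int_{\gamma_\epsilon} f\lambda_0 \ = \ (1+c_1/2)\,\epsilon,
\]
which is less than $c_2$ whenever $\epsilon<c_2/(1+c_1/2)$.

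The main technical obstacle will be the tension between the pointwise data forced by the Reeb equation along $\gamma_\epsilon$ (whose $\xi$-derivative is of size $\sim 1/\epsilon$) and the global oscillation bound $|f-1|<c_1$. This is resolved by the freedom to shrink the tube radius $\delta$ to order $c_1\epsilon$, so that the oscillation of $\phi\cdot\psi$ across the tube is controlled by $c_1$. The isotropic neighbourhood theorem makes the construction explicit in a canonical local model, while Chow-Rashevsky supplies an isotropic representative of the arbitrary class $\alpha$ in the first place.
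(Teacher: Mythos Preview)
Your construction is correct and follows a route genuinely different from the paper's. You work directly with the conformal factor: after pushing the isotropic representative $\gamma_L$ off to a nearby positively transverse loop $\gamma_\epsilon$ of $\lambda_0$-action $\epsilon$, you prescribe the $1$-jet of $f$ along $\gamma_\epsilon$ so that $\dot\gamma_\epsilon$ lies in $\ker d(f\lambda_0)$, and then extend by a bump supported in a tube whose radius scales like $c_1\epsilon$ to keep $\|f-1\|_\infty < c_1$. The paper instead deforms $\lambda_0$ itself inside a Legendrian flow box to a new contact form $\lambda_{\delta,\epsilon}$---a ``Reeb semi-plug''---which traps a single closed orbit of period $2\pi(\epsilon+\epsilon^2)$, and then applies a quantitative version of Gray's stability theorem to produce a diffeomorphism $\Psi$ isotopic to the identity with $\Psi^*\lambda_{\delta,\epsilon} = f\lambda_0$ and $1 \le f < e^{2\delta+4\epsilon}$. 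Your approach is more elementary in that it avoids Gray stability and its estimates entirely, and it makes transparent that the constraint on $f$ is purely first-order along the loop. The paper's approach, in exchange, frames the construction in the language of Wilson-type plugs and thereby connects to the observation (Rechtman via Sullivan's geodesibility criterion, and independently Cieliebak) that a genuine Reeb plug with the matching-endpoints property is impossible.

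Two minor remarks on your write-up: the quadratic term $Ks^2$ in $\psi$ is unnecessary, since the linear function $\psi(s)=c_1/2+(f_0/\epsilon)s$ already satisfies $0<\psi<c_1$ on $[-\delta,\delta]$ once $\delta < c_1\epsilon/(2f_0)$; and your parenthetical that $\gamma_\epsilon$ is isotopic to $\gamma_L$ ``through transverse loops'' is not quite right (since $\gamma_L$ itself is isotropic, not transverse) and is in any case irrelevant---any isotopy inside $M$ preserves the free homotopy class.
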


There are some intriguing phenomena hidden in the gap between the construction underlying Theorem \ref{fast} and, say, Theorem \ref{persist}. For example, in creating the nearby form $\lambda$ in Theorem \ref{fast} with an arbitrarily fast closed orbit we are forced, by the  Reeb condition, to cede all control over the number and basic properties of any additional closed Reeb orbits we might also create in the process. This follows from work of Rechtman in \cite{re} and is discussed further in Remark \ref{plug}

\subsection{Related Works} 
Our proof of Theorem \ref{gutt+} is motivated by the method to obtain cuplength estimates in Floer theory that was introduced by Albers and Momin in \cite{am} and further developed by Albers and Hein in \cite{ah}.

The proof of Theorem \ref{persist} has two main parts. The first involves the  construction of  a version of Hamiltonian Floer theory for rigid constellations. In this construction we use several results concerning the Hamiltonian Floer theory of autonomous Hamiltonians established by 
Bourgeois and Oancea in \cite{bo}. To obtain the $C^0$-bounds for our Floer trajectories (which allows us to do away with fillings) we also adapt an argument of Albers, Fuchs and Merry from \cite{afm}. In the second part of the proof we use the Floer thoeretic tools developed in the first to adapt an argument of Chekanov from \cite{ch} to detect the desired closed orbits. This is based on previous work of the author from \cite{ke}.

Similar ideas to those underlying Theorem \ref{persist} were developed by Jean Gutt in his thesis  \cite{gu1} and subsequent paper  \cite{gu2}. \footnote{The author is grateful to Peter Albers for notifying him of Gutt's thesis when the author spoke of the results presented here at the Lorentz Centre in July 2014.} In these works, Gutt shows that positive $\mathbb{S}^1$-equivariant symplectic homology can be used as a contact invariant for a certain class of fillable contact manifolds that can be realized as the boundary of Liouville domains. Among the many interesting applications of his theory, Gutt reproves Theorem \ref{el} under the additional strong nondegeneracy assumption, and also proves a result (Theorem 1.6 in \cite{gu2}) very similar in content to Theorem \ref{gutt+} here.  
Happily, besides a shared debt owed to the technical foundations for $\mathbb{S}^1$-equivariant Hamiltonian Floer theory laid down by Bourgeois and Oancea,  this is essentially the extent of the overlap between the two projects. 

The construction of a Reeb semi-plug described in the proof of Theorem \ref{fast} is similar in several details to Cieliebak's construction of a confoliation-type plug from \cite{ci2}. The goals of the two constructions diverge at an early stage, however. In keeping within the class of Reeb flows we sacrifice here the possibility, achieved in \cite{ci2},  of realizing the insertion of our plugs on fillable contact manifolds, as the result  of a symplectomorphism acting on the interior of a filling.  

\section*{Acknowledgements}
The author wishes to thank Miguel Abreu, Peter Albers, Viktor Ginzburg, Richard Hind, Eugene Lerman, Ana Rechtman, EJ Sanchez and Yang Song for helpful  comments, discussions and suggestions.

\section{Hamiltonian Floer theory and rigid constellations}\label{floer}

Let  $\Cc_{\lambda_0,\alpha}( T)$ be a rigid constellation for the contact form $\lambda_0$ on $M$. 
In this section we develop a version of Hamiltonian Floer theory adapted to $\Cc_{\lambda_0,\alpha}( T)$. 


\subsection{Admissible Families of Functions}
 
Denoting the $\R$-coordinate of $\R \times M$ by $\tau$, we consider the symplectization
$$
(\R \times M,  d(e^{\tau} \lambda_0)).
$$

\begin{Definition}
A smooth function $H \colon \R \times M \to \R$ is an  {\bf admissible} Hamiltonian if it is nonnegative, satisfies 
\begin{equation}
\label{back}
H(\tau,p)=0 \text{   for all   } \tau \leq 0 
\end{equation}
and if there is a $\mathbf{T}>0$ such that 
\begin{equation}
\label{front}
dH_{(\tau,p)}=0  \text{   for all   }  \tau \geq \mathbf{T}.
\end{equation}
\end{Definition}

The Hamiltonian vector field, $V_H$ of $H$  is defined by the equation
\begin{equation*}
\label{ }
i_{V_H} \omega = -dH.
\end{equation*}
Conditions  \eqref{back} and \eqref{front} imply that the support of $dH$ is compact, and so the flow of $V_H$ is defined for all times. 

Let $x(t)$ be a $1$-periodic orbit of $H$ (i.e., $V_H$). We define the action of $x(t)$  by 
\begin{eqnarray*}
\Aa_H(x) & = & -\int_{\R /\Z} x^*(e^{\tau} \lambda_0)+ \int_{\R /\Z}H(x(t)) \, dt.
\end{eqnarray*}

Let $\Pp^-(H)$ be the set of all $1$-periodic orbits of $H$ which have negative action. Since $H$ is nonnegative, every $x$ in $\Pp^-(H)$ is nonconstant. We say that $H$ is  {\bf nondegenerate} if every $x$ in $\Pp^-(H)$ is also  transversally nondegenerate. Strict nondegeneracy is impossible since 
 $H$ is  autonomous. In particular every nonconstant $1$-periodic orbit $x(t)$ of $H$ belongs to an $\R /\Z$-family of such orbits which we will denote by $X$. 
The elements of $X$ all have the same action and so the notation $\Aa_H(X)$ can and will be used unambiguously.  The collection of all $\R /\Z$-families of $1$-periodic orbits of $H$ with negative actions will be denoted by $\Pp^-_{\R /\Z}(H)$.

The {\bf action gap} of a nondegenerate admissible Hamiltonian $H$ is defined to be
\begin{equation*}
\label{ }
\Delta(H) = \sup_{X, \,Y \in  \Pp^-_{\R /\Z}(H)} \left\{\Aa_H(X)-\Aa_H(Y)\,  \colon \, \Aa_H(X)\neq \Aa_H(Y) \right\}
\end{equation*}
where we set $\Delta(H)=0$ if there are no families $X$ and $Y$ meeting the required conditions (for example if $dH$ is sufficiently $C^1$-small). Given two nondegerate admissible Hamiltonians  $H^0$ and $H^1$ we set 
\begin{equation*}
\label{ }
\Delta(H^0, H^1) = \sup_{} \left\{\Aa_{H^0}(X^0)-\Aa_{H^1}(X^1) \,  \colon \, \Aa_{H^0}(X^0) \neq \Aa_{H^1}(X^1) \right\} 
\end{equation*}
where here the supremum is over pairs $(X^0, X^1) \in \Pp^-_{\R /\Z}(H^0) \times \Pp^-_{\R /\Z}(H^1)$ and we again set $\Delta(H^0, H^1)=0$ if no relevant pairs exist. 

An {\bf admissible homotopy} from  $H^0$ to $H^1$ is  a smooth family of functions $H^s$  for $s \in \mathbb{R}$, such that  for some $\mathbf{S}>0$ and $\mathbf{T}>0$ we have:
\begin{enumerate}
  \item[($\mathrm{H}^s$1)]  $H^s =H^0$ for all $s \leq -\mathbf{S}$
  \item[($\mathrm{H}^s$2)]  $H^s =H^1$ for all $s \geq \mathbf{S}$
  \item[($\mathrm{H}^s$3)]  for all $s \in \R$ the support of $d(H^s)$ is contained in  $[0,\mathbf{T}] \times M$ 
 \end{enumerate}  
The {\bf cost} of the homotopy $H^s$ is defined to be
\begin{equation*}
\label{ }
\mathrm{cost}(H^s) =\int_{\R} \max_{(\tau,\,p)} \left(\p_s\big(H^s(\tau,p)\big)\right) \, ds.
\end{equation*}

Finally, an {\bf admissible homotopy of homotopies}  between $H^0$ and $H^1$ is  a smooth $\mathbb{R}^2$-family of functions $H^{r,s}$ such that for each $r \in \R$,  $H^{r,s}$ is an admissible homotopy from $H^0$ to $H^1$ and the supports of all the $d(H^{r,s})$ are contained in a single neighborhood of the form $[0,\mathbf{T}] \times M$.
The cost of  $H^{r,s}$ is defined as 
\begin{equation*}
\label{ }
\mathrm{cost}(H^{r,s}) =\int_{\R} \max_{\substack{(\tau,\,p,\,r) }} \left(\p_s\big(H^{r,s}(\tau,p)\big)\right) \, ds.
\end{equation*}

\subsection{Dividing and Tuned Hamiltonians}

 Let $H$ be an admissible Hamiltonian of the form $$H(\tau, p) = h(e^{\tau}).$$ We will refer to $h$ as the {\bf profile} of $H$. 
The Hamiltonian vector field  of $H$ is 
$$V_H(\tau, p) = h'(e^{\tau})R_{\lambda_{0}}(p).$$
Thus,  every nonconstant $1$-periodic orbit $x(t)$  of $V_H$ corresponds to a unique closed Reeb orbit $\gamma_x(t)$ of $\lambda_0$ such that  
\begin{equation}
\label{x0}
x(t) = (\tau_x, \gamma_x(h'(e^{\tau_x})t)).
\end{equation}
In particular,  $h'(e^{\tau_x})$ is the period of the orbit $\gamma_x$ which we will also denote by $T_{\gamma_x}$.
The  action of $x(t)$ is 
\begin{equation}
\label{critical0}
\Aa_H(x) = -e^{\tau_x}  h'(e^{\tau_x})+ h(e^{\tau_x}).
\end{equation}
Note also that the  $\R /\Z$-family of $1$-periodic orbits of $V_H$  containing $x$, $X$, corresponds to the  unique  $\R /\Z$-family of closed Reeb orbits of $\lambda_0$ containing $\gamma_x$.  Denoting this family of Reeb orbits by $\Gamma_X$  we have  $$\Gamma_X =\bigcup_{x \in X}\gamma_x.$$

We now define a class of Hamiltonians which have useful collections of $1$-periodic orbits (related to useful collections of closed Reeb orbits) that  can be identified simply by the fact that their actions are negative.  For positive constants $a,b>0$ and $c>1+a$ let $\mathfrak{h}_{a,b,c}$ be the space of smooth profile functions $h\colon \R \to \R$ with the following properties
\begin{enumerate}
  \item[(h1)]  $h(s)=0 \text{ for } s\leq1$.
  \item[(h2)] $h''(s)>0$ for $s \in (1,1+a)$.
   \item[(h3)] $h(1+a)=a^2$.
  \item[(h4)] $h'(s)=b$ for  $s \in[1+a, c]$.
  \item[(h5)] $h''(s)<0$ for $s \in (c,c+a)$.
  \item[(h6)] $h(s)=b(c-1-a)+a^2$ for  $s\geq c+a$.
\end{enumerate}
Let  $\Rr^b(\lambda_0)$ be the set  of closed Reeb orbits of $\lambda_0$ with period less than $b$.
\begin{Lemma}\label{divide} Suppose that $H(\tau,p)=h(e^{\tau})$ for some profile $h$ in $\mathfrak{h}_{a,b,c}$.    
If $b$  is not the period of a closed Reeb orbit of $\lambda_0$ and $c$ is sufficiently large, then  every  $x \in \Pp^-(H)$  is nonconstant and of the  form $$x = (\tau_x, \gamma_x(h'(e^{\tau_x})t))$$ for some $e^{\tau_x}$ in $(1, 1+a)$ and  some $\gamma_x$ in $ \Rr^b(\lambda_0)$. Moreover, the correspondence $x \rightarrow \gamma_x$ defines a bijection between $\Pp^-(H)$ and $\Rr^b(\lambda_0)$.
 \end{Lemma}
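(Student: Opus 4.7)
The plan is to partition the range of $s := e^{\tau_x}$ according to (h1)--(h6) and analyze the $1$-periodic orbits of $V_H$ and their actions piece by piece. On the two flat regions $\{s \le 1\}$ and $\{s \ge c+a\}$ the Hamiltonian vector field vanishes identically, so the only $1$-periodic orbits are constants with actions $0$ and $h(c+a) \ge 0$ respectively; none of them lies in $\Pp^-(H)$. For a nonconstant orbit $x(t) = (\tau_x, \gamma_x(h'(e^{\tau_x})t))$ one has $T_{\gamma_x} = h'(e^{\tau_x})$, so on the linear stretch $[1+a, c]$ such an orbit exists only if $b$ itself is a closed Reeb period, which is excluded by hypothesis.

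The bijection comes from the convex ramp. On $(1, 1+a)$, condition (h2) makes $h'$ a strictly increasing bijection onto $(0, b)$, producing exactly one nonconstant $1$-periodic orbit for each element of $\Rr^b(\lambda_0)$. The action $\Aa_H(s) = h(s) - s h'(s)$ has derivative $-s h''(s) < 0$ and vanishes at $s = 1$, so it is strictly negative throughout $(1, 1+a)$. Hence every orbit from this ramp lies in $\Pp^-(H)$.

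The main obstacle is the concave cap $(c, c+a)$, where (h5) likewise makes $h'$ a strictly decreasing bijection onto $(0, b)$, producing a priori a second copy of $\Rr^b(\lambda_0)$ worth of orbits. Here $\Aa_H$ is \emph{increasing} in $s$ (since $-s h''(s) > 0$) and starts from a value that can well be negative, so a naive count would contribute undesired elements to $\Pp^-(H)$. To rule these out for sufficiently large $c$, I would use that the period spectrum of $\lambda_0$ is closed in $[0, \infty)$ (by Arzel\`a--Ascoli applied to loops of bounded period on the compact manifold $M$); combined with the assumption that $b$ is not a period, this yields a uniform gap $\delta > 0$ with $[b - \delta, b]$ disjoint from the period spectrum. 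Writing $s = c + u$ and arranging the cap shape to be independent of $c$, a direct computation gives
\begin{equation*}
\Aa_H(c+u) = c\bigl(b - h'(c+u)\bigr) + O(1),
\end{equation*}
where the $O(1)$ term is bounded uniformly in $c$. Since any Reeb period $T = h'(c+u)$ in $\Rr^b(\lambda_0)$ satisfies $b - T \ge \delta$, taking $c$ large enough forces $\Aa_H(c+u) > 0$ for every cap orbit, removing them all from $\Pp^-(H)$. Assembling the regions yields the claimed description of $\Pp^-(H)$ and the bijection $x \mapsto \gamma_x$ with $\Rr^b(\lambda_0)$.
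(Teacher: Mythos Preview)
Your proof is correct and follows essentially the same strategy as the paper's: partition according to the profile regions, dispatch the constant orbits and the linear stretch, show the convex ramp gives exactly the bijection with $\Rr^b(\lambda_0)$ via the monotonicity of $F(s)=h(s)-sh'(s)$, and use the spectral gap below $b$ to make the concave-cap actions positive for large $c$.

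One small remark: the clause ``arranging the cap shape to be independent of $c$'' is unnecessary and, taken literally, would restrict the profiles under consideration. In fact your $O(1)$ term is uniform over all of $\mathfrak{h}_{a,b,c}$: from (h3)--(h4) one has $h(c)=a^2+b(c-1-a)$, and since $0\le h'\le b$ on $[c,c+a]$ one gets $h(c+u)\in[h(c),h(c)+ab]$; combining this with $0<u<a$ and $0<h'(c+u)<b$ yields exactly the paper's explicit bound $\Aa_H(x)>c(b-T_{\gamma_x})-b(2a+1)$, with no assumption on the cap shape.
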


\begin{proof}
Every point $(\tau,p)$ in $\R \times M$ with $e^{\tau} \leq 1$ or   $e^{\tau} \geq c+a$ corresponds to a constant periodic orbit of $H$. The constant orbits in the first region have action ($H$-value) equal to zero by (h1). The orbits in the second region have strictly positive action ($H$-value) by conditions (h3)-(h6). Thus neither set contributes to $\Pp^-(H)$

It follows from our choice of $b$ that the nonconstant periodic orbits of $H$ are of the  form $$x(t) = (\tau_x, \gamma_x(h'(e^{\tau_x})t))$$ for some $e^{\tau_x}$ in either $(1, 1+a)$ or $(c, c+a)$. As mentioned above, the action of such an orbit is
\begin{equation}
\Aa_H(x)= -e^{\tau_x}h'(e^{\tau_x})+ h(e^{\tau_x}).
\end{equation}
These action values correspond to values of the function $$F(s)= -sh'(s) +h(s).$$ 
By (h1), we have $F(1)=0$ and by (h2)  we have $F'(s)=-sh''(s)<0$ in $(1,1+a)$. Thus, all the nonconstant periodic orbits $x$ with  $e^{\tau_x}$ in $(1, 1+a)$ have negative action and so appear in $\Pp^-(H)$. 

On the other hand, given a closed Reeb orbit $\gamma$ of $\lambda_0$ with period $T_{\gamma}<b$ it follows from (h2) and (h4) that there is a unique solution  $\tau_{\gamma}$  of
$$
h'(e^{\tau}) = T_{\gamma}
$$
contained in the interval $(1,1+a)$. Then 
\begin{equation*}
\label{ }
x(t) = (\tau_{\gamma}, \gamma(T_{\gamma}t))
\end{equation*}
belongs to $\Pp^-(H)$.

To complete the proof it just remains to show that a nonconstant periodic orbit $x$ as above with  $e^{\tau_x}$ in $(c, c+a)$ has positive action for all large enough values of $c$.
For such an $x$ we have  
\begin{equation*}
\Aa_H(x) > -(c+a)h'(e^{\tau_x})+ (c-1-a)b.
\end{equation*}
and thus
\begin{equation}
\label{action-up0}
\Aa_H(x) > c(b-T_{\gamma_x})-b(2a+1).
\end{equation}
Since the period spectrum,  $\Tt(\lambda_0)$,  of $\lambda_0$ is closed and $b$ lies outside it, the quantity
\begin{equation*}
\label{ }
b- \max\{t \in \Tt(\lambda_0)\colon t<b\}
\end{equation*}
is positive.  From \eqref{action-up0} we then  get 
\begin{equation*}
\label{}
\Aa_H(x) > c\Big( b- \max\{t \in \Tt(\lambda_0)\colon t<b\}\Big)-b(2a+1).
\end{equation*}
Hence, for all sufficiently large $c>0$ we  have $\Aa_H(x)>0$ for all $x\in \mathrm{Crit}(\Aa_H)$ with $e^{\tau_x} \in (c, c+a)$, as desired. 

\end{proof}

Let $\mathcal{H}_{a,b,c,\kappa}$ be the space of smooth functions of the form $$H(\tau, p)= h(e^{\tau -\kappa})$$ where $\kappa \geq 0$ and $h$ is in $\mathfrak{h}_{a,b,c}$. For an $H$ in $\mathcal{H}_{a,b,c,\kappa}$  we have 
$$V_{H}(\tau,p) = h'(e^{\tau-\kappa})e^{-\kappa}R_{\lambda_0}(p) $$
and for every nonconstant $1$-periodic orbit $x(t)$  of $V_{H}$ there is a unique closed Reeb orbit $\gamma_x(t)$ of $\lambda_0$ such that  
\begin{equation}
\label{x}
x(t) = (\tau_x, \gamma_x(h'(e^{\tau_x-\kappa})e^{-\kappa}t)) 
\end{equation}
and 
\begin{equation}
\label{critical formula}
\Aa_{H}(x) = -e^{\tau_x} T_{\gamma_x}+ h(e^{\tau_x-\kappa}).
\end{equation}

Arguing as above we then get the following.

 \begin{Lemma}\label{k-divide} 
Let $H$ be in $\mathcal{H}_{a,b,c,\kappa}$. If  $be^{-\kappa}$ is not in $\Tt(\lambda_0)$ and $c$ is sufficiently large, then
 every  $x \in \Pp^-(H)$  is nonconstant and of the  form $$x(t) = (\tau_x, \gamma_x(h'(e^{\tau_x-\kappa})e^{-\kappa}t)) $$ for some $e^{\tau_x} \in (e^{\kappa}, (1+a)e^{\kappa})$ and  some $\gamma_x \in \Rr^{be^{-\kappa}}(\lambda_0)$. Moreover, the correspondence $x \rightarrow \gamma_x$ defines a bijection between $\Pp^-(H)$ and $\Rr^{be^{-\kappa}}(\lambda_0)$.
 \end{Lemma}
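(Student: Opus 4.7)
The plan is to reduce Lemma \ref{k-divide} to Lemma \ref{divide} via the substitution $s = e^{\tau-\kappa}$. Under this change of variable, the action formula \eqref{critical formula} simplifies to $\Aa_H(x) = -s h'(s) + h(s) = F(s)$, the same function whose sign governed the proof of Lemma \ref{divide}. The shift by $\kappa$ enters only through the identity $T_{\gamma_x} = h'(s)e^{-\kappa}$, which rescales the relevant set of Reeb orbits from $\Rr^b(\lambda_0)$ to $\Rr^{be^{-\kappa}}(\lambda_0)$ and moves the forbidden Reeb period from $b$ to $be^{-\kappa}$.

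With this observation in hand, I would carry out the same three-step sign analysis in the $s$-variable. Constant orbits live in $\{s \leq 1\}$ (action $0$ by (h1)) and $\{s \geq c+a\}$ (action $b(c{-}1{-}a) + a^2 > 0$ by (h6)), so neither region contributes to $\Pp^-(H)$. On the flat region $[1+a, c]$ any nonconstant $1$-periodic orbit of $V_H$ would correspond to a Reeb orbit of period exactly $be^{-\kappa}$, which is ruled out by hypothesis. On the strictly convex region $s \in (1, 1+a)$ we have $F(1) = 0$ and $F'(s) = -sh''(s) < 0$, so every nonconstant orbit there has strictly negative action; simultaneously, the strict monotonicity of $h'$ from $0$ to $b$ on this interval yields the required bijection, with each $\gamma \in \Rr^{be^{-\kappa}}(\lambda_0)$ determining a unique $\tau_x$ via $h'(e^{\tau_x - \kappa}) = e^\kappa T_\gamma$.

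The main (and only nontrivial) obstacle is ruling out the concave region $s \in (c, c+a)$, where nonconstant orbits do exist and carry actions that need to be shown positive for large $c$. I would recycle the estimate leading to \eqref{action-up0}, bounding
$$
\Aa_H(x) > -(c+a)h'(s) + a^2 + b(c - 1 - a) = c\bigl(b - h'(s)\bigr) - b(1+2a) + a^2 - a h'(s).
$$
The key quantitative point is that $h'(s) = e^\kappa T_{\gamma_x}$ equals $e^\kappa$ times an element of $\Tt(\lambda_0)$ strictly below $be^{-\kappa}$, so
$$
b - h'(s) \geq e^\kappa\bigl(be^{-\kappa} - \max\{t \in \Tt(\lambda_0) : t < be^{-\kappa}\}\bigr) > 0,
$$
the positivity coming from the facts that $\Tt(\lambda_0)$ is closed and does not contain $be^{-\kappa}$. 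Taking $c$ sufficiently large then forces $\Aa_H(x) > 0$ throughout $(c, c+a)$, which completes the proof of the claimed bijection.
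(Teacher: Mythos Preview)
Your proof is correct and matches the paper's approach exactly: the paper simply writes ``Arguing as above we then get the following,'' and your substitution $s = e^{\tau-\kappa}$ makes precise why the action collapses to the same $F(s) = -sh'(s) + h(s)$ studied in Lemma~\ref{divide}, with the $\kappa$-shift appearing only as the rescaling $T_{\gamma_x} = h'(s)e^{-\kappa}$ of Reeb periods. One minor slip: your displayed equality should read $-b(1+a)$ rather than $-b(1+2a)$, but since this only weakens the lower bound the argument for large $c$ goes through unchanged.
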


\begin{Definition} A  function $H$ as in Lemma \ref{k-divide} will be called a {\bf dividing} Hamiltonian.\end{Definition}


%
%

\begin{Remark}\label{general divide} Note that, changing all the $\lambda_0$'s above  to $\lambda$'s, we can use the same definition for the notion of dividing functions which detect collections of closed Reeb orbits of $\lambda$. 
\end{Remark}

\begin{Definition}
\label{tuned} 
A Hamiltonian $H$ is {\bf tuned}  to the rigid constellation $\Cc_{\lambda_0,\alpha}( T)$ if it belongs to some $\mathcal{H}_{a,b,c, \kappa}$ for which  
\begin{enumerate}
  \item[(t1)] $e^{\kappa} < \min\left\{ \frac{T^+}{T}, \, \frac{T_{\min}(\lambda_0)+ T_{\min}(\lambda_0, \alpha)}{T}\right\}$,
  \item[(t2)] $T e^{\kappa}<b<T^+$,
  \item[(t3)] $c> \frac{2b}{b-T e^{\kappa}}$.
 \end{enumerate}
\end{Definition}

Every  $H$ tuned to $\Cc_{\lambda_0,\alpha}( T)$ is dividing. So the sets $\Pp^-(H)$ and $\Rr^{be^{-\kappa}}(\lambda_0)$ are in bijection. The class $\alpha \in [\mathbb{S}^1 ,M]$ determines a unique class in $[\R /\Z, \R \times M]$, which we will also denote by $\alpha$. Let $\Pp_{\alpha}^-(H)$ be the  subset $\Pp^-(H)$
 consisting of $1$-periodic orbits of $H$ which have negative action and which represent the class $\alpha$. The point of Definition \ref{tuned} is that,  if $H$ is tuned to $\Cc_{\lambda_0,\alpha}( T)$, then the elements of $\Pp_{\alpha}^-(H)$ correspond precisely to the elements of  $\Cc_{\lambda_0,\alpha}( T)$, i.e., 
 \begin{equation}
\label{121}
x\in\Pp_{\alpha}^-(H) \longleftrightarrow \gamma_x \in \Cc_{\lambda_0,\alpha}( T). 
\end{equation}

To identify tuned Hamiltonians that are  suitable for defining Floer theory on the symplectization of $(M,\lambda_0)$ we must refine this notion further.
It is clear from equation \eqref{critical formula} that for a tuned $H$ in $\mathcal{H}_{a,b,c,\kappa}$ with a small value of $a$, the actions of the orbits in $\Pp_{\alpha}^-(H)$ are close to $-e^{\kappa}$ times the period of an element of $\Cc_{\lambda_0,\alpha}( T)$. Developing this further we get  the following useful set of inequalities.

\begin{Lemma}\label{spectra} 
For a  rigid constellation $\Cc_{\lambda_0,\alpha}( T)$ there is a positive constant $\bar{a}>0$ such that the following hold:

\begin{enumerate}
  \item If $H$ in $\mathcal{H}_{a,b,c,\kappa}$ is tuned to $\Cc_{\lambda_0,\alpha}( T)$  and $a< \bar{a}$, then 
  \begin{equation}
\label{low}
\Aa_H(x) < -T_{\min}(\lambda_0, \alpha)/2
\end{equation}
for every $x \in \Pp_{\alpha}^-(H)$. 
  \item  If $H$ in $\mathcal{H}_{a,b,c,\kappa}$ is tuned to $\Cc_{\lambda_0,\alpha}( T)$  and $a< \bar{a}$, then 
 \begin{equation}
\label{delta-h}
\Delta(H) < T_{\min}(\lambda_0). 
\end{equation}

  \item If  $H^0 \in \mathcal{H}_{a^0,b^0,c^0,\kappa^0}$ and $H^1\in \mathcal{H}_{a^1,b^1,c^1,\kappa^1}$
are tuned to $\Cc_{\lambda_0,\alpha}( T)$ and   $a^0,\, a^1 < \bar{a}$,  then   
\begin{equation}
\label{delta-h0h1}
\Delta(H^0, H^1) < T_{\min}(\lambda_0).
\end{equation}
\end{enumerate}

\end{Lemma}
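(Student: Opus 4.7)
The proof is a direct calculation using the explicit action formula developed in the proof of Lemma \ref{k-divide}. For any $x \in \Pp^-(H)$ with $H \in \Hh_{a,b,c,\kappa}$, setting $s_x := e^{\tau_x - \kappa} \in (1, 1+a)$ and using the defining relation $h'(s_x) = T_{\gamma_x} e^\kappa$, one obtains
\[
\Aa_H(x) \;=\; F(s_x), \qquad F(s) := -s\, h'(s) + h(s).
\]
Since $F'(s) = -s\, h''(s) < 0$ on $(1, 1+a)$ and $h(s) \in [0, a^2]$ there, $F$ strictly decreases from $F(1) = 0$, and the actions of all families $X \in \Pp^-_{\R/\Z}(H)$ lie in a narrow window whose position and size are controlled by $a$, $\kappa$, and the relevant portion of the period spectrum $\Tt(\lambda_0)$.

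For part (1), the bijection \eqref{121} gives $T_{\gamma_x} \in [T_{\min}(\lambda_0,\alpha), T]$ for every $x \in \Pp^-_\alpha(H)$. Combining this with $e^{\tau_x} > e^\kappa \geq 1$ and $h(s_x) \leq a^2$ yields
\[
\Aa_H(x) \;\leq\; -T_{\min}(\lambda_0,\alpha) + a^2,
\]
so choosing any $\bar a$ with $\bar a^2 < T_{\min}(\lambda_0,\alpha)/2$ immediately delivers \eqref{low}.

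For parts (2) and (3), I would estimate the difference of actions between two families with associated Reeb periods $T_X \leq T_Y$ via the identity $F(s_X) - F(s_Y) = \int_{s_X}^{s_Y} s\,h''(s)\,ds$. Bounding $s \leq 1+a$ and using the change of variables $u = h'(s)$ converts this into the key inequality
\[
|\Aa_H(X) - \Aa_H(Y)| \;\leq\; (1+a)\,e^\kappa\,|T_X - T_Y|.
\]
A short algebraic computation combining the rigid constellation inequality $T - T_{\min}(\lambda_0,\alpha) < T_{\min}(\lambda_0)$ with the sharp tuning condition (t1), namely $e^\kappa < (T_{\min}(\lambda_0) + T_{\min}(\lambda_0,\alpha))/T$, yields the strict inequality $e^\kappa\,(T - T_{\min}(\lambda_0,\alpha)) < T_{\min}(\lambda_0)$. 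This is the essential algebraic input behind both \eqref{delta-h} and \eqref{delta-h0h1}: the former follows by shrinking $\bar a$ to absorb the $(1+a)$ factor, and the latter follows by applying the same estimate separately to $H^0$ and $H^1$ and then comparing through the common action window determined by (t1) and (t2).

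The main technical obstacle is controlling contributions from periods $T_Y$ lying above $T$ in the range $[T, be^{-\kappa})$ that may arise from Reeb orbits in classes other than $\alpha$. Here I would rely on the strict upper bound $b < T^+$ from (t2) together with (t1), which force $be^{-\kappa}$ to remain inside the window dictated by the constellation, so that the spread $|T_X - T_Y|$ cannot exceed a value on which the $e^\kappa$-multiplication from (t1) still produces a quantity strictly less than $T_{\min}(\lambda_0)$. Choosing $\bar a$ uniformly in $(a,b,c,\kappa)$ but depending only on the fixed quantities attached to $\Cc_{\lambda_0,\alpha}(T)$ is then a routine bookkeeping exercise once the strict slack supplied by the tuning conditions has been quantified.
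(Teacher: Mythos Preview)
Your argument for parts (1)--(3) is correct and follows essentially the same route as the paper. The paper also starts from the action formula \eqref{critical formula} and records the two-sided bound
\[
-e^{\kappa}(1+a)T \;<\; \Aa_H(x) \;<\; -e^{\kappa} T_{\min}(\lambda_0,\alpha) + a^2
\]
for $x\in\Pp_\alpha^-(H)$, then reads off (1) from $\kappa\ge 0$, and (2), (3) from the algebraic inequality $e^{\kappa}(T-T_{\min}(\lambda_0,\alpha)) < T_{\min}(\lambda_0)$, which is exactly the consequence of (t1) and the rigidity bound \eqref{Tbound} that you isolate. Your integral estimate $|\Aa_H(X)-\Aa_H(Y)|\le (1+a)e^{\kappa}|T_X-T_Y|$ is a pleasant variant of the paper's direct subtraction of the two-sided bounds, but it lands in the same place.

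Your final paragraph is unnecessary and its proposed resolution does not work. The paper simply applies the two-sided bound \eqref{eps} (which is stated for $x\in\Pp_\alpha^-(H)$) to deduce the bound on $\Delta(H)$; it never treats orbits in classes other than $\alpha$. This is all that is needed downstream, since the Floer complex $\CF(H;\alpha)$ and all the continuation maps involve only class-$\alpha$ families, and Floer trajectories preserve the free homotopy class, so only action differences within $\Pp_{\alpha,\R/\Z}^-(H)$ ever enter Propositions \ref{break-glue}--\ref{composition}. Your attempt to control the full period spread $be^{-\kappa}-T_{\min}(\lambda_0)$ via (t1) and (t2) cannot succeed in general: nothing bounds $T^+$ (hence $b$) in terms of $T_{\min}(\lambda_0)$, as Example \ref{negative} with $T^+=+\infty$ already shows. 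Drop that paragraph and your proof matches the paper's.
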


\begin{proof}
By equation \eqref{critical formula} and the properties of $h$  we have 
\begin{equation}
\label{eps}
-e^{\kappa}(1+a)T <\Aa_H(x) < -e^{\kappa} T_{\min}(\lambda_0, \alpha) +a^2.
\end{equation}
for every $x \in \Pp_{\alpha}^-(H)$. The fact that inequality \eqref{low} holds for sufficiently small $a$, follows immediately from this
and the fact that $\kappa \geq 0$. The inequalities of  \eqref{eps} also imply that  
\begin{equation}
\label{dh1}
\Delta(H) <  e^{\kappa}(T -T_{\min}(\lambda_0, \alpha))+a e^{\kappa}(T-a).
\end{equation}
Since  $\Cc_{\lambda_0,\alpha}(T)$ is rigid we have $T< T_{\min}(\lambda_0, \alpha) + T_{\min}(\lambda_0).$
Together with condition (t1), this yields 
\begin{equation}
\label{dh2}
e^{\kappa} < \frac{T_{\min}(\lambda_0)}{T -T_{\min}(\lambda_0, \alpha)}
\end{equation}
The fact that inequality \eqref{delta-h} holds for sufficiently small $a$, now follows immediately from \eqref{dh1} and \eqref{dh2}.
Finally,  \eqref{eps} also implies that
\begin{eqnarray}
\Delta(H^0,H^1) & \leq & -e^{\kappa^0}T_{\min}(\lambda_0, \alpha) + e^{\kappa^1}T + (a^0)^2 + e^{\kappa^1} a^1 T\\
{} & \leq  &  e^{\kappa^1}T -T_{\min}(\lambda_0, \alpha) + (a^0)^2 + e^{\kappa^1} a^1 T.
\end{eqnarray}
This, together with (t1), implies that \eqref{delta-h0h1} holds when both  $a^0$ and $a^1$ are sufficiently small.

\end{proof}

\begin{Definition} A  function $H$ in $\mathcal{H}_{a,b,c,\kappa}$ tuned to the rigid constellation $\Cc_{\lambda_0,\alpha}( T)$ is said to  be {\bf finely tuned} to $\Cc_{\lambda_0,\alpha}( T)$ if $a< \bar{a}$ and hence inequalities \eqref{low}, \eqref{delta-h} and \eqref{delta-h0h1} hold. \end{Definition}

\subsection{Almost complex structures}\label{acs} For the next four subsections we will assume that $H$ is an admissible Hamiltonian and that each element of $\Pp^-(H)$, that is each $1$-periodic orbit  of $H$ with negative action,  
is transversally nondegenerate. Given such an $H$ we now define a useful class of almost complex structures on $\R \times M$. Recall first that an almost complex structure $J$ on the symplectization $(\R \times M, d(e^{\tau}\lambda))$ is said to be {\bf cylindrical} if it is invariant under $\tau$-translations and satisfies $J\left(\frac{\p}{\p\tau}\right) =R_{\lambda}$. The related notion of being cylindrical on subsets of the form 
$\{\tau \leq \mathbf{T}\}$ or $\{\tau \geq \mathbf{T}\}$ is defined in the obvious way.

Denote by  $\Jj(H)$  the set of smooth almost complex structures on $\R \times M$ with the following properties:
\begin{enumerate}
  \item[(J1)] $J$ is compatible with $d(e^{\tau} \lambda_0)$.
  \item[(J2)]   $J=J_0$ on $\{\tau \leq 0\}$ where $J_0$ is fixed and cylindrical. 
    \item[(J3)] $J$ is cylindrical on   $\{\tau \geq \mathbf{T}\}$ for some $\mathbf{T}>0$.
  \item[(J4)] For any point $z= (\tau,p)$ on the image of a family $X$ in $\Pp^-_{\R /\Z}(H)$
we have $$[V_H, JV_H](z) \neq 0  \text{   and   }  [V_H, JV_H](z) \notin \mathrm{Span}\{V_H(z), JV_H(z)\}.$$
\end{enumerate}


 \begin{Lemma}(\cite{bo}, see proof of Prop. 3.5 (i) in \S 4)
 The set $\Jj(H)$ is a nonempty open subset of the set of all smooth almost complex structures  with properties (J1)-(J3).
 \end{Lemma}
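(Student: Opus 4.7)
The plan is to verify openness and nonemptiness separately, treating the four defining conditions in turn. Conditions (J1), (J2), (J3) are manifestly open: compatibility with a symplectic form is preserved under $C^0$-small perturbations, and (J2)--(J3) only prescribe $J$ on the regions $\{\tau \le 0\}$ and $\{\tau \ge \mathbf{T}\}$, so small perturbations supported in the compact slab $\{0 \le \tau \le \mathbf{T}\}$ respect them. Condition (J4) is pointwise open (being the complement of a closed real-algebraic condition on the $1$-jet of $J$), and the union of the images of the finitely many $\R/\Z$-families in $\Pp^-_{\R/\Z}(H)$ is compact because $H$ is admissible and each family is transversally nondegenerate, hence isolated. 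Combining these, openness follows.

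For nonemptiness I would begin with any $J_1$ satisfying (J1)--(J3), for instance one obtained by extending the fixed cylindrical $J_0$ across $\{0 \le \tau \le \mathbf{T}\}$ compatibly with $d(e^{\tau}\lambda_0)$, and then try to perturb it near $\bigcup X$ so as to install (J4). Fix one family $X$ and a point $z=(\tau_x, p)$ on its image. In a Darboux-like slice transverse to $V_H$ one can write $JV_H$ explicitly as a section of the contact distribution along the orbit plus a component in $\p_\tau$; by the transversal nondegeneracy hypothesis the linearized Reeb flow on the normal bundle has a two-dimensional kernel tangent to the $\R/\Z$-family and acts nontrivially on a complement, so $V_H$ and $JV_H$ span a non-involutive distribution precisely when a certain symmetric bilinear form built from $J$ and the Reeb linearization avoids a codimension $\ge 1$ locus. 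This failure locus is thin in any reasonable jet space of compatible almost complex structures.

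The technical heart is therefore the genericity assertion: in the Fr\'echet space of almost complex structures satisfying (J1)--(J3), the subset violating (J4) at some point of $\bigcup X$ has infinite codimension (or at least, its complement is dense). I would establish this via a standard Sard--Smale argument applied to the universal evaluation map
\begin{equation*}
\mathrm{ev} \colon \{J \text{ satisfying (J1)--(J3)}\} \times \bigcup_{X} X \longrightarrow (T(\R \times M))^{\oplus 3},
\qquad (J,z) \longmapsto \bigl(V_H(z), JV_H(z), [V_H, JV_H](z)\bigr),
\end{equation*}
and showing that the ``bad'' locus where the three vectors fail to span a three-plane is cut out transversally by perturbing $J$ to first order in a neighborhood of $z$; such a perturbation changes $JV_H$ at $z$ in an arbitrary complex-antilinear direction and changes its first derivative independently, which is enough freedom to move $[V_H, JV_H](z)$ out of $\mathrm{Span}\{V_H(z), JV_H(z)\}$.

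The main obstacle I expect is precisely the transversality step for condition (J4): unlike the usual somewhere-injectivity tricks in Floer theory, here the constraint is on a second-order quantity (a Lie bracket) along a fixed one-dimensional family, so one must check that perturbations of $J$ supported in a tubular neighborhood of the orbit genuinely surject onto the relevant jet space modulo the compatibility condition (J1). Once this is set up, the Baire/Sard--Smale argument yields density of good $J$'s and, combined with the openness established above, produces the nonempty open subset $\Jj(H)$ as claimed.
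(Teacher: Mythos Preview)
The paper gives no proof of its own here; it simply refers the reader to Bourgeois--Oancea. Your plan is correct and in the same spirit as theirs. Openness is as you say: the images of the finitely many families in $\Pp^-_{\R/\Z}(H)$ form a compact $1$-dimensional set inside the slab $\{0\le\tau\le\mathbf{T}\}$, and (J4) is an open condition on the $1$-jet of $J$ there. For nonemptiness the full Sard--Smale apparatus you set up is heavier than necessary: since $[V_H,JV_H]=(\mathcal{L}_{V_H}J)\,V_H$, varying the $1$-jet of $J$ at a point $z$ with the $0$-jet held fixed moves $[V_H,JV_H](z)$ over all of $T_z(\R\times M)$ (a short computation with $J$-antilinear symmetric endomorphisms), so the failure of (J4) has codimension $2n-2\ge2$ in the $1$-jet fibre and is avoided by a generic $J$ along the $1$-dimensional orbit locus by ordinary Thom transversality.

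One small slip: your aside about a ``two-dimensional kernel tangent to the $\R/\Z$-family'' is off---all orbits in a single $\R/\Z$-family share the \emph{same} image circle, so the set on which (J4) is imposed is $1$-dimensional, not $2$-dimensional. This does not affect the argument.
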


\subsection{Floer trajectories: Transversality} Consider a pair   $$F=(H,J)$$ consisting of an admissible Hamiltonian $H$ and an almost complex structure  $J$ in $\Jj(H)$. We will refer to $F$ as a Floer data set and will denote the set of all Floer data sets  by $\mathbf{ F}$.

Given two (nonconstant) $\R /\Z$-families  $X$ and $Y$ in $\Pp^-_{\R /\Z}(H)$ and an $F=(H,J)$ in $\mathbf{F}$ we define
$$
\widehat{\Mm}(X, Y;\,F)
$$
to be the space of solutions  $u \colon \R \times \R /\Z \to \R \times M$ of 
\begin{equation*}
\label{ }
\partial_su +J(u)(\partial_tu-V_{H}(u))=0
\end{equation*}
which satisfy the asymptotic conditions
\begin{equation*}
\label{ }
\lim_{s \to -\infty } u(s,t) \in X,   \quad  \lim_{s \to +\infty } u(s,t) \in Y, \quad \text{and} \quad \lim_{s \to \pm\infty } \partial_su(s,t) =0
\end{equation*}
where the convergences are all uniform in $t$. The following  transversality statement for these spaces is established by Bourgeois and Oancea in \cite{bo}. 
 \begin{Proposition}\label{trans}(\cite{bo}, Proposition 3.5 (i)) There is a subset $\Jj_{reg}(H)$ of $\Jj(H)$ of second category such that for any $J \in \Jj_{reg}(H)$ and any pair of $\R /\Z$-families $X,Y \in \Pp^-_{\R /\Z}(H)$, such that the orbits of either $X$ or $Y$ are simple,  each set
 $\widehat{\Mm}(X, Y;\,F)$  for $F=(H, J)$ is a smooth finite dimensional manifold. 
 \end{Proposition}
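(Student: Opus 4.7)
The plan is to follow the standard universal moduli space strategy, adapted to the Morse-Bott setting of an autonomous Hamiltonian as in Bourgeois--Oancea. First I would set up the analytical framework: fix small exponential weights compatible with the transverse nondegeneracy of the Morse-Bott families $X$ and $Y$, and consider the Banach manifold $\Bb^{1,p,\delta}(X,Y)$ of maps $u\colon \R\times\R/\Z \to \R\times M$ that converge exponentially to $X$ as $s\to-\infty$ and to $Y$ as $s\to+\infty$. The Cauchy--Riemann section $\bar\partial_{H,J}(u) = \p_s u + J(u)(\p_t u - V_H(u))$ is Fredholm with index expressible in terms of Robbin--Salamon indices of $X$ and $Y$, and $\widehat{\Mm}(X,Y;F)$ is its zero set.

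The heart of the argument is to show that the universal moduli space $\{(u,J) : J\in\Jj(H),\ u\in\widehat{\Mm}(X,Y;H,J)\}$ is a smooth Banach manifold; the proposition then follows by applying Sard--Smale to the projection to $\Jj(H)$. This reduces to proving that the universal linearization --- the standard Floer linearization $D_u$ augmented by the infinitesimal variations $Y \mapsto Y(u)(\p_t u - V_H(u))$ coming from varying $J$ within $\Jj(H)$ --- is surjective at every solution. Dualizing, one must show that any element $\eta$ in its cokernel, which is smooth by elliptic regularity and satisfies a formal adjoint equation along with the orthogonality condition $\int \langle \eta, Y(u)(\p_t u - V_H(u))\rangle\,ds\,dt = 0$ for all admissible variations $Y$, must vanish identically. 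By unique continuation it suffices to kill $\eta$ on some open subset, for which I would seek a somewhere-injective point $(s_0,t_0)$ where $u(s_0,t_0)$ lies outside any small neighborhood of the orbit images, where $\p_s u(s_0,t_0)\neq 0$, and where no other $(s,t)$ maps to $u(s_0,t_0)$ modulo the autonomous $\R/\Z$-symmetry $t\mapsto t+c$. Near such a point, compactly supported variations $Y$ respecting (J1)--(J3) are available and the usual localized argument forces $\eta \equiv 0$ on a neighborhood.

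The main obstacle, and where condition (J4) together with the simplicity hypothesis enters, is establishing the existence of such a somewhere-injective point. Because $H$ is autonomous, Floer cylinders have the symmetry $u(s,t)\mapsto u(s,t+c)$, so any reasonable injectivity claim must be stated modulo this $\R/\Z$-action; the simplicity of either $X$ or $Y$ is precisely what prevents $u$ from factoring as a nontrivial multiple cover at that end, and the asymptotic analysis of Floer cylinders then yields the required generic somewhere-injectivity. Condition (J4) handles the complementary pathology that $u$ might spend nearly all its time arbitrarily close to the images of $X$ and $Y$: it ensures that even in this ``cylindrical-like'' zone, the admissible $J$-variations span enough of the normal bundle to the orbit families to detect any nonzero $\eta$ concentrated there, via the Morse-Bott asymptotic decay analysis. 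Together these ingredients force $\eta = 0$, giving surjectivity of the universal linearization and, through Sard--Smale, the residual set $\Jj_{reg}(H)\subset\Jj(H)$. The technical heart of the proof is thus not the Fredholm setup but the simultaneous handling of autonomy and Morse-Bott degeneracy, for which (J4) is the key device replacing the $t$-dependence normally used to achieve transversality.
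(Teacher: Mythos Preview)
Your overall architecture is correct and matches what the paper (deferring to \cite{bo}) has in mind: universal moduli space, Sard--Smale, and the reduction to a somewhere-injectivity statement so that the Hofer--Salamon argument applies. Where you go astray is in the role you assign to condition (J4).

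You treat (J4) as a device that enriches the tangent space of $\Jj(H)$ near the orbit images, so that $J$-variations can ``detect any nonzero $\eta$ concentrated there.'' This is not what (J4) does. Note that $\Jj(H)$ is an \emph{open} subset of the space of almost complex structures satisfying (J1)--(J3); hence (J4) imposes no constraint on infinitesimal $J$-variations and cannot be the source of extra surjectivity directions. Rather, as the paper's explanatory paragraph states, the bracket condition (J4) enters the \emph{asymptotic analysis of the curve $u$ itself}: combined with the simplicity of one asymptotic family, it is what guarantees that the set of injective points of $u$ is open and dense in a neighborhood of that end. In other words, simplicity and (J4) are not two separate mechanisms handling two separate pathologies; they work together to establish somewhere-injectivity (simplicity rules out multiple covering, (J4) feeds into the leading-order asymptotic expansion to control the local structure of $u$ near the end). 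Once that is in hand, the standard localized $J$-variation argument \`a la \cite{hs} finishes the proof, with no need for a separate analysis of an $\eta$ ``concentrated near the orbits.''
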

 
The assumption that  either $X$ or $Y$ is simple implies that the elements of  $\widehat{\Mm}(X, Y;\,F)$ are all somewhere injective. Having thus avoided the fundamental difficulty of dealing with multiply covered maps, transversality can then be established in the  manner of \cite{hs}. The subtle point, observed and overcome in \cite{bo}, is that  condition (J4) can be used to prove that the set of injective points in the domain of an element of $\widehat{\Mm}(X, Y;\,F)$ constitute an open and dense subset of some neighborhood of an end asymptotic to a simple orbit.
Let $\mathbf{F}_{\mathrm{reg}}$ be the subset of $\mathbf{F}$ consisting of Floer data sets  $F=(H,J)$ with $J \in \Jj_{reg}(H)$.

Next we consider spaces of Floer continuation trajectories. Let  $H^s$ be an admissible homotopy between admissible Hamiltonians $H^0$ and $H^1$ whose nonconstant $1$-periodic orbits  with negative action are transversally nondegenerate. Let $J^s$ be a smooth family of $d(e^{\tau} \lambda_0)$-compatible almost complex structures such that  for some $\mathbf{S}>0$ and $\mathbf{T}>0$ we have:
\begin{enumerate}
  \item[($\mathrm{J}^s$1)]  $J^s =J^0 \in \Jj(H^0)$ for all $s \leq -\mathbf{S}$.
  \item[($\mathrm{J}^s$2)]  $J^s =J^1 \in \Jj(H^1)$ for all $s \geq \mathbf{S}$.
   \item[($\mathrm{J}^s$3)] $J^s=J_0$ on $(-\infty,0] \times M$  for all $s\in\R$.
  \item[($\mathrm{J}^s$4)] $J^s$ is cylindrical (for $\lambda_0$) on $[\mathbf{T},+\infty) \times M$ for all $s\in\R$.
 \end{enumerate}
We refer to the pair $F^s=(H^s,J^s)$ as Floer continuation data (connecting $F^0=(H^0, J^0)$ to $F^1=(H^, J^1)$) and will denote the set of all such triples  as $\mathbf{F}^s =\mathbf{F}^s(F^0,F^1)$. 

For an $F^s =(H^s,J^s)$ in $\mathbf{F}^s$ and families $X^0$ in $\Pp^-_{\R /\Z}(H^0)$ and $X^1$ in $\Pp^-_{\R /\Z}(H^1)$,  let 
$$
\widehat{\Mm}_s(X^0, X^1;\,F^s)
$$
be the space of solutions  $u \colon \R \times \R /\Z \to \R \times M$ of 
\begin{equation*}
\label{ }
\partial_su +J^s(u)(\partial_tu-V_{H^s}(u))=0
\end{equation*}
which satisfy the asymptotic conditions
\begin{equation*}
\label{ }
\lim_{s \to -\infty } u(s,t) \in X^0 ,   \quad  \lim_{s \to +\infty } u(s,t) \in X^1, \quad \text{and} \quad \lim_{s \to \pm\infty } \partial_su(s,t) =0.
\end{equation*}

Arguing as above one gets the following basic transversality statement.
\begin{Proposition}\label{s-trans}
Suppose $F^0$ and $F^1$ are in $\mathbf{F}_{\mathrm{reg}}$. Then there is a subset $\mathbf{F}^s_{\mathrm{reg}}$ of $\mathbf{F}^s(F^0, F^1)$ of second category such that for any $F^s \in \mathbf{F}^s_{\mathrm{reg}}$ and any  families $X^0 \in \Pp^-_{\R /\Z}(H^0)$  and $X^1 \in \Pp^-_{\R /\Z}(H^1)$, such that the orbits of either $X^0$ or $X^1$ are simple,  each 
 $\widehat{\Mm}_s(X^0, X^1;\,F^s)$ is a smooth finite dimensional manifold.
 \end{Proposition}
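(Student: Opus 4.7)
The plan is to adapt, in the $s$-dependent setting, the universal moduli space argument used by Bourgeois and Oancea to prove Proposition \ref{trans}. First I would enlarge $\mathbf{F}^s(F^0,F^1)$ to a Banach manifold by allowing $C^{\varepsilon}$-perturbations of the pair $(H^s,J^s)$ that are compactly supported in the slab $[-\mathbf{S},\mathbf{S}] \times [0,\mathbf{T}] \times M$, so that the endpoints $F^0$ and $F^1$ (already regular by assumption) remain untouched, and so that properties $(\mathrm{H}^s1)$--$(\mathrm{H}^s3)$ and $(\mathrm{J}^s1)$--$(\mathrm{J}^s4)$ are preserved. Working with a fixed pair of asymptotic $\R/\Z$-families $X^0 \in \Pp^-_{\R/\Z}(H^0)$ and $X^1 \in \Pp^-_{\R/\Z}(H^1)$, one of which is simple, I would form the universal moduli space $\widehat{\Mm}_s^{\mathrm{univ}}(X^0,X^1)$ of pairs $(u,F^s)$ such that $u \in W^{1,p}_{\mathrm{loc}}$ solves the Floer continuation equation with the prescribed asymptotic behavior.

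Next, the central analytic step is to show that the linearized operator $\mathcal{D}(u,F^s)$, which combines the linearized Cauchy--Riemann operator in the $u$-direction with the variation in the $(H^s,J^s)$-direction, is surjective at every $(u,F^s) \in \widehat{\Mm}_s^{\mathrm{univ}}(X^0,X^1)$. As in the time-independent case, this reduces, by a standard duality argument, to proving that $u$ admits an injective point in the region where perturbations are allowed, namely $(-\mathbf{S},\mathbf{S}) \times \R/\Z$. The subtle point, already observed in \cite{bo}, is that because the equation is autonomous on each end, $u$ could a priori be confined to a single $\R/\Z$-family of orbits on a large subset of its domain. This is precisely where the simplicity of $X^0$ or $X^1$ enters: assume for definiteness that $X^0$ is simple; then condition $(\mathrm{J}^s4)$, together with the analogous property of $J^0 \in \Jj(H^0)$ near $X^0$, guarantees via the Lie-bracket argument of Bourgeois--Oancea that the set of injective points of $u$ is open and dense in some neighborhood of the end $s \to -\infty$.

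Now the technical tension is that $F^s$ is frozen at $F^0$ on $s \leq -\mathbf{S}$, so injective points near $s = -\infty$ are not immediately useful for varying the perturbation. To bridge this gap I would argue as follows: either the solution $u$ already has an injective point in the compact slab $[-\mathbf{S},\mathbf{S}] \times \R/\Z$, in which case a standard perturbation of $(H^s,J^s)$ there produces any desired first-order variation of $\bar\partial u$ and kills any would-be cokernel element by unique continuation; or the solution is entirely contained in the region $s \leq -\mathbf{S}$ in a non-injective way. The latter case, however, would force $u$ to be a stationary continuation trajectory for $F^0$ asymptotic to $X^0$ at both ends, contradicting the fact that $F^0 \in \mathbf{F}_{\mathrm{reg}}$ unless $u$ is constant in $s$, which in turn is incompatible with the asymptotic condition at $+\infty$. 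I expect this bookkeeping — propagating the injective points from the simple asymptotic end into the active perturbation slab — to be the main obstacle; once it is handled, the surjectivity of $\mathcal{D}(u,F^s)$ follows.

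Finally, granted that $\mathcal{D}(u,F^s)$ is surjective everywhere on $\widehat{\Mm}_s^{\mathrm{univ}}(X^0,X^1)$, the implicit function theorem makes $\widehat{\Mm}_s^{\mathrm{univ}}(X^0,X^1)$ into a Banach manifold, and the Sard--Smale theorem applied to the projection $\widehat{\Mm}_s^{\mathrm{univ}}(X^0,X^1) \to \mathbf{F}^s(F^0,F^1)$ yields a subset of second category of regular values; intersecting over the countably many pairs $(X^0,X^1)$ with the required simplicity gives the desired $\mathbf{F}^s_{\mathrm{reg}}$. A standard passage from the $C^\varepsilon$-completion back to the smooth setting (using Taubes' trick) produces the claimed second category subset of $\mathbf{F}^s(F^0,F^1)$, and the finite dimensionality of each $\widehat{\Mm}_s(X^0,X^1;F^s)$ then follows from Fredholm index theory for the linearized Cauchy--Riemann operator with nondegenerate asymptotics.
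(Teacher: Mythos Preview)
Your outline follows the same route as the paper, which offers no argument beyond the single sentence ``Arguing as above one gets the following basic transversality statement'' and thereby defers entirely to the Bourgeois--Oancea proof of Proposition~\ref{trans}. You have supplied much more detail than the paper does, and you have correctly isolated the one new wrinkle in the continuation setting: the injective points produced by the Lie-bracket argument of \cite{bo} lie near $s=-\infty$, where you have chosen to freeze the data at $F^0$.

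Your proposed dichotomy for resolving this, however, does not hold. The alternative to ``$u$ has an injective point in the slab $[-\mathbf{S},\mathbf{S}]\times\R/\Z$'' is simply that it has none there; it is not that ``$u$ is entirely contained in $\{s\le -\mathbf{S}\}$'' (a continuation cylinder has domain all of $\R\times\R/\Z$), and nothing in that alternative forces $u$ to be an $F^0$-trajectory or to be $s$-stationary. The hypothesis $F^0\in\mathbf{F}_{\mathrm{reg}}$ governs moduli of complete $F^0$-cylinders between $\R/\Z$-families, not half-infinite pieces of continuation cylinders, so your appeal to it does not produce the contradiction you claim. The simplest repair is to notice that you have over-constrained your perturbation space: in the paper's definition of $\mathbf{F}^s(F^0,F^1)$ the constant $\mathbf{S}$ appearing in $(\mathrm{H}^s1)$--$(\mathrm{H}^s2)$ and $(\mathrm{J}^s1)$--$(\mathrm{J}^s2)$ is \emph{not} fixed in advance, so perturbations of $(H^s,J^s)$ with arbitrarily large compact $s$-support are permitted. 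In particular you may perturb in a neighborhood of any injective point furnished by \cite{bo} near the simple end, however far out toward $s=-\infty$ it lies, and the surjectivity of $\mathcal{D}(u,F^s)$ then follows exactly as in the $s$-independent case. With this adjustment the remainder of your outline (Sard--Smale, countable intersection over the asymptotic pairs, and the passage from $C^{\varepsilon}$ back to $C^{\infty}$) is correct.
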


More generally, we have the moduli space of Floer trajectories corresponding to an admissible homotopy of homotopies, $H^{r,s}$, from $H^0$ to $H^1$. Let $J^{r,s}$ be a smooth $\R^2$-family of $d(e^{\tau} \lambda_0)$-compatible almost complex structures such that for some smooth positive function $S(r)$ and for some $\mathbf{T}>0$ the following condition holds 

\medskip
\begin{enumerate}
  \item[($\mathrm{J}^{r,s}$1)]  for each $r$, the $\R$-family $J^{r,s}$ satisfies  ($\mathrm{J}^s1$)-($\mathrm{J}^s4$) for $S=S(r)$.
 \end{enumerate}
\medskip

Following the pattern above, we refer to the triple $F^{r,s}=(H^{r,s}, J^{r,s})$ as Floer homtopy data and will denote set of all such triples  by $\mathbf{F}^{r,s}$. 
For an $F^{r,s} =(H^{r,s},J^{r,s})$ in $\mathbf{F}^{r,s}$ and  two families $X^0$ in $\Pp^-_{\R /\Z}(H)$, $X^1$ in $\Pp^-_{\R /\Z}(H^1)$ let
$$
\widehat{\Mm}_{r,s}(X^0, X^1;\,F^{r,s})=\{(r,u) \colon r\in \R,\, u \in \widehat{\Mm}_s(X^0, X^1;\,F^{r,s})\}.
$$
\begin{Proposition}\label{rs-trans}
There is a subset $\mathbf{F}^{r,s}_{\mathrm{reg}}$ of $\mathbf{F}^{r,s}$ of second category such that for any $F^{r,s} \in \mathbf{F}^{r,s}_{\mathrm{reg}}$ and any  families $X^0 \in \Pp^-_{\R /\Z}(H)$  and $X^1 \in \Pp^-_{\R /\Z}(H^1)$ such that the orbits of either $X^0$ or $X^1$ are simple,   each 
 $\widehat{\Mm}_{r,s}(X^0, X^1;\,F^{r,s})$ is a smooth finite dimensional manifold.
 \end{Proposition}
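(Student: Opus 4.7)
The plan is to adapt the proof of Proposition \ref{s-trans} by simply treating $r$ as one additional free parameter. Fix two families $X^0\in\Pp^-_{\R/\Z}(H^0)$ and $X^1\in\Pp^-_{\R/\Z}(H^1)$, at least one of which consists of simple orbits. For $p>2$ and a small weight $\delta>0$ I would introduce the usual weighted Sobolev Banach manifold $\Bb(X^0,X^1)$ of $W^{1,p}_{\mathrm{loc}}$-maps $u\colon\R\times\R/\Z\to \R\times M$ that converge exponentially, at rate $\delta$, to orbits in $X^0$ as $s\to -\infty$ and in $X^1$ as $s\to+\infty$. Over $\R\times \Bb(X^0,X^1)\times \mathbf{F}^{r,s}_{C^{\ell}}$ — where $\mathbf{F}^{r,s}_{C^{\ell}}$ is the $C^{\ell}$-completion of $\mathbf{F}^{r,s}$ with $\ell$ sufficiently large — form the Banach bundle $\Ee$ whose fibre at $(r,u,F^{r,s})$ is $L^p_{\delta}(u^*T(\R\times M))$, and let $\Ff$ be the section
\[
\Ff(r,u,F^{r,s}) \;=\; \partial_s u + J^{r,s}(u)\bigl(\partial_t u - V_{H^{r,s}}(u)\bigr).
\]
The moduli space $\widehat{\Mm}_{r,s}(X^0,X^1;F^{r,s})$ is the slice $\Ff^{-1}(0)\cap(\R\times \Bb\times\{F^{r,s}\})$.

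Next I would prove that the universal zero set $\widetilde{\Mm}=\Ff^{-1}(0)$ is a Banach submanifold by showing that the vertical differential $D\Ff$ is surjective at every zero $(r_0,u_0,F_0^{r,s})$. The horizontal part — that is, variation in $u$ and in $r$ — already produces a Fredholm operator of index equal to $1+\mathrm{ind}(D_u\Ff|_{r=r_0})$, so it suffices to hit a closed subspace of finite codimension in $L^p_{\delta}$ by varying $J^{r,s}$ alone. Following the standard argument of \cite{hs}, this reduces to the existence of a single injective point $(s_*,t_*)\in\R\times\R/\Z$ of $u_0$ at which $(\partial_t-V_{H^{r_0,s_*}})u_0 \neq 0$: one can then localize a $J$-perturbation in a neighborhood of $u_0(s_*,t_*)$, extend it smoothly in the parameters $(r,s)$ near $(r_0,s_*)$, and so reach any desired direction in the cokernel. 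Once $\widetilde{\Mm}$ is a Banach manifold, the projection $\pi\colon\widetilde{\Mm}\to\mathbf{F}^{r,s}_{C^{\ell}}$ is Fredholm; the Sard--Smale theorem supplies a residual set of regular values, and a standard Taubes-type argument upgrades the $C^{\ell}$ result to a residual subset of the smooth space $\mathbf{F}^{r,s}$. Intersecting over the countably many pairs $(X^0,X^1)$ meeting the simplicity hypothesis yields $\mathbf{F}^{r,s}_{\mathrm{reg}}$.

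The principal obstacle is, as in Proposition \ref{s-trans}, producing the injective point. The orbits in $X^0$ and $X^1$ are nonconstant but necessarily belong to $\R/\Z$-families, so the standard trick of perturbing $J$ away from constant-in-$t$ maps is not directly available, and multiply covered ends can in principle obstruct somewhere-injectivity. This is precisely the difficulty resolved in \cite{bo}: condition (J4) at each end of $H^{r,s}$ — which the homotopy data inherits via ($\mathrm{J}^{r,s}1$) from the endpoints $F^0,F^1$ (and which, by upper semicontinuity of the open condition (J4), may be assumed to hold throughout the homotopy) — forces the bracket $[V_{H^{r,s}},J^{r,s}V_{H^{r,s}}]$ to be transverse to $\mathrm{Span}\{V_{H^{r,s}},J^{r,s}V_{H^{r,s}}\}$ at every point on $X^0$ or $X^1$, and thereby makes the set of injective points of any Floer cylinder open and dense near the simple end. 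The remainder of the argument — Fredholm theory for the parametric linearization, compactly supported $J$-perturbations, and Sard--Smale — is routine once this injectivity is in hand.
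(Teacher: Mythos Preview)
Your proposal is correct and follows the same approach the paper has in mind: the paper does not give a separate proof of this proposition but treats it as an immediate parametric variant of Propositions~\ref{trans} and~\ref{s-trans}, relying on the Bourgeois--Oancea injectivity argument via condition~(J4) and then the standard \cite{hs}/Sard--Smale machinery. One small imprecision: you do not need (J4) to ``hold throughout the homotopy''---it is only required near the asymptotic ends, where $(\mathrm{J}^{r,s}1)$ already guarantees $J^{r,s}=J^0\in\Jj(H^0)$ or $J^{r,s}=J^1\in\Jj(H^1)$, and that is exactly where the injective-point argument of \cite{bo} is applied.
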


\subsection{Floer trajectories: $C^0$-bounds} With transversality in hand,  we now turn to compactness. Since the manifold $\R \times M$ is open, we must first establish $C^0$-bounds. The positive end of this manifold  ($\tau \to +\infty$) is never a possible source of noncompactness. Since every (family of) Hamiltonian(s) we consider is constant for $\tau \gg 0$ and every (family of) almost complex structure(s) we consider is cylindrical for $\tau \gg 0$ the maximal principle forbids our curves from entering these regions. It remains for us to deal with the negative end of $\R \times M$.

Before proceeding we recall the relevant notions of energy in this context and some useful equalities and inequalities involving them. 
For $F=(H,J)$ the $L^2$-energy of each $u \in \widehat{\Mm}(X, Y; F)$ is defined to be 
\begin{equation}
\label{energy}
E(u) = \int_{\R \times \R /\Z} d(e^{\tau} \lambda_0)(\p_s u, J(u) \p_s u) \, ds\, dt.
\end{equation}
The following well-known identity then follows from Stokes' Theorem and the definition of $\widehat{\Mm}(X, Y; F)$,
\begin{equation}
\label{energy-action}
E(u) = \Aa_{H}(X) - \Aa_{H}(Y).
\end{equation}
Hence, we have  
\begin{equation}
\label{energy-bound}
E(u) \leq \Delta(H)
\end{equation}
for any $u$ in any $\widehat{\Mm}(X, Y; F)$.

Similarly, for $F^s=(H^s,J^s)$ the $L^2$-energy of $u$ in
$
\widehat{\Mm}_s(X^0, X^1;\,F^s)
$
is defined to be
\begin{equation}
\label{s-energy }
E_s(u) = \int_{\R \times \R /\Z} d(e^{\tau} \lambda_0)(\partial_su, J^s(u) \partial_s u) \, ds\,dt.
\end{equation}
In this case, Stokes' theorem yields
\begin{equation}
\label{sr-energy-action}
E_s(u)= \Aa_{H^0}(X^0) - \Aa_{H^1}(X^1) + \int_{\R \times \R /\Z} \left(\p_sH^s\right)(u(s,t)) \, ds\,dt
\end{equation}
and 
\begin{equation}
\label{s-energy-action}
E_s(u) \leq  \Delta(H^0,H^1)+ \mathrm{cost}(H^s). 
\end{equation}
Finally, for $F^{r,s}=(H^{r,s}, J^{r,s})$ and  $(r,u)$ in
$
\widehat{\Mm}_{r,s}(X^0, X^1;\,F^{r,s}) 
$
 we have
\begin{equation}
\label{rs-energy-action}
E_{r,s}((r,u)) = \int_{\R \times \R /\Z} d(e^{\tau} \lambda_0)(\partial_su, J^{r,s}(u) \partial_s u) \, ds\,dt \leq   \Delta(H^0,H^1)+ \mathrm{cost}(H^{r,s}).
\end{equation}

We now prove that for a fixed $H$ we have uniform $C^0$-bounds for the elements of the spaces $\widehat{\Mm}(X, Y;\,F)$.
Recall that  $T_{\min}(\lambda_0)$ is the  smallest period of any closed Reeb orbit of $\lambda_0$.

\begin{Proposition}\label{C0}
Suppose  that $H$ is an admissible Hamiltonian and that $X$ and $Y$ are transversally nondegenerate families in $\Pp^-_{\R /\Z}(H)$. If $$
\Aa_{H}(X) - \Aa_{H}(Y) < T_{\min}(\lambda_0), $$ then there is a $K>0$ such that for any choice of Floer data of the form $F= (H, J)$ the image of every $u \in \widehat{\Mm}(X, Y;\,F)$ is contained in $[-K,+\infty)\times M \subset \R \times M$.
\end{Proposition}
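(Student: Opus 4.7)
The plan is to argue by contradiction, adapting the SFT-compactness strategy used in \cite{afm}. Suppose no uniform $K$ exists; then there is a sequence $u_n \in \widehat{\Mm}(X,Y;F)$ and points $(s_n,t_n) \in \R \times \R/\Z$ with $\sigma_n := -\tau(u_n(s_n,t_n)) \to +\infty$. By the exponential asymptotic convergence of $u_n$ to the families $X$ and $Y$ (both of which lie in a bounded range of $\tau$), the points $(s_n,t_n)$ remain in a compact subset of $\R \times \R/\Z$.

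The next step is to translate into the cylindrical region. Set $T_\sigma(\tau,p) := (\tau + \sigma, p)$ and $\tilde u_n := T_{\sigma_n} \circ u_n$, so that $\tau(\tilde u_n(s_n, t_n)) = 0$. Since $H \equiv 0$ and $J \equiv J_0$ on $\{\tau \leq 0\}$ and $J_0$ is $\tau$-invariant, $\tilde u_n$ satisfies the $J_0$-holomorphic equation on $\tilde u_n^{-1}(\{\tau \leq \sigma_n\})$, and this region exhausts $\R \times \R/\Z$ as $n \to \infty$. The $L^2$-energy is preserved under the shift on the cylindrical region and, by \eqref{energy-action}, is uniformly bounded by $\Aa_H(X) - \Aa_H(Y) < T_{\min}(\lambda_0)$. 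One now applies Gromov/SFT-type compactness to $\{\tilde u_n\}$: either the gradients are locally bounded and elliptic regularity yields a $C^\infty_{\mathrm{loc}}$-limit $v$, or the gradients blow up and standard rescaling produces a $J_0$-holomorphic bubble $v$. In either case, the anchor condition $\tau(\tilde u_n(s_n,t_n)) = 0$ (combined with a suitable choice of the points $(s_n, t_n)$, e.g.\ as centers of gradient blow-up when it occurs) prevents the mass from escaping to $\tau = \pm\infty$, and one obtains a nonconstant finite-energy $J_0$-holomorphic curve $v$ in $(\R \times M, d(e^\tau \lambda_0))$ whose energy is at most $\Aa_H(X) - \Aa_H(Y) < T_{\min}(\lambda_0)$.

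The contradiction now comes from the standard lower bound on the energy of a nontrivial holomorphic curve in a symplectization: by Hofer--Wysocki--Zehnder removal of singularities and asymptotic analysis, $v$ has at least one non-removable puncture asymptotic to a closed orbit of $R_{\lambda_0}$, and its Hofer (hence $L^2$-) energy is bounded below by the period of such an orbit, which is at least $T_{\min}(\lambda_0)$. This contradicts the energy bound established above and proves the proposition.

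The main obstacle is the extraction of a nontrivial limit: one must carefully rule out the degenerate cases in which $v$ is constant or a trivial cylinder over a Reeb orbit. This is addressed by choosing the anchor points $(s_n, t_n)$ with care -- not merely as minimizers of $\tau \circ u_n$ but as points that record genuine $J_0$-holomorphic content (via Hofer's lemma on rescaling at gradient maxima, for example). Once nontriviality of $v$ is secured, the asymptotic and energy estimates that yield the contradiction are routine consequences of the cylindrical structure at infinity.
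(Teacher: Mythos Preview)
Your overall strategy---contradict the assumption by extracting, via SFT-type compactness, a closed Reeb orbit of $\lambda_0$ with period at most $\Aa_H(X)-\Aa_H(Y)$---matches the paper's. But the key step of your argument fails as written: the claim that ``the $L^2$-energy is preserved under the shift on the cylindrical region'' is false. The symplectic form $d(e^\tau\lambda_0)$ satisfies $T_\sigma^*\,d(e^\tau\lambda_0)=e^\sigma\,d(e^\tau\lambda_0)$, so the $L^2$-energy of $\tilde u_n=T_{\sigma_n}\circ u_n$ on the $J_0$-holomorphic region is $e^{\sigma_n}$ times the corresponding energy of $u_n$. Since $\sigma_n\to+\infty$, you have no uniform bound on $E(\tilde u_n)$, and your compactness step has no input. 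The parenthetical ``Hofer (hence $L^2$-) energy'' compounds the confusion: these are different quantities, and for finite-energy curves in a symplectization the $d(e^\tau\lambda_0)$-energy is typically infinite.

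The paper repairs exactly this point. Rather than translating, it restricts each $u_k$ to the sublevel $V_k=u_k^{-1}((-\infty,-\epsilon_k]\times M)$, obtaining genuine $J_0$-holomorphic pieces $v_k$ with boundary on the slice $\{\tau=-\epsilon_k\}$. Two applications of Stokes' theorem convert the $d(e^\tau\lambda_0)$-energy of $v_k$ (which is bounded by $\Aa_H(X)-\Aa_H(Y)$ via \eqref{energy-action}) into a bound on the \emph{Hofer} energy $E_{\mathrm{Hofer}}(v_k)\le e^{\epsilon_k}(\Aa_H(X)-\Aa_H(Y))$; this quantity \emph{is} translation invariant and is the correct input for SFT compactness. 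The paper then invokes the neck-stretching compactness of \cite{afm,cm} to produce a subsequence converging on long cylinders to a \emph{trivial cylinder} over a Reeb orbit of period at most $\Aa_H(X)-\Aa_H(Y)$. Note in particular that the ``trivial cylinder'' outcome you flag as a degenerate case to be excluded is precisely the desired conclusion: it already exhibits a Reeb orbit of small period and yields the contradiction. The only case to exclude is the constant limit, and that is handled by the Hofer-energy bound together with the fact that the minima of $\rho_k$ tend to $-\infty$.
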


\begin{proof}

In terms of the product structure of $\R \times M$, any $u \in \widehat{\Mm}(X, Y;\,F)$ can be written in the form  $$u(s,t) = (\rho(s,t), \xi(s,t)).$$
Arguing by contradiction we assume that  there is  a sequence of almost-complex structures $J_k$ in $\Jj(H)$,  and  a sequence of curves  $$u_k(s,t) = (\rho_k(s,t), \xi_k(s,t))$$
in $\widehat{\Mm}(X, Y;\,F_k)$  for $F_k = (H,J_k)$ such that 
\begin{equation}
\label{assumption2}
\lim_{k \to \infty}\left(\min_{(s,t) \in \R\times \R /\Z} \rho_k(s,t) \right) = -\infty.
\end{equation}

To obtain the desired contradiction we will argue as in \cite{afm} (see also \cite{ehs}). 
We begin by isolating purely $J_0$-holomorphic portions of the $u_k$. Fix  a decreasing sequence  $\epsilon_k \searrow0$ such that $u_k$ is transverse to $\{-\epsilon_k \} \times M$ for all $k \in \N$ and  set $$V_k = u_k^{-1}((-\infty, -\epsilon_k]\times M).$$ By \eqref{assumption2} we may assume, by passing to a subsequence if necessary, that each $V_k$ is nonempty. Let  $v_k = u_k|_{V_k}$.  Since $H$ is admissible and $J_k$ belongs to $\Jj(H)$, each  $v_k$ is a $J_0$-holomorphic curve with a possibly disconnected domain and image in $\{\tau \leq 0\}$. For these curves we have 
\begin{equation*}
\label{v-energy}
\int_{V_k}v_k^* d(e^{\tau}\lambda_0)  = \int_{V_k} d(e^{\tau}\lambda_0)(\partial_su_k, J_k(u_k) \partial_s u_k) \, ds\, dt <E(u_k).
\end{equation*}
and so, by equation \eqref{energy-action} we have
\begin{equation}
\label{v-energy}
 \int_{V_k}v_k^*d(e^{\tau}\lambda_0)< \Aa_{H}(X) - \Aa_{H}(Y).
\end{equation}


The Hofer energy of $v_k$ is
$$
E_{\mathrm{Hofer}}(v_k) =  \sup_{\phi } \int_{V_k} v_k^*d(\phi \lambda_0)
$$
where the supremum is over all functions $\phi$ in  $C^{\infty}(\R, [0,1])$ that are nondecreasing.

Applying Stokes' Theorem twice we get 
\begin{eqnarray*}
E_{\mathrm{Hofer}}(v_k) & = &  \sup_{\phi} \int_{V_k} v_k^*d(\phi \lambda_0)\\
{} & = & \sup_{\phi} \int_{\partial V_k} v_k^*(\phi \lambda_0)\\
{} & = & \int_{\partial V_k} v_k^* \lambda_0\\
{} & = & e^{\epsilon_k}\int_{\partial V_k} v_k^*( e^{\tau}\lambda_0)\\
{}& = &   e^{\epsilon_k}\int_{V_k} v_k^*d (e^{\tau}\lambda_0)\\
{}& = &    e^{\epsilon_k}\int_{V_k} v_k^*d(e^{\tau}\lambda_0).
\end{eqnarray*}
Hence, by \eqref{v-energy}, we have 
\begin{equation*}
\label{ }
E_{\mathrm{Hofer}}(v_k ) < e^{\epsilon_k}(\Aa_{H}(X) - \Aa_{H}(Y)).
\end{equation*}
%
So, we have a sequence, $v_k$, of $J_0$-holomorphic curves  in the symplectization $(\R \times M, d(e^{\tau} \lambda_0))$ whose Hofer energies are uniformly bounded from above, which all intersect the hypersurface $\{-\epsilon_1\} \times M$, and whose $\R$-components have minima that converge to negative infinity. This suggests that the curves break at $\tau=-\infty$ along closed Reeb orbits of $\lambda_0$ with period less than the limiting energy bound. 
Indeed this is the case. This is a consequence of Theorem 5.3 of \cite{afm}, which utilizes the compactness argument from \cite{cm} and yields the following precise statement in the present setting.

\begin{Proposition}[Theorem 5.3, \cite{afm}]
There is a subsequence $k_n$ and cylinders $C_n \subset U_{k_n}$ that are biholomorphically equivalent to the standard cylinders $[-L_n, L_n] \times \R /\Z$ such that the lengths $L_n \to \infty$ and the curves $v_{k_n}|_{C_n}$
converge in  $C^{\infty}_{loc}(\R\times \R /\Z, \R \times M)$ to a trivial cylinder over a closed Reeb orbit of $\lambda_0$ of period at most $\Aa_{H}(X) - \Aa_{H}(Y)$. 
\end{Proposition}

\noindent The existence of this closed Reeb orbit of $\lambda_0$ implies that  
$
T_{\min}(\lambda_0) \leq \Aa_{H}(X) - \Aa_{H}(Y)
$ 
and we have arrived at the desired contradiction. 
\end{proof}

Starting from the uniform bounds \eqref{s-energy-action} and \eqref{rs-energy-action}, and arguing as above one also obtains  $C^0$-bounds for moduli spaces of the form $\widehat{\Mm}_s(X^0, X^1;\,F^s)$ and $
\widehat{\Mm}_{r,s}(X^0, X^1;\,F^{r,s})$ whenever $\Aa_{H^0}(X^0) - \Aa_{H^1}(X^1) + \mathrm{cost}(H^s)< T_{\min}(\lambda_0)$ and $\Aa_{H^0}(X^0) - \Aa_{H^1}(X^1) + \mathrm{cost}(H^{r,s})< T_{\min}(\lambda_0)$, respectively.

\subsection{Floer trajectories: Quotients and Compactifications}\label{quotient} Consider a regular Floer data set $F =(H, J)$ in $\mathbf{F}_{\mathrm{reg}}$ and two distinct families $X$ and $Y$ in $\Pp^-_{\R /\Z}(H)$  at least one of which is simple. Since both $H$ and $J$ do not depend on $t$, it follows that  the $\R \times \R /\Z$-action on  $\widehat{\Mm}(X, Y;\,F)$ given by  
\begin{equation*}
\label{ }
(s,t) * u(\cdot,\cdot) = u(\cdot+s,\cdot+t)
\end{equation*}
is free. The quotient
$$
\Mm(X, Y;\,F)=\widehat{\Mm}(X, Y;\,F)/ (\R \times \R /\Z).
$$
is then a smooth manifold. Let  $\Mm^k(X, Y;\,F)$ be the submanifold of $\Mm(X, Y;\,F)$ which consists of all its components which have dimension $k$. Given Proposition \ref{C0}, the follow compactness statements are then standard.

\begin{Proposition}\label{break-glue} Suppose $\Aa_{H}(X) - \Aa_{H}(Y)<T_{\min}(\lambda_0)$.  Then $\Mm^0(X, Y;\,F)$ is a compact manifold of dimension zero. If, in addition, both $X$ and $Y$ are simple,  then $\Mm^1(X, Y;\,F)$ admits a compactification $\overline{\Mm}^1(X, Y;\,F)$ which is a $1$-dimensional  manifold with boundary equal to
\begin{equation*}
\label{ }
\bigcup_{Z \in  \Pp^-_{\R /\Z}(H)} \Mm^0(X, Z;\,F) \times  \Mm^0(Z, Y;\, F).
\end{equation*}
\end{Proposition}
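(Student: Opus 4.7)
The plan is to combine the uniform $C^0$-bound of Proposition \ref{C0} with standard Gromov--Floer compactness and gluing, adapted to the autonomous Morse--Bott setting as in \cite{bo}. Since by hypothesis $\Aa_H(X)-\Aa_H(Y)<T_{\min}(\lambda_0)$, Proposition \ref{C0} confines every $u\in\widehat{\Mm}(X,Y;F)$ to a half-space $[-K,+\infty)\times M$ for a uniform $K>0$. At the opposite end, $dH\equiv 0$ and $J$ is cylindrical for $\tau\geq\mathbf{T}$, so $\tau\circ u$ is subharmonic there and the maximum principle rules out escape to $\tau=+\infty$. Combined with the a priori energy bound \eqref{energy-bound} and standard elliptic bootstrapping, this yields $C^{\infty}_{\mathrm{loc}}$-precompactness of any sequence in $\widehat{\Mm}(X,Y;F)$ modulo the free $\R\times\R /\Z$ reparametrization.

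Next I would invoke Gromov--Floer compactness. Sphere bubbling is excluded because $d(e^{\tau}\lambda_0)$ is exact, so any nonconstant $J$-holomorphic sphere would have zero symplectic area, which is impossible. Therefore any subsequential limit decomposes as a broken trajectory $X=Z_0\to Z_1\to\cdots\to Z_k=Y$ with $u_i\in\widehat{\Mm}(Z_{i-1},Z_i;F)$ and each intermediate $Z_i$ an $\R /\Z$-family of nonconstant $1$-periodic orbits of $H$. Strict positivity of energy on each piece produces the chain
\[
\Aa_H(X)>\Aa_H(Z_1)>\cdots>\Aa_H(Z_{k-1})>\Aa_H(Y),
\]
so every $Z_i$ has negative action and thus belongs to $\Pp^-_{\R /\Z}(H)$, using the transversal nondegeneracy assumption on negative-action orbits imposed in Section \ref{acs}.

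For part one, no breaking can occur. Each factor $\Mm(Z_{i-1},Z_i;F)$ carries a free $\R\times\R /\Z$ action, so by Fredholm theory its formal (transversely cut-out) dimension is nonnegative, and a $k$-fold breaking of a zero-dimensional moduli space would have to produce a stratum of formal dimension $-(k-1)<0$ whenever $k\geq 2$. Hence $\Mm^0(X,Y;F)$ admits no broken limits and is already compact, so it is a finite collection of points. For part two, assume in addition that both $X$ and $Y$ are simple. Then every broken limit $X\to Z\to Y$ has both endpoints simple, so Proposition \ref{trans} guarantees transversality of each piece, and dimension counting forces precisely one intermediate break $Z$ with both factors zero-dimensional. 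This describes the ends of $\Mm^1(X,Y;F)$, and standard Morse--Bott gluing (as in \cite{bo}) shows that each pair $(u_1,u_2)\in\Mm^0(X,Z;F)\times\Mm^0(Z,Y;F)$ glues to a unique end of $\Mm^1(X,Y;F)$, yielding the compactification $\overline{\Mm}^1(X,Y;F)$ as a $1$-manifold with the stated boundary.

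The main obstacle I anticipate is verifying that the standard Gromov--Floer package transfers intact to this non-compact, autonomous symplectization setting. The $C^0$-bound from Proposition \ref{C0}, the exactness of $d(e^{\tau}\lambda_0)$, and the Bourgeois--Oancea Morse--Bott transversality framework together address these issues, reducing the task to a careful bookkeeping of action values that keeps all intermediate breaks inside $\Pp^-_{\R /\Z}(H)$, combined with the standard gluing--breaking correspondence now adapted to the free $\R\times\R /\Z$ quotient.
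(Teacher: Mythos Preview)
Your proposal is correct and is precisely the ``standard'' argument the paper is invoking: the paper gives no detailed proof here, stating only that ``Given Proposition \ref{C0}, the follow[ing] compactness statements are then standard.'' You have filled in exactly what is meant---$C^0$-confinement via Proposition \ref{C0} and the maximum principle, exclusion of sphere bubbles by exactness of $d(e^{\tau}\lambda_0)$, action monotonicity forcing all intermediate breaks $Z$ to lie in $\Pp^-_{\R /\Z}(H)$, and the Bourgeois--Oancea Morse--Bott transversality and gluing package to control the boundary strata.
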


Now consider two admissible and nondegenerate Hamiltonians $H^0$ and $H^1$, regular Floer data  $F^0=(H^0,J^0)$ and $F^1=(H^1,J^1)$  and  regular Floer continuation data $F^s \in\mathbf{F}^s(F^0,F^1)$ in $\mathbf{F}^s_{\mathrm{reg}}$. For families $X^0 \in \Pp^-_{\R /\Z}(H^0)$ and $X^1 \in \Pp^-_{\R /\Z}(H^1)$, at least  one of which is simple,  the manifold  $\widehat{\Mm}_s(X^0, X^1;\,F^s)$ admits a  free $\R /\Z$-action,
\begin{equation*}
\label{ }
t * u(\cdot,\cdot) = u(\cdot,\cdot+t)
\end{equation*}
The quotient 
$$
\Mm_s(X^0, X^1;\,F^s)=\widehat{\Mm}_s(X^0, X^1;\,F^s)/  (\R /\Z).
$$
is then a smooth manifold and we let 
$\Mm_s^k(X^0, X^1;\,F^s)$ be the collection of its $k$-dimensional components. In this case we get the following compactness result.

\begin{Proposition}\label{s-break-glue}
Suppose  $\Aa_{H^0}(X^0) - \Aa_{H^1}(X^1) + \mathrm{cost}(H^s)< T_{\min}(\lambda_0)$. Then $\Mm^0_s(X^0, X^1;\,F^s)$ is compact.
If, in addition, both $X^0$ and $X^1$ are simple and $\mathrm{cost}(H^s) < -\Aa_{H}(X^0)$,  then $\Mm^1_s(X^0, X^1;\,F^s)$ admits a compactification $\overline{\Mm}^1_s(X^0, X^1;\,F^s)$ which is a $1$-dimensional  manifolds whose boundary is  
\begin{eqnarray*}
{} & {} & \bigcup_{Y^0 \in \Pp^-_{\R /\Z}(H^0)} \Mm^0(X^0, Y^0;\,F^0) \times  \Mm^0_s(Y^0, X^1;\,F^s) \\
{} & \cup & \bigcup_{Y^1 \in \Pp^-_{\R /\Z}(H^1)} \Mm^0_s(X^0, Y^1;\,F^s) \times \Mm^0(Y^1, X^1;\,F^1). 
\end{eqnarray*}
\end{Proposition}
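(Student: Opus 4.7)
The plan is to run the standard Floer-theoretic compactness and gluing package, with the only technical twist being control of the non-compact $\tau$-direction in $\R\times M$. For any sequence in $\Mm^k_s(X^0,X^1;F^s)$, the uniform energy bound \eqref{s-energy-action} together with the continuation analog of Proposition \ref{C0} (noted immediately after its proof, applicable here because the hypothesis $\Aa_{H^0}(X^0)-\Aa_{H^1}(X^1)+\mathrm{cost}(H^s)<T_{\min}(\lambda_0)$ is precisely what is required) delivers a uniform $C^0$-bound on the image; exactness of $d(e^{\tau}\lambda_0)$ forbids $J$-holomorphic sphere bubbles. Gromov--Floer compactness then extracts a limit in the form of a (possibly broken) cascade. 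In the zero-dimensional case, transversality (Propositions \ref{trans} and \ref{s-trans}) makes each component after quotient nonnegative-dimensional while each break costs at least one dimension; with total Fredholm dimension zero no breaking can occur, so $\Mm^0_s(X^0,X^1;F^s)$ is already compact.

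For a sequence in $\Mm^1_s(X^0,X^1;F^s)$, dimension counting permits at most one break, at either the $-\infty$ or the $+\infty$ end of the cylinder. A break at $-\infty$ yields a pair in $\Mm^0(X^0,Y^0;F^0)\times\Mm^0_s(Y^0,X^1;F^s)$ for some $\R/\Z$-family $Y^0$ of $1$-periodic orbits of $H^0$; action monotonicity along the $F^0$-piece forces $\Aa_{H^0}(Y^0)\leq\Aa_{H^0}(X^0)<0$, so $Y^0\in\Pp^-_{\R/\Z}(H^0)$ automatically. A break at $+\infty$ yields a pair in $\Mm^0_s(X^0,Y^1;F^s)\times\Mm^0(Y^1,X^1;F^1)$ where the identity \eqref{sr-energy-action} gives $\Aa_{H^1}(Y^1)\leq\Aa_{H^0}(X^0)+\mathrm{cost}(H^s)$, which is negative exactly because of the extra hypothesis $\mathrm{cost}(H^s)<-\Aa_{H^0}(X^0)$, placing $Y^1$ in $\Pp^-_{\R/\Z}(H^1)$. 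Since $\Aa_{H^0}(Y^0)\leq\Aa_{H^0}(X^0)$ and $\Aa_{H^1}(Y^1)\geq\Aa_{H^1}(X^1)$, the original action-plus-cost hypothesis persists for each surviving factor, so Proposition \ref{break-glue} and the zero-dimensional case above supply the compactness of each factor. Standard gluing then endows $\Mm^1_s(X^0,X^1;F^s)$ with the advertised compactification $\overline{\Mm}^1_s(X^0,X^1;F^s)$ with boundary the stated union.

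The main obstacle, as in Proposition \ref{C0}, is to prevent a sequence of solutions from escaping to $\tau=-\infty$ by forming a neck asymptotic to a closed Reeb orbit of $\lambda_0$; the combined action-plus-cost bound in the hypothesis is what rules this out, via an essentially verbatim adaptation of the $J_0$-holomorphic bubble-off argument for Proposition \ref{C0}. The role of the auxiliary assumption $\mathrm{cost}(H^s)<-\Aa_{H^0}(X^0)$ is purely bookkeeping: without it a break at $+\infty$ could land on a $1$-periodic orbit with nonnegative action, lying outside the negative-action Floer theory being constructed.
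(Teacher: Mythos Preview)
Your argument is correct and follows exactly the standard route the paper alludes to: the proposition is stated there without proof, the paper simply declaring that ``Given Proposition~\ref{C0}, the follow[ing] compactness statements are then standard'' and appending the one-line remark about the role of the cost bound. Your write-up fills in precisely those standard details---$C^0$-control via the continuation analogue of Proposition~\ref{C0}, exclusion of sphere bubbles by exactness, Gromov--Floer compactness, dimension counting to limit breaking, and gluing---so there is nothing to compare.

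One small point worth recording: your identification of \emph{which} intermediate orbit the hypothesis $\mathrm{cost}(H^s)<-\Aa_{H^0}(X^0)$ controls is the correct one. The paper's remark after the proposition says it is needed so that ``the orbits $Y_0$ \ldots\ have negative action,'' but as you observe, $\Aa_{H^0}(Y^0)\leq\Aa_{H^0}(X^0)<0$ is automatic from the energy identity for the $F^0$-piece; it is the orbit $Y^1$ arising from a break at $+\infty$ whose negativity requires the cost bound, via $\Aa_{H^1}(Y^1)\leq\Aa_{H^0}(X^0)+\mathrm{cost}(H^s)$. The paper's remark appears to be a typo.
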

\noindent Note that the condition $\mathrm{cost}(H^s) < -\Aa_{H}(X^0)$ is needed to ensure that the orbits $Y_0$ that appear in the expression above have negative action.

Finally, for regular Floer homotopy data $F^{r,s}=(H^{r,s},J^{r,s}) \in\mathbf{F}^{r,s}(F^0,F^1)$ we set 
$$
\Mm_{r,s}(X^0, X^1;\,F^{r,s})=\widehat{\Mm}_{r,s}(X^0, X^1;\,F^s)/  (\R /\Z),  
$$
and define $\Mm_{r,s}^k(X^0, X^1;\,F^{r,s})$ as above. For $k=0$ we get the following.

\begin{Proposition}\label{sr-break-glue}
If  $\Aa_{H^0}(X^0) - \Aa_{H^1}(X^1) + \mathrm{cost}(H^{r,s})< T_{\min}(\lambda_0)$  then $\Mm^0_{r,s}(X^0, X^1;\,F^{r,s})$ is a compact manifold of dimension zero.
\end{Proposition}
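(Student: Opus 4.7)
The plan is to combine the uniform energy bound $\eqref{rs-energy-action}$ with a $C^0$-bound obtained by adapting the proof of Proposition \ref{C0} to the $r$-dependent setting, then invoke standard Floer--Gromov compactness and dimension counting exactly as in Propositions \ref{break-glue} and \ref{s-break-glue}. The hypothesis and $\eqref{rs-energy-action}$ give
\[
E_{r,s}((r,u)) \leq \Aa_{H^0}(X^0) - \Aa_{H^1}(X^1) + \mathrm{cost}(H^{r,s}) < T_{\min}(\lambda_0)
\]
for every $(r,u)\in\widehat{\Mm}_{r,s}(X^0,X^1;F^{r,s})$, and this bound is uniform in $r$ since $\mathrm{cost}(H^{r,s})$ is a supremum over all $(r,s)$. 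The positive end of $\R\times M$ is automatically controlled by the maximum principle, since each $H^{r,s}$ is locally constant and each $J^{r,s}$ is cylindrical for $\tau \gg 0$ (by $(\mathrm{J}^{r,s}1)$ combined with $(\mathrm{J}^s4)$).

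For the negative end I would argue by contradiction exactly as in Proposition \ref{C0}. Supposing a sequence $(r_n,u_n)$ with $u_n=(\rho_n,\xi_n)$ satisfies $\min\rho_n \to -\infty$, I would cut along $\{\tau=-\epsilon_n\}$ for $\epsilon_n \searrow 0$ and restrict each $u_n$ to $V_n=u_n^{-1}((-\infty,-\epsilon_n]\times M)$. The resulting pieces $v_n$ are honestly $J_0$-holomorphic because $H^{r,s}$ vanishes and $J^{r,s}=J_0$ on $\{\tau\leq 0\}$ (from $(\mathrm{H}^s3)$ and $(\mathrm{J}^s3)$, carried over to the $(r,s)$-setting by $(\mathrm{J}^{r,s}1)$). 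The same Stokes computation bounds $E_{\mathrm{Hofer}}(v_n) \leq e^{\epsilon_n} E_{r_n,s}(u_n) < T_{\min}(\lambda_0)$ for large $n$, and Theorem 5.3 of \cite{afm} then extracts a limiting trivial cylinder over a closed Reeb orbit of $\lambda_0$ of period strictly less than $T_{\min}(\lambda_0)$, contradicting the definition of $T_{\min}(\lambda_0)$.

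With uniform energy and $C^0$-bounds in hand, standard Floer--Gromov compactness applies; bubbling is excluded because $d(e^\tau\lambda_0)$ is exact on the symplectization. Any failure of sequential compactness of $\Mm^0_{r,s}(X^0,X^1;F^{r,s})$ must therefore arise from Floer breaking at $s=\pm\infty$ or from escape of the $r$-parameter, and in the $0$-dimensional setting both are ruled out by dimension counting: a breaking would pair the $0$-dimensional stratum with a nonnegative-dimensional Floer trajectory of $F^0$ or $F^1$, producing a stratum of strictly positive total dimension, while for generic $F^{r,s}$ the $r$-slice moduli spaces $\Mm_s(X^0,X^1;F^{r,s})$ have virtual dimension $-1$ and so the limits as $r_n\to\pm\infty$ would land in empty loci. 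The delicate step throughout is the negative-end $C^0$-bound; it transports verbatim to the $(r,s)$ setting precisely because $\mathrm{cost}(H^{r,s})$ already controls the energy uniformly over all parameters.
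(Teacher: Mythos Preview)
Your proposal is correct and matches the paper's approach: the paper does not prove Proposition~\ref{sr-break-glue} separately but declares it (together with Propositions~\ref{break-glue} and~\ref{s-break-glue}) ``standard'' once the $C^0$-bounds of Proposition~\ref{C0} are in hand, having already noted after that proposition that the same negative-end argument applies to $\widehat{\Mm}_{r,s}$ under the hypothesis $\Aa_{H^0}(X^0)-\Aa_{H^1}(X^1)+\mathrm{cost}(H^{r,s})<T_{\min}(\lambda_0)$. Your write-up simply unpacks those standard details---the uniform energy bound \eqref{rs-energy-action}, the maximum principle at the positive end, the Albers--Fuchs--Merry argument at the negative end, and exactness to exclude bubbling---exactly as the paper intends.
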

\noindent For $k=1$ there are two important versions of the relevant compactness statement to state. 

\medskip
\noindent{\bf Version 1: A closed homotopy of homotopies.} We say that the  homotopy data $F^{r,s}$ is closed if for some $\mathbf{R}>0$
\begin{equation*}
\label{ }
F^{r,s}=\begin{cases}
     F^{0,s} \in \mathbf{F}^s_{\mathrm{reg}}(F^0,F^1)& \text{for all $r \leq -\mathbf{R}$}, \\
     F^{1,s}  \in \mathbf{F}^s_{\mathrm{reg}}(F^0,F^1)& \text{for all $r \geq \mathbf{R}$}.
\end{cases}
\end{equation*}
\begin{Proposition}\label{chain homotopy}
Suppose that $F^{r,s} =(H^{r,s}, J^{r,s})$ is regular and closed and that $\Aa_{H^0}(X^0) - \Aa_{H^1}(X^1) + \mathrm{cost}(H^{r,s})< T_{\min}(\lambda_0)$. If $X^0$ and $X^1$ are both simple and $\mathrm{cost}(H^{r,s}) < |\Aa_{H}(X^0)|$, then $\Mm^1_{r,s}(X^0, X^1;\,F^{r,s})$ admits a compactification $\overline{\Mm}^1_{r,s}(X^0, X^1;\,F^{r,s})$ which is a $1$-dimensionsal manifold with boundary  equal to
\begin{eqnarray*}
{} & {} & \Mm^0_s(X^0, X^1;\,F^{0,s}) \cup  \Mm^0_s(X^0, X^1;\,F^{1,s})\\
{} & \cup & \bigcup_{Y^0 \in \Pp^-_{\R /\Z}(H^0)} \Mm^0(X^0, Y^0;\,F^0) \times  \Mm^{0}_{r,s}(Y^0, X^1;\,F^{r,s}) \\
{} & \cup & \bigcup_{Y^1 \in \Pp^-_{\R /\Z}(H^1)} \Mm^{0}_{r,s}(X^0, Y^1;\,F^{r,s}) \times \Mm^0(Y^1, X^1;\,F^1). 
\end{eqnarray*}
\end{Proposition}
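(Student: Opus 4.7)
The plan is to carry over the standard compactness/gluing package for one-parameter families of Floer continuation trajectories, using the hypotheses of the proposition precisely where $C^0$-bounds and negativity of intermediate actions are needed.

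First I would establish the uniform $C^0$-bound for the elements of $\Mm^1_{r,s}(X^0,X^1;F^{r,s})$, adapting the proof of Proposition \ref{C0}. Given a sequence $(r_n,u_n)$ in this moduli space, the energy estimate \eqref{rs-energy-action} together with the hypothesis
\[
\Aa_{H^0}(X^0) - \Aa_{H^1}(X^1) + \mathrm{cost}(H^{r,s}) < T_{\min}(\lambda_0)
\]
gives a uniform bound on $E_{r,s}((r_n,u_n))$ strictly below $T_{\min}(\lambda_0)$. If the $\tau$-components of the $u_n$ were not uniformly bounded from below, the restrictions of $u_n$ to $\{\tau\le -\epsilon_n\}$ would become $J_0$-holomorphic (by condition ($\mathrm{J}^s$3) applied in each $r$-slice) with Hofer energy uniformly below $T_{\min}(\lambda_0)$, and the Hofer/Cieliebak--Mohnke argument used in Proposition \ref{C0} (via Theorem 5.3 of \cite{afm}) would extract a nontrivial closed Reeb orbit of $\lambda_0$ of period strictly less than $T_{\min}(\lambda_0)$, a contradiction.

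Next I would combine this $C^0$-bound with standard Gromov--Floer compactness in the parametric setting. Given a sequence $(r_n,u_n)$ in $\Mm^1_{r,s}$, after extraction either $|r_n|\to\infty$ or $r_n\to r_0\in\R$. In the first case, the closedness of the homotopy $F^{r,s}$ forces the $u_n$ to eventually solve the $s$-equation for $F^{0,s}$ or $F^{1,s}$, producing limits in $\Mm^0_s(X^0,X^1;F^{0,s})$ or $\Mm^0_s(X^0,X^1;F^{1,s})$ respectively (these are the first two boundary strata). In the second case, either $u_n$ converges to an interior point of $\Mm^1_{r,s}$, or it breaks. Since the $F^{r,s}$ equation is non-autonomous in $s$, bubbling of a new trajectory in the middle is ruled out in the usual way and breaking can only occur at one of the two asymptotic ends, producing a pair $(u_-,u_+)$ with an intermediate orbit $Y^0\in\Pp^-_{\R/\Z}(H^0)$ (breaking on the left) or $Y^1\in\Pp^-_{\R/\Z}(H^1)$ (breaking on the right).

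To see that the intermediate orbits belong to the relevant moduli spaces I would use the two energy identities. A left breaking yields $u_-\in\Mm^0(X^0,Y^0;F^0)$, and \eqref{energy-action} gives $\Aa_{H^0}(Y^0)\le \Aa_{H^0}(X^0)<0$ automatically, so $Y^0\in\Pp^-_{\R/\Z}(H^0)$ as required. A right breaking yields $u_-\in\Mm^0_{r,s}(X^0,Y^1;F^{r,s})$, and \eqref{rs-energy-action} gives
\[
\Aa_{H^1}(Y^1)\le \Aa_{H^0}(X^0)+\mathrm{cost}(H^{r,s}),
\]
which is strictly negative precisely because the hypothesis $\mathrm{cost}(H^{r,s})<|\Aa_{H^0}(X^0)|$ holds; so $Y^1\in\Pp^-_{\R/\Z}(H^1)$. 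The simplicity of $X^0$ and $X^1$ keeps all moduli spaces occurring in the boundary within the regularity range of Propositions \ref{trans}, \ref{s-trans} and \ref{rs-trans}, and the remaining energy bounds propagate through each factor so Proposition \ref{C0} and its $s$- and $(r,s)$-analogues apply.

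Finally I would invoke the standard parametric gluing theorem for Floer continuation data to show that each configuration appearing in the claimed boundary is uniquely approximated, up to reparametrization, by an end of $\Mm^1_{r,s}(X^0,X^1;F^{r,s})$, giving the compactification $\overline{\Mm}^1_{r,s}$ the structure of a $1$-manifold with the stated boundary. The main obstacle, as in the proof of Proposition \ref{C0}, is the absence of a filling: all the non-trivial work sits in the $C^0$-bound in Step 1, and the additional bookkeeping in Step 3 to ensure every intermediate breaking orbit still has negative action, which is exactly what the two numerical hypotheses of the proposition encode.
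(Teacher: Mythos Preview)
Your proposal is correct and matches the paper's approach: the paper does not give a separate proof of this proposition, grouping it with Propositions \ref{break-glue}, \ref{s-break-glue}, \ref{sr-break-glue} and \ref{composition} under the remark that ``Given Proposition \ref{C0}, the following compactness statements are then standard.'' Your outline is precisely the standard argument the paper has in mind, with the $C^0$-bound coming from the $(r,s)$-version of Proposition \ref{C0} via the energy estimate \eqref{rs-energy-action}, and the cost hypothesis $\mathrm{cost}(H^{r,s})<|\Aa_{H^0}(X^0)|$ used exactly to force the intermediate orbits arising from right-end breaking to have negative action (cf.\ the paper's remark after Proposition \ref{s-break-glue}).
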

\medskip
\noindent{\bf Version 2: A half-open homotopy of homotopies.} We now consider a more explicit homotopy of homotopies with an open end. Let 
$H^0$, $H^1$ and $G$ be nondegenerate and admissible Hamiltonians. Consider  two admissible homotopies,   $H_0^s$ from
from $H^0$ to $G$, and $H_1^s$  
from $G$ to $H^1$. Now consider a homotopy of homotopies of the form 
\begin{equation*}
\label{ }
H_{0\#1}^{r,s} = \begin{cases}
   H_0^{s+ \xi(r)}   & \text{for $s\leq 0$}, \\
    H_1^{s-\xi(r)}  & \text{for $s>0$}.
\end{cases}
\end{equation*}
where $\xi(r)$ is a smooth positive and nondecreasing function which equals $r$ for $r \gg \xi(0)$ and which equals $\xi(0)$ for $r \leq \xi(0)/2$.
It is easy to check that this is an admissible homotopy of homotopies from $H^0$ to $H^1$ if we choose $\xi(0)$ to be sufficiently large.

Fixing regular Floer data sets $F^0=(H^0,J^0)$, $F^1=(H^1,J^1)$ and $F^G=(G, J^G)$ we extend these to Floer continuation data sets
\begin{equation*}
\label{ }
F_0^s= (H_0^s, J_0^s) \in \mathbf{F}^s(F^0, F^G)
\end{equation*}
and 
\begin{equation*}
\label{ } 
F_1^s= (H_1^s, J_1^s) \in \mathbf{F}^s(F^G, F^1),
\end{equation*}
which we use to form  the Floer homotopy data set
\begin{equation*}
\label{ }
F_{0\#1}^{r,s} = \begin{cases}
  F_0^{s+ \xi(r)}   & \text{for $s\leq 0$}, \\
    F_1^{s-\xi(r)}  & \text{for $s>0$}
\end{cases}
\end{equation*}
in $\mathbf{F}^{r,s}(F^0, F^1)$. 
Perturbing these, if necessary, we may assume the three previous data sets are all regular.

\begin{Proposition}\label{composition}
Suppose that $\Aa_{H^0}(X^0) - \Aa_{H^1}(X^1) + \mathrm{cost}(H^{r,s})< T_{\min}(\lambda_0)$. If $X^0$ and $X^1$ are both simple and $\mathrm{cost}(H_{0\#1}^{r,s}) < -\Aa_{H}(X^0),$  then  the manifold  $$\Mm^1_{r,s}(X^0, X^1;\,F_{0\#1}^{r,s})$$ admits a compactification $$\overline{\Mm}^1_{r,s}(X^0, X^1;\,F_{0\#1}^{r,s})$$ which is a $1$-dimensional  manifold whose boundary is  
\begin{eqnarray*}
{} & {} & \Mm^0_s(X^0, X^1;\,F_{0\#1}^{0,s})\\
{} & \cup & \bigcup_{Z \in \Pp^-_{\R /\Z}(G)} \Mm^0_s(X^0, Z;\,F^s_0) \times  \Mm^0_s(Z, X^1;\,F^s_1)\\
{} & \cup & \bigcup_{Y^0 \in \Pp^-_{\R /\Z}(H^0)} \Mm^0(X^0, Y^0;\,F^0) \times  \Mm^{0}_{r,s}(Y^0, X^1;\,F_{0\#1}^{r,s}) \\
{} & \cup & \bigcup_{Y^1 \in \Pp^-_{\R /\Z}(H^1)} \Mm^{0}_{r,s}(X^0, Y^1;\,F_{0\#1}^{r,s}) \times \Mm^0(Y^1, X^1;\,F^1). 
\end{eqnarray*}
\end{Proposition}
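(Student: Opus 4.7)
The plan is to follow the Floer--Gromov compactness-and-gluing paradigm applied to a parametrized moduli space in which the parameter $r$ controls a neck-stretching between two continuation homotopies. The two technical inputs are a uniform $C^0$-bound for the curves $u$ independent of $r$, and a case analysis of how an end of a $1$-dimensional component of $\widehat{\Mm}_{r,s}^1$ can degenerate.

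For the $C^0$-bound, I would adapt the proof of Proposition \ref{C0}. The energy identity \eqref{rs-energy-action} and the hypothesis $\Aa_{H^0}(X^0)-\Aa_{H^1}(X^1)+\mathrm{cost}(H_{0\#1}^{r,s})<T_{\min}(\lambda_0)$ together yield an $r$-uniform energy bound. The cylinder-breaking argument of Proposition \ref{C0}, based on the Hofer-energy bound and the compactness result from \cite{afm}, then forbids escape into $\{\tau\to-\infty\}$, since doing so would produce a closed Reeb orbit of $\lambda_0$ of period strictly less than $T_{\min}(\lambda_0)$.

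With compactness in place, let $(r_n,u_n)$ parametrize an end of a $1$-dimensional component of $\Mm^1_{r,s}(X^0,X^1;F_{0\#1}^{r,s})$. After extracting a subsequence, three cases arise. In case (i), $r_n\to r_\infty<\infty$ and $u_n$ converges without breaking modulo $t$-translation; since the single parameter of the moduli space is $r$, this only happens once $r$ enters the constant region $r\leq\xi(0)/2$, and $u_n$ converges to an element of $\Mm^0_s(X^0,X^1;F_{0\#1}^{0,s})$, giving the first boundary stratum. In case (ii), $r_n$ is bounded and $u_n$ breaks in the $s$-direction. Dimension counting in $\Mm^1_{r,s}$ forces exactly one breaking, which occurs either at the left asymptote through some $Y^0\in\Pp^-_{\R /\Z}(H^0)$, yielding a point of $\Mm^0(X^0,Y^0;F^0)\times\Mm^0_{r,s}(Y^0,X^1;F_{0\#1}^{r,s})$, or at the right through $Y^1\in\Pp^-_{\R /\Z}(H^1)$. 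The hypothesis $\mathrm{cost}(H_{0\#1}^{r,s})<-\Aa_{H^0}(X^0)$ applied to the energy identity \eqref{sr-energy-action} restricted to the upper half-cylinder ensures that $Y^0$ and $Y^1$ retain negative action and hence genuinely lie in $\Pp^-_{\R /\Z}$. In case (iii), $r_n\to+\infty$, so $\xi(r_n)\to\infty$ and the neck separating the two homotopies stretches to infinity. Translating the left half of the cylinder by $-\xi(r_n)$ and the right half by $+\xi(r_n)$ decouples the curve $u_n$ into configurations for the Floer data $F^s_0$ and $F^s_1$ respectively; the uniform energy bound and standard neck-stretching compactness extract $u_0\in\widehat{\Mm}_s(X^0,Z;F^s_0)$ and $u_1\in\widehat{\Mm}_s(Z,X^1;F^s_1)$ for some $Z\in\Pp^-_{\R /\Z}(G)$, with $0$-dimensionality forced by the index count and negativity $\Aa_G(Z)<0$ again supplied by the cost hypothesis. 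This yields the second boundary stratum.

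Conversely, standard gluing arguments realize each boundary configuration as the endpoint of a one-parameter family of interior solutions, endowing $\overline{\Mm}^1_{r,s}(X^0,X^1;F_{0\#1}^{r,s})$ with the structure of a topological $1$-manifold with boundary. The main obstacle is case (iii): the neck-stretching analysis must preclude sphere and Reeb bubbling in the limit (handled by the $T_{\min}(\lambda_0)$ hypothesis and the cylindrical nature of $J^s$ for large $\tau$), and must confirm that the intermediate family $Z$ has the negativity of action needed to appear in $\Pp^-_{\R /\Z}(G)$ (handled by the cost hypothesis). These are precisely the roles of the two numerical conditions in the statement of the proposition.
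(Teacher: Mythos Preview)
Your proposal is correct and matches the paper's approach. The paper does not actually give a detailed proof of this proposition: after establishing the $C^0$-bounds in Proposition \ref{C0}, it simply declares that ``the following compactness statements are then standard'' and lists Propositions \ref{break-glue} through \ref{composition} without further argument. Your sketch fills in exactly the standard Floer--Gromov compactness-and-gluing analysis the paper has in mind, including the correct identification of the role of each hypothesis (the $T_{\min}(\lambda_0)$ bound for $C^0$-control and Reeb bubbling, the cost bound for ensuring intermediate orbits lie in the negative-action set).

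One minor imprecision: in your case (i), the point is not quite that ``the single parameter is $r$,'' but rather that since the data $F_{0\#1}^{r,s}$ is constant in $r$ for $r\leq\xi(0)/2$, each solution $u$ for this fixed data contributes an entire half-line $\{(r,u):r\leq\xi(0)/2\}$ to $\widehat{\Mm}_{r,s}$, whose end at $r\to-\infty$ (equivalently, after truncating the parameter interval, at $r=0$) produces the boundary stratum $\Mm^0_s(X^0,X^1;F_{0\#1}^{0,s})$. This is what you intended, but the phrasing could be tightened.
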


\subsection{Floer theory for finely tuned Hamiltonians} 
Throughout this section we consider  a fixed nondegenerate rigid constellation $\Cc_{\lambda_0,\alpha}( T)$.

\subsubsection{Homology}  To every Hamiltonian  $H$ which is finely tuned to $\Cc_{\lambda_0,\alpha}( T)$ and every regular Floer data set  $F=(H, J) \in \mathbf{F}_{\mathrm{reg}}$ we associate a version of Floer homology. Let $\Pp^-_{\alpha, \,\R /\Z}(H)$ be the set of $\R /\Z$-families of closed $1$-periodic orbits of $H$ which have negative action and which represent the class $\alpha$. Since $H$ is tuned, each family $X$ in $\Pp^-_{\alpha, \,\R /\Z}(H)$ corresponds to a unique  $\R /\Z$-family of closed Reeb orbits $\Gamma_X$ in $\Cc_{\lambda_0,\alpha}( T)$, and vice versa. Since each family $\Gamma_X$ in $\Cc_{\lambda_0,\alpha}( T)$ is simple, so are the families $X$ in $\Pp^-_{\alpha, \,\R /\Z}(H)$.

Define  the chain group by 
$$\CF(H; \alpha) = \mathrm{Span}_{\mathbb{Z}/2\mathbb{Z}}\{X \colon X \in \Pp^-_{\alpha, \,\R /\Z}(H)\}$$
and the corresponding boundary map $\p_F \colon \CF(H; \alpha)\to \CF(H; \alpha)$ to be the linear operator defined on generators by 
$$\p_F(X) = \sum_{Y \in \Pp^-_{\alpha, \,\R /\Z}(H)} \#\Mm^0(X, Y;\,F) \,Y. $$
Here, and in what follows,  for any finite set $\Mm$ the notation $\#\Mm$ will denote the number of elements modulo 2. 
By the definition of {\it finely tuned} we have $\Delta(H) <  T_{\min}(\lambda_0)$ (see Lemma \ref{spectra}). It then follows from Proposition \ref{break-glue} that $\p_F$ is well-defined and satisfies $\p_F\circ \p_F = 0$. Standard arguments imply that the resulting homology is independent of the choice of regular $J \in \Jj(H)$, and so we denote this  homology by 
$
\HF(H;\alpha).
$

\subsubsection{Continuation maps} Let $H^0$ and $H^1$ both be finely tuned to $\Cc_{\lambda_0,\alpha}(T)$. We now construct tools which allow us to compare
$
\HF(H^0;\alpha)
$
and
$
\HF(H^1;\alpha).
$
Let
\begin{equation}
\label{homotopy}
\Delta_s(H^0,H^1) = \min\left\{\frac{T_{\min}(\lambda_0, \alpha)}{2}, \,T_{\min}(\lambda_0) -\Delta(H^0,H^1) \right\}.
\end{equation}
By Lemma \ref{spectra}, $\Delta_s(H^0,H^1) > 0.$ 
Consider an admissible homotopy  $H^s$  from $H^0$ to $H^1$ such that 
\begin{equation}
\label{homotopy}
\mathrm{cost}(H^s) < \Delta_s(H^0,H^1).
\end{equation}
Perturbing $H^s$ if necessary, we choose regular Floer continuation data $F^s=(H^s,J^s)$ between regular Floer data $F^0 =(H^0,J^0)$ and  $F^1=(H^1, J^1)$ and define the linear map 
\begin{equation*}
\label{ }
\theta_{F^s} \colon \CF(H^0; \alpha) \to \CF(H^1; \alpha)
\end{equation*}
on generators by
$$\theta_{F^s}(X^i) = \sum_{Y^j \in \Pp^-_{\alpha,\, \R /\Z}(H^1)} \#\Mm_s^0(X^i, Y^j;\,F^s) \,Y^j.$$
Proposition \ref{s-break-glue} implies that
$\theta_{F^s}$ is a well-defined chain map. The usual  arguments again imply that the resulting map in homology is independent of the
choice of $J^s$ and so we denote this map by 
\begin{equation*}
\label{ }
\Theta_{H^s} \colon \HF(H^0; \alpha) \to \HF(H^1; \alpha).
\end{equation*}


A convex combination of two homotopies that satisfy \eqref{homotopy} also satisfies the same bound. Hence,  the usual homotopy of homotopies argument in Floer theory can be used to show that the map $\Theta_{H^s}$ does not depend on the choice of homotopy   $H^s$ with cost less than $\Delta_s(H^0,H^1)$.

 \begin{Lemma}
 \label{same}
If  $H^s$ and $\widetilde{H}^s$ are two admissible homotopies from $H^0$ to $H^1$ with cost less than $\Delta_s(H^0, H^1)$, then the maps $\Theta_{H^s}$ and  $\Theta_{\widetilde{H}^s}$,  
  are equal.
 \end{Lemma}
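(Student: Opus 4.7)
The plan is to run the usual chain-homotopy argument from Floer theory: I will build a closed admissible homotopy of homotopies $H^{r,s}$ between $H^s$ and $\widetilde{H}^s$, use Proposition \ref{rs-trans} to put it in general position, and then extract the chain-homotopy relation between $\theta_{F^s}$ and $\theta_{\widetilde{F}^s}$ from the boundary of the $1$-dimensional components of the associated moduli spaces via Proposition \ref{chain homotopy}.

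For the construction I would use the convex combination
\[
H^{r,s} \;=\; \chi(r)\,H^s + \bigl(1-\chi(r)\bigr)\,\widetilde{H}^s,
\]
where $\chi\colon\R\to[0,1]$ is smooth and monotone, equal to $1$ for $r\leq -\mathbf{R}$ and to $0$ for $r\geq \mathbf{R}$. By the convexity remark preceding the lemma, each fixed-$r$ slice is an admissible homotopy from $H^0$ to $H^1$ with cost strictly less than $\Delta_s(H^0,H^1)$. Extending $J^s$ and $\widetilde{J}^s$ to an $\R^2$-family $J^{r,s}$ satisfying $(\mathrm{J}^{r,s}1)$ and perturbing generically, I may assume $F^{r,s}=(H^{r,s},J^{r,s})$ lies in $\mathbf{F}^{r,s}_{\mathrm{reg}}$ and is closed in the sense preceding Proposition \ref{chain homotopy}. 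Since every orbit in $\Cc_{\lambda_0,\alpha}(T)$ is simple, so is every family in $\Pp^-_{\alpha,\R /\Z}(H^0)$ and $\Pp^-_{\alpha,\R /\Z}(H^1)$, and Proposition \ref{rs-trans} guarantees smoothness of the resulting moduli spaces.

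Defining $K\colon \CF(H^0;\alpha)\to \CF(H^1;\alpha)$ on generators by
\[
K(X^0) \;=\; \sum_{X^1\in \Pp^-_{\alpha,\R /\Z}(H^1)} \#\Mm^{0}_{r,s}(X^0,X^1;F^{r,s})\,X^1,
\]
I would then appeal to Proposition \ref{chain homotopy} to compactify each component of $\Mm^1_{r,s}(X^0,X^1;F^{r,s})$. The hypothesis $\mathrm{cost}(H^{r,s})<|\Aa_{H^0}(X^0)|$ follows from Lemma \ref{spectra}(1), which gives $|\Aa_{H^0}(X^0)|>T_{\min}(\lambda_0,\alpha)/2 \geq \Delta_s(H^0,H^1)$, so no positive-action orbits enter the broken configurations. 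The four boundary strata in Proposition \ref{chain homotopy} contribute, respectively, the $X^1$-coefficients of $\theta_{F^s}(X^0)$, $\theta_{\widetilde F^s}(X^0)$, $K\circ\partial_{F^0}(X^0)$, and $\partial_{F^1}\circ K(X^0)$. Counting boundary points modulo $2$ then yields the identity
\[
\theta_{F^s} + \theta_{\widetilde F^s} \;=\; \partial_{F^1}\circ K + K\circ\partial_{F^0},
\]
and passage to homology gives $\Theta_{H^s}=\Theta_{\widetilde H^s}$.

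The main technical obstacle is controlling the \emph{overall} cost $\mathrm{cost}(H^{r,s})$ that enters the $C^0$-bound \eqref{rs-energy-action}, since it is an integral over $s$ of a maximum over $(\tau,p,r)$ rather than a slicewise quantity. For a convex combination one has only
\[
\mathrm{cost}(H^{r,s}) \;\leq\; \mathrm{cost}(H^s)+\mathrm{cost}(\widetilde H^s),
\]
which can be up to twice $\Delta_s(H^0,H^1)$, exceeding the compactness threshold $T_{\min}(\lambda_0)-\Delta(H^0,H^1)$. I would handle this by a preliminary reduction to the case in which both $\mathrm{cost}(H^s)$ and $\mathrm{cost}(\widetilde H^s)$ are strictly less than $\tfrac12\bigl(T_{\min}(\lambda_0)-\Delta(H^0,H^1)\bigr)$: factor each of $H^s$ and $\widetilde H^s$ as a concatenation of finitely many admissible homotopies of arbitrarily small cost and apply Proposition \ref{composition} iteratively, so that at each step the two ``small'' homotopies being compared induce the same map in homology. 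Once both homotopies have cost below this halved threshold, the convex combination above has overall cost strictly less than $T_{\min}(\lambda_0)-\Delta(H^0,H^1)$, and the chain-homotopy argument proceeds as described.
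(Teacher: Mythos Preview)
Your core argument---build a convex-combination homotopy of homotopies and read off the chain-homotopy relation from Proposition~\ref{chain homotopy}---is exactly the paper's; the paper compresses it into the single sentence preceding the lemma.

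Where you go off course is the ``main technical obstacle'' you flag at the end. It is not an obstacle. The paper's assertion is that each \emph{slice} $H^{r_0,\cdot}$ of the convex combination has cost $<\Delta_s(H^0,H^1)$, which is immediate from
\[
\max_{(\tau,p)}\bigl(\chi(r_0)\,\partial_sH^s+(1-\chi(r_0))\,\partial_s\widetilde{H}^s\bigr)\leq \chi(r_0)\max_{(\tau,p)}\partial_sH^s+(1-\chi(r_0))\max_{(\tau,p)}\partial_s\widetilde{H}^s
\]
and linearity of the integral. You are right that the global quantity $\mathrm{cost}(H^{r,s})=\int\max_{(\tau,p,r)}\partial_s H^{r,s}\,ds$ can exceed this, but the compactness underlying Proposition~\ref{chain homotopy} does not need the global bound: every element $(r,u)$ of $\widehat{\Mm}_{r,s}$ satisfies the Floer equation for a single fixed $r$, so by \eqref{sr-energy-action} its energy is bounded by $\Delta(H^0,H^1)+\mathrm{cost}(H^{r,\cdot})<T_{\min}(\lambda_0)$, uniformly in $r$. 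The same slice-wise bound (combined with your own observation from Lemma~\ref{spectra}(1)) forces the broken limits $Y^0,Y^1$ to have negative action. The hypotheses of Proposition~\ref{chain homotopy} are phrased in terms of $\mathrm{cost}(H^{r,s})$ simply because $\mathrm{cost}(H^{r,s})\geq\sup_{r}\mathrm{cost}(H^{r,\cdot})$ is a convenient uniform upper bound; the proof only uses the latter.

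Your proposed workaround---factoring each homotopy into small-cost pieces and invoking Lemma~\ref{split} iteratively---is therefore unnecessary, and in any case does not work as written: the intermediate Hamiltonians produced by such a factorization need not be finely tuned to $\Cc_{\lambda_0,\alpha}(T)$, so their Floer groups are not defined, and the factorizations of $H^s$ and $\widetilde{H}^s$ pass through different intermediate Hamiltonians, so there is no step-by-step comparison available.
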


In the present setting, the usual composition rule for continuation maps has the following form.

\begin{Lemma}
\label{split}
Suppose that $H^0$, $H^1$ and $H^2$ are  Hamiltonians  that are finely tuned to $\Cc_{\lambda_0,\alpha}(T)$ and that  $H^s_{10}$ is an admissible homotopy from $H^0$ to $H^1$  and $H^s_{21}$ is an admissible homotopy from $H^1$ to $H^2$. If 
\begin{equation}
\label{ }
\mathrm{cost}(H^s_{10})<\Delta_s(H^0, H^1)
\end{equation}
\begin{equation}
\label{ }
\mathrm{cost}(H^s_{21})<\Delta_s(H^1, H^2)
\end{equation}
and 
\begin{equation}
\label{ }
\mathrm{cost}(H^s_{10})+ \mathrm{cost}(H^s_{21})<\Delta_s(H^0, H^2).
\end{equation}
Then there is an admissible homotopy $H_{20}^s$ from $H^0$ to $H^2$ with cost at most $\mathrm{cost}(H^s_{10})+ \mathrm{cost}(H^s_{21})$ such that 
\begin{equation*}
\label{ }
\Theta_{H^s_{20}} = \Theta_{H^s_{21}} \circ \Theta_{H^s_{10}}.
\end{equation*}
\end{Lemma}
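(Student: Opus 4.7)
The plan is to apply Proposition \ref{composition} with $G=H^1$, where $F^s_0$ and $F^s_1$ are regular extensions to Floer continuation data of $H^s_{10}$ (from $F^0$ to $F^1$) and $H^s_{21}$ (from $F^1$ to $F^2$), respectively. I will take $H^s_{20}$ to be the slice $H_{0\#1}^{0,s}$ of the half-open homotopy of homotopies constructed just before Proposition \ref{composition}; concretely, this is a reparameterized concatenation of $H^s_{10}$ and $H^s_{21}$ joined through $H^1$ with a large gap controlled by $\xi(0)$. From the piecewise definition, the contributions to cost from $s\leq 0$ and $s>0$ are $\mathrm{cost}(H^s_{10})$ and $\mathrm{cost}(H^s_{21})$, respectively, with no additional growth in the constant $H^1$-gap, yielding
\[
\mathrm{cost}(H^s_{20}) \leq \mathrm{cost}(H^s_{10}) + \mathrm{cost}(H^s_{21}).
\]

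Next I will verify the two bounds required by Proposition \ref{composition}. For any $X^0 \in \Pp^-_{\alpha,\R/\Z}(H^0)$ and $X^2 \in \Pp^-_{\alpha,\R/\Z}(H^2)$, combining $\Aa_{H^0}(X^0) - \Aa_{H^2}(X^2) \leq \Delta(H^0,H^2)$ with the third hypothesis and the definition of $\Delta_s(H^0,H^2)$ gives
\[
\Aa_{H^0}(X^0) - \Aa_{H^2}(X^2) + \mathrm{cost}(H_{0\#1}^{r,s}) < \Delta(H^0,H^2) + \bigl(T_{\min}(\lambda_0)-\Delta(H^0,H^2)\bigr) = T_{\min}(\lambda_0).
\]
The bound $\mathrm{cost}(H_{0\#1}^{r,s}) < -\Aa_{H^0}(X^0)$ follows from inequality \eqref{low} of Lemma \ref{spectra}, which supplies $-\Aa_{H^0}(X^0) > T_{\min}(\lambda_0,\alpha)/2 \geq \Delta_s(H^0,H^2) > \mathrm{cost}(H^s_{20})$. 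The first two hypotheses of the lemma guarantee the analogous bounds at the intermediate subtrajectories through families in $\Pp^-_{\alpha,\R/\Z}(H^1)$, so Proposition \ref{composition} applies.

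The compact $1$-dimensional cobordism $\overline{\Mm}^1_{r,s}(X^0, X^2; F_{0\#1}^{r,s})$ then has boundary decomposing into the four strata listed in Proposition \ref{composition}. Counting endpoints modulo $2$ produces the chain-level identity
\[
\theta_{F_{0\#1}^{0,s}} + \theta_{F^s_1} \circ \theta_{F^s_0} = \p_{F^2} \circ K + K \circ \p_{F^0},
\]
where $K$ counts rigid elements of $\Mm^0_{r,s}(\cdot,\cdot;F_{0\#1}^{r,s})$. Passing to homology yields $\Theta_{H^s_{20}} = \Theta_{H^s_{21}} \circ \Theta_{H^s_{10}}$, as claimed. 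The main obstacle is bookkeeping: ensuring that the three cost hypotheses, together with Lemma \ref{spectra}, produce precisely the action, cost, and negativity inequalities demanded by Proposition \ref{composition} uniformly in all intermediate orbit families appearing in the four boundary strata, and that the concatenation operation underlying $H_{0\#1}^{r,s}$ does not inflate cost beyond the sum of the two individual costs.
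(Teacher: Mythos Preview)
Your approach is correct and is precisely the standard argument the paper has in mind; the paper does not give an explicit proof of Lemma \ref{split}, introducing it only as ``the usual composition rule for continuation maps'' in the present setting, and Proposition \ref{composition} (which the paper later deploys in the proof of Proposition \ref{prop:ident}) is exactly the intended tool.

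One technical wrinkle is worth flagging. The paper defines $\mathrm{cost}(H^{r,s}) = \int_\R \max_{(\tau,p,r)} \partial_s H^{r,s}\,ds$, and for the half-open family $H_{0\#1}^{r,s}$ this quantity can be infinite when either constituent homotopy has positive cost: as $r\to\infty$ the $s$-support of $\partial_s H_{0\#1}^{r,s}$ drifts without bound, so for every fixed $s\leq 0$ there is some $r$ with $s+\xi(r)$ in the support of $\partial_s H^s_{10}$, and the inner maximum picks up $\max_{(\tau,p,\sigma)}\partial_\sigma H^\sigma_{10}$ on an infinite $s$-interval. You implicitly identify $\mathrm{cost}(H_{0\#1}^{r,s})$ with $\mathrm{cost}(H^s_{20})$, which is not literally justified by the paper's definition. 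The fix is immediate: the energy estimate \eqref{rs-energy-action} and the $C^0$-bound underlying Proposition \ref{composition} only require the sharper quantity $\sup_r \mathrm{cost}(H_{0\#1}^{r,\cdot})$, and a change of variables shows this equals $\mathrm{cost}(H^s_{10})+\mathrm{cost}(H^s_{21})$ once $\xi(0)$ is large. With that substitution your verification of both hypotheses of Proposition \ref{composition} goes through verbatim. (In the paper's own application of Proposition \ref{composition} both homotopies are monotone and all costs vanish, so the issue never surfaces there.)
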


With these tools in place we can now begin the process of identifying the Floer homology groups associated to different finely tuned Hamiltonians. As above, the arguments we use are standard, but are complicated by the need to manage the cost of homotopies at each step.

Given a function $G \colon \R \times M \to \R$  set 
\begin{equation}
\label{ }
\|G\| = \max_{\R \times M} G - \min_{\R \times M} G.
\end{equation}


\begin{Corollary}
\label{closer}
Suppose that the Hamiltonians $H^0$ and $H^1$  are finely tuned to  $\Cc_{\lambda_0,\alpha}(T)$. If 
\begin{equation}
\label{close1}
\max_{\R \times M}(H^1-H^0) < \Delta_s(H^0, H^1),
\end{equation}
\begin{equation}
\label{close2}
\max_{\R \times M}(H^0-H^1) < \Delta_s(H^1, H^0),
\end{equation}
and 
\begin{equation}
\label{close3}
\|H^1-H^0\| < \Delta_s(H^0, H^0),
\end{equation}
then  $\HF(H^0; \alpha)$ and $\HF(H^1; \alpha)$ are isomorphic.
\end{Corollary}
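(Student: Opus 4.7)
The plan is to construct mutually inverse continuation maps between the two Floer groups using monotone linear interpolations, and to identify their compositions with the identity by combining Lemma \ref{split} with Lemma \ref{same}.

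First I would define the admissible linear homotopy
$$H^s_{01} = (1-\beta(s))H^0 + \beta(s)H^1,$$
where $\beta\colon \R\to [0,1]$ is a smooth nondecreasing cutoff vanishing for $s\ll 0$ and equal to $1$ for $s\gg 0$. Since $H^0$ and $H^1$ are both admissible with $dH^i$ supported in a common region of the form $[0,\mathbf{T}]\times M$, so is $dH^s_{01}$. A direct computation gives
$$
\mathrm{cost}(H^s_{01}) = \int_\R \beta'(s)\, \max_{\R\times M}(H^1-H^0)\, ds = \max_{\R\times M}(H^1-H^0),
$$
which by \eqref{close1} is strictly less than $\Delta_s(H^0,H^1)$. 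The reversed homotopy $H^s_{10}$ from $H^1$ to $H^0$ satisfies $\mathrm{cost}(H^s_{10})=\max_{\R\times M}(H^0-H^1)<\Delta_s(H^1,H^0)$ by \eqref{close2}. After perturbing the accompanying families of almost complex structures to lie in $\mathbf{F}^s_{\mathrm{reg}}$, we obtain well-defined continuation maps
$$
\Theta_{01}\colon \HF(H^0;\alpha)\to \HF(H^1;\alpha),\qquad \Theta_{10}\colon \HF(H^1;\alpha)\to \HF(H^0;\alpha).
$$

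Next I would show $\Theta_{10}\circ\Theta_{01} = \mathrm{id}_{\HF(H^0;\alpha)}$. Since
$$
\mathrm{cost}(H^s_{01}) + \mathrm{cost}(H^s_{10}) = \max(H^1-H^0) + \max(H^0-H^1) = \|H^1-H^0\|,
$$
hypothesis \eqref{close3} places this sum strictly below $\Delta_s(H^0,H^0)$. All three cost inequalities of Lemma \ref{split} are therefore satisfied with $H^2=H^0$, and the lemma yields an admissible homotopy $\widetilde{H}^s$ from $H^0$ to itself of cost below $\Delta_s(H^0,H^0)$ with $\Theta_{\widetilde{H}^s} = \Theta_{10}\circ\Theta_{01}$. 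On the other hand the constant homotopy $H^s\equiv H^0$ is also admissible, with cost $0$. A standard transversality argument (using \eqref{energy-action} to see that any nonstationary continuation strip from $X$ to $X$ would carry positive energy, a contradiction, so that the only rigid components of the relevant $\Mm^0_s$ are the stationary ones) identifies the induced continuation map with the identity on $\HF(H^0;\alpha)$. Lemma \ref{same} now identifies $\Theta_{\widetilde{H}^s}$ with this identity, giving $\Theta_{10}\circ\Theta_{01}=\mathrm{id}$. An entirely parallel argument with the roles of $H^0$ and $H^1$ interchanged yields $\Theta_{01}\circ\Theta_{10}=\mathrm{id}_{\HF(H^1;\alpha)}$, so $\Theta_{01}$ is the desired isomorphism.

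The main obstacle is verifying that the constant-homotopy continuation map coincides with the identity in the present Morse--Bott, $\R/\Z$-equivariant setting: one must check that, for generic $J^s$, the zero-dimensional components of $\Mm^0_s(X,Y;(H^s\equiv H^0,J^s))$ are exactly the quotiented stationary cylinders over each $X\in\Pp^-_{\alpha,\R/\Z}(H^0)$, compatibly with the $C^0$-bound of Proposition \ref{C0} and the $\R/\Z$-quotient construction of Section \ref{quotient}. Once that standard fact is in place, the remainder of the argument is pure cost accounting against \eqref{close1}--\eqref{close3} via Lemmas \ref{split} and \ref{same}.
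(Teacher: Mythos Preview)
Your argument is essentially the paper's own proof: both build the linear homotopy $H^s=(1-\beta(s))H^0+\beta(s)H^1$, compute $\mathrm{cost}(H^s)=\max(H^1-H^0)$ and $\mathrm{cost}(H^{-s})=\max(H^0-H^1)$, invoke Lemma~\ref{split} to realize the composition as a single homotopy $\widetilde{H}^s$ from $H^0$ to itself with cost $\|H^1-H^0\|<\Delta_s(H^0,H^0)$, and then use Lemma~\ref{same} to compare with the constant homotopy. The only cosmetic difference is that the paper is content to say the constant homotopy induces an \emph{isomorphism} (hence $\Theta_{H^s}$ is injective, and by the symmetric argument surjective), whereas you go slightly further and argue it induces the \emph{identity}; your energy argument for this is correct but not needed for the stated conclusion.
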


\begin{proof}
Fix a smooth nondecreasing step function $\mathrm{\mathbf{step}} \colon \R \to \R$ such that 
$$
\mathrm{\mathbf{step}}(\tau)=\begin{cases}
   0   & \text{ for $\tau \leq -1$ }, \\
    1  & \text{for  $\tau \geq 0$}.
\end{cases}
$$
Let
\begin{equation*}
\label{ }
H^s = (1-\mathrm{\mathbf{step}}(s))H^0 + \mathrm{\mathbf{step}}(s)H^1
\end{equation*}
We then have
\begin{equation*}
\label{}
\mathrm{cost}(H^s) =\max_{\R \times M}(H^1-H^0)  
\end{equation*}
and
\begin{equation*}
\label{}
 \mathrm{cost}(H^{-s}) = \max_{\R \times M} (H^0-H^1).
\end{equation*}
By Proposition  \ref{split},  there is an admissible homotopy $\widetilde{H}^s$ from $H^0$ to $H^0$ with 
\begin{equation}
\label{close}
\mathrm{cost}(\widetilde{H}^s) \leq \max_{\R \times M} (H^1-H^0) - \min_{\R \times M} (H^1-H^0) = \|H^1-H^0\| < \Delta_s(H^0, H^0),
\end{equation}
such that 
\begin{equation*}
\label{ }
\Theta_{\widetilde{H}^s} = \Theta_{H^{-s}} \circ  \Theta_{H^s} \colon \HF(H^0; \alpha) \to \HF(H^0; \alpha).
\end{equation*}
By Lemma \ref{same}, the map $\Theta_{\widetilde{H}^s}$ is the same as that corresponding to the constant homotopy from $H^0$ to itself, and so $\Theta_{\widetilde{H}^s}$ is an isomorphism.  Thus,  $\Theta_{H^s}$ is injective. Applying the same argument to the composition $\Theta_{H^{s}} \circ  \Theta_{H^-s}$
we see that $\Theta_{H^s}$ is also surjective.
\end{proof}

\begin{Corollary}
\label{c1close}
If $H^0$ and $H^1$ are finely tuned $\Cc_{\lambda_0,\alpha}(T)$ and $H^1$ is sufficiently $C^1$-close to $H^0$ then $\HF(H^0; \alpha)$ is isomorphic to $\HF(H^1; \alpha)$.  
\end{Corollary}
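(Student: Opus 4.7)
The plan is to reduce Corollary \ref{c1close} to Corollary \ref{closer} by verifying its three pinching conditions \eqref{close1}, \eqref{close2}, and \eqref{close3} whenever $H^1$ is sufficiently $C^1$-close to $H^0$. Each of the three left-hand sides is bounded above by $2\|H^1 - H^0\|_{C^0}$ and is therefore arbitrarily small once $\|H^1 - H^0\|_{C^0}$ is small. The substantive task is to produce a uniform positive lower bound for the three right-hand sides $\Delta_s(H^0, H^0)$, $\Delta_s(H^0, H^1)$, and $\Delta_s(H^1, H^0)$ that does not shrink to zero as $H^1 \to H^0$.

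For $\Delta_s(H^0, H^0)$, Lemma \ref{spectra}(2) gives $\Delta(H^0) < T_{\min}(\lambda_0)$, so $\Delta_s(H^0, H^0)$ is a fixed positive constant depending only on $H^0$. For the two mixed quantities, the key idea is that the negative-action spectrum $\{\Aa_H(X) : X \in \Pp^-_{\R /\Z}(H)\}$ depends continuously on $H$ in the $C^1$ topology near $H^0$. Since every family in $\Pp^-_{\R /\Z}(H^0)$ is transversally nondegenerate and isolated up to $\R /\Z$-reparameterization, the implicit function theorem yields a canonical bijection between $\Pp^-_{\R /\Z}(H^0)$ and certain families of $H^1$ located near those of $H^0$, with continuous dependence of actions under this bijection. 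Provided no \emph{stray} negative-action orbits of $H^1$ appear far from those of $H^0$, one obtains $\Delta(H^0, H^1) \to \Delta(H^0)$ and $\Delta(H^1, H^0) \to \Delta(H^0)$ as $H^1 \to H^0$ in $C^1$, and the strict inequality of Lemma \ref{spectra}(3) upgrades to the quantitative bound required for $\Delta_s(H^0, H^1)$ and $\Delta_s(H^1, H^0)$ to have uniform positive lower bounds.

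The main obstacle is the exclusion of stray negative-action orbits. I would argue by contradiction: suppose $H^n \to H^0$ in $C^1$ and $x^n \in \Pp^-(H^n)$ are orbits whose $\R /\Z$-families avoid a fixed neighborhood of every family in $\Pp^-_{\R /\Z}(H^0)$. Admissibility uniformly confines the supports of $dH^n$, and hence $V_{H^n}$, to a fixed compact region $[0, \mathbf{T}] \times M$, so the $x^n$ cannot escape to infinity; a standard elliptic-bootstrap argument then extracts a $C^1$-convergent subsequence whose limit $x^\infty$ is a $1$-periodic orbit of $H^0$ with $\Aa_{H^0}(x^\infty) \le 0$. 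The explicit profile description of finely tuned Hamiltonians in Lemma \ref{k-divide}, together with the action bounds in the proof of Lemma \ref{spectra}, isolates the families of $\Pp^-_{\R /\Z}(H^0)$ away from the constants and from the zero-action level, so $x^\infty$ must belong to some $X^0 \in \Pp^-_{\R /\Z}(H^0)$, contradicting the stray hypothesis. With this compactness in hand, conditions \eqref{close1}--\eqref{close3} hold for $H^1$ sufficiently $C^1$-close to $H^0$, and Corollary \ref{closer} produces the desired isomorphism $\HF(H^0; \alpha) \cong \HF(H^1; \alpha)$.
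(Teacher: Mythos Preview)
Your approach is correct and essentially identical to the paper's: reduce to Corollary~\ref{closer} by showing that the left-hand sides of \eqref{close1}--\eqref{close3} tend to zero while the right-hand sides stay bounded away from zero. The paper's own proof is a two-sentence assertion of exactly this, noting that $\max_{\R\times M}(H^k-H^0)\to 0$ while $\Delta_s(H^0,H^k)\to\Delta_s(H^0,H^0)>0$; you supply the justification the paper omits.

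One small remark on your implementation. You invoke transversal nondegeneracy of \emph{all} families in $\Pp^-_{\R/\Z}(H^0)$ to run the implicit function theorem, but the standing hypothesis only guarantees nondegeneracy for the orbits in class $\alpha$ (those coming from the nondegenerate rigid constellation); orbits in other classes need not be nondegenerate. This is harmless here, however, because both $H^0$ and $H^1$ are finely tuned and hence of the explicit radial form $h(e^{\tau-\kappa})$. The negative-action spectrum is then given by the closed formula \eqref{critical formula} in terms of the profile and the fixed period spectrum $\Tt(\lambda_0)$, so the continuity $\Delta(H^0,H^1)\to\Delta(H^0)$ follows directly from continuous dependence of the profile data, without any appeal to nondegeneracy or a compactness argument for stray orbits. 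Either route is fine, but the explicit-formula route is shorter and avoids the technical caveat.
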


\begin{proof}
This follows easily from Corollary \ref{closer} since the hypotheses of the theorem are met for all $H^1$ sufficiently $C^1$-close to $H^0$.
For example, if $H^k$ is a sequence of finely tuned Hamiltonians converging to $H^0$ in the $C^1$-topology then $\max_{\R \times M}(H^k-H^0) \to 0$ whereas 
$\Delta_s(H^0, H^k) \to \Delta_s(H^0, H^0)>0$.
\end{proof}

\begin{Corollary}
\label{bigger}
Suppose that the Hamiltonians $H^0$ and $H^1$  are finely tuned to  $\Cc_{\lambda_0,\alpha}(T)$ and that $H^s$  is an admissible homotopy from $H^0$ to $H^1$ such that for each $s$ the Hamiltonian $H^s$  is also finely tuned to $\Cc_{\lambda_0,\alpha}(T)$. Then 
the following statements hold.
\begin{enumerate}
  \item The groups $\HF(H^s; \alpha)$ are isomorphic to one another for all $s$. 
  \item If, in addition,  $\mathrm{cost}(H^s) =0$, then  the map
$$\Theta_{H^s}  \colon \HF(H^0; \alpha) \to \HF(H^1; \alpha)$$ is an isomorphism.
\end{enumerate}
\end{Corollary}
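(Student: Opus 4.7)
The plan is to reduce both parts to a compactness argument in the parameter $s$. Since $H^s=H^0$ for $s\leq -\mathbf{S}$ and $H^s=H^1$ for $s\geq \mathbf{S}$, only the interval $s\in[-\mathbf{S},\mathbf{S}]$ matters. For each $s_\ast$ in this interval, Corollary~\ref{c1close} produces a $C^1$-neighborhood of $H^{s_\ast}$ in the space of finely tuned Hamiltonians on which the Floer homology is isomorphic to $\HF(H^{s_\ast};\alpha)$. By smoothness of $s\mapsto H^s$, some open interval around $s_\ast$ maps into this neighborhood; compactness then yields a finite subdivision $-\mathbf{S}=s_0<s_1<\cdots<s_N=\mathbf{S}$ with $\HF(H^{s_i};\alpha)\cong\HF(H^{s_{i+1}};\alpha)$ for every $i$. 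Chaining these isomorphisms proves part~(1).

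For part~(2), first observe $\mathrm{cost}(H^s)=0<\Delta_s(H^0,H^1)$ (the latter positive by Lemma~\ref{spectra}), so $\Theta_{H^s}$ is defined. I would refine the subdivision from part~(1) so that on each subinterval $[s_i,s_{i+1}]$ the following two homotopies from $H^{s_i}$ to $H^{s_{i+1}}$ both have cost less than $\Delta_s(H^{s_i},H^{s_{i+1}})$: the sub-homotopy $\tilde{H}^s_i$ obtained by restricting $H^s$ to $[s_i,s_{i+1}]$ and reparametrizing, and the linear interpolation $(1-\mathrm{\mathbf{step}}(s))H^{s_i}+\mathrm{\mathbf{step}}(s)H^{s_{i+1}}$. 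The former bound holds for a fine enough mesh since $\partial_s H^s$ is bounded on $[-\mathbf{S},\mathbf{S}]$; the latter because $H^{s_{i+1}}-H^{s_i}\to 0$ in $C^0$ as the mesh shrinks, while $\Delta_s$ remains bounded below by continuity. The proof of Corollary~\ref{closer} shows the linear-interpolation continuation is an isomorphism, and Lemma~\ref{same} identifies this map with $\Theta_{\tilde{H}^s_i}$, so each $\Theta_{\tilde{H}^s_i}$ is an isomorphism. Since sub-costs are invariant under monotone reparametrization and satisfy $\sum_i\mathrm{cost}(\tilde{H}^s_i)=\mathrm{cost}(H^s)=0$, the cumulative cost bounds in Lemma~\ref{split} hold at each stage, so repeated application exhibits the composition $\Theta_{\tilde{H}^s_{N-1}}\circ\cdots\circ\Theta_{\tilde{H}^s_0}$ as $\Theta_{H^s_{\mathrm{concat}}}$ for some concatenated homotopy of cost $\leq 0<\Delta_s(H^0,H^1)$. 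A final application of Lemma~\ref{same} identifies this composition with $\Theta_{H^s}$, which is therefore an isomorphism.

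The main obstacle is the bookkeeping: simultaneously ensuring all the cost and action-gap inequalities demanded by Lemmas~\ref{same} and~\ref{split} along the refined subdivision. This is tractable because $\Delta_s$ is continuous in its Hamiltonian arguments (via Lemma~\ref{spectra}) and stays bounded away from zero along the compact path $s\mapsto H^s$ of finely tuned Hamiltonians, so a single uniform refinement of the subdivision suffices to meet every inequality at once.
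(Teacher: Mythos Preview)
Your proposal is correct and follows essentially the same route as the paper: a finite subdivision of the parameter interval via compactness for part~(1), and for part~(2) the identification of each reparametrized sub-homotopy with the linear interpolation via Lemma~\ref{same}, followed by iterated use of Lemma~\ref{split} to concatenate, and a final appeal to Lemma~\ref{same} to match the result with $\Theta_{H^s}$. The only cosmetic difference is that you invoke Corollary~\ref{c1close} for part~(1) whereas the paper appeals directly to Corollary~\ref{closer}; and you phrase the sub-cost bound as coming from a fine mesh, while the paper simply observes that $\mathrm{cost}(H^s)=0$ forces $\partial_s H^s\le 0$ pointwise (since $H^s$ vanishes for $\tau\le 0$), so every monotone reparametrization of a subinterval is automatically cost free regardless of mesh size.
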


\begin{proof}

Reparameterizing if necessary we may assume that  $H^s= H^0$ for all $s\leq 0$ and $H^s= H^1$ for all $\geq 1$.
It follows from Corollary \ref{closer}, and continuity, that for each $s' \in [0,1]$ there is a $\delta_{s'}>0$ such that 
$\HF(H^{\varsigma}; \alpha)$ is isomorphic to $\HF(H^{s'}; \alpha)$ for all $\varsigma \in (s'- \delta_{s'}, s'+\delta_{s'})$.
Covering $[0,1]$ by finitely many such intervals it follows that for all $s\in [0,1]$ the groups $\HF(H^{s}; \alpha)$  are isomorphic to one another. 


To  prove the second assertion of the Corollary, it suffices (by Lemma \ref{same}), to find an admissible homotopy $\widetilde{H}^s$ from $H^0$ to $H^1$ with $\mathrm{cost}(\widetilde{H}^s) < \Delta_s(H^0, H^1)$ such that  $\Theta_{\widetilde{H}^s}\colon \HF(H^0; \alpha) \to \HF(H^1; \alpha)
$ is an isomorphism. We will use $H^s$ and Lemma \ref{split} to construct  this $\widetilde{H}^s$.

Arguing as above, and invoking the proof of Corollary \ref{closer},  we can find numbers $s_0=1<s_1<\dots s_N=1$ such that for $k=0, \dots, N-1$  each linear homotopy $$G^s_k =H^{s_k}(1-\mathbf{ step}(s)) + H^{s_{k+1}}\mathbf{ step}(s)$$ induces an isomorphism $\Theta_{G_k^s}\colon \HF(H^{s_k}; \alpha) \to \HF(H^{s_{k+1}}; \alpha)$. From $H^s$ we can also construct a homotopy $$H^s_k = H^{(s_k + (s_{k+1}-s_k)\mathbf{ step}(s))}$$ from $H^{s_k}$ to $H^{s_{k+1}}$ which is admissible and cost free. It follows from Lemma \ref{same} that $\Theta_{H_k^s} = \Theta_{G_k^s}$ so that each $\Theta_{H_k^s}$ is an isomorphism. On the other hand, since each $H^s_k$ is costfree and each $\Delta_s(H^{s_k}, H^{s_{k+1}})$ is positive we can invoke Lemma \ref{split} $N+1$ times to obtain  a cost free admissible homotopy $\widetilde{H}^s$ from $H^0$ to $H^1$ such that
\begin{equation*}
\label{ }
\Theta_{\widetilde{H}^s} = \Theta_{H_{N-1}^s} \circ  \cdots \circ \Theta_{H_0^s}.
\end{equation*}
This completes the proof.
\end{proof}

Now we get to our main invariance result.

\begin{Proposition}
\label{hf}
The rank of $\HF(H; \alpha)$ is the same for every Hamiltonian $H$ that is finely tuned to the rigid constellation $\Cc_{\lambda_0,\alpha}(T)$.
\end{Proposition}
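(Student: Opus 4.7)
The plan is to reduce this invariance statement to Corollary \ref{bigger}(1) by constructing, for any two Hamiltonians $H^0$ and $H^1$ finely tuned to $\Cc_{\lambda_0,\alpha}(T)$, a smooth admissible homotopy $\{H^s\}_{s\in\R}$ between them that passes through finely tuned Hamiltonians at every $s$. Once such a homotopy is in hand, Corollary \ref{bigger}(1) immediately yields that the groups $\HF(H^s;\alpha)$ are all isomorphic, in particular of the same rank, which is precisely the claim. So the real content of the proof is path-connectedness of the space of finely tuned Hamiltonians.

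I would argue this in two stages. First, the set of parameter tuples $(a,b,c,\kappa)$ satisfying (t1)-(t3) together with the fineness condition $a<\bar{a}$ is an open, path-connected subset of $\R_{>0}^4$: condition (t1) constrains only $\kappa$ to an open interval; (t2) constrains $(b,\kappa)$ by the strict inequalities $Te^\kappa < b < T^+$; and (t3) requires $c$ to exceed a continuous function of $(b,\kappa)$. Hence any two valid tuples can be joined by first enlarging $c$ in each to a common sufficiently large value, then interpolating $(a,b,\kappa)$ through the obviously connected region they sweep out, and finally lowering $c$ to the target. Second, for each fixed $(a,b,c)$ the profile space $\mathfrak{h}_{a,b,c}$ is convex, because each of the defining conditions (h1)-(h6) is preserved under convex combinations. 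Combining these, one builds the desired homotopy by jointly interpolating parameters and profiles, then reparameterizing the $s$-variable so that the family is stationary outside a compact interval (which produces an admissible homotopy). The required transversal nondegeneracy of each intermediate $H^s$ is automatic: by Lemma \ref{k-divide} the $\R/\Z$-families in $\Pp^-_{\alpha,\R/\Z}(H^s)$ are in canonical bijection with the fixed nondegenerate $\R/\Z$-families of $\Cc_{\lambda_0,\alpha}(T)$, and nondegeneracy of a Reeb family passes to transversal nondegeneracy of the corresponding $H^s$-family.

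The main (mild) obstacle is the joint interpolation of parameters and profiles: one needs $h^s\in\mathfrak{h}_{a^s,b^s,c^s}$ at each $s$, not just that the profiles belong to some single $\mathfrak{h}_{a,b,c}$. This can be handled cleanly by first fixing a smooth section $(a,b,c)\mapsto \hat{h}_{a,b,c}$ through the parameter region (so that interpolations outside a single profile space reduce to interpolations within single spaces joined by movements of the section) and then convex-combining against $\hat{h}_{a^0,b^0,c^0}$ and $\hat{h}_{a^1,b^1,c^1}$ inside the endpoint profile spaces. Alternatively, one may avoid constructing a smooth homotopy altogether and instead use Corollary \ref{c1close}: any continuous path in the $C^1$-topology through the (same) path-connected space of finely tuned Hamiltonians can be covered by finitely many neighborhoods on which Corollary \ref{c1close} supplies isomorphisms, and composing these isomorphisms via compactness of $[0,1]$ delivers the equality of ranks directly.
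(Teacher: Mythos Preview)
Your proposal is correct and follows essentially the same strategy as the paper: both arguments show that any two finely tuned Hamiltonians can be joined by a homotopy through finely tuned Hamiltonians and then invoke Corollary~\ref{bigger}, relying on the convexity of each $\mathfrak{h}_{a,b,c}$ and path-connectedness of the admissible parameter region. The only organizational difference is that the paper uses a hub-and-spoke structure---connecting each $H$ to a reference space $\Hh_{\epsilon}(T)$ via a smooth family $h(A,B,C)$ extending the given profile, and handling the comparison within $\Hh_{\epsilon}(T)$ separately via Corollary~\ref{closer}---whereas you connect $H^0$ to $H^1$ directly via a fixed global section $\hat{h}_{a,b,c}$ (or alternatively via the $C^1$-close argument); both routes are sound.
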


\begin{proof}


We first observe that for all sufficiently small  $\epsilon>0$ every Hamiltonian 
 in the space  $$\Hh_{\epsilon}(T ) =\mathcal{H}_{\epsilon, T+\epsilon,3(T+\epsilon)/\epsilon,0}$$ is finely tuned to $\Cc_{\lambda_0,\alpha}(T)$. Moreover, there is an $\epsilon_0>0$ such that for any $\epsilon, \epsilon' < \epsilon_0$ and any Hamiltonians
 $H_{\epsilon} \in \Hh_{\epsilon}(T)$ and $H_{\epsilon'} \in \Hh_{\epsilon'}(T)$ we have 
\begin{equation*}
\label{}
\Delta_s(H_{\epsilon}, H_{\epsilon '}) > (T_{\min}(\lambda_0)+T_{\min}(\lambda_0, \alpha) -T)/2>0.
\end{equation*}
It then follows from Corollary \ref{closer} that for all $\epsilon< \epsilon_0$ and every Hamiltonian $H_{\epsilon}$ in $\Hh_{\epsilon}(T)$  the rank of  $\HF(H_{\epsilon}; \alpha)$ is the same.

By Corollary \ref{bigger}, it now suffices to show that given any finely tuned Hamiltonian $H$ there is an admissible homotopy $H^s$ which consists of finely tuned Hamiltonians and connects $H$ to some $H_{\epsilon} \in \Hh_{\epsilon}(T)$ with $\epsilon< \epsilon_0$. The starting point of $H^s$, $H$,  belongs to $\Hh_{a,b,c,\kappa}$ for some $h$ in   $\mathfrak{h}_{a,b,c}$. For the endpoint $H_{\epsilon}$ we choose $\epsilon< \epsilon_0$ small enough so that  the following inequalities  hold  $$\text{ $\epsilon<a$, \quad $T+\epsilon <b$\quad  and \quad $ 3(T+\epsilon)/\epsilon>c$}.$$
We now  view $h$ as belonging to a smooth family of functions $h(A,B,C)$ such that $h(A,B,C)$ belongs to $\mathfrak{h}_{A,B,C}$. The segments of the path $H^s$ will then be defined by varying the parameters $A$, $B$, $C$ and $\kappa$ one at a time.

We begin with $\kappa$. The distinction between a tuned and finely tuned Hamiltonian involves only the relationship between $a$ and $\kappa$, and if $a$ {\it works} for $\kappa$ then it also works for all smaller values of $\kappa$. So, the first segment of the path $H^s$ will be $$s\in [0,1] \mapsto  h(a,b,c)(e^{\tau -(1-s)\kappa}).$$ 
The next segment increases $C$ from $c$ to $3(T+\epsilon)/\epsilon$. Again it follows easily from the definitions that the intermediate Hamiltonians remain finely tuned. Continuing in this way, we decrease $B$ from b to $T+\epsilon$  and finally decrease $A$ from $a$ to $\epsilon$. By joining these four segments in order and reparameterizing to smoothen the transitions between them we obtain the desired homotopy $H^s$. 
\end{proof}

\section{The Proof of Theorem \ref{persist}}\label{proof1}
In the set-up of the Theorem \ref{persist} we are given a nondegenerate rigid constellation $\Cc_{\lambda_0,\alpha}( T)$  and  a pinched  contact form $\lambda = f\lambda_0$ such that the function $f$ is positive.  By rescaling we may assume that $\min(f) =1$ and so the pinching condition becomes 
\begin{equation*}
\label{pinch}
\max(f)<  \min\left\{\frac{T^+}{T}, \frac{T_{\min}(\lambda_0)+T_{\min}(\lambda_0 ,\alpha)}{T}\right\}.
\end{equation*}
We also have the condition that every closed Reeb orbit of $\lambda$ in the class $\alpha$ and with period in $[T_{\min}(\lambda_0, \alpha),T\max(f)]$ is nondegenerate.

Define $\widehat{T}$ by 
\begin{equation*}
\label{pinch}
\widehat{T} = \min\left\{T^+, T_{\min}(\lambda_0)+T_{\min}(\lambda_0 ,\alpha)\right\}.
\end{equation*}
Every  $b$ in the open  interval  $(T \max(f), \widehat{T})$  is not the period of a closed Reeb orbit of  $\lambda_0$. Choose such a $b$ which also lies in the complement of  $\mathcal{T}(\lambda,\alpha)$.
For a profile $h$ in $\mathfrak{h}_{a,b,c}$ we set 
\begin{equation*}
\label{G}
G(\tau,p)=h(e^{\tau}/f(p)).
\end{equation*} While $G$ is admissible, it is clearly not radial. However, for the diffeomorphism $\Psi_{\lambda} \colon \R \times M \to \R \times M$  defined  by 
$$
(\tau,p) \mapsto (\tau +f(p), p)
$$
we have 
\begin{equation}
\label{lambda}
\Psi_{\lambda}^*(e^{\tau}\lambda_0) = e^{\tau}\lambda
\end{equation} 
and 
\begin{equation}
\label{G}
\Psi_{\lambda}^*G(\tau,p) = h(e^{\tau}).
\end{equation} 
Thus $\Psi_{\lambda}^*G$ is radial. By our choice of $b$ above, we may therefore assume that for all sufficiently small $a$ and sufficiently large $c$, the Hamiltonian $\Psi_{\lambda}^*G$ is dividing (see Lemma \ref{divide}). More precisely, by \eqref{lambda}, the Hamiltonian  $\Psi_{\lambda}^*G$ is dividing for $\lambda$ and not for $\lambda_0$ (see Remark \ref{general divide}). As a consequence, there is a bijection between $\Pp^-_{\alpha}(\Psi_{\lambda}^*G)$ and $\Rr^b_{\alpha}(\lambda)$ and so, between $\Pp^-_{\alpha,\, \R /\Z}(\Psi_{\lambda}^*G)$ and $\Rr^b_{\alpha,\, \R /\Z}(\lambda)$, as well.

By equation \eqref{lambda} and the fact that $\Psi_{\lambda}$ is isotopic to the identity (and so preserves $[\R /\Z, \R \times M]$), we also know that $\Psi_{\lambda}$ maps $\Pp^-_{\alpha,\, \R /\Z}(\Psi_{\lambda}^*G)$ bijectively onto $\Pp^-_{\alpha,\, \R /\Z}(G)$ and preserves actions. Thus, every family in $\Pp^-_{\alpha,\, \R /\Z}(G)$ also corresponds to a unique family in $\Rr^b_{\alpha,\, \R /\Z}(\lambda)$.

\medskip
\noindent \textbf{Milepost 1.  To prove the first assertion of Theorem \ref{persist} it suffices to find at least $\mathrm{rank}(\Cc_{\lambda_0,\alpha}( T))$  elements of $\Pp^-_{\alpha,\, \R /\Z}(G)$ which correspond to distinct closed Reeb orbits of $\lambda$ with periods in  $\left[T_{\min}(\lambda_0, \alpha),\, T\max(f)\right]$.}
\medskip

We now refine this task. For the constant $a$ from the definition of the profile function $h$ consider the following subset of $\Pp^-_{\alpha,\, \R /\Z}(G)$
$$\Pp^a_{\alpha, \,\R /\Z}(G)= \left\{Y \in \Pp^-_{\alpha, \,\R /\Z}(G)\colon \Aa_G(Y) \in \left(-(1+a)T\max(f),\, -T_{\min}(\lambda_0, \alpha)+a^2 \right)   \right\}.$$ 

\begin{Lemma}\label{nondegenerate}
If $a>0$ is sufficiently small, then every family  $Y$ in $\Pp^a_{\alpha, \,\R /\Z}(G)$ is nondegenerate and the corresponding family $\Gamma_Y$ in $\Rr^b_{\alpha, \,\R /\Z}(\lambda)$ has period in the interval  $$\left[T_{\min}(\lambda_0, \alpha),\, T\max(f)\right].$$ 
\end{Lemma}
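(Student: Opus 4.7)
Here is the plan. The orbit $Y\in \Pp^a_{\alpha,\R/\Z}(G)$ corresponds, via the diffeomorphism $\Psi_\lambda$, to an $\R/\Z$-family of $1$-periodic orbits of the radial Hamiltonian $\Psi_\lambda^*G(\tau,p)=h(e^\tau)$ on the symplectization of $(M,\lambda)$; hence, by Lemma \ref{divide} applied to $\lambda$ via Remark \ref{general divide}, $Y$ corresponds to a unique $\R/\Z$-family $\Gamma_Y\in\Rr^b_{\alpha,\R/\Z}(\lambda)$ together with a value $e^{\tau_Y}\in(1,1+a)$ satisfying $h'(e^{\tau_Y})=T_\gamma$, where $T_\gamma$ denotes the common period of the orbits of $\Gamma_Y$. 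Using the action-preserving identity $\Psi_\lambda^*(e^\tau\lambda_0)=e^\tau\lambda$, the action takes the form
\[
\Aa_G(Y)=-e^{\tau_Y}T_\gamma+h(e^{\tau_Y}),\qquad h(e^{\tau_Y})\in[0,a^2].
\]
Feeding the action window $\Aa_G(Y)\in(-(1+a)T\max(f),-T_{\min}(\lambda_0,\alpha)+a^2)$ defining $\Pp^a_{\alpha,\R/\Z}(G)$ into this formula, and using $h\geq 0$, $e^{\tau_Y}<1+a$ on the upper bound and $h\leq a^2$, $e^{\tau_Y}>1$ on the lower bound, yields
\[
\frac{T_{\min}(\lambda_0,\alpha)-a^2}{1+a}\;<\;T_\gamma\;<\;(1+a)T\max(f)+a^2,
\]
and these soft bounds collapse to the closed interval $[T_{\min}(\lambda_0,\alpha),T\max(f)]$ as $a\to 0$.

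Next I would upgrade these soft bounds to the closed interval in the statement. The period spectrum $\mathcal T(\lambda,\alpha)$ is closed, by an Arzel\`a--Ascoli argument on the compact manifold $M$ applied to solutions of $\dot\gamma=R_\lambda(\gamma)$ together with preservation of the free homotopy class under $C^0$-convergence of loops. By the hypothesis of Theorem \ref{persist}, every closed Reeb orbit of $\lambda$ in class $\alpha$ with period in $[T_{\min}(\lambda_0,\alpha),T\max(f)]$ is nondegenerate, and the implicit function theorem makes any such orbit locally the unique closed orbit in phase-space$\times$period-space; hence no sequence of periods in $\mathcal T(\lambda,\alpha)$ distinct from $T$ can converge to a nondegenerate period $T$ in this range. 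Applied at the two endpoints, and combined with closedness of $\mathcal T(\lambda,\alpha)$, this produces $\delta>0$ with
\[
\mathcal T(\lambda,\alpha)\cap\bigl((T_{\min}(\lambda_0,\alpha)-\delta,T_{\min}(\lambda_0,\alpha))\cup(T\max(f),T\max(f)+\delta)\bigr)=\emptyset.
\]
Taking $a$ small enough that the soft window on $T_\gamma$ lies inside $(T_{\min}(\lambda_0,\alpha)-\delta,T\max(f)+\delta)$, and remembering that $T_\gamma\in\mathcal T(\lambda,\alpha)$, pins $T_\gamma$ to $[T_{\min}(\lambda_0,\alpha),T\max(f)]$.

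Finally, for the transversal nondegeneracy of $Y$ I would linearize directly in the pulled-back picture. Because $V_{\Psi_\lambda^*G}=h'(e^\tau)R_\lambda$ has no $\partial_\tau$-component and is parallel to the Reeb field, the time-$1$ flow at the base point $(\tau_Y,p_0)$ of a representative orbit preserves $\tau$, sends $\partial_\tau$ to $\partial_\tau+h''(e^{\tau_Y})e^{\tau_Y}R_\lambda(p_0)$, fixes $R_\lambda(p_0)$, and restricts to the linearized Poincar\'e return map $P$ of $\gamma$ on $\xi_{p_0}=\ker\lambda_{p_0}$. The strict convexity condition (h2) gives $h''(e^{\tau_Y})\neq 0$, while the nondegeneracy of $\gamma$ just established makes $P-\mathrm{id}$ invertible on $\xi_{p_0}$. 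Splitting a potential kernel vector of $d\phi^1-\mathrm{id}$ along $\R\partial_\tau\oplus\R R_\lambda(p_0)\oplus\xi_{p_0}$ and reading off the $R_\lambda$- and $\xi$-components then forces both $\delta\tau=0$ and the $\xi$-part of $\delta p$ to vanish, so the kernel reduces to $\R\cdot R_\lambda(p_0)$, which is exactly the tangent space to the $\R/\Z$-family through the orbit. The main obstacle is the gap step of the middle paragraph: without both closedness of $\mathcal T(\lambda,\alpha)$ and the nondegeneracy hypothesis on the full \emph{closed} interval, one could not rule out accumulating ``fast'' periods just below $T_{\min}(\lambda_0,\alpha)$, and the soft period bounds would fail to exclude them.
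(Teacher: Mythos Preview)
Your argument is correct and follows essentially the same route as the paper: derive the soft period window from the action formula $\Aa_G(Y)=-e^{\tau_Y}T_\gamma+h(e^{\tau_Y})$ with $e^{\tau_Y}\in(1,1+a)$, then use Arzel\`a--Ascoli together with the nondegeneracy hypothesis on the closed interval to rule out periods in the slack regions as $a\to 0$. The paper frames the second step as a direct contradiction (a sequence of periods accumulating on an endpoint would produce a limiting orbit contradicting isolatedness), whereas you extract an explicit gap $\delta$ first; these are equivalent.

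Your proof is in fact more complete than the paper's on one point: the lemma asserts that every $Y\in\Pp^a_{\alpha,\R/\Z}(G)$ is \emph{nondegenerate}, but the paper's written proof only establishes the period bound and leaves the transversal nondegeneracy of $Y$ implicit. Your final paragraph supplies this missing step, computing the linearized time-$1$ map of $\Psi_\lambda^*G$ in the splitting $\R\partial_\tau\oplus\R R_\lambda\oplus\xi$ and using $h''(e^{\tau_Y})\neq 0$ together with the nondegeneracy of $\Gamma_Y$ (just established) to conclude that the kernel of $d\phi^1-\mathrm{id}$ is exactly $\R R_\lambda$. This is the standard argument, and it is good that you spelled it out.
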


\begin{proof}

Consider a family $Y \in \Pp^a_{\alpha, \,\R /\Z}(G)$ and the corresponding family  $\Gamma_Y$ in $\Rr^b_{\alpha, \,\R /\Z}(\lambda)$.  As described above, the preimage $\Psi_{\lambda}^{-1}(Y)$ is an $\R /\Z$-family in $\Pp^-_{\R /\Z}(\Psi_{\lambda}^*G; \alpha)$  with the same action. 
So,
\begin{equation*}
\label{bounds}
\Aa_{\Psi_{\lambda}^*G}(\Psi_{\lambda}^{-1}(Y))=\Aa_G(Y) \in \left(-(1+a)T\max(f),\, -T_{\min}(\lambda_0, \alpha)+a^2 \right).\end{equation*}
Since the Hamiltonian $\Psi_{\lambda}^*G$ is dividing we also have
\begin{equation*}
\label{ }
\Aa_G(Y) = \Aa_{\Psi_{\lambda}^*G}(\Psi_{\lambda}^{-1}(Y))=-e^{\tau_Y} T_{\Gamma_Y} +h(e^{\tau_Y})
\end{equation*}
where $T_{\Gamma_Y}$ is the common period of the family $\Gamma_Y$ and $e^{\tau_Y}$ is in $(1, 1+a)$. 
Thus 
\begin{equation}
\label{open}
T_{\Gamma_Y} \in \left(\frac{T_{\min}(\lambda_0, \alpha)-a^2}{1+a},\, (1+a)T\max(f) + a^2\right).
\end{equation}

So, if the assertion of the lemma doesn't hold then for every small $a>0$ there is a  $Y$ in  $\Pp^a_{\alpha, \,\R /\Z}(G)$ such that  \eqref{open} holds but $T_{\Gamma_Y}$ lies outside the closed subinterval  
\begin{equation*}
\label{ }
\left[T_{\min}(\lambda_0, \alpha),\, T\max(f)\right] \subset \left(\frac{T_{\min}(\lambda_0, \alpha)-a^2}{1+a},\, (1+a)T\max(f) + a^2\right).
\end{equation*}
Thus, there is sequence of closed Reeb orbits of $\lambda$ in class $\alpha$ whose periods are monotonically converging to one of  the endpoints, $T_{\min}(\lambda_0, \alpha)$ or $T\max(f)$ as $a \to 0$. By Arzela-Ascoli, a subsequence of these orbits must converge to a closed Reeb orbit of $\lambda$ in class $\alpha$ with period equal to either  $T_{\min}(\lambda_0, \alpha)$ or $T\max(f)$. This contradicts our assumption that every closed Reeb orbit of $\lambda$ in class $\alpha$ and with period in $[T_{\min}(\lambda_0, \alpha),T\max(f)]$ is nondegenerate (and hence isolated).

\end{proof}

Henceforth we will assume that  $a>0$ is sufficiently small in the sense of Lemma \ref{nondegenerate}.

\medskip
\noindent \textbf{Milepost 2.  To prove the first assertion of Theorem \ref{persist} it suffices to find at least $\mathrm{rank}(\Cc_{\lambda_0,\alpha}( T))$ distinct elements of $\Pp^a_{\alpha,\, \R /\Z}(G)$.}
\medskip

To achieve this, we now use the Floer theoretic machinery developed in the previous section. 
We will argue as in \cite{ke} by adapting a technique introduced by Chekanov in \cite{ch}. 

Starting with the profile $h$ used to define $G$  we set 
\begin{equation*}
\label{H0}
H^0 =h(e^{\tau})
\end{equation*} 
and 
\begin{equation*}
\label{H1}
H^1 =h(e^{\tau}/\max(f)).
\end{equation*}
By our choice of $b \in (T \max(f), \widehat{T})$ both $H^0$ and $H^1$ are finely tuned to $\Cc_{\lambda_0,\alpha}( T)$ for all sufficiently small $a$ and sufficiently large $c$  . Since $h$ is nondecreasing and $\min(f)=  1$ we also have 
\begin{equation*}
\label{ }
h(e^{\tau}) \geq h(e^{\tau}/f(p)) \geq h(e^{\tau}/\max(f))
\end{equation*}
for all $(\tau, p) \in \R \times M$, and thus
\begin{equation}
\label{0G1}
H^0 \geq G \geq H^1.
\end{equation}

Using, again, the simple function $\mathrm{\mathbf{step}}$ we define two admissible homotopies;
\begin{equation*}
\label{ }
{H}_0^s = (1-\mathrm{\mathbf{step}}(s))H^0 + \mathrm{\mathbf{step}}(s)G
\end{equation*}
from $H^0$ to $G$, and
\begin{equation*}
\label{ }
{H}_1^s = (1-\mathrm{\mathbf{step}}(s))G + \mathrm{\mathbf{step}}(s)H^1
\end{equation*}
from $G$ to $H^1$. Inequality, \eqref{0G1} implies that  
\begin{equation}
\label{down}
\p_s({H}_0^s),\, \p_s({H}_1^s) \leq 0
\end{equation}
and so 
\begin{equation*}
\label{}
\mathrm{cost}({H}_0^s)=  \mathrm{cost}({H}_1^s) =0.
\end{equation*}

Consider the ${\mathbb{Z}/2}$-vector space spanned by the elements of $\Pp^a_{\alpha,\, \R /\Z}(G)$,
$$V^a(G;\alpha) = \mathrm{Span}_{\mathbb{Z}/2}\{Y \in \Pp^a_{\alpha,\, \R /\Z}(G)\}.$$
For regular Floer continuation data sets ${F}_0^s=({H}_0^s,{J}_0^s)$  and  ${F}_1^s=({H}_1^s,{J}_1^s)$ we define two linear maps. The first,
\begin{equation*}
\label{ }
\chi_{{F}_0^s} \colon \CF(H^0; \alpha) \to V^a(G; \alpha),
\end{equation*}
is defined on generators by
$$\chi_{{F}_0^s}(X^0) = \sum_{Y \in \Pp^a_{\alpha,\, \R /\Z}(G)} \#\Mm^0_s(X^0, Y;\,{F}_0^s) \,Y,$$
and the second  map
\begin{equation*}
\label{ }
\chi_{{F}_1^s} \colon V^a(G; \alpha) \to \CF(H^1; \alpha)
\end{equation*}
is defined on generators by
$$\chi_{{F}_1^s}(Y) = \sum_{X^1 \in \Pp^-_{\alpha, \,\R /\Z}(H^1)} \#\Mm^0_s(Y, X^1;\,{F}_1^s) \, X^1.$$
Since the elements of $\Pp^-_{\alpha, \,\R /\Z}(H^0)$ and $\Pp^-_{\alpha, \,\R /\Z}(H^1)$ are all simple, it follows  from Proposition \ref{s-break-glue} that  the maps  $\chi_{{F}_0^s}$ and $\chi_{{F}_1^s}$ are well defined. 

Note that no claim is being made that  $\chi_{{F}_0^s}$ and $\chi_{{F}_1^s}$ are chain maps. Indeed the relevant version of Floer homology can only be defined for $G$ by imposing prohibitively restrictive assumptions on it and hence on $\lambda$.  This reflects one of the important observations of Chekanov in \cite{ch}. We now prove the following result. 

\begin{Proposition}\label{prop:ident}
The composition $\chi_{{F}_1^s} \circ \chi_{{F}_0^s}\colon \CF(H^0; \alpha) \to \CF(H^1; \alpha)$ is a chain map which induces an isomorphism in homology. 
\end{Proposition}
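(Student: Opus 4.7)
The plan is to apply Proposition \ref{composition} to the half-open homotopy of homotopies $F_{0\#1}^{r,s}$ formed from $F_0^s$ and $F_1^s$ via the cut-off function $\xi(r)$. The decisive feature of the setup is the pointwise inequality $H^0 \geq G \geq H^1$, which forces both $H_0^s$ and $H_1^s$ (and hence $H_{0\#1}^{r,s}$) to be monotonically nonincreasing in $s$. Consequently every cost involved is nonpositive, and every cost-dependent compactness hypothesis is trivially satisfied; the only non-trivial surviving condition, $\Aa_{H^0}(X^0) - \Aa_{H^1}(X^1) < T_{\min}(\lambda_0)$, follows from Lemma \ref{spectra}(3) since both $H^0$ and $H^1$ are finely tuned to $\Cc_{\lambda_0,\alpha}(T)$.

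First I would check that $\chi_{F_1^s} \circ \chi_{F_0^s}$ is even a well-defined map into $\CF(H^1;\alpha)$. For any intermediate orbit $Z \in \Pp^-_{\alpha,\, \R /\Z}(G)$ appearing in a broken trajectory from $X^0$ to $X^1$, the action-energy identity \eqref{sr-energy-action}, applied separately on each of the two monotone nonincreasing legs, yields $\Aa_{H^1}(X^1) \leq \Aa_G(Z) \leq \Aa_{H^0}(X^0)$. The bounds \eqref{eps} for the actions of orbits of finely tuned Hamiltonians then place $\Aa_G(Z)$ in the open interval $\bigl(-(1+a)T\max(f),\, -T_{\min}(\lambda_0,\alpha)+a^2\bigr)$ defining $\Pp^a_{\alpha,\, \R /\Z}(G)$, so every contributing $Z$ lies in this restricted set. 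Next, Proposition \ref{composition} produces the compactification $\overline{\Mm}^1_{r,s}(X^0, X^1; F_{0\#1}^{r,s})$ as a $1$-manifold, and counting its boundary modulo $2$ yields the identity
\[
\theta_{F_{0\#1}^{0,s}} \,+\, \chi_{F_1^s}\circ\chi_{F_0^s} \,=\, \partial_{F^1} K \,+\, K \partial_{F^0},
\]
where $K$ counts the parametric space $\Mm^0_{r,s}$ and $\theta_{F_{0\#1}^{0,s}}$ is the continuation chain map of the glued homotopy at $r = 0$. A direct algebraic check using $\partial_{F^0}^2 = \partial_{F^1}^2 = 0$ then confirms that $\chi_{F_1^s}\circ\chi_{F_0^s}$ intertwines the differentials and induces the same map as $\theta_{F_{0\#1}^{0,s}}$ in homology.

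To conclude, I would show $\Theta_{F_{0\#1}^{0,s}}$ is an isomorphism by interpolating $H^0$ to $H^1$ through the radial family $\tilde{H}^s := h\bigl(e^{\tau - s\log\max(f)}\bigr)$, $s\in[0,1]$. Each $\tilde{H}^s$ lies in $\mathcal{H}_{a,b,c,\, s\log\max(f)}$ and remains finely tuned to $\Cc_{\lambda_0,\alpha}(T)$ by the pinching hypothesis $\max(f) < T^+/T$ together with the remaining conditions of Definition \ref{tuned}. Since $\tilde{H}^s$ is again monotone nonincreasing, hence cost-free, Corollary \ref{bigger}(2) gives that $\Theta_{\tilde{H}^s}$ is an isomorphism; Lemma \ref{same} then identifies $\Theta_{\tilde{H}^s}$ with $\Theta_{F_{0\#1}^{0,s}}$ because both homotopies have cost strictly below $\Delta_s(H^0, H^1) > 0$. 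The principal obstacle throughout is the bookkeeping required to keep every auxiliary moduli space inside the action window defining $\Pp^a_{\alpha,\, \R /\Z}(G)$: this is precisely what allows the Chekanov-style composition $\chi_{F_1^s}\circ\chi_{F_0^s}$ to carry Floer-theoretic content without ever making sense of Floer homology for the non-radial Hamiltonian $G$ in its own right.
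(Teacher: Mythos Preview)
Your proposal is correct and follows essentially the same approach as the paper's proof: both apply Proposition \ref{composition} to the half-open homotopy $F_{0\#1}^{r,s}$, use the monotonicity $H^0 \geq G \geq H^1$ to make all costs vanish, restrict the intermediate orbits $Z$ to $\Pp^a_{\alpha,\,\R/\Z}(G)$ via the action bounds \eqref{eps}, and finish by comparing with a cost-free radial homotopy through finely tuned Hamiltonians. The only cosmetic difference is your interpolation $\tilde{H}^s = h(e^{\tau - s\log\max(f)})$ versus the paper's $h\bigl(e^\tau/(1-\mathbf{step}(s)+\mathbf{step}(s)\max(f))\bigr)$; both are families in $\mathcal{H}_{a,b,c,\kappa(s)}$ with $\kappa(s)$ increasing from $0$ to $\log\max(f)$, and both work for the same reasons.
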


\begin{proof} To prove this we return to the setting of Proposition \ref{composition}. For the homotopies  ${H}_0^s$ and ${H}_1^s$ above, consider the half-open homotopy of homotopies
\begin{equation*}
\label{ }
{H}_{0\#1}^{r,s} = \begin{cases}
   {H}_0^{s+ \xi(r)}   & \text{for $s\leq 0$}, \\
    {H}_1^{s-\xi(r)}  & \text{for $s>0$}.
\end{cases}
\end{equation*}
where $\xi(r)$ is a smooth, positive and nondecreasing function which equals $r$ for $r \gg 2$ and which equals $2$ for $r \leq 0$. 
It follows from \eqref{0G1} and the choices above that \begin{equation}
\label{final cost}
 \mathrm{cost}({H}_{0\#1}^{r,s})=0.
\end{equation} 

Fixing regular Floer data sets $F^0=(H^0,J^0)$, $F^1=(H^1,J^1)$ and $F^G=(G,  J^G)$ we extend these to Floer continuation data sets
\begin{equation*}
\label{ }
{F}_0^s= ({H}_0^s,  {J}_0^s) \in \mathbf{F}^s(F^0, F^G),
\end{equation*}
\begin{equation*}
\label{ }
{F}_1^s= ({H}_1^s,  {J}_1^s) \in \mathbf{F}^s(F^G, F^1),
\end{equation*}
which we use to form  the Floer homotopy data set
\begin{equation*}
\label{ }
{F}_{0\#1}^{r,s} = \begin{cases}
 {F}_0^{s+ \xi(r)}   & \text{for $s\leq 0$}, \\
    {F}_1^{s-\xi(r)}  & \text{for $s>0$}
\end{cases}
\end{equation*}
in $\mathbf{F}^{r,s}(F^0, F^1)$. 
Perturbing again, if necessary, we assume that these data sets are all regular.

Given $X^0 \in \Pp^-_{\alpha, \,\R /\Z}(H^0)$ and $X^1\in \Pp^-_{\alpha, \,\R /\Z}(H^1)$, Proposition \ref{composition} implies that  the boundary of the compactification  $\overline{\Mm}^1_{r,s}(X^0, X^1;\,{F}_{0\#1}^{r,s})$  can be identified with the elements of the following four sets:
\begin{enumerate}
  \item[(I)]  $\displaystyle\Mm^0_s(X^0, X^1;\,{F}_{0\#1}^{0,s})$,
  
  \medskip
\item[(II)]   $\displaystyle \bigcup_{Z \in \Pp^-_{\alpha, \,\R /\Z}(G)} \Mm^0_s(X^0, Z;\,{F}^s_0) \times  \Mm^0_s(Z, X^1;\,{F}^s_1)$,
\item[(III)] $\displaystyle\bigcup_{Y^0 \in \Pp^-_{\alpha, \,\R /\Z}(H)} \Mm^0(X^0, Y^0;\,F^0) \times  \Mm^{0}_{r,s}(Y^0, X^1;\,{F}_{0\#1}^{r,s})$,

  \item[(IV)] $\displaystyle\bigcup_{Y^1 \in \Pp^-_{\alpha,\, \R /\Z}(H^1)} \Mm^{0}_{r,s}(X^0, Y^1;\,{F}_{0\#1}^{r,s}) \times \Mm^0(Y^1, X^1;\,F^1).$
\end{enumerate}

By definition, the number of elements in set (I), modulo 2, is the coefficient of $X^1$ in the image of $X^0$ under the
map $$\theta_{{F}_{0\#1}^{0,s}} \colon \CF(H^0; \alpha) \to \CF(H^1; \alpha),$$ which is well-defined by \eqref{final cost}. 
If we can show that every $Z$ which contributes a term to the set (II) must belong to the subset  $\Pp^a_{\alpha,\, \R /\Z}(G)$ of $\Pp^-_{\R /\Z}(G; \alpha)$, then 
 the number of elements in set (II), modulo 2, will be the coefficient of $X^1$ in the image of $X^0$ under the
map  $$\chi_{{F}_1^s} \circ \chi_{{F}_0^s} \colon \CF(H^0; \alpha) \to \CF(H^1; \alpha).$$
With this,  the fact that $\theta_{{F}_{0\#1}^{0,s}}$ and  $\chi_{{F}_1^s} \circ \chi_{{F}_0^s}$ are chain homotopic will follow from the usual arguments.

Suppose then  that  $Z \in \Pp^-_{\R /\Z}(G; \alpha)$ contributes a nontrivial term to the set (II). In this case both $\Mm^0_s(X^0, Z;\,{F}^s_0)$ and $\Mm^0_s(Z, X^1;\,{F}^s_1)$ must be nonempty. By \eqref{sr-energy-action} and \eqref{down} we then have
\begin{equation*}
\label{ }
 \Aa_{H^0}(X^0)> \Aa_G(Z) > \Aa_{H^1}(X^1)
\end{equation*}
and so, by \eqref{eps}, the following
\begin{equation*}
\label{ }
 -(1+a)T \max(f)< \Aa_G(Z)< -T_{\min}(\lambda_0, \alpha) +a^2. \end{equation*}
Thus,  $Z$ belongs to $\Pp^a_{\alpha,\, \R /\Z}(G)$, as desired. 

Since  $\chi_{{F}_1^s} \circ \chi_{{F}_0^s}$ is chain homotopic to $\theta_{{F}_{0\#1}^{0,s}}$ it only remains to show that $\Theta_{{H}_{0\#1}^{0,s}}$ (the map that $\theta_{{F}_{0\#1}^{0,s}}$ induces in homology) is an isomorphism. 
Consider the admissible homotopy
\begin{equation*}
\label{ }
H^s(\tau,p)=h\left(\frac{e^{\tau}}{1-\mathrm{\mathbf{step}}(s) + \mathrm{\mathbf{step}}(s) \max(f)}\right)
\end{equation*}
from $H^0$ to $H^1$. 
For all small enough $a$ and large enough $c$, each function $H^s$ is finely tuned to the rigid constellation $\Cc_{\lambda_0,\alpha}( T)$. Moreover, $\p_s(H^s) \leq 0$ and so Corollary \ref{bigger} implies  that the map 
$$\Theta_{H^s}  \colon \HF(H^0; \alpha) \to \HF(H^1; \alpha)$$ is an isomorphism.
Lemma \ref{same} then implies that 
\begin{equation*}
\label{ }
\Theta_{{H}_{0\#1}^{0,s}} = \Theta_{H^s}
\end{equation*}
which concludes the proof.

\end{proof}

At this point we can complete the task described in Milepost 2, and thus the proof of the first assertion of Theorem \ref{persist}.

\begin{Lemma}\label{number}
There are at least $\mathrm{rank}(\Cc_{\lambda_0,\alpha}( T))$ 
distinct elements of $\Pp^a_{\alpha,\, \R /\Z}(G)$. 
\end{Lemma}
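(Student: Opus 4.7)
The plan is to extract Lemma \ref{number} from Proposition \ref{prop:ident} by an elementary linear algebra argument over $\mathbb{Z}/2$. First I would observe that the chain-level composition
\[
\phi := \chi_{{F}_1^s} \circ \chi_{{F}_0^s} \colon \CF(H^0;\alpha) \longrightarrow \CF(H^1;\alpha)
\]
factors through the finite-dimensional $\mathbb{Z}/2$-vector space $V^a(G;\alpha)$ whose basis is, by definition, the set $\Pp^a_{\alpha,\, \R /\Z}(G)$. Consequently the image of $\phi$ in $\CF(H^1;\alpha)$ satisfies
\[
\mathrm{rank}(\phi) \;\leq\; \dim V^a(G;\alpha) \;=\; |\Pp^a_{\alpha,\, \R /\Z}(G)|.
\]

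Next I would bound $\mathrm{rank}(\phi)$ from below using Proposition \ref{prop:ident}: the map $\phi$ is a chain map and the induced map $\bar{\phi}$ on homology is an isomorphism. Setting $N := \dim \HF(H^0;\alpha)$, pick a basis $[c_1], \dots, [c_N]$ of $\HF(H^0;\alpha)$ and lift it to chain-level representatives $c_1, \dots, c_N \in \CF(H^0;\alpha)$. Since $\bar{\phi}$ is an isomorphism, the classes $[\phi(c_1)], \dots, [\phi(c_N)]$ are linearly independent in $\HF(H^1;\alpha)$; any nontrivial chain-level relation $\sum a_i \phi(c_i) = 0$ would descend to a homology relation, forcing all $a_i = 0$. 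Hence $\phi(c_1), \dots, \phi(c_N)$ are linearly independent in $\CF(H^1;\alpha)$, so $\mathrm{rank}(\phi) \geq N$. By Proposition \ref{hf} together with the definition of the rank of a rigid constellation, $N = \dim \HF(H^0;\alpha) = \mathrm{rank}(\Cc_{\lambda_0,\alpha}(T))$.

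Combining the two inequalities gives
\[
|\Pp^a_{\alpha,\, \R /\Z}(G)| \;\geq\; \mathrm{rank}(\Cc_{\lambda_0,\alpha}(T)),
\]
which is the desired conclusion. There is no real obstacle at this point: all the analytic content (transversality, $C^0$-bounds, the action-window argument showing that only $Z \in \Pp^a_{\alpha,\,\R/\Z}(G)$ contribute to the broken configurations, and the identification of $\phi$ with a map chain-homotopic to a continuation isomorphism) has already been carried out in Proposition \ref{prop:ident}. The lemma merely converts the homological statement about $\phi$ into a counting statement about generators of the intermediate complex.
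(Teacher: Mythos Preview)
Your proof is correct and follows essentially the same approach as the paper: both arguments use that $\chi_{F_1^s}\circ\chi_{F_0^s}$ factors through $V^a(G;\alpha)$ and induces an isomorphism in homology (Proposition~\ref{prop:ident}), then extract the dimension bound by elementary linear algebra. The only cosmetic difference is that the paper phrases this via injectivity of $\chi_{F_0^s}$ restricted to a subspace $V_0\subset\CF(H^0;\alpha)$ representing $\HF(H^0;\alpha)$, whereas you bound the rank of the full composition~$\phi$; these are equivalent.
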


\begin{proof}

Let $V_0$ be a subspace of $\CF(H; \alpha)$ that represents the homology $\HF(F^0; \alpha)$.
By Proposition \ref{prop:ident}, the restriction of $\chi_{{F}_R^s} \circ \chi_{{F}_L^s}$ to $V_0$ has no kernel.
Thus the restriction of $\chi_{{F}_L^s}$ to $V_0$ also has no kernel.  We therefore have
\begin{equation*}
\label{ }
\dim(V^a(G;\alpha)) \geq \dim(\chi_{{F}_L^s}(V_0)) = \dim(V_0)= \mathrm{rank}(\HF(F^0; \alpha)).
\end{equation*}
With this we are done.
\end{proof}

Finally we prove the second assertion of Theorem \ref{persist}

\begin{Lemma}\label{distinct} 
If the class $\alpha$  is either primitive or of infinite order, then the closed Reeb orbits of $\lambda$ corresponding to the distinct elements of $\Pp^a_{\alpha, \,\R /\Z}(G)$ are geometrically distinct.
Otherwise, they are geometrically distinct if there are no closed Reeb orbits of  $\lambda$ with period at most $$\frac{1}{|\alpha|}\left( T\max(f)-T_{\min}(\lambda_0, \alpha)\right)$$
that represent a class $\beta$ in $[\mathbb{S}^1 ,M]$ such that $\beta^k =\alpha$ for some integer $k>1$.
\end{Lemma}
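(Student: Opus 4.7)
The plan is to argue by contradiction, transferring the question to the Reeb side and reducing a failure of geometric distinctness to a statement about common iterates. First I would invoke the correspondence already developed in this section: the diffeomorphism $\Psi_{\lambda}$ together with the bijection of Lemma \ref{divide} (applied to $\lambda$ as in Remark \ref{general divide}) identifies each $\R /\Z$-family in $\Pp^a_{\alpha,\,\R /\Z}(G)$ with a unique $\R /\Z$-family of closed Reeb orbits of $\lambda$ in class $\alpha$, whose periods lie in $[T_{\min}(\lambda_0,\alpha),\,T\max(f)]$ by Lemma \ref{nondegenerate}. Since different $\R /\Z$-families of parametrised Reeb orbits are different closed orbits in the sense of Section 1, distinct elements of $\Pp^a_{\alpha,\,\R /\Z}(G)$ produce distinct closed Reeb orbits of $\lambda$.

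Now suppose for contradiction that two of the resulting orbits $\gamma_1$ and $\gamma_2$ have intersecting images. Uniqueness of integral curves of $R_{\lambda}$ forces the images to coincide, and each $\gamma_i$ is an iterate $\gamma_0^{k_i}$ of a common simple closed Reeb orbit $\gamma_0$ of period $T_0$. Writing $\beta \in [\mathbb{S}^1,M]$ for the free homotopy class of $\gamma_0$, we have $\beta^{k_1} = \alpha = \beta^{k_2}$, and distinctness of $\gamma_1$ and $\gamma_2$ forces $k_1 \neq k_2$ with neither dividing the other; we may take $1 \leq k_1 < k_2$.

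For the first assertion I would treat the two hypotheses in turn. If $\alpha$ is primitive, then $\alpha = \beta^{k_i}$ forces $k_i = 1$ for both $i$, contradicting $k_1 < k_2$. If instead $\alpha$ has infinite order, then $\beta$ inherits infinite order (otherwise $\alpha = \beta^{k_1}$ would be torsion), and the coincidence of conjugacy classes $[\beta^{k_1}] = [\beta^{k_2}]$ of distinct powers of an infinite-order class must be ruled out, again giving $k_1 = k_2$.

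For the second assertion I would combine the period interval with the divisibility of $\alpha$. The equality $[\beta^{k_1}] = [\beta^{k_2}]$ of conjugacy classes representing the order-$|\alpha|$ class $\alpha$ yields $|\alpha|\,\big|\,(k_2 - k_1)$, so $k_2 - k_1 \geq |\alpha|$. The inequalities $k_1 T_0 \geq T_{\min}(\lambda_0,\alpha)$ and $k_2 T_0 \leq T\max(f)$ then give
\[
T_0 \;\leq\; \frac{T\max(f) - T_{\min}(\lambda_0,\alpha)}{k_2 - k_1} \;\leq\; \frac{T\max(f) - T_{\min}(\lambda_0,\alpha)}{|\alpha|}.
\]
If $k_1 > 1$, then $\beta^{k_1} = \alpha$ itself exhibits $\gamma_0$ as a forbidden fast orbit. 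If $k_1 = 1$, then $\beta = \alpha$ and $\alpha^{k_2 - 1} = e$, so $k = k_2 > 1$ satisfies $\beta^k = \alpha$ and $\gamma_0 = \gamma_1$ likewise lies in the forbidden set. Either sub-case contradicts the hypothesis.

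The principal obstacle I expect is the group-theoretic input: the deductions $k_1 = k_2$ in the infinite-order case and $|\alpha|\,\big|\,(k_2 - k_1)$ in the general case must be extracted from the coincidence of conjugacy classes $[\beta^{k_1}] = [\beta^{k_2}]$ in $\pi_1(M)$. This is transparent in the abelian setting—from $\beta^{k_2 - k_1} = e$ and comparison of orders—but in full generality requires a careful argument that I have compressed above.
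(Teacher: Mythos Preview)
Your argument follows the paper's almost exactly: pass to the Reeb side via $\Psi_\lambda$ and Lemma~\ref{nondegenerate}, write two non-geometrically-distinct families as iterates $\gamma_0^{k_1}$ and $\gamma_0^{k_2}$ of a common simple orbit, and reason with the free homotopy class $\beta$ of $\gamma_0$. The paper uses the notation $(k,k+l)$ in place of your $(k_1,k_2)$ and argues the primitive and infinite-order cases together, but the substance is the same.

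Two small remarks. The clause ``with neither dividing the other'' is not a consequence of distinctness in the paper's sense (for instance $\gamma_0^2$ and $\gamma_0^4$ are distinct); since you never use it, simply delete it. Your case split $k_1>1$ versus $k_1=1$ in the second assertion is cleaner than the paper, which leaves this implicit; note that in the sub-case $k_1=1$ the conclusion follows directly from $\beta^{k_2}=\alpha$ with $k_2>1$, so the intermediate claim ``$\alpha^{k_2-1}=e$'' is unnecessary.

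On the group-theoretic concern you flag at the end: the paper does not address it. From $\beta^k=\beta^{k+l}=\alpha$ it simply asserts $\beta^l=e$ in $[\mathbb{S}^1,M]$ and then $\alpha^l=(\beta^l)^k=e$, treating powers of free homotopy classes as if they cancelled. So the step you are uneasy about---that equality of the conjugacy classes $[\beta^{k_1}]$ and $[\beta^{k_2}]$ forces $\beta^{k_2-k_1}=e$---is exactly what the paper takes for granted. Your caution is warranted (in $BS(1,2)$ an infinite-order element is conjugate to its square), but the argument is unproblematic whenever $\pi_1(M)$ is abelian, which covers all the explicit applications in the paper.
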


\begin{proof}

Let  $Y$ and $Y'$ be distinct elements of $\Pp^a_{\alpha, \,\R /\Z}(G)$ and let $\Gamma_Y$ and $\Gamma_{Y'}$ by the corresponding (distinct) elements of $\Rr^b_{\alpha,\, \R /\Z}(\lambda)$. Assume that  $\Gamma_Y$ and $\Gamma_{Y'}$ are not geometrically distinct. Then there must be an $\R /\Z$-family $\Gamma_R$ of closed Reeb orbits of $\lambda$ and integers $k\geq1$ and $l \geq 1$ such that 
\begin{equation}
\label{not distinct}
\Gamma_Y = (\Gamma_R)^k  \quad \text{ and } \quad \Gamma_{Y'} = (\Gamma_R)^{k+l}.
\end{equation}
Let $\beta$ be the class in $[\R /\Z, M]$ represented by $\Gamma_R$. By  \eqref{not distinct} we then have
\begin{equation}
\label{prim}
\beta^k =\beta^{k+l} =\alpha.
\end{equation}
This implies that $\beta^l$ is equal to the trivial element $e \in [\R /\Z, M]$ and so
\begin{equation}
\label{ord}
\alpha^l = (\beta^k)^l = (\beta^l)^k =e.
\end{equation}
In this case the class $\alpha$ can be neither primitive, by \eqref{prim}, or of infinite order, by \eqref{ord}. This implies the first statement of the lemma.

Suppose then that  $\Gamma_Y$ and $\Gamma_{Y'}$ are not geometrically distinct and that the $\alpha$ is not primitive and  is of finite order.  By Lemma \ref{nondegenerate}, we have
\begin{equation}
\label{both}
T_{\Gamma_Y}, \,  T_{\Gamma_{Y'}}\in \left[T_{\min}(\lambda_0, \alpha),\, T\max(f)\right].
\end{equation} 
Together with \eqref{not distinct},  this implies that 
\begin{equation*}
\label{}
kT_{\Gamma_R}, \,  (k+l)T_{\Gamma_R}\in \left[T_{\min}(\lambda_0, \alpha),\, T\max(f)\right],
\end{equation*}
and so 
\begin{equation}
\label{lbound}
lT_{\Gamma_R} \leq T\max(f)-T_{\min}(\lambda_0, \alpha).
\end{equation}
Equation \eqref{ord} implies that $l \geq |\alpha|$ an so we conclude from \eqref{lbound} that  
\begin{equation*}
\label{ }
T_{\Gamma_R}\leq \frac{1}{|\alpha|}\left(T\max(f)-T_{\min}(\lambda_0, \alpha)\right).
\end{equation*}
This implies the second statement of the lemma and concludes the proof.
\end{proof}

\section{The Proof of Theorem \ref{gutt+}}

To begin we recall the setting and the statement of the theorem.
Let  $(Q, \omega)$ be a symplectic manifold of dimension $2n$ such that the class 
$-[\omega]/2\pi \in H^2(Q;\mathbb{R})$ is the image of an integral class $\mathrm{ \mathbf{e}} \in H^2(Q; \mathbb{Z})$.
Let $$p_Q \colon M \to Q$$ be an $\mathbb{S}^1$-bundle over $Q$ with first Chern class equal to $\mathrm{ \mathbf{e}}$ and let $\lambda_Q$ be the corresponding Boothby-Wang contact form on $M$. 
Denote by $\alpha_\mathbf{f} \in [\mathbb{S}^1 ,M]$ the free homotopy class corresponding to the fibres of the bundle $p_Q$. 

\begin{Theorem}
Let  $\lambda =f\lambda_Q$ for some positive function $f$. If  
$$
\frac{\max(f)}{\min(f)}< 2
$$
then there are at least $n+1$ distinct closed Reeb orbits of $\lambda$ which represent the class $\alpha_{\mathbf{f}}$ and have period in the interval  $$[2\pi \min(f), 2\pi \max(f)].$$ 

These orbits are geometrically distinct from one another if the class  $\alpha_{\mathbf{f}}$ is either primitive or is of infinite order. Otherwise, they are geometrically distinct if there are no  closed Reeb orbits of $\lambda$  which have period less than or equal to $$\frac{2 \pi}{|\alpha_{\mathbf{f}} |} \left(\max(f)- \min(f)\right)$$ and which represent a class $\beta$ such that $\beta^k =\alpha_{\mathbf{f}}$ for some integer $k>1$.
\end{Theorem}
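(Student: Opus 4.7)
The collection $\Cc_{\lambda_Q,\alpha_{\mathbf{f}}}(2\pi)$ of closed Reeb orbits of $\lambda_Q$ in class $\alpha_{\mathbf{f}}$ with period in $[T_{\min}(\lambda_Q,\alpha_{\mathbf{f}}),2\pi]$ is a Morse--Bott nondegenerate rigid constellation: every point of $M$ lies on such an orbit, and one has $T=2\pi$, $T^+=4\pi$, $T_{\min}(\lambda_Q)=T_{\min}(\lambda_Q,\alpha_{\mathbf{f}})=2\pi$, so the pinching $\max(f)/\min(f)<2$ is precisely the hypothesis required by the framework of Theorem \ref{persist}. Since no nondegeneracy assumption is imposed on $\lambda=f\lambda_Q$, the plan is to set up a Morse--Bott version of the rigid-constellation Floer theory of Section \ref{floer} and to replace the rank estimate of Proposition \ref{hf} by a cuplength estimate in the spirit of Albers--Momin and Albers--Hein.

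First, I would fix a profile $h\in\mathfrak{h}_{a,b,c}$ with $b\in(2\pi\max(f),4\pi)$ and, as in Section \ref{proof1}, form the three admissible Hamiltonians $H^0=h(e^\tau)$, $G=h(e^\tau/f)$ and $H^1=h(e^\tau/\max(f))$ satisfying $H^0\geq G\geq H^1$. To break the Morse--Bott degeneracy I would perturb each of these by adding $\varepsilon\, \varphi(e^\tau)\, p_Q^*g$, where $g\colon Q\to\R$ is a Morse function and $\varphi$ is a bump supported where $h''\neq 0$. For sufficiently small $\varepsilon$, every $\R/\Z$-family in $\Pp^-_{\alpha_{\mathbf{f}},\R/\Z}$ splits into isolated nondegenerate orbits indexed by $\mathrm{Crit}(g)$; all transversality, $C^0$-bound, and compactness results of Section \ref{floer} remain valid, and the resulting $\HF(H^0;\alpha_{\mathbf{f}})$ is canonically isomorphic (in the relevant action window) to the Morse homology of $Q$, i.e.\ to $H_*(Q;\mathbb{Z}/2)$ up to degree shift. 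The Chekanov sandwich of Proposition \ref{prop:ident} then yields a factorization of the continuation isomorphism $\HF(H^0;\alpha_{\mathbf{f}})\to\HF(H^1;\alpha_{\mathbf{f}})$ through the vector space $V^a(G;\alpha_{\mathbf{f}})$ generated by the perturbed orbits of $G$ in the prescribed action window.

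To upgrade this from a rank bound to the bound $\tfrac{1}{2}\dim Q+1$, I would enrich the Floer complex with a pair-of-pants product, following Albers--Momin \cite{am} and Albers--Hein \cite{ah}, so that it becomes a module over $H^*(Q;\mathbb{Z}/2)$ in which the classical cup product is realized by the product structure. Since $[\omega]^n\neq 0\in H^{2n}(Q;\mathbb{Z}/2)$, the cuplength of this module is at least $n=\tfrac12\dim Q$, and the factorization of the identity through $V^a(G;\alpha_{\mathbf{f}})$ forces, by a standard Lusternik--Schnirelmann argument, at least $n+1$ generators of $V^a(G;\alpha_{\mathbf{f}})$. Passing to the limit $\varepsilon\to 0$ (the periods lie in the fixed compact window $[2\pi\min(f),2\pi\max(f)]$, which prevents collapse) and translating back via the dividing-Hamiltonian dictionary \eqref{121}, one obtains at least $n+1$ distinct $\R/\Z$-families of closed Reeb orbits of $\lambda$ representing $\alpha_{\mathbf{f}}$ with periods in $[2\pi\min(f),2\pi\max(f)]$. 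The geometric-distinctness statement follows verbatim from the argument of Lemma \ref{distinct}, since in the present situation $T\max(f)-T_{\min}(\lambda_Q,\alpha_{\mathbf{f}})=2\pi(\max(f)-1)$ after rescaling $\min(f)=1$. The main technical obstacle is to construct and control the pair-of-pants product entirely within the rigid-constellation framework, without appealing to a symplectic filling of $(M,\lambda_Q)$: this requires extending the Hofer-energy $C^0$-bounds of Proposition \ref{C0} from cylinders to thrice-punctured spheres, and verifying that the cost of the homotopies involved in the product construction stays below the threshold $T_{\min}(\lambda_Q)=2\pi$ forced by those bounds.
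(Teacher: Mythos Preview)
Your outline is in the right spirit—the proof is indeed a cuplength argument in the style of Albers--Momin and Albers--Hein—but your chosen implementation via a pair-of-pants product is precisely the trap the paper avoids, and the obstacle you flag at the end (extending the $C^0$-bounds of Proposition \ref{C0} to thrice-punctured spheres without a filling) is a real one for which you offer no mechanism. The paper's proof sidesteps it entirely.

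Instead of encoding the cup product through curves with extra punctures, the paper encodes it through \emph{evaluation constraints on a single cylinder}. Concretely: choose Morse functions $F_1,\dots,F_n$ on $Q$ and, using $[\omega]^n\neq 0$, find index-$2$ critical points $q_j$ with $W^s(q_1)\cap\cdots\cap W^s(q_n)$ finite of odd cardinality (Lemma \ref{pointss}). Lift these stable manifolds to $M$ via trivializations of $p_Q$ over each $W^s(q_j)$. One then studies continuation cylinders $u$ from $H^0$ to $H^1$ (through the homotopy that lingers on $G$ for $s\in[0,(n+1)r]$) subject to the $n$ point constraints $p_M(u(jr,0))\in[W^s(q_j)]$ for $j=1,\dots,n$. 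A cobordism argument shows these moduli spaces are nonempty for all $r$, and since the curves are still cylinders, the $C^0$-bounds of Proposition \ref{C0} apply verbatim. Sending $r\to\infty$, the $n$ recentered sequences $u(\cdot+jr,\cdot)$ converge to Floer trajectories $v^1,\dots,v^n$ for $G$; a transversality condition on the stable manifolds versus the projected Reeb orbits (Corollary \ref{away}) forces each $v^j$ to be nonconstant, yielding $n+1$ distinct action values and hence $n+1$ distinct orbits.

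Two further points. First, the paper does not perturb the Morse--Bott family $X(H^0)$ into nondegenerate orbits; it works directly with the degenerate family and uses the codimension-$(2n-2)$ evaluation constraints to cut the moduli spaces down to dimension zero. Your perturbation $\varepsilon\,\varphi(e^\tau)\,p_Q^*g$ is not needed and would in fact complicate the limiting step $\varepsilon\to 0$ you invoke, since without nondegeneracy of the orbits of $\lambda$ you have no a priori control preventing the $n+1$ detected families from merging in the limit. Second, your geometric-distinctness argument via Lemma \ref{distinct} is correct and matches the paper.
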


The  assertions concerning the conditions under which the detected orbits are  geometrically distinct follow as in Lemma \ref{distinct}, and so their proof is left to the reader.  It remains to for us to detect  $n+1$ closed Reeb orbits of $\lambda$ in class $\alpha_\mathbf{f}$ which are distinct and whose periods lie in the interval  $[2\pi \min(f), 2\pi \max(f)].$ 

We may assume that $\lambda$ has finitely many, say $N$, distinct $\R /\Z$-families of such closed orbits. We denote them by $$\Xi_1, \dots, \Xi_N.$$ We may also  assume, by a simple rescaling,  that $\min(f) =1$. It remains to prove that $N \geq n+1$

\medskip

\noindent{\bf{Step 1.}} First we derive a Morse theoretic implication of the fact that $\omega_Q$ is a symplectic form (Lemma \ref{pointss} below).
Let $F\colon Q \to \R$ be a Morse function, let $q$ be a critical point of $F$ with Morse index $\mathrm{index}(q)$ and let $g$ be a Riemannian metric on $Q$. We denote the 
stable submanifold of $q$, for the negative gradient flow of $F$ with respect to $g$, by $W^s(q,(F,g))$, or just $W^s(q)$ assuming the gradient data is clear from the context. For a generic choice of the metric $g$ each $W^s(q)$ is an embedded submanifold diffeomorphic to $\R^{2n-\mathrm{index}(q)}$ and admits a compactification as a manifold with corners whose boundary faces are comprised of stable submanifolds of critical points of $F$ with Morse index greater than that of $q$. 

The following result is implied by standard transversality arguments. 
\begin{Lemma}\label{generic}
For a generic collection of Morse functions $F_1, \dots, F_{n}$ and metrics $g_1, \dots, g_{n}$, the stable and unstable submanifolds of the critical points of the $F_j$ all intersect transversally as do all of their repeated intersections. Moreover for any closed Reeb orbit of $\xi_i(t)$ of $\lambda$ belonging to one of the families  $\Xi_i$, and any critical point $q_j$ of any $F_j$  we have 
\begin{equation}
\label{trans}
W^s(q_j) \pitchfork (p_Q (\xi_i(t)).
\end{equation}
\end{Lemma}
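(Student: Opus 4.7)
The plan is to assemble this lemma from several standard applications of the Sard--Smale parametric transversality theorem, together with an inductive Baire category argument, being careful that the Reeb orbits $\xi_i$ of $\lambda$ are treated as fixed data while the Morse--Smale data $(F_j,g_j)$ is what we perturb.

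First I would fix, for each $j=1,\dots,n$, an arbitrary Morse function $F_j$ on $Q$ and recall that for a residual set of Riemannian metrics $g_j$, the pair $(F_j,g_j)$ is Morse--Smale: every pair of stable and unstable manifolds of critical points of $F_j$ intersects transversally, and each $W^s(q,(F_j,g_j))$ admits the asserted compactification as a manifold with corners whose boundary strata are products of stable manifolds of higher-index critical points. This is the classical result of Smale. Since each ``Morse--Smale condition for the $j$-th pair'' is a second-category condition in $g_j$, their simultaneous satisfaction is second-category in the product $(g_1,\dots,g_n)$ by Baire.

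Next I would upgrade pairwise transversality of the $W^s(q,(F_j,g_j))$ and $W^u(q',(F_{j'},g_{j'}))$ for $j\neq j'$ to ``multiway'' transversality of arbitrary finite intersections. Since there are only finitely many critical points and finitely many functions, this is a finite list of conditions. Each one is a standard parametric transversality statement: fixing a finite subfamily and perturbing the remaining metric (or, equivalently, post-composing one $F_j$ by a small $C^\infty$-small diffeomorphism), one verifies that the evaluation map sending $(x,g)$ to the tuple of tangent data along the flow is a submersion on the dense open set of injective points of the flows. An inductive application of Sard--Smale, running through the finite list of intersection patterns in order of increasing complexity, then gives a residual set on which every repeated intersection is transverse, as well as transverse to all boundary strata of the partial compactifications.

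For the final condition~\eqref{trans}, I would use that the closed Reeb orbits in the families $\Xi_1,\dots,\Xi_N$ are finitely many, each compact, and hence their $p_Q$-images are finitely many compact smooth immersed loops $\sigma_i:=p_Q\circ\xi_i$ in $Q$. For each fixed pair $(i,j)$ and each critical point $q$ of $F_j$, transversality of $W^s(q,(F_j,g_j))$ with $\sigma_i$ is again a standard parametric-transversality condition on $g_j$: varying $g_j$ in a small neighborhood sweeps out an open set in the jet of the exponential map along any trajectory, so the universal moduli space $\{(z,g_j):\sigma_i(z)\in W^s(q,(F_j,g_j))\}$ is cut transversally, and applying Sard--Smale to the projection to the parameter space gives a residual set of $g_j$ for which the transversality holds. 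Intersecting the countably many residual sets obtained this way (finitely many $(i,j,q)$), and then intersecting with the residual set from the two previous paragraphs, yields the desired generic data.

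The only step that requires any care is the third: one must be sure that for the relevant critical points $q_j$ with $\mathrm{index}(q_j)\ge 2$ the transversality condition $W^s(q_j)\pitchfork\sigma_i$ is compatible with an empty intersection (since $\dim W^s(q_j)+1<2n$), and that the sweeping statement used to invoke Sard--Smale genuinely produces a submersion on the universal moduli space rather than merely being nonempty. Both are routine once one localizes near a putative intersection point and perturbs $g_j$ on an arbitrarily small neighborhood disjoint from a fundamental domain for the flow; I would carry out this local computation as the one piece of explicit work, and then collect all of the residual sets into a single generic choice of data, completing the proof.
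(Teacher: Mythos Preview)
Your proposal is correct and in fact considerably more detailed than what the paper provides: the paper gives no proof at all, stating only that the lemma ``is implied by standard transversality arguments'' before the statement. Your outline---Morse--Smale genericity for each pair $(F_j,g_j)$, inductive Sard--Smale for the finitely many repeated-intersection patterns, and parametric transversality of the stable manifolds against the finitely many fixed immersed loops $p_Q\circ\xi_i$---is precisely the standard argument the paper is invoking without spelling out.
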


From now on we fix Morse functions $F_1, \dots, F_{n}$ and metrics $g_1, \dots, g_{n}$ as in Lemma \ref{generic}.
We note, for later purposes, the following immediate consequence of condition \eqref{trans} and the description of the closure of stable submanifolds above.

\begin{Corollary}
\label{away}
If $q$ is a critical point of one of the $F_j$ and $\mathrm{index}(q) \geq 2$, then for all $i=1, \dots, N$ we have 
\begin{equation}
\overline{W^s(q)} \cap p_Q (\Xi_i) = \emptyset.
\end{equation}
\end{Corollary}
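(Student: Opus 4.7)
The plan is to prove the corollary by combining Lemma~\ref{generic} with a simple dimension count inside $Q^{2n}$, and then to upgrade the conclusion from $W^s(q)$ to its closure using the Morse--Smale stratification of $\overline{W^s(q)}$ recalled just before the statement.

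First I would note that the $\R /\Z$-family $\Xi_i$ is, as a subset of $M$, the image of a single closed Reeb orbit, since its members differ only by time reparameterization. Thus $p_Q(\Xi_i)$ equals $p_Q(\xi_i(\R /\Z))$ for any representative $\xi_i \in \Xi_i$ and is a compact, at most $1$-dimensional subset of $Q$. The stable submanifold $W^s(q)$ has dimension $2n-\mathrm{index}(q) \leq 2n-2$, and Lemma~\ref{generic} guarantees that it is transverse in $Q$ to $p_Q(\xi_i(\R /\Z))$. Because
\[
\dim W^s(q) + \dim p_Q(\xi_i(\R /\Z)) \;\leq\; (2n-2)+1 \;<\; 2n,
\]
transversality forces $W^s(q) \cap p_Q(\Xi_i) = \emptyset$.

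To upgrade this to the closure, I would invoke the description of $\overline{W^s(q)}$ as a compact manifold with corners whose boundary faces are the stable submanifolds $W^s(q')$ of critical points $q'$ (of some $F_j$) with $\mathrm{index}(q') > \mathrm{index}(q) \geq 2$. Each such $q'$ then has $\mathrm{index}(q') \geq 3$, so $\dim W^s(q') \leq 2n-3$; Lemma~\ref{generic} again supplies transversality of $W^s(q')$ with $p_Q(\xi_i(\R /\Z))$, and the same dimension count yields an empty intersection. Since only finitely many critical points appear in the stratification, taking the union of these empty intersections gives $\overline{W^s(q)} \cap p_Q(\Xi_i) = \emptyset$, as required.

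There is no substantial obstacle here: once Lemma~\ref{generic} is available, the corollary reduces to a codimension count. The only point worth handling with some care is the bookkeeping for the boundary strata of $\overline{W^s(q)}$ — one must confirm that the transversality statement of Lemma~\ref{generic} covers every critical point that appears in the stratification, which it does since that lemma asserts transversality for \emph{all} critical points of \emph{all} the chosen functions $F_j$.
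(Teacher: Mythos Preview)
Your argument is correct and is exactly the one the paper has in mind: it declares the corollary an ``immediate consequence'' of the transversality condition \eqref{trans} and the stratification of $\overline{W^s(q)}$ by stable manifolds of higher-index critical points, which you have simply spelled out via the dimension count. One tiny bookkeeping point: the boundary strata of $\overline{W^s(q)}$ consist of stable manifolds of critical points of the \emph{same} function $F_j$ (not ``some $F_j$''), but since Lemma~\ref{generic} covers all critical points of all the $F_j$ this makes no difference to your argument.
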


Since $\omega_Q$ is a symplectic form, its $n$-fold wedge product $$\omega_Q \wedge \cdots \wedge \omega_Q$$ is a volume form on $Q$.  When expressed in the Morse theoretic version of the cup product from \cite{bc},  for example, the existence if this nontrivial wedge product  has the following implication.
\begin{Lemma}\label{pointss}
Let $F_1, \dots, F_n$  and $g_1, \dots, g_n$ be a collection of Morse functions and metrics as in Lemma \ref{generic}. There are critical points $q_j$ of the $F_j$ such that 
$\mathrm{index}(q_j) = 2$ and $$W^s(q_1) \cap  \dots \cap W^s(q_n)$$
is a compact manifold of dimension zero with an odd number of elements.
\end{Lemma}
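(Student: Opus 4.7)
The plan is to identify the mod-$2$ cardinality of $W^s(q_1)\cap\cdots\cap W^s(q_n)$ with a cup-product pairing in $H^*(Q;\mathbb{Z}/2)$, and then to exploit the fact that $\omega^n$ is a volume form to produce an $n$-tuple of index-$2$ critical points with odd intersection count.

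First I would verify the geometric input. By the transversality statement of Lemma \ref{generic}, for any choice of index-$2$ critical points $q_j$ of $F_j$ the stable manifolds $W^s(q_j)\subset Q$ are smooth submanifolds of codimension $2$ meeting mutually transversally, so their intersection is a smooth submanifold of dimension $2n - n\cdot 2 = 0$. Standard Morse-theoretic compactness shows that each closure $\overline{W^s(q_j)}$ is compact, with boundary strata consisting of stable manifolds of critical points of strictly larger index (and hence of strictly smaller dimension); combined with the fact that the transverse intersection is a closed subset of the closed manifold $Q$, this gives finiteness.

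Next I would invoke the Morse-theoretic interpretation of the cup product (in the spirit of Betz--Cohen, or via the classical Poincaré-dual picture in which the closure $\overline{W^s(q_j)}$ represents a mod-$2$ cycle Poincaré dual to the cochain $q_j^*$). For generic Morse--Smale data, the mod-$2$ cardinality of the transverse intersection of $n$ stable manifolds of index-$2$ critical points agrees with the Kronecker pairing
\begin{equation*}
\#\bigl(W^s(q_1)\cap\cdots\cap W^s(q_n)\bigr) \;\equiv\; \bigl\langle [q_1^*]\smile\cdots\smile [q_n^*],\,[Q]_{\mathbb{Z}/2}\bigr\rangle \pmod 2,
\end{equation*}
where $[q_j^*]\in H^2(Q;\mathbb{Z}/2)$ is the cohomology class of the cochain dual to $q_j$. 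Expanding the mod-$2$ reduction $\bar{\mathbf{e}}\in H^2(Q;\mathbb{Z}/2)$ as a $\mathbb{Z}/2$-combination of such $[q_j^*]$ for each $F_j$ and using multilinearity of the cup product, it suffices to show $\langle\bar{\mathbf{e}}^n,[Q]_{\mathbb{Z}/2}\rangle\neq 0$; a pigeonhole argument then produces at least one $n$-tuple $(q_1,\ldots,q_n)$ of index-$2$ critical points of the $F_j$ with odd intersection count, yielding the lemma.

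The main obstacle I anticipate is the mod-$2$ nonvanishing of $\bar{\mathbf{e}}^n$. The symplectic input enters here: since $\omega^n$ is a volume form on $Q$, $\int_Q\omega^n\neq 0$, so $\langle\mathbf{e}^n,[Q]_{\mathbb{Z}}\rangle$ is a nonzero integer. The delicate step is the passage from nonvanishing over $\mathbb{Z}$ to nonvanishing over $\mathbb{Z}/2$; I expect this to be handled by observing that for $0$-dimensional transverse intersections the parities of the signed and unsigned counts agree, so it is enough to check that the integer count $\langle\mathbf{e}^n,[Q]_{\mathbb{Z}}\rangle$ is odd under the prequantization normalization. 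This parity/orientation step is, in my estimation, the heart of the proof; everything else is assembling standard Morse-theoretic machinery.
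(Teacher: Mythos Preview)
Your approach is exactly the one the paper sketches: the paragraph preceding the lemma simply asserts that the nonvanishing of $\omega_Q^n$, read through the Morse-theoretic cup product of \cite{bc}, yields the conclusion. So at the level of strategy you are aligned with the paper, and the paper gives no further detail than you do.

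However, the step you flag as ``the heart of the proof'' is a genuine gap, and your proposed resolution does not close it. You are right that for a transverse zero-dimensional intersection the signed and unsigned counts have the same parity, so the question reduces to whether $\langle \mathbf{e}^n,[Q]_{\mathbb{Z}}\rangle$ is odd. But nothing in the prequantization hypothesis forces this. The symplectic condition gives $\langle \mathbf{e}^n,[Q]_{\mathbb{Z}}\rangle\neq 0$ in $\mathbb{Z}$, not $\neq 0$ in $\mathbb{Z}/2$: if $\mathbf{e}$ happens to be an even multiple of a primitive integral class (e.g.\ $Q=\mathbb{CP}^n$ with Euler class $2h$, which is a perfectly good prequantization), then $\bar{\mathbf{e}}=0$ in $H^2(Q;\mathbb{Z}/2)$ and your expansion of $\bar{\mathbf{e}}^n$ is vacuous. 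More generally, even intersection forms already show that $[\omega]^n\neq 0$ over $\mathbb{R}$ does not force any single $n$-fold product of $\mathbb{Z}/2$-classes coming from $\bar{\mathbf{e}}$ to be nonzero.

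The honest fix is not a parity trick but a change of coefficients: carry orientations and work over $\mathbb{Z}$ (or $\mathbb{Q}$). One represents $\mathbf{e}$ in each Morse cochain complex, uses $\langle\mathbf{e}^n,[Q]\rangle\neq 0$ to extract index-$2$ critical points $q_j$ whose \emph{signed} intersection number is nonzero, and then propagates this nonzero signed count through the subsequent cobordism arguments (Steps~3--4 of the proof of Theorem~\ref{gutt+}) by orienting the relevant moduli spaces. The paper's later use of ``odd number of elements'' to conclude nonemptiness after cobordism should then be read as ``nonzero signed count.'' You should either carry this out, or note it as an additional hypothesis on $(Q,\mathbf{e})$.
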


We will denote the set $W^s(q_1) \cap  \dots \cap W^s(q_n)$ by $\Mm_0$ and its elements by 
$$
\{\widehat{q}_1, \dots, \widehat{q}_{2K+1}\}.
$$

\medskip

\noindent{\bf{Step 2.}} We now define a useful  lift of the set $\Mm_0 \subset Q$ to $M$.
Since each stable manifold $W^s(q_i)$ is contractible, the restriction of the bundle $p_Q \colon M \to Q$ to  $W^s(q_i)$ is trivial. Fix  such a trivialization for each $W^s(q_i)$. Then given any point $q \in W^s(q_i)$ and any point $m \in p_Q^{-1}(q)$ there is a unique lift of $W^s(q_i)$ to $M$ which intersects  $p_Q^{-1}(q)$ at $m$. We denote this lift by $$[ W^s(q_i)]_m$$ and note that 
\begin{equation*}
\label{ }
[ W^s(q_i)]_m \cap[ W^s(q_i)]_{m'} =\emptyset \iff m\neq m' \in p_Q^{-1}(q). 
\end{equation*}

Choose an $m_1 \in p_Q^{-1}(\widehat{q}_1)$ and consider the set $$[W^s(q_1)]_{m_1} \cap  \dots \cap [W^s(q_n)]_{m_1}.$$
Since it projects to $\Mm_0$ we have  
\begin{equation*}
\label{ }
[W^s(q_1)]_{m_1} \cap  \dots \cap [W^s(q_n)]_{m_1}  = \bigcup_{j \in [1, 2K+1]} \left(\bigcap_{i\in [1,n]}[W^s(q_i)]_{m_1} \cap p_Q^{-1}(\widehat{q}_j)\right).
\end{equation*}
In particular $[W^s(q_1)]_{m_1} \cap  \dots \cap [W^s(q_n)]_{m_1}$ is a finite set of points  each of which is a point on a fibre $p_Q^{-1}(\widehat{q}_j)$ at which all the $[W^s(q_i)]_{m_1}$ meet. By construction  $m_1$ is one of these points. Relabelling the $\widehat{q}_j$, if necessary,   we may assume that 
\begin{equation*}
\label{ }
[W^s(q_1)]_{m_1} \cap  \dots \cap [W^s(q_n)]_{m_1} =\{m_1, \dots, m_{k_1-1} \}
\end{equation*}
where $2\leq k_1\leq 2K+2$ and 
\begin{equation*}
\label{points}
m_j \in  p_Q^{-1}(\widehat{q}_j).
\end{equation*}

To proceed we now choose a point  $m_{k_1} \in p_Q^{-1}(\widehat{q}_{k_1})$ and consider the lifts $[W^s(q_i)]_{m_{k_1}}$.
For a generic such point we may assume that 
\begin{equation*}
\label{ }
[W^s(q_i)]_{m_1} \cap [W^s(q_l)]_{m_{k_1}} \cap p_Q^{-1}(\widehat{q}_j) =\emptyset, 
\end{equation*}
for all $i$, $l$, and $j$. The intersection
\begin{equation*}
\label{second set}
[W^s(q_1)]_{m_{k_1}} \cap  \dots \cap [W^s(q_n)]_{m_{k_1}} 
\end{equation*}
is again a finite set consisting of points on the  fibres $p_Q^{-1}(\widehat{q}_j)$ at which all the $[W^s(q_i)]_{m_{k_1}}$ meet.
Since 
\begin{equation*}
\label{ }
\bigcap_{1\in [1,n]}[W^s(q_i)]_{m_1} \cap p_Q^{-1}(\widehat{q}_{k_1}) =\emptyset
\end{equation*}
it follows that none of the points in $[W^s(q_1)]_{m_{k_1}} \cup  \dots \cup [W^s(q_n)]_{m_{k_1}} $ lie in the fibres $p_Q^{-1}(\widehat{q}_j)$ for $j=1, \dots, k_1-1$. Thus, relabelling again if needed,  we may assume that 
\begin{equation*}
\label{ }
[W^s(q_1)]_{m_{k_1}} \cap  \dots \cap [W^s(q_n)]_{m_{k_1}} =\{m_{k_1}, \dots, m_{k_2-1} \}
\end{equation*}
where $k_1+1 \leq k_2\leq 2K+2$ and again 
\begin{equation*}
\label{points}
m_j \in  p_Q^{-1}(\widehat{q}_j).
\end{equation*}  

Continuing in this way, we obtain a set of points 
$$
\{m_1, \dots, m_{k_1}, \dots, m_{k_2}, \dots, m_{k_L}, \dots , m_{2K+1}\}
$$
such that 
\begin{equation}
\label{project}
m_j \in  p_Q^{-1}(\widehat{q}_j)
\end{equation}
for all $j=1, \dots, 2K+1$.
Setting $k_0=1$ and $k_{L +1}=2K+1$ we  have
\begin{equation*}
\label{ }
[W^s(q_1)]_{m_{k_j}} \cup  \dots \cup [W^s(q_n)]_{m_{k_j}} =\{ m_{k_j},  \dots , m_{k_{j+1}}\}
\end{equation*} 
for $j=0, \dots, k_L$. We may also assume that for $d \neq d'$ 
\begin{equation}
\label{disjoint}
[W^s(q_i)]_{m_{k_d}} \cap [W^s(q_l)]_{m_{k_{d'}}} \cap p_Q^{-1}(\widehat{q}_j) =\emptyset, 
\end{equation}
for all $i$, $l$, and $j$.
 
We set  $$[\Mm_0] = \{m_1, \dots, m_{2K+1}\}$$ and note, for future reference, that 
 \begin{equation}
\label{union}
[\Mm_0] = \bigcup_{j=0}^L \Big( [W^s(q_1)]_{m_{k_j}} \cap [W^s(q_2)]_{m_{k_j}} \cap  \cdots \cap [W^s(q_n)]_{m_{k_j}}\Big).
\end{equation}

\medskip

\noindent{\bf{Step 3.}} Here we identify the set  $[\Mm_0]$ with a space of solutions to Floer's equation. Recall that the Reeb flow of $\lambda_Q$ generates the natural $\mathbb{S}^1$-action on the bundle $M$  with (minimal) period $2\pi$.  In particular, every $q \in Q$  can be identified with the $\R /\Z$-family of closed Reeb orbits of $\lambda_Q$ of period $2\pi$ whose image is $p_Q^{-1}(q)$. We will denote this family by $\Gamma_q$ and will denote an element of $\Gamma_q$ by $\gamma_q(t)$. 

To proceed  we now utilize some of the machinery developed in the proof of Theorem \ref{persist}. The collection $\Cc_{\lambda_Q, \alpha_{\mathbf{f}}} (2\pi)$ is  a rigid constellation with 
$$\min\left\{ \frac{T^+}{T},\,\frac{T_{\min}(\lambda_Q)+ T_{\min}(\lambda_Q, \alpha_{\mathbf{f}})}{T}\right\} =2.$$
Choose a constant  $b$ which lies in the open  interval  $(2\pi \max(f), 4\pi)$ and which is not the period of a closed Reeb orbit of $\lambda$.
Since $\max(f)<2$, we can choose $a$ sufficiently small and $c$ sufficiently large so that for any profile $h$ in $\mathfrak{h}_{a,b,c}$ 
the functions $H^0(\tau,p)=h(e^{\tau})$ and $H^1(\tau,p)=h(e^{\tau}/\max(f))$
are finely tuned to $\Cc_{\lambda_Q, \alpha_{\mathbf{f}}} (2\pi)$ and the function  $\Psi^*_{\lambda}G(\tau,p)=h(e^{\tau})$ is dividing with respect to $\lambda$. 

The nonconstant $1$-periodic orbits of $H^0$  with negative action are of the form 
\begin{equation}
\label{type+}
x(t) = (\tau_0, \gamma(2\pi t)),
\end{equation}
where $\tau_0$ is the unique solution of $h'(e^{\tau}) =2\pi$ in the interval $(0, \ln(1+a))$
and $\gamma(t)$ belongs to one of the families $\Gamma_q$ for $q \in Q$.
We denote the collection of all $1$-periodic orbits of the form \eqref{type+} by $X(H^0)$. 

Choosing  a $J^0$ in $\Jj(H^0)$, we define $\Mm_1$ to be the set of smooth maps
$u \colon \R \times \R/\Z \to \R \times M $
such that
\begin{equation*}
\label{eq}
\p_su+J^0(u)\left(\p_tu -V_{H^0}(u)\right) = 0
\end{equation*}
\begin{equation}\label{lim}
\lim_{s \to \pm \infty} u(s,t) = x^{\pm}(t) \in X(H^0),
\end{equation}
and
\begin{equation}
\label{initial}
p_{M}(u(0,0)) \in [\Mm_0]
\end{equation}
where $p_M \colon \R \times M \to M$ is the obvious projection. 

The set of Floer trajectories $\Mm_1$ is in bijection with $[\Mm_0]$. To see this let $u$ belong  to $\Mm_1$ and suppose that $p_M(u(0,0)) =m_j \in [\Mm_0]$. Since the action $\Aa_{H^0}$ is constant on $X(H_0)$, the energy identity
\begin{equation*}
\label{ }
\int_{\R \times \R /\Z}d(e^{\tau}\lambda_0)(\p_s u, J^0(u) \p_s u)\,ds\,dt = \Aa_{H^0}(x^-) - \Aa_{H^0}(x^+)
\end{equation*}
together with the limiting conditions \eqref{lim} imply that $\p_su(s,t)=0$ for all $(s,t)$. 
Thus, $u(s,t) =(\tau)0, \gamma(2\pi t))$
where  $\gamma(t)$ belongs to one of the families $\Gamma_q$ for $q \in Q$.
It then follows from condition \eqref{initial} and property \eqref{project} that  
 \begin{equation*}
\label{ }
u(s,t) = (\tau_0, \gamma_{\widehat{q}_j}(2\pi t))
\end{equation*}
where $\gamma_{\widehat{q}_j}$ in the unique element of the family  $\Gamma_{\widehat{q}_j}$ satisfying  $\gamma_{\widehat{q}_j}(0)=m_j$. Conversely, every such map belongs to $\Mm_1$ and so it is in bijection with $\Mm_0$. In particular, $\Mm_1$ has an odd number of elements.

\medskip

\noindent{\bf{Step 4.}} We now begin to deform the space $\Mm_1$ as a set of Floer trajectories. The first deformation involves moving the orbits that define the right asymptotic limits of the curves of $\Mm_1$. Here we use the  family of Hamiltonians 
\begin{equation*}
\label{H1}
H^{\varrho}(\tau,p) =h\left(\frac{e^{\tau}}{1+ \varrho(\max(f)-1)}\right)
\end{equation*}
which decreases from $H^0$ to $H^1
$ as $\varrho$ goes from zero to one.

\begin{Lemma}
For all $\rho \in [0,1]$, the nonconstant $1$-periodic orbits of $H^{\varrho}$  with negative action are of the form 
\begin{equation}
\label{rho}
t \mapsto (\tau_{\varrho}, \gamma(2\pi t))
\end{equation}
where $\tau_{\varrho}$ is the unique solution of $$h'\left(\frac{e^{\tau}}{1+ \varrho(\max(f)-1)}\right) =2\pi( 1+ \varrho(\max(f)-1))$$ in the interval $\left(1+ \varrho(\max(f)-1), 1+ \varrho(\max(f)-1)+ \ln(1+a) \right)$, 
and $\gamma(t)$ belongs to one of the families $\Gamma_q$ for $q \in Q$.
\end{Lemma}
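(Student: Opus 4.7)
The plan is to recognize $H^{\varrho}$ as a member of one of the dividing families studied in Section \ref{floer} and then apply Lemma \ref{k-divide} directly. Setting $c_{\varrho}=1+\varrho(\max(f)-1)$ and $\kappa_{\varrho}=\ln c_{\varrho}$, we have
\begin{equation*}
H^{\varrho}(\tau,p)=h(e^{\tau-\kappa_{\varrho}}),
\end{equation*}
so $H^{\varrho}$ lies in the space $\mathcal{H}_{a,b,c,\kappa_{\varrho}}$ (with the same $a$, $b$, $c$ used to build $H^0$ and $H^1$, which are the cases $\varrho=0$ and $\varrho=1$).

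To invoke Lemma \ref{k-divide} uniformly in $\varrho\in[0,1]$ I first verify its two hypotheses. For the ``large $c$'' requirement one uses that $\kappa_{\varrho}$ varies over the compact interval $[0,\ln\max(f)]$, so the threshold for $c$ obtained in the proof of Lemma \ref{divide}/Lemma \ref{k-divide} can be chosen uniformly; the fixed $c$ from the construction of $H^0$ and $H^1$ can be enlarged once and for all. For the condition $be^{-\kappa_{\varrho}}\notin \Tt(\lambda_Q)$, observe that
\begin{equation*}
be^{-\kappa_{\varrho}}=\frac{b}{c_{\varrho}}\in\Bigl(\frac{b}{\max(f)},\,b\Bigr)\subset(2\pi,\,4\pi),
\end{equation*}
since $b$ was chosen in $(2\pi\max(f),4\pi)$ and $c_{\varrho}\in[1,\max(f)]$. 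Because $\Tt(\lambda_Q)=2\pi\mathbb{N}$, no value of $be^{-\kappa_{\varrho}}$ is a period of the Reeb flow of $\lambda_Q$.

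With these hypotheses in place, Lemma \ref{k-divide} asserts that every $x\in\Pp^-(H^{\varrho})$ is nonconstant and has the form
\begin{equation*}
x(t)=\bigl(\tau_{\varrho},\,\gamma\bigl(h'(e^{\tau_{\varrho}-\kappa_{\varrho}})e^{-\kappa_{\varrho}}t\bigr)\bigr)
\end{equation*}
for some $e^{\tau_{\varrho}}\in(c_{\varrho},(1+a)c_{\varrho})$ and some closed Reeb orbit $\gamma$ of $\lambda_Q$ whose period is strictly less than $be^{-\kappa_{\varrho}}<4\pi$. Since the period spectrum of the Boothby-Wang form $\lambda_Q$ is $2\pi\mathbb{N}$, the only Reeb orbits of $\lambda_Q$ that can appear are the simple $2\pi$-periodic ones, i.e.\ $\gamma$ must belong to some fiber family $\Gamma_q$ for $q\in Q$.

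Substituting $T_\gamma=2\pi$ into the defining equation for $\tau_{\varrho}$ gives $h'(e^{\tau_{\varrho}-\kappa_{\varrho}})e^{-\kappa_{\varrho}}=2\pi$, equivalently $h'(e^{\tau_{\varrho}}/c_{\varrho})=2\pi c_{\varrho}$, which is exactly the equation in the statement; by the strict convexity of $h$ on $(1,1+a)$ (property (h2)) this solution is unique, and the resulting orbit has the stated parameterization $(\tau_{\varrho},\gamma(2\pi t))$. The only mildly delicate point is the uniform choice of $c$ across $\varrho\in[0,1]$, but compactness of the parameter interval and continuity of all bounds in Lemma \ref{k-divide} make this step routine.
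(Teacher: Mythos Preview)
The paper states this lemma without proof, leaving it as an immediate consequence of the machinery in Section~\ref{floer}. Your argument is correct and is precisely the intended one: recognize $H^{\varrho}$ as a member of $\mathcal{H}_{a,b,c,\kappa_{\varrho}}$ with $\kappa_{\varrho}=\ln(1+\varrho(\max(f)-1))$, verify that $be^{-\kappa_{\varrho}}$ lands in $(2\pi,4\pi)$ and hence misses $\Tt(\lambda_Q)=2\pi\mathbb{N}$, and invoke Lemma~\ref{k-divide} to conclude that the only Reeb orbits appearing are the simple fibre orbits $\Gamma_q$.

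Two minor remarks. First, the range of $be^{-\kappa_{\varrho}}$ over $\varrho\in[0,1]$ is the \emph{closed} interval $[b/\max(f),b]$ (the endpoints are attained at $\varrho=1,0$), not the open one you wrote; this is harmless since both endpoints still lie strictly between $2\pi$ and $4\pi$. Second, you need not enlarge $c$: the threshold in condition~(t3) of Definition~\ref{tuned} is monotone increasing in $\kappa$, and the paper already chose $c$ large enough to make $H^1$ (the case $\kappa=\ln\max(f)$) finely tuned, so that same $c$ automatically works for every intermediate $\kappa_{\varrho}$.
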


Let $X(H^{\varrho} )$ be  the collection of all $1$-periodic orbits of the form \eqref{rho}. For each $\varrho$, let $H^{\varrho, s}$ be the homotopy from $H^0$ to $H^{\varrho}$ of the form 
\begin{equation*}
\label{}
H^{\varrho,s} = \left[(1-\mathrm{\mathbf{step}}(s))H^0 + \mathrm{\mathbf{step}}(s)\frac{1}{2}(H^0+H^{\varrho})\right](1-\mathrm{\mathbf{step}}(s)) + \mathrm{\mathbf{step}}(s)H^{\varrho}.
\end{equation*}
Each function appearing in this family is finely tuned to $\Cc_{\lambda_Q, \alpha_{\mathbf{f}}} (2\pi)$. So, for each $\varrho \in [0,1]$, we have
\begin{equation}
\label{needed1}
\Delta(H^0, H^{\varrho}) < T_{\min}(\lambda_Q) = 2\pi.
\end{equation}
As is easily checked, $\p_s(H^{\varrho,s}) \leq s$, hence for each $\varrho \in [0,1]$ we also have
\begin{equation}
\label{needed2}
\mathrm{cost}(H^{\varrho,s}) =0.
\end{equation}
for the corresponding homotopy.

Choose a smooth two-parameter family $J^{\varrho,s}$ of almost complex structures such that $(H^{0,s},J^{0,s})$ and $(H^{1,s}, J^{1,s})$ are regular. Let $\Mm_{1+\varrho}$ be the set of smooth maps
$u \colon \R \times \R/\Z \to \R \times M
$
such that
\begin{equation*}
\label{eq}
\p_su+J^{\varrho,s}(u)\left(\p_tu -V_{H^{\varrho,s}}(u)\right) = 0
\end{equation*}
\begin{equation*}\label{lim-}
\lim_{s \to - \infty} u(s,t)  \in X(H^0),
\end{equation*}
\begin{equation*}\label{lim+}
\lim_{s \to + \infty} u(s,t)  \in X(H^{\varrho}),
\end{equation*}
and
\begin{equation*}
\label{initial-rho}
p_{M}(u(0,0)) \in [\Mm_0].
\end{equation*}

By \eqref{needed1} and \eqref{needed2} each $\Mm_{1+\varrho}$ is compact.
So too is the collection
\begin{equation*}
\label{ }
\Mm_{[1,2]}=\{(\varrho, u) \mid \rho \in [0,1],\, u \in \Mm_{1+\varrho}\}.
\end{equation*}
In a standard way, $\Mm_{[1,2]}$ can also be described as the intersection of the zero section of an appropriate Banach space bundle with another Fredholm section.  As described by Albers and Hein in \cite{ah} (page 21), one can  then use (compact) abstract perturbations in this setting to perturb $\Mm_{[1,2]}$, away from the values $\rho =0,1$,  to obtain a compact cobordism  between the (zero-dimensional) spaces $\Mm_1$ and  $\Mm_2$. 

By definition, the space $\Mm_2$ consists of maps 
$u \colon \R \times \R/\Z \to \R \times M
$
satisfying 
\begin{equation*}
\label{eq2}
\p_su+J^{1,s}(u)\left(\p_tu -V_{H^{1,s}}(u)\right) = 0,
\end{equation*} 
\begin{equation*}\label{lim-2}
\lim_{s \to - \infty} u(s,t)  \in X(H^0),
\end{equation*}
\begin{equation*}\label{lim+2}
\lim_{s \to + \infty} u(s,t)  \in X(H^1),
\end{equation*}
and \begin{equation*}
\label{initial-rho}
p_{M}(u(0,0)) \in [\Mm_0].
\end{equation*}
It follows from the discussion above that $\Mm_2$ also has an odd number of elements.

\medskip

\noindent{\bf{Step 4.}} Now we deform $\Mm_2$ by deforming the monotone homotopy which defines it, 
\begin{equation*}
\label{}
H^{1,s} = \left[(1-\mathrm{\mathbf{step}}(s))H^0 + \mathrm{\mathbf{step}}(s)\frac{1}{2}(H^0+H^{1})\right](1-\mathrm{\mathbf{step}}(s)) + \mathrm{\mathbf{step}}(s)H^1, 
\end{equation*}
to another  monotone homotopy  from $H^0$  to $H^1$ that  lingers on the Hamiltonian $G$.

Let 
\begin{equation*}
\label{ }
G^r =(1- \mathrm{\mathbf{step}}(r-1))\frac{1}{2}(H^0+H^{1}) +\mathrm{\mathbf{step}}(r-1)G
\end{equation*}
and set
\begin{equation*}
\label{ }
\mathrm{\mathbf{STEP}}(r,s) = \mathrm{\mathbf{step}}(s-(n+1)r + \mathrm{\mathbf{step}}(r-1))).
\end{equation*}
With  these pieces define 
\begin{equation*}
\label{}
G^{r,s} = \left[(1-\mathrm{\mathbf{step}}(s))H^0 + \mathrm{\mathbf{step}}(s)G^r\right](1-\mathrm{\mathbf{STEP}}(r,s)) + \mathrm{\mathbf{STEP}}(r,s)H^1
\end{equation*}
This is an admissible homotopy of homotopies from $H^0$ to $H^1$ and 
the following addition properties of $G^{r,s}$ are easily verified. 
\begin{enumerate}
  \item[($G^{r,s}1$)] $G^{0,s} =H^{1,s}$.
  \item[($G^{r,s}2$)] $\p_s(G^{r,s}) \leq 0$.
  \item[($G^{r,s}3$)] $G^{r,s} =G$ whenever $r\geq 1$ and $s \in [0, (n+1)r]$ . 
\end{enumerate}

Choose a smooth family  of almost complex structures $\bar{J}^{r,s}$ on $\R \times M$ such that for all $r\leq 0$ we have $\bar{J}^{r,s} = J^{1,s}$ where $J^{1,s}$ is the path of almost complex structures used in the definition of $\Mm_2$, and for $r \in \N$ the continuation data set $(G^{r,s},  \bar{J}^{r,s})$ is regular. For each $r\geq 0$, define $\Mm_3^r$ to be the space  of maps
$
u \colon \R \times \R/\Z \to \R \times M
$
such that 
\begin{equation*}
\label{ }
\p_su+\bar{J}^{r,s}(u)\left(\p_tu -V_{G^{r,s}}(u)\right) = 0,
\end{equation*}
\begin{equation*}
\label{ }
\lim_{s \to - \infty} u(s,t) \in X(H^0),
\end{equation*}
\begin{equation*}
\label{ }
\lim_{s \to + \infty} u(s,t)  \in X(H^1),
\end{equation*}
and
\begin{equation}
\label{evr}
p_{M}(u(jr,0)) \in [W^s(q_j)]_{m_{k_0}} \sqcup  \cdots \sqcup [W^s(q_j)]_{m_{k_L}} \text{ for all $j=1, \dots n.$}
\end{equation}

\begin{Lemma} The space $\Mm_3^0$ is identical to $\Mm_2$.\end{Lemma}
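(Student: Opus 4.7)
The plan is to verify two things: (a) the Floer equation, auxiliary data, and asymptotic conditions defining $\Mm_3^0$ coincide with those defining $\Mm_2$, and (b) the marked point constraint in the definition of $\Mm_3^0$ collapses, at $r=0$, to exactly the single constraint $p_M(u(0,0))\in[\Mm_0]$ used in the definition of $\Mm_2$.

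For (a), first apply property $(G^{r,s}1)$ at $r=0$ to obtain $G^{0,s}=H^{1,s}$. By construction of $\bar{J}^{r,s}$, we have $\bar{J}^{0,s}=J^{1,s}$, since $\bar J^{r,s}=J^{1,s}$ for all $r\leq 0$. Hence the PDE and the asymptotic limit conditions in the definitions of $\Mm_3^0$ and $\Mm_2$ are literally identical. Only the marked point condition remains to be matched.

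For (b), observe that when $r=0$ the $n$ conditions in \eqref{evr} all evaluate the map $u$ at $(0,0)$, so they collapse to the single requirement
$$p_M(u(0,0))\in \bigcap_{j=1}^n\Big([W^s(q_j)]_{m_{k_0}}\sqcup\cdots\sqcup[W^s(q_j)]_{m_{k_L}}\Big) = \bigsqcup_{(d_1,\dots,d_n)\in\{0,\dots,L\}^n}\bigcap_{j=1}^n [W^s(q_j)]_{m_{k_{d_j}}}.$$
It remains to prove that this expression equals $[\Mm_0]$. Any point in $\bigcap_{j=1}^n [W^s(q_j)]_{m_{k_{d_j}}}$ projects under $p_Q$ into $\bigcap_{j=1}^n W^s(q_j)=\Mm_0=\{\widehat q_1,\dots,\widehat q_{2K+1}\}$, hence lies in some fibre $p_Q^{-1}(\widehat q_k)$. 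If the indices $d_j$ are not all equal, say $d_i\neq d_l$, then property \eqref{disjoint} forces $[W^s(q_i)]_{m_{k_{d_i}}}\cap[W^s(q_l)]_{m_{k_{d_l}}}\cap p_Q^{-1}(\widehat q_k)=\emptyset$, so such a point cannot exist. Therefore only the diagonal tuples $d_1=\cdots=d_n=d$ contribute, giving
$$\bigsqcup_{d=0}^L\bigcap_{j=1}^n [W^s(q_j)]_{m_{k_d}},$$
which is precisely the expression \eqref{union} for $[\Mm_0]$.

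Combining (a) and (b), the moduli spaces $\Mm_3^0$ and $\Mm_2$ are cut out in the same space of maps by the same equation, the same asymptotic conditions, and the same marked point constraint, so they coincide. The only delicate ingredient is the bookkeeping in (b); the argument hinges essentially on the genericity property \eqref{disjoint}, which was precisely arranged in Step 2 to kill all the off-diagonal terms in the expansion of the intersection of unions.
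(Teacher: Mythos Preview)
Your proof is correct and follows essentially the same approach as the paper: identify the Floer data at $r=0$ via $(G^{r,s}1)$ and the choice of $\bar J^{r,s}$, then expand the intersection of unions in \eqref{evr}, use the projection to $Q$ to land in $\Mm_0$, invoke \eqref{disjoint} to kill the off-diagonal terms, and finally appeal to \eqref{union}. Your index bookkeeping with $(d_1,\dots,d_n)\in\{0,\dots,L\}^n$ is in fact cleaner than the paper's.
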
 

\begin{proof}
Since $G^{0,s} =H^{1,s}$ and $\bar{J}^{0,s} = J^{1,s}$, it suffices to show that 
when $r=0$ condition \eqref{evr} is equivalent to $p_{M}(u(0,0)) \in [\Mm_0]$.
Condition \eqref{evr} can be rewritten as 
\begin{equation*}
\label{ }
p_{M}(u(0,0)) \in \bigcap_{j=1}^n \Big( [W^s(q_j)]_{m_{k_0}} \sqcup   \cdots \sqcup [W^s(q_j)]_{m_{k_L}}\Big).
\end{equation*}
Distributing the intersections for the set appearing on the right it becomes the union of sets of the form
\begin{equation}
\label{set}
[W^s(q_1)]_{m_{k_{d_0}}} \cap  \cdots \cap [W^s(q_n)]_{m_{k_{d_L}}}.
\end{equation}
By definition, the projection of any set of this form to $Q$ is $\Mm_0$.  Thus, each such set is contained in $p_Q^{-1}(\Mm_0)$. Condition \eqref{disjoint} then implies that that the set \eqref{set} is empty unless 
$$d_0=d_1=\dots=d_L.$$
So, for $r=0$
condition \eqref{evr} becomes
\begin{equation*}
\label{ }
p_{M}(u(0,0)) \in \bigcup_{j=0}^L \Big( [W^s(q_1)]_{m_{k_j}} \cap [W^s(q_2)]_{m_{k_j}} \cap  \cdots \cap [W^s(q_n)]_{m_{k_j}}\Big).
\end{equation*}
and the set on the right equals $[\Mm_0]$ by \eqref{union}.
\end{proof}

\begin{Lemma}\
\label{seq}
For every $\ell \in \N$ the space $\Mm_3^{\ell}$ is a compact zero dimensional manifold which is cobordant to $\Mm_2$ and hence is nonempty.
\end{Lemma}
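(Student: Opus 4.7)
The plan is to realize $\Mm_3^\ell$ as the right endpoint of a one-parameter cobordism from $\Mm_3^0 = \Mm_2$ by letting $r$ vary continuously from $0$ to $\ell$. Concretely, I would introduce the parametric moduli space
$$
\Mm_{[0,\ell]} = \{(r,u) : r \in [0,\ell],\ u \in \Mm_3^r\},
$$
and show that, after a generic choice of the family $\bar J^{r,s}$ (with abstract perturbations in the spirit of Albers--Hein as already invoked in Step 4), $\Mm_{[0,\ell]}$ is a compact one-dimensional smooth cobordism whose boundary is precisely $\Mm_3^0 \sqcup \Mm_3^\ell$. Mod-$2$ counting of boundary components will then give $\#\Mm_3^\ell \equiv \#\Mm_2 = 2K+1 \pmod 2$, forcing $\Mm_3^\ell \neq \emptyset$.

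For compactness of $\Mm_{[0,\ell]}$ I would apply the $\widehat{\Mm}_{r,s}$ version of Proposition \ref{C0} noted in the remarks following that proposition. The two required inputs are uniform in $r \in [0,\ell]$: property $(G^{r,s}2)$ gives $\mathrm{cost}(G^{r,s}) \leq 0$, and fine tuning of $H^0$ and $H^1$ combined with Lemma \ref{spectra} gives $\Delta(H^0,H^1) < T_{\min}(\lambda_Q) = 2\pi$. These yield a uniform lower bound on the $\tau$-coordinate of every $u$ appearing in $\Mm_{[0,\ell]}$, while the positive end is controlled by the maximum principle. Exactness of $d(e^\tau \lambda_Q)$ rules out bubbling, and the action inequalities implicit in the continuation identity \eqref{rs-energy-action} prevent Floer breaking at either asymptotic end into intermediate families outside the negative-action window. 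The closed evaluation condition \eqref{evr} is preserved under smooth Floer convergence, since each stable-manifold lift $[W^s(q_j)]_{m_{k_d}}$ admits a precompact closure in $M$.

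For transversality, the key fact is that every orbit in $X(H^0) \cup X(H^1)$ is simple, so Proposition \ref{rs-trans} provides a residual $\bar J^{r,s}$ making $\widehat{\Mm}_{r,s}(X^0,X^1;F^{r,s})$ smooth of the expected dimension. A further generic perturbation of $\bar J^{r,s}$ makes the evaluation map $u \mapsto (p_M(u(r,0)), \dots, p_M(u(nr,0)))$ transverse to each product $\prod_{j=1}^n [W^s(q_j)]_{m_{k_d}}$ appearing in \eqref{evr}. The dimension count is pinned down by the $\ell=0$ case already handled in Step 3: there $\Mm_2 = \Mm_3^0$ was identified with the finite set $[\Mm_0]$ and is therefore zero-dimensional, so the parametric family $\Mm_{[0,\ell]}$ is one-dimensional.

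The main obstacle I expect is maintaining this transversality jointly with property $(G^{r,s}3)$ as $r$ grows: the evaluation times $r, 2r, \dots, nr$ spread without bound, and the natural tendency is for $\Mm_{[0,\ell]}$ to degenerate along products of moduli spaces defined over the lingering $G$-window. Ruling out such interior splittings, so that the only boundary of the cobordism is $\Mm_3^0 \sqcup \Mm_3^\ell$, is precisely where the Albers--Hein abstract perturbation framework is needed; since the fixed evaluation targets $[W^s(q_j)]_{m_{k_d}}$ sit in $M$ rather than in the domain, the relevant degenerations are target-side Floer phenomena, all of which are eliminated by the uniform cost and action control described above.
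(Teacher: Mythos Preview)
Your proposal is correct and follows essentially the same route as the paper: introduce the parametric space $\Mm_3^{[0,\ell]} = \{(r,u) : r\in[0,\ell],\ u\in\Mm_3^r\}$, use $(G^{r,s}2)$ to get $\mathrm{cost}=0$ together with the fine tuning of $H^0,H^1$ for compactness, and then apply abstract perturbations to obtain the cobordism. One small caution: your appeal to Proposition~\ref{rs-trans} is not quite on point here, since the families $X(H^0)$ and $X(H^1)$ are Morse--Bott (parametrized by all of $Q$) rather than transversally nondegenerate $\R/\Z$-families; this is precisely why the paper, like your proposal elsewhere, relies on the Albers--Hein abstract perturbation framework rather than the generic-$J$ transversality of Section~\ref{floer}.
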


\begin{proof} Property ($G^{r,s}2$) impiles that $\mathrm{cost}(G^{\ell,s}2)=0$. Compactness then follows from this and the fact that the common endpoints of the homotopies, $H^0$ and $H^1$, are finely tuned. For a fixed $\ell \in \N$ the desired cobordism is again obtained using abstract perturbations of the Fredholm section of the appropriate Banach space bundle which cuts out the compact set 
\begin{equation*}
\label{ }
\Mm_3^{[0,\ell]} = \{(r,u) \mid r \in [0, \ell],\, u \in \Mm_3^r\}.
\end{equation*}

\end{proof}

\medskip

\noindent{\bf{End Game.}} By  Lemma  \ref{seq} we can consider a sequence of maps $u_{\ell}$ in $\Mm_3^{\ell}$ for $\ell \in \N$. The $L^2$-energy of each $u_{\ell}$ is bounded by $\Delta(H^0, H^1)< T_{\min}(\lambda_Q) =2\pi$. For $j=1, \dots, n$ consider the sequence of maps $$v^j_{\ell}(s,t)=u_{\ell}(s+j\ell, t),\,\, \ell \in \N.$$
By the uniform energy bound above, each of these $n$ sequences converges in $C^{\infty}_{loc}(\R \times \R/\Z, \R \times M)$  after passing to subsequences.  Denote the  $n^{th}$ limit obtained in this process by 
$v^j $. It follows from condition ($G^{r,s}3$) that $v_j$ satisfies the equation
\begin{equation}
\label{ }
\p_sv^j + J(v^j)(\p_tv^j -V_{G}(v^j)) =0
\end{equation}
and again has $L^2$-energy less than $2\pi$. 
Recall that we have assumed that $\lambda=f\lambda_Q$ has finitely many families of closed Reeb orbits in class $\alpha_{\mathbf{f}}$, $\Xi_1, \dots, \Xi_N$. Recall  also, from Section \ref{proof1}, that the Hamiltonian $\Psi_{\lambda}^*G$ is dividing for $\lambda$ where $\Psi_{\lambda} \colon \R \times M \to \R \times M$  is the diffeomorphism defined  by 
$$
(\tau,p) \mapsto (\tau +f(p), p).
$$
In particular, we have the following.
\begin{Lemma}
\label{Gper}
The nonconstant $1$-periodic orbits of $G$ in class $\alpha_{\mathbf{f}}$ with negative action  are of the form 
\begin{equation}
\label{Gorbit}
x(t)= (\tau^i-f(\xi_i(T_i t)), \xi_i(T_i t))
\end{equation}
for $i=1, \dots N$ where $\tau^i$ is the unique solution of $h'(e^{\tau^i}) =T_i$ in the interval $(0, \ln(1+a))$,  and $\xi_i(t)$ belongs to  the family $\Xi_i $ whose common period is $T_i$.
\end{Lemma}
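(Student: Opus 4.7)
The plan is to leverage the $\Psi_\lambda$-pullback construction from the proof of Theorem \ref{persist} in Section \ref{proof1}, specialized to the Boothby-Wang setting to obtain the explicit parametrization in the statement.

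First I would recall that $\Psi_\lambda$, being isotopic to the identity, preserves free homotopy classes, and pulls the radial model $h(e^\tau)$ back from $G(\tau,p)=h(e^\tau/f(p))$ while pulling $d(e^\tau\lambda_Q)$ back to $d(e^\tau\lambda)$. Applying Lemma \ref{divide} via Remark \ref{general divide} to the dividing Hamiltonian $\Psi_\lambda^* G$ (with $\lambda$ in place of $\lambda_0$) then identifies its nonconstant $1$-periodic orbits with negative action as $\widetilde{x}(t)=(\tau^i,\xi(T_\xi t))$, one for each closed Reeb orbit $\xi$ of $\lambda$ with period $T_\xi<b$; here $\tau^i\in(0,\ln(1+a))$ is the unique solution of $h'(e^\tau)=T_\xi$. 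The hypotheses of Lemma \ref{divide} are met by the choice of $b\notin\mathcal{T}(\lambda)$ and of $c$ sufficiently large.

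Next I would argue that closed Reeb orbits of $\lambda$ in class $\alpha_{\mathbf{f}}$ with period less than $b$ are precisely the orbits comprising $\Xi_1,\ldots,\Xi_N$. Since the $\Xi_i$ by definition exhaust those with period in $[2\pi\min(f),2\pi\max(f)]$, this reduces to excluding orbits of $\lambda$ in class $\alpha_{\mathbf{f}}$ whose period lies in the gap $(2\pi\max(f),b)$, using the Boothby-Wang structure: closed Reeb orbits of $\lambda_Q$ in class $\alpha_{\mathbf{f}}$ have period exactly $2\pi$, and the pinching $\max(f)<2$ together with $b\in(2\pi\max(f),4\pi)$ forces any such orbit of $\lambda$ to have period at most $2\pi\max(f)$. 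Once this is in hand, pushing $\widetilde{x}$ forward by $\Psi_\lambda$ immediately yields the formula in the statement, since $\Psi_\lambda$ fixes the $M$-factor and merely shifts the $\R$-coordinate by a function of $p$, producing $(\tau^i-f(\xi_i(T_i t)),\xi_i(T_i t))$.

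I expect the main obstacle to be the period-gap argument in the second step: one must invoke the specific Boothby-Wang structure of $\lambda_Q$ (rather than only the abstract rigid-constellation framework of Theorem \ref{persist}) to rule out closed Reeb orbits of $\lambda$ in class $\alpha_{\mathbf{f}}$ with period strictly between $2\pi\max(f)$ and $b$.
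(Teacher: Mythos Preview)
Your approach via $\Psi_\lambda$ and the dividing property of $\Psi_\lambda^*G$ is exactly the paper's route: the lemma is stated there as an immediate consequence of ``$\Psi_\lambda^*G$ is dividing for $\lambda$'' (the sentence just preceding the lemma). The first step of your proposal is correct and complete.

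The second step, however, contains a genuine gap. Your ``period--gap argument'' asserts that the pinching $\max(f)<2$ and the Boothby--Wang structure force every closed Reeb orbit of $\lambda$ in class $\alpha_{\mathbf f}$ to have period at most $2\pi\max(f)$. This is not justified: the fact that closed Reeb orbits of $\lambda_Q$ in class $\alpha_{\mathbf f}$ have period $2\pi$ gives no direct control on the periods of Reeb orbits of $\lambda=f\lambda_Q$ in that class, because $\int_\gamma\lambda_Q$ is not a homotopy invariant (recall $d\lambda_Q=p_Q^*\omega\neq 0$). You also do not address the possibility of orbits of $\lambda$ in class $\alpha_{\mathbf f}$ with period below $2\pi$, which would equally contaminate your bijection.

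The paper sidesteps this entirely. Just before the lemma it \emph{re-reads} the families $\Xi_1,\dots,\Xi_N$ as an enumeration of \emph{all} $\R/\Z$-families of closed Reeb orbits of $\lambda$ in class $\alpha_{\mathbf f}$ (note the phrasing ``finitely many families of closed Reeb orbits in class $\alpha_{\mathbf f}$'' with no period restriction). Under this reading the lemma is an immediate corollary of the bijection $\Pp^-(\Psi_\lambda^*G)\leftrightarrow\Rr^b(\lambda)$ from Lemma~\ref{divide}, and no period--gap argument is needed. The restriction of periods to the target interval $[2\pi,2\pi\max(f)]$ is recovered \emph{after} the lemma, via the action squeezing coming from the monotone homotopy $G^{r,s}$ (exactly as in the proof of Theorem~\ref{persist}; compare Lemma~\ref{nondegenerate}). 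So rather than trying to prove a period bound you cannot prove, you should simply drop the period restriction in the definition of the $\Xi_i$ at this stage of the argument.
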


Hence, for each $j=1, \dots, n$ the limits  $$\lim_{s\to \pm\infty} v^j(s,t) = x^j_{\pm}(t)$$ exist and each $x^j_{\pm}(t)$ is a one periodic orbit of $G$ of the form \eqref{Gorbit}. If all the limits $v^j$ depend nontrivially on $s$, then 
$$\Aa_G(x^1_-)< \Aa_G(x^1_+) < \Aa_G(v^2_+)< \dots< \Aa_G(v^n_+)$$ and the  orbits $$x^1_-, x^1_+, x^2_+, \dots , x^n_+$$  are all distinct. By Lemma \ref{Gper}, the corresponding closed Reeb orbits  $$\xi^1_-, \xi^1_+, \xi^2_+, \dots , \xi^n_+$$ of $\lambda$ are also distinct and so we will be done.

Assume then that one of the limits $v_j$ does not depend on $s$. By Lemma \ref{Gper} we then have
\begin{equation}
\label{ }
v^j(s,t)= v^j(0,t) =  x^j(t) =(\tau^k - f(\xi_k(t+ \theta)), \xi_k(t+ \theta)) 
\end{equation}
for some $1\leq k  \leq N$  and $\theta \in [0,T_k)$. On the other hand we have 
\begin{equation}
\label{ }
v^j(0,0) = \lim_{\ell \to \infty} u_{\ell}(j\ell, 0)
\end{equation}
and $$p_{M}(u_{\ell}(j\ell,0)) \in [W^s(q_j)]_{m_1} \sqcup [W^s(q_j)]_{m_2} \sqcup  \cdots \sqcup [W^s(q_j)]_{m_L}.$$
Together, these conditions imply that 
\begin{equation*}
\label{ }
 \xi_k(\theta) = \lim_{\ell \to \infty} p_{M}(u_{\ell}(j\ell, 0)) \in \overline{\Big([W^s(q_j)]_{m_1} \sqcup [W^s(q_j)]_{m_2} \sqcup  \cdots \sqcup [W^s(q_j)]_{m_L}\Big)}
\end{equation*}
and so 
\begin{equation*}
\label{ }
 p_Q(\xi_k(\theta))  \in \overline{W^s(q_j)}.
\end{equation*}
Since the Morse index of each $q_j$ is two, this  contradicts Corollary \ref{away}. Thus the limit $v^j$ above all depend nontrivially on $s$ and the proof of Theorem \ref{gutt+} is complete.

\section{The Proof of Theorem \ref{fast}}\label{proof2}

Recall the statement of Theorem \ref{fast}.
\begin{Theorem}
Let $(M, \lambda_0)$ be a  contact manifold. For any free homotopy class $\alpha \in [\mathbb{S}^1 ,M]$, and any positive constants $c_1, \,c_2 >0$, there is a contact  form $\lambda=f \lambda_0$ on $M$ such that $\min(f)=1$, 
$
\max(f)<1+c_1
$
and $\lambda$ has a closed Reeb orbit in class $\alpha$ of period less than $c_2.$
\end{Theorem}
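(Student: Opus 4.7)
The plan is to construct $\lambda = f\lambda_0$ by a localized modification of $\lambda_0$ inside a Weinstein-type tubular neighborhood of a Legendrian loop representing $\alpha$. Although the factor $f$ will have amplitude less than $1+c_1$, its gradient across a narrow strip will be large enough to force a closed Reeb orbit of arbitrarily short period. The construction is a Reeb ``semi-plug'' in the spirit of Cieliebak \cite{ci2}. First I would use the Legendrian $h$-principle to choose a smoothly embedded loop $\gamma_L \colon \mathbb{S}^1 \to M$ with $\lambda_0(\dot\gamma_L)\equiv 0$ that represents $\alpha$. The isotropic neighborhood theorem then furnishes, for any preassigned $L > 0$, a constant $\eta_0 > 0$ and a strict contactomorphism
\[
\Phi \colon U \longrightarrow V \subset \mathbb{S}^1_L \times \R \times \R \times \R^{2n-4}
\]
from a tubular neighborhood $U$ of $\gamma_L$ onto a neighborhood $V \supset \{|y|, |z|, |p|, |q| < \eta_0\}$ of the zero section, where the codomain carries the standard contact form
\[
\lambda_{\mathrm{std}} = dz - y\,dx + \sum_{i=1}^{n-2} p_i\,dq_i,
\]
$x \in \mathbb{S}^1_L = \R/L\Z$, and $\Phi(\gamma_L) = \{y = z = 0,\ p = q = 0\}$.

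Next I would pick $y_0 \in \R \setminus \{0\}$ with $|y_0| < \min\{\eta_0/2,\ c_2/[L(1+c_1)]\}$ and construct a smooth function $F \colon \R \to \R_{>0}$ that equals $1$ outside an interval of width of order $c_1|y_0|$ about $y_0$, satisfies $\min F = 1$ and $\max F < 1+c_1$, and obeys the normalization
\[
F'(y_0) \;=\; -\frac{F(y_0)}{y_0}, \qquad\text{i.e.,}\qquad (yF(y))'\big|_{y_0}=0.
\]
Such an $F$ is produced by a smooth monotone ramp of height at most $c_1/2$ over a strip of width comparable to $c_1|y_0|$ near $y_0$ (followed by a descent back to $1$); the prescribed slope $|F'(y_0)| \sim 1/|y_0|$ is large but compatible with $\max F - \min F < c_1$ because the ramp is correspondingly narrow. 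Setting $f := F \circ y \circ \Phi$ on $U$ and $f \equiv 1$ on $M \setminus U$ produces a smooth function on $M$ with $\min_M f = 1$ and $\max_M f < 1 + c_1$.

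A direct calculation using $d\lambda_{\mathrm{std}} = dx \wedge dy + \sum dp_i \wedge dq_i$ and the fact that $f$ depends only on $y$ shows that on the chart
\[
R_{f\lambda_0} \;=\; \frac{F'(y)}{F(y)^2}\,\partial_x \;+\; \frac{yF'(y)+F(y)}{F(y)^2}\,\partial_z,
\]
with vanishing components along $\partial_y, \partial_{p_i}, \partial_{q_i}$. At $y = y_0$ the $\partial_z$-component vanishes by the normalization above, so the loop
\[
\gamma_0 \;:=\; \Phi^{-1}\bigl(\{(x, y_0, 0, 0, 0) : x \in \mathbb{S}^1_L\}\bigr)
\]
is a closed Reeb orbit of $\lambda = f\lambda_0$ of period
\[
T \;=\; \frac{L\,F(y_0)^2}{|F'(y_0)|} \;=\; L\,F(y_0)\,|y_0| \;<\; c_2,
\]
where the second equality uses $|F'(y_0)| = F(y_0)/|y_0|$. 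Since $\gamma_0$ is freely homotopic to $\gamma_L$ inside $U$ via $s \mapsto \Phi^{-1}(x, sy_0, 0, 0, 0)$, it represents the class $\alpha$ in $M$.

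The principal technical point is the existence of an embedded Legendrian representative for an arbitrary free homotopy class $\alpha$. This is a standard consequence of the Legendrian $h$-principle for $\dim M \geq 5$, and follows from classical results on Legendrian knot types in dimension $3$. If one prefers to bypass the $h$-principle, one can instead start from a transverse representative of $\alpha$ with its standard neighborhood, whose contact form has the shape $dz + \tfrac{1}{2}\sum(p_i\,dq_i - q_i\,dp_i)$, and use a rotationally symmetric profile $f = F(|p|^2 + |q|^2)$ to produce the short closed Reeb orbit by the same resonance mechanism.
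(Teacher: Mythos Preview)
Your approach is sound in its essential idea, but it contains one genuine gap and is also substantially different from the paper's route, so both deserve comment.

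\medskip
\noindent\textbf{The gap.} You set $f := F\circ y\circ\Phi$ on $U$ and $f\equiv 1$ on $M\setminus U$, and assert that this is a smooth function on $M$. As written it is not: the support of $F-1$ is a $y$-interval near $y_0$, but nothing forces $f$ to return to $1$ as $z$ (or $p,q$) approaches the boundary of the chart at points where $y\approx y_0$. The fix is easy and does not disturb your argument: replace $F(y)$ by $1+\chi(z)\,\psi(p,q)\bigl(F(y)-1\bigr)$, where $\chi,\psi$ are bump functions equal to $1$ near the origin and compactly supported in the chart. Since $f>0$ everywhere, $f\lambda_0$ is automatically a contact form defining $\xi$, and in the region where $\chi=\psi=1$ your Reeb computation is unchanged; the closed orbit at $(y_0,0,0,0)$ therefore persists. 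One further detail: the Reeb orbit traverses the $x$-circle in the direction of $\mathrm{sign}(F'(y_0))\,\partial_x$, so to land in $\alpha$ rather than $\alpha^{-1}$ you must choose the sign of $y_0$ compatibly with the orientation of $\gamma_L$.

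\medskip
\noindent\textbf{Comparison with the paper.} The paper takes a different and longer path. It first deforms $\lambda_0$ inside the flow box to a form $\lambda_{\delta,\epsilon}=(1-\delta\mathscr{A})\,dt+\mathscr{B}\,d\theta+\kappa_0$ that is \emph{not} in the conformal class of $\lambda_0$, checks the contact condition by hand, and exhibits a unique family of short closed Reeb orbits inside the box. It then applies a quantitative version of Gray's stability theorem to produce a diffeomorphism $\Psi$ isotopic to the identity with $\Psi^*\lambda_{\delta,\epsilon}=f\lambda_0$ and $1\le f<e^{2\delta+4\epsilon}$. Your construction bypasses Gray stability entirely by building $f$ directly and exploiting that any positive multiple of a contact form is again contact; this is more elementary and shorter. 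What the paper's detour buys is a genuine Wilson-type semi-plug in which the new closed orbit inside $P_\epsilon$ is \emph{unique}; this control is what allows the subsequent statement about realizing arbitrary link types by closed Reeb orbits. Your construction may well produce additional closed orbits (both at other zeros of $(yF)'$ and in the cutoff region), which is harmless for the theorem at hand but would not suffice for that application.
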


We will first give the proof for the case when $M$ is three dimensionsal. Here all the essential ideas are present and unobscured.  We then give the proof for higher dimensional contact manifolds. In each case there are two steps. The first step involves the construction of a new and elementary Wilson semi-plug for Reeb flows which inserts fast closed Reeb orbits. In the second step,  the contact form resulting from the insertion of the new plug is deformed into the desired contact form (in the correct conformal class).

\bigskip

\noindent \textbf{Dimension three.} Suppose that  $(M, \lambda_0)$ is three dimensional. 
Recall that a Legendrian knot in $M$ is an embedded closed curve which is everywhere tangent to $\xi = \mathrm{ker}\lambda_0$. Recall also that there is a Legendrian knot arbitrarily  $C^0$-close to any closed loop in $M$. 
This allows us to start by fixing  a Legendrian knot $L$ in $M$ which represents the class $\alpha$.

We now consider a flow box for the Reeb flow of $\lambda_0$ around $L$ using the following normal neighborhood theorem from \cite{we1, we2}. \footnote{A self-contained and detailed account of the proof is also presented in \cite{ci2} where the relevant result appears as Lemma A5.}

\begin{Theorem}\label{normal}
For every  small enough $\epsilon>0$,  there is a neighborhood $P_{\epsilon}$ of $L$ in $M$ of the form $$\{ (t,x,\theta) \in [-2\epsilon, 2\epsilon] \times [-2 \epsilon, 2\epsilon] \times \R /\Z\}$$ in which  
$$\lambda_0 =dt + x d \theta.$$
\end{Theorem}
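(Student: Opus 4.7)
The statement is the classical Weinstein Legendrian neighborhood theorem specialized to a Legendrian knot in a contact $3$-manifold, and the plan is to follow the standard two-step proof: first build a provisional chart near $L$ in which $\lambda_0$ matches the model form $\lambda_1 := dt + x\,d\theta$ to first order along $L$, and then correct the remaining discrepancy via Moser's trick.

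For the first step, I would parametrize $L$ by an embedding $\gamma \colon \R /\Z \to L$ and pick a section $X(\theta)$ of $\xi|_L$ transverse to $TL$. Since $L$ is Legendrian we have $TL \subset \xi$, and the Reeb vector field $R_{\lambda_0}$ is transverse to $\xi$, hence to $L$. Because $d\lambda_0$ restricts to a nondegenerate area form on the rank-two bundle $\xi|_L$, by rescaling $X(\theta)$ by a positive smooth function of $\theta$ I may arrange $d\lambda_0\bigl(X(\theta), \gamma'(\theta)\bigr) = 1$. The map
\begin{equation*}
\Phi(t, x, \theta) \;=\; \phi_t^{R_{\lambda_0}}\bigl(\exp_{\gamma(\theta)}(x\, X(\theta))\bigr),
\end{equation*}
with $\exp$ taken in any auxiliary Riemannian metric, is a local diffeomorphism from a neighborhood of $\{0\}\times\{0\}\times \R /\Z$ onto a tubular neighborhood of $L$ by the inverse function theorem. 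A direct computation along $L = \{t=0,\,x=0\}$ yields $\partial_t = R_{\lambda_0}$, $\partial_\theta = \gamma'(\theta) \in TL \subset \xi$, and $\partial_x = X(\theta) \in \xi$; hence $\Phi^* \lambda_0$ and $\lambda_1$ agree to first order along $L$.

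For the second step, I would run Moser's trick on the family $\lambda_s = (1-s)\lambda_1 + s\, \Phi^*\lambda_0$, $s\in[0,1]$. Since $\Phi^*\lambda_0 - \lambda_1$ and its first derivatives vanish along $L$, each $\lambda_s$ remains a contact form on some uniform neighborhood of $L$. Seeking a time-dependent vector field $V_s$ with $V_s|_L = 0$ whose flow $\psi_s$ satisfies $\psi_s^* \lambda_s = \lambda_1$ leads, via Cartan's formula, to the equation $\mathcal{L}_{V_s} \lambda_s = \lambda_1 - \Phi^*\lambda_0$. Splitting $V_s = f_s R_{\lambda_s} + W_s$ with $W_s \in \ker \lambda_s$ and imposing $\lambda_s(V_s) = 0$ reduces this to a linear equation for $W_s$, which has a unique smooth solution by the nondegeneracy of $d\lambda_s$ on $\ker \lambda_s$; the first-order vanishing of $\lambda_1 - \Phi^*\lambda_0$ on $L$ forces $V_s|_L = 0$ and gives a uniform existence time for the flow. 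Composing $\Phi$ with $\psi_1$ and shrinking $\epsilon$ as needed produces the desired chart.

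The points to monitor are the normalization of $X$ (so that $d\lambda_0|_L$ really matches $dx \wedge d\theta$ and not some other area form on $\xi|_L$) and the uniform existence in $s$ of the Moser flow on a neighborhood containing $[-2\epsilon, 2\epsilon]^2 \times \R /\Z$; both are routine, so the substantive content is the invocation of Weinstein's theorem, which is why the authors quote rather than reprove it.
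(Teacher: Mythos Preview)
The paper does not give its own proof of this theorem: it is quoted from Weinstein~\cite{we1,we2}, with a pointer to Lemma~A5 of~\cite{ci2} for a self-contained account. Your outline is precisely the standard Weinstein argument that those references carry out, and you correctly observe this at the end, so there is nothing to compare at the level of strategy.

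There is one technical slip worth flagging in your Moser step. Imposing $\lambda_s(V_s)=0$ reduces the equation to $i_{W_s}d\lambda_s=\lambda_1-\Phi^*\lambda_0$, but contracting with the Reeb vector $R_s$ gives $0$ on the left and $(\lambda_1-\Phi^*\lambda_0)(R_s)$ on the right, which need not vanish. With $f_s\equiv 0$ you therefore recover only Gray stability---a contactomorphism matching the contact \emph{structures}, producing $\psi_1^*(\Phi^*\lambda_0)=g\,\lambda_1$ for some positive $g$, rather than the contact form itself that the statement demands. The fix is standard: allow $f_s=\lambda_s(V_s)$ to be nonzero, determined by the Reeb component $R_s(f_s)=(\lambda_1-\Phi^*\lambda_0)(R_s)$ of the equation (solved by integrating along the Reeb flow from the hypersurface $\{t=0\}$, to which $R_s$ is transverse near $L$), and then solve for $W_s$ on $\ker\lambda_s$ from the remaining components. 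Since $\lambda_1-\Phi^*\lambda_0$ vanishes on $T(\R\times\R\times\R/\Z)|_L$, both $f_s$ and $W_s$ can be taken to vanish on $L$, and the rest of your argument goes through unchanged.
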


The Reeb vector field of $\lambda_0$ in $P_{\epsilon}$ is just $ \p_t$, and so $P_{\epsilon}$ is our flow box. Following \cite{gi} and \cite{ci2}, we now consider a deformation of $\lambda_0$ within $P_{\epsilon}$ of the form 
\begin{equation*}
\label{ }
\lambda_{\delta, \epsilon} = (1- \delta \mathscr{A})dt +\mathscr{B}d\theta.
\end{equation*}
Here, $\mathscr{A}$ and $\mathscr{B}$  are smooth functions of $t$ and $x$ described below,  and  $\delta$ is a suitably small positive constant to be chosen later. Depending on  the context, $\mathscr{A}$ and $\mathscr{B}$ will be considered as functions on either $P_{\epsilon}$ or the square 
$
Q_{\epsilon}= \left[-2\epsilon, 2\epsilon \right] \times \left[-2\epsilon, 2\epsilon \right] .
$

We choose $\mathscr{A}(t,x)$ so that it has the following simple properties.
\begin{itemize}
  \item[($\mathscr{A}$1)] $\mathscr{A}$ is supported in $Q_{\epsilon}$,
  \item[($\mathscr{A}$2)] $-1 < \mathscr{A} \leq 0$,
  \item[($\mathscr{A}$3)] $\mathscr{A}_x(0,\epsilon)=1$ on the rectangle $\left[-\epsilon, \epsilon\right]\times \left[\frac{\epsilon}{2}, \frac{3\epsilon}{2}\right].$
\end{itemize}
The function $\mathscr{B}(t,x)$ is constructed as a perturbation of the function $(t,x) \mapsto x$ of the form
\begin{equation}
\label{Bform}
\mathscr{B}(t,x)=(1-\mathscr{T}(t))x  + \mathscr{T}(t)\mathscr{X}(x),
\end{equation}
where $\mathscr{T}\colon [-2\epsilon, 2\epsilon] \to [0,1]$ is  a smooth  function such that
\begin{itemize}
 \item[($\mathscr{T}$1)] the support of $\mathscr{T}$ is $\left[-\epsilon, \epsilon\right]$,
 \item[($\mathscr{T}$2)]  $\mathscr{T}$ is an even function,
 \item[($\mathscr{T}$3)]  $\mathscr{T}^{-1}(1)=\{0\},$
\end{itemize}
and $\mathscr{X} \colon [-2\epsilon, 2\epsilon] \to [-2\epsilon, 2\epsilon]$ is chosen so that:
\begin{itemize}
  \item[($\mathscr{X}$1)] $\mathscr{X}(x)\geq x$ with equality only outside $\left(\frac{\epsilon}{2}, \frac{3\epsilon}{2}\right)$,
  \item[($\mathscr{X}$2)] $\mathscr{X}'(x) \geq 0$ with equality only at $\epsilon$,
  \item[($\mathscr{X}$3)] $\mathscr{X}(\epsilon)=\epsilon + \epsilon^2$,
  \item[($\mathscr{X}$4)] $\mathscr{X}(x) -x< 2\epsilon^2$.
 \end{itemize}
For these choices the function $\mathscr{B}$  inherits the following properties.
\begin{itemize}
  \item[($\mathscr{B}$1)]  $(0,\epsilon)$ is the only critical point  of $\mathscr{B}$ and the only point where $\mathscr{B}_x$ is not positive.
  \item[($\mathscr{B}$2)]  $0 \leq \mathscr{B}(t,x)-x <2\epsilon^2$ for all $(t,x) \in Q_{\epsilon}$.
 \end{itemize}

By extending $\lambda_{\delta, \epsilon}$ as $\lambda_0$ outside of $P_{\epsilon}$ we may view it as a 1-form on $M$. 

\begin{Lemma}
\label{contact4}
For all sufficiently small $\delta>0$, the form $\lambda_{\delta, \epsilon}$ is contact. 
\end{Lemma}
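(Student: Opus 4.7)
The plan is to verify the contact condition $\lambda_{\delta,\epsilon}\wedge d\lambda_{\delta,\epsilon}\neq 0$ pointwise. Outside $P_\epsilon$ the form agrees with $\lambda_0$ (since both $\mathscr{A}$ and $\mathscr{T}$ vanish there, making $\mathscr{B}=x$ and $\lambda_{\delta,\epsilon}=dt+xd\theta=\lambda_0$), so $\lambda_{\delta,\epsilon}$ is automatically contact there. Moreover this shows that $\lambda_{\delta,\epsilon}$ glues smoothly to $\lambda_0$ across $\partial P_\epsilon$, so it is indeed a well-defined smooth $1$-form on $M$. All remaining work takes place inside $P_\epsilon$ in the coordinates $(t,x,\theta)$.

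Inside $P_\epsilon$, I would simply compute. Since $\mathscr{A}$ and $\mathscr{B}$ depend only on $(t,x)$,
\begin{equation*}
d\lambda_{\delta,\epsilon} = \delta \mathscr{A}_x\,dt\wedge dx + \mathscr{B}_t\,dt\wedge d\theta + \mathscr{B}_x\,dx\wedge d\theta.
\end{equation*}
Wedging with $\lambda_{\delta,\epsilon}=(1-\delta\mathscr{A})dt+\mathscr{B}d\theta$, the terms involving $dt\wedge dt$ and $d\theta\wedge d\theta$ drop out, leaving
\begin{equation*}
\lambda_{\delta,\epsilon}\wedge d\lambda_{\delta,\epsilon} = \Phi_\delta(t,x)\,dt\wedge dx\wedge d\theta, \qquad \Phi_\delta := (1-\delta\mathscr{A})\mathscr{B}_x + \delta \mathscr{A}_x\mathscr{B}.
\end{equation*}
So the contact condition becomes the scalar inequality $\Phi_\delta>0$ on $Q_\epsilon$.

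Next I would analyse $\Phi_\delta$ using the properties of $\mathscr{A}$ and $\mathscr{B}$. By ($\mathscr{A}$2), $1-\delta\mathscr{A}\geq 1>0$ for every $\delta>0$. By ($\mathscr{B}$1), $\mathscr{B}_x>0$ everywhere on $Q_\epsilon$ except at the single point $p_0=(0,\epsilon)$, where additionally $\mathscr{B}(p_0)=\mathscr{X}(\epsilon)=\epsilon+\epsilon^2$ by ($\mathscr{X}$3) and $\mathscr{A}_x(p_0)=1$ by ($\mathscr{A}$3). Hence at $p_0$,
\begin{equation*}
\Phi_\delta(p_0) = 0 + \delta\cdot 1\cdot(\epsilon+\epsilon^2) = \delta\epsilon(1+\epsilon) > 0.
\end{equation*}
By continuity of $\Phi_\delta$ in $(t,x)$, there is an open neighborhood $U$ of $p_0$ in $Q_\epsilon$ on which $\Phi_\delta>0$ (for any fixed $\delta>0$, but in fact one can arrange $U$ independent of small $\delta$ since the dependence of $\Phi_\delta$ on $\delta$ is affine and the leading term $\delta\mathscr{A}_x\mathscr{B}$ at $p_0$ is positive).

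The remaining step, which is the only place the smallness of $\delta$ is used, is to handle $Q_\epsilon\setminus U$. On this compact set $\mathscr{B}_x$ attains a positive minimum $c>0$, while $\mathscr{A}_x$ and $\mathscr{B}$ are bounded (say by $C$ in absolute value). Thus on $Q_\epsilon\setminus U$,
\begin{equation*}
\Phi_\delta \geq (1-\delta\mathscr{A})c - \delta|\mathscr{A}_x||\mathscr{B}| \geq c - \delta C^2,
\end{equation*}
which is positive for all $\delta<c/C^2$. Combining with the neighborhood $U$, $\Phi_\delta>0$ everywhere on $Q_\epsilon$ for every sufficiently small $\delta>0$. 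The main potential obstacle is the behavior at the critical point $p_0$, where the first term of $\Phi_\delta$ vanishes; this is precisely why we arranged $\mathscr{A}_x(p_0)=1$ and $\mathscr{B}(p_0)>0$, so that the second term saves the contact condition at exactly the place the first term is lost.
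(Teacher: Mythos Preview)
Your proof is correct and follows essentially the same approach as the paper: compute $\lambda_{\delta,\epsilon}\wedge d\lambda_{\delta,\epsilon}=\Phi_\delta\,dt\wedge dx\wedge d\theta$ with $\Phi_\delta=(1-\delta\mathscr{A})\mathscr{B}_x+\delta\mathscr{A}_x\mathscr{B}$, then split $Q_\epsilon$ into a region near the critical point $p_0=(0,\epsilon)$ (where the term $\delta\mathscr{A}_x\mathscr{B}>0$ carries the positivity) and its complement (where $\mathscr{B}_x$ is bounded below and a smallness condition on $\delta$ suffices). The only cosmetic difference is that the paper uses the explicit rectangle $[-\epsilon,\epsilon]\times[\tfrac{\epsilon}{2},\tfrac{3\epsilon}{2}]$ (on which $\mathscr{A}_x\equiv 1$ and $\mathscr{B}\geq\epsilon/2$, giving the explicit bound $\Phi_\delta>\delta\epsilon/2$) rather than an abstract continuity neighborhood $U$; your parenthetical about $U$ being $\delta$-independent is justified precisely because $\Phi_\delta=\mathscr{B}_x+\delta(-\mathscr{A}\mathscr{B}_x+\mathscr{A}_x\mathscr{B})$ with $\mathscr{B}_x\geq 0$ and the $\delta$-coefficient positive near $p_0$.
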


\begin{proof}
It suffices to check this in $P_{\epsilon}$ where we have
\begin{equation*}
\label{contact}
\lambda_{\delta, \epsilon} \wedge d(\lambda_{\delta, \epsilon})= \left(\mathscr{B}_x(1 -\delta \mathscr{A})+  \delta\mathscr{A}_x \mathscr{B} \right) dt\wedge dx\wedge d\theta.
\end{equation*}
By property ($\mathscr{A}$2)
\begin{equation*}
\label{ }
\mathscr{B}_x(1 -\delta \mathscr{A})+  \delta\mathscr{A}_x \mathscr{B} \geq \mathscr{B}_x+  \delta\mathscr{A}_x \mathscr{B},
\end{equation*}
and so it suffices to show that for all sufficiently small $\delta>0$ the function
$\mathscr{B}_x+  \delta\mathscr{A}_x \mathscr{B}$ is strictly positive on $Q_{\epsilon}$. In the subrectangle   $\left[-\epsilon,\epsilon\right]\times \left[\frac{\epsilon}{2}, \frac{3\epsilon}{2}\right] \subset Q_{\epsilon}$ this follows easily from  ($\mathscr{A}$3) and ($\mathscr{B}$2) which imply
\begin{equation*}
\label{ }
\mathscr{B}_x+  \delta\mathscr{A}_x \mathscr{B} = \mathscr{B}_x+  \delta \mathscr{B} > \delta \frac{\epsilon}{2}.
\end{equation*}
Outside of this subrectangle we have 
\begin{equation*}
\label{ }
\mathscr{B}_x+  \delta\mathscr{A}_x \mathscr{B} = 1+  \delta x\mathscr{A}_x. 
\end{equation*}
Choosing $\delta<|\min(x\mathscr{A}_x )|^{-1}$ we are done.
\end{proof}

The crucial feature of the new contact form $\lambda_{\delta, \epsilon}$ is the following.

\begin{Lemma}\label{new orbit}
There is exactly one $\R /\Z$-family of simple periodic Reeb orbits of $\lambda_{\delta, \epsilon}$, $\Gamma_{\delta, \epsilon}$, which is contained in $P_{\epsilon}$. The orbits  in the family $\Gamma_{\delta, \epsilon}$ represent  the class $\alpha$, and their (common) period is $2\pi(\epsilon+\epsilon^2)$.  
\end{Lemma}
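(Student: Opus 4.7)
\emph{Plan.} The argument is an explicit computation of the Reeb vector field of $\lambda_{\delta,\epsilon}$ on $P_\epsilon$. Writing $R=R_t\p_t+R_x\p_x+R_\theta\p_\theta$ and using
\begin{equation*}
d\lambda_{\delta,\epsilon} = \delta\mathscr{A}_x\, dt\wedge dx + \mathscr{B}_t\, dt\wedge d\theta + \mathscr{B}_x\, dx\wedge d\theta,
\end{equation*}
the system $i_R d\lambda_{\delta,\epsilon}=0$, $\lambda_{\delta,\epsilon}(R)=1$ solves, wherever $\mathscr{B}_x\neq 0$, to
\begin{equation*}
R_t = \frac{\mathscr{B}_x}{(1-\delta\mathscr{A})\mathscr{B}_x + \delta\mathscr{A}_x\mathscr{B}}, \qquad R_\theta = \frac{\delta\mathscr{A}_x}{\mathscr{B}_x}R_t, \qquad R_x = -\frac{\mathscr{B}_t}{\mathscr{B}_x}R_t.
\end{equation*}
The denominator of $R_t$ is precisely the function shown to be positive in the proof of Lemma \ref{contact4}.

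By property ($\mathscr{B}$1), the only critical point of $\mathscr{B}$ on $Q_\epsilon$ is $(0,\epsilon)$. Substituting this point into the original linear system (rather than the formulas above, which are degenerate there) and using $\mathscr{A}_x(0,\epsilon)=1$ from ($\mathscr{A}$3) and $\mathscr{B}(0,\epsilon)=\epsilon+\epsilon^2$ from ($\mathscr{X}$3), one finds $R_t=R_x=0$ and, from the normalization, $R_\theta=1/(\epsilon+\epsilon^2)$. Hence the fibre $\Gamma_{\delta,\epsilon}:=\{(0,\epsilon)\}\times\R/\Z$ is a simple closed Reeb orbit of the asserted period, and its $\R/\Z$-family of time-translates is the candidate family of the lemma. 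It represents the class $\alpha$ because the homotopy $s\mapsto\{(0,s\epsilon)\}\times\R/\Z$ exhibits a free homotopy in $P_\epsilon$ between $\Gamma_{\delta,\epsilon}$ and the Legendrian knot $L$, which was chosen to represent $\alpha$.

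For uniqueness, on $P_\epsilon\setminus\Gamma_{\delta,\epsilon}$ one has $\mathscr{B}_x>0$ by ($\mathscr{B}$1), and the contact condition makes the denominator in the displayed formula positive, so $R_t>0$ throughout this region. Along any Reeb trajectory in $P_\epsilon$ disjoint from $\Gamma_{\delta,\epsilon}$ the $t$-coordinate is therefore strictly increasing, forcing the trajectory to exit $P_\epsilon$ through the face $\{t=2\epsilon\}$ in finite forward time. No such trajectory can close up inside $P_\epsilon$, so $\Gamma_{\delta,\epsilon}$ is the unique $\R/\Z$-family of (automatically simple) closed Reeb orbits contained in $P_\epsilon$. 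The one delicate point in the whole calculation is the degeneracy at $(0,\epsilon)$, where both numerator and denominator of $R_t$ vanish and one must return to the original linear system to read off $R_\theta$ from $\lambda_{\delta,\epsilon}(R)=1$.
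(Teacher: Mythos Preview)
Your proof is correct and follows essentially the same approach as the paper. The paper streamlines the computation slightly by working with the unnormalized kernel vector field $K=\mathscr{B}_x\,\p_t-\mathscr{B}_t\,\p_x+\delta\mathscr{A}_x\,\p_\theta$ of $d\lambda_{\delta,\epsilon}$ (thereby avoiding the degeneracy at $(0,\epsilon)$ that you handle by returning to the linear system) and then reads off the period as $\bigl|\int_{S_\epsilon}\lambda_{\delta,\epsilon}\bigr|=2\pi\mathscr{B}(0,\epsilon)$ rather than via $R_\theta$; the uniqueness argument from the positivity of the $\p_t$-component is identical.
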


\begin{proof}

The kernel of $d\lambda_{\delta, \epsilon}$ in $P_{\epsilon}$ is spanned by the vector field
\begin{equation*}
\label{ }
K=\mathscr{B}_x \p_t -\mathscr{B}_t\p_x +\delta \mathscr{A}_x\p_\theta.
\end{equation*}
Since $(0,\epsilon)$ is the only critical point of $\mathscr{B}$, is follows easily from the formula for $K$ that the embedded circle 
$
S_{\epsilon} =\{0\}\times \{\epsilon\} \times \R /\Z
$
 corresponds to a unique $\R /\Z$-family, $\Gamma_{\delta, \epsilon}$, of simple periodic Reeb orbits of $\lambda_{\delta, \epsilon}$.  Since the $\p_t$-component of $K$ is  positive away from $
S_{\epsilon}
$, there are no other closed Reeb orbits in $P_{\epsilon}$.

Since $S_{\epsilon}$ is $C^\infty$-close to $\{0\}\times \{0\} \times \R /\Z$ in $P_{\epsilon}$, it  is also  $C^\infty$-close to our original Legendgrian knot $L$. Hence orbits in $\Gamma_{\delta, \epsilon}$ represent $\alpha$. Finally, the common periods of these orbits is 
\begin{equation}
\label{action}
\left|\int_{S_{\epsilon}} \lambda_{\delta, \epsilon}\right| =2\pi \mathscr{B}(0,\epsilon) =2\pi(\epsilon+\epsilon^2).
\end{equation}
\end{proof}

\begin{Remark}\label{plug}
The deformation of  $\lambda_0$ to $\lambda_{\delta, \epsilon}$ within the flow box $P_{\epsilon}$ corresponds to the insertion of a Wilson semi-plug.  Here, the {\it semi} refers to the fact that our plug fails to have the {\it matching endpoint property}.
This means that a trajectory which enters $P_{\epsilon}$ at  $(-2\epsilon, x, \theta)$, then follows our new Reeb flow and exits $P_{\epsilon}$ at $t=2\epsilon$, does not need to do so at the point $(2\epsilon, x, \theta)$ (as the trajectories of the the original Reeb vector field $R_{\lambda_0}$ did). Thus it is possible, indeed inevitable in some cases, that we have created new closed  Reeb orbits which pass through $P_{\epsilon}$ but are not contained therein. This can not be remedied.  As shown by  Ana Rechtman already in her thesis, \cite{re}, Sullivan's characterization of geodesible vector fields from \cite{su} implies that no Wilson plug is geodesible  and hence no Wilson plug is Reeb. In dimension three,  a similar conclusion was also reached by Cieliebak in \cite{ci2}. There it is observed that if one could construct a Wilson plug for Reeb flows (centered about an arbitrary Legendrian knot)  then one could immediately construct a counterexample to the following result.

\begin{Theorem}(Hofer-Wysocki-Zehnder, \cite{hwz})\label{hwz} Every Reeb vector field  on $\mathbb{S}^3$, 
has a periodic orbit which is unknotted and has self-linking number $-1$.
\end{Theorem}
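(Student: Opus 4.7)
The plan is to argue via the theory of pseudo-holomorphic curves in symplectizations, following Hofer's program. Fix a contact form $\lambda$ on $\mathbb{S}^3$ defining the standard tight contact structure $\xi$, and consider the symplectization $(\R \times \mathbb{S}^3, d(e^{\tau}\lambda))$ equipped with a cylindrical almost complex structure $J$ mapping $\p_\tau$ to $R_\lambda$ and preserving $\xi$. I would seek to produce a finite-energy $J$-holomorphic plane $\tilde{u}\colon \C \to \R \times \mathbb{S}^3$ asymptotic at its positive puncture to a simple closed Reeb orbit of Conley--Zehnder index $3$; the unknottedness of the asymptotic orbit and the value $-1$ of its self-linking number will then be read off from the embeddedness and asymptotic behavior of $\tilde{u}$.

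The starting point is the round model. For the standard contact form $\lambda_0$ on $\mathbb{S}^3\subset\C^2$, every Reeb orbit is a Hopf fiber, each unknotted and of self-linking $-1$. One has an explicit $\mathbb{S}^1$-family of embedded finite-energy $J_0$-holomorphic planes, each asymptotic to a Hopf fiber and each projecting to a page of the standard open book whose binding is a fixed Hopf fiber. These planes have Fredholm index $2$ and asymptotic Conley--Zehnder index $3$. I would then deform $\lambda_0$ to the given $\lambda$ through a generic path $\lambda_t$, with accompanying $J_t$, and track the corresponding moduli spaces $\mathcal{M}_t$ of embedded index-$2$ planes asymptotic to simple closed Reeb orbits. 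The target is to show that $\mathcal{M}_1$ is nonempty and that at least one of its elements extends, via its projection to $\mathbb{S}^3$, to an open book decomposition whose binding is the desired orbit.

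The main obstacle will be compactness along the deformation. A sequence of planes in $\mathcal{M}_{t_n}$ can degenerate by bubbling off holomorphic spheres or planes, or by breaking along closed Reeb orbits, so an SFT-type compactness theorem is needed to guarantee that every limiting configuration still contains a simple finite-energy plane asymptotic to a simple orbit. Intersection theory for punctured holomorphic curves in four-dimensional symplectizations (Siefring's asymptotic intersection calculus, together with positivity of intersections) should then forbid the limiting plane from having a multiply covered asymptotic, force it to remain embedded, and ensure it is disjoint from its own asymptotic cylinder --- exactly the ingredients needed to upgrade it to a page of an open book. Once such a plane $\tilde{u}$ asymptotic to $\gamma$ is produced, the projection of its image to $\mathbb{S}^3$ furnishes an embedded spanning disk $D$ for $\gamma$, so $\gamma$ is an unknot; and the self-linking number $\mathrm{sl}(\gamma)$ with respect to the framing of $\xi|_\gamma$ induced by $D$ is determined by the contact winding along $\gamma$, which the asymptotic analysis for a simple orbit of Conley--Zehnder index $3$ pins down as $-1$.
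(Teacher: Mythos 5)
This statement is not proved in the paper at all: it is quoted verbatim from Hofer--Wysocki--Zehnder \cite{hwz} and used only as background in Remark \ref{plug}, so there is no internal proof to compare against; what can be assessed is whether your sketch would constitute a proof of the cited theorem, and as it stands it has two genuine gaps. First, the theorem concerns \emph{every} Reeb vector field on $\mathbb{S}^3$, hence every contact form, including those defining overtwisted contact structures; by fixing at the outset a form for the standard tight structure and deforming from the round model through contact forms (which, by Gray stability, never leaves the tight isotopy class), you have silently discarded the overtwisted case, which in \cite{hwz} is handled by a separate Bishop-family argument based at an overtwisted disk.

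Second, and more seriously, your continuation scheme has no mechanism forcing $\mathcal{M}_1$ to be nonempty. Index-$2$ planes are not counted by any algebraic invariant, so ``tracking the moduli spaces along $\lambda_t$'' only helps if one can rule out all degenerations in which no simple plane with a simple asymptotic orbit of the right index survives; SFT compactness permits breaking along orbits of low Conley--Zehnder index and multiply covered asymptotics, and excluding these is precisely the hard part (in the later HWZ work on finite energy foliations this is exactly where dynamical convexity or genericity hypotheses enter, neither of which is available here). Moreover, for an arbitrary -- possibly degenerate -- contact form the nondegenerate Fredholm and SFT compactness package you invoke is not even applicable. The actual proof in \cite{hwz} avoids all of this: it runs a Bishop family of pseudoholomorphic disks with boundary on an embedded two-sphere (tight case) or an overtwisted disk, shows the family cannot fill, and extracts from the bubbling-off a finite-energy plane whose embedded limit disk exhibits the asymptotic orbit as an unknot of self-linking number $-1$, with a priori estimates replacing any nondegeneracy assumption. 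Your sketch would need to either supply the missing compactness/nonemptiness argument or be rebuilt along those filling-by-disks lines.
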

\end{Remark}

Having inserted the desired family of closed Reeb orbits with our semi-plug we must now reckon with the fact that the form $\lambda_{\delta, \epsilon}$ need not be of the form  $f\lambda_0$ for some positive function $f$.

\begin{Lemma}\label{diffeo}
For all sufficiently small $\delta>0$ there exists a diffeomorphism $\Psi$ of $M$ such that $\Psi$ is isotopic to the identity and $\Psi^*{\lambda_{\delta, \epsilon}} =f\lambda_0$ for some positive function $f$ such that $\min(f)=1$ and 
\begin{equation*}
\label{ }
\max(f)< e^{2\delta +4\epsilon}.
\end{equation*}
\end{Lemma}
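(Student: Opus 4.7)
The plan is to construct $\Psi$ explicitly inside $P_\epsilon$ as a diffeomorphism of the form $(\tau,x,\theta) \mapsto (\tau,F(\tau,x),\theta)$ that acts only on the $x$-coordinate, and to extend it as the identity outside. Writing $\Psi^*\lambda_{\delta,\epsilon} = f\lambda_0$ inside $P_\epsilon$ and comparing the $d\tau$ and $d\theta$ components yields the pair of scalar equations
\[
f(\tau,x) = 1-\delta\mathscr{A}(\tau,F(\tau,x)), \qquad f(\tau,x)\,x = \mathscr{B}(\tau,F(\tau,x)),
\]
so $F(\tau,\cdot)$ must be the inverse of the map $G(\tau,\cdot)\colon y \mapsto \mathscr{B}(\tau,y)/(1-\delta\mathscr{A}(\tau,y))$.

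The crucial observation is that invertibility of $G(\tau,\cdot)$ is equivalent to the positivity of
\[
(1-\delta\mathscr{A})^2 \, G_y = \mathscr{B}_y(1-\delta\mathscr{A}) + \delta \mathscr{B}\mathscr{A}_y,
\]
which is precisely the quantity shown in the proof of Lemma \ref{contact4} to be strictly positive throughout $Q_\epsilon$ for all sufficiently small $\delta>0$. Since $\mathscr{A}$ and $\mathscr{B}-y$ vanish smoothly (to infinite order) at $\partial Q_\epsilon$, the same is true of $G-\mathrm{id}$ and hence of $F-\mathrm{id}$, so $\Psi$ extends smoothly to a global diffeomorphism of $M$ equal to the identity outside $P_\epsilon$.

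To realize $\Psi$ as isotopic to the identity, I would apply the same recipe to the linear family $\lambda_s = \lambda_0 + s(\lambda_{\delta,\epsilon}-\lambda_0)$, obtaining $\Psi_s(\tau,x,\theta) = (\tau, F_s(\tau,x),\theta)$ where $F_s(\tau,\cdot) = G_s(\tau,\cdot)^{-1}$ for
\[
G_s(\tau,y) = \frac{(1-s)y + s\mathscr{B}(\tau,y)}{1-s\delta\mathscr{A}(\tau,y)}.
\]
At $s=0$ this yields $G_0 = \mathrm{id}$ and hence $\Psi_0 = \mathrm{id}$; at $s=1$ one recovers $\Psi_1 = \Psi$. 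The same sort of computation as in Lemma \ref{contact4}, applied to $\lambda_s$, shows that $G_{s,y} > 0$ throughout $Q_\epsilon$ for all $s \in [0,1]$ when $\delta$ is small.

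With the diffeomorphism in hand, I would read off the bound on $f$ directly from the explicit formula $f = 1-\delta\mathscr{A}(\tau, F(\tau,x))$. Property ($\mathscr{A}$2) states $-1<\mathscr{A}\leq 0$, hence $1 \leq f < 1+\delta$ on $P_\epsilon$, while outside $P_\epsilon$ we have $\Psi = \mathrm{id}$ and $\lambda_{\delta,\epsilon}=\lambda_0$, giving $f\equiv 1$. Therefore $\min(f)=1$ and $\max(f)< 1+\delta < e^\delta < e^{2\delta+4\epsilon}$, as required. The only conceptual obstacle is spotting that the invertibility condition for $G$ coincides exactly with the contact condition verified in Lemma \ref{contact4}; once this is observed, the construction is fully explicit and the required estimate on $f$ is immediate from ($\mathscr{A}$2), bypassing any need for a Moser- or Gray-stability-style argument.
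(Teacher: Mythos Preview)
Your argument is correct and takes a genuinely different route from the paper. The paper builds $\Psi$ via two applications of a quantitative Gray stability theorem: first isotoping $\lambda_0$ to $(1-\delta\mathscr{A})dt+x\,d\theta$ (yielding a conformal factor in $[1,e^{2\delta})$), then isotoping this to $\lambda_{\delta,\epsilon}$ (yielding a factor in $[1,e^{4\epsilon})$), and composing the two flows. Your construction is instead fully explicit: the single coordinate change $(t,x,\theta)\mapsto(t,F(t,x),\theta)$ with $F(t,\cdot)=G(t,\cdot)^{-1}$ does the whole job at once, and your key observation---that the monotonicity of $G$ in $y$ is \emph{exactly} the contact condition verified in Lemma~\ref{contact4}---is a very clean way to see why the inverse exists. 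Your approach is more elementary (no Moser/Gray machinery) and yields the sharper bound $\max(f)<1+\delta$, which comfortably beats the stated $e^{2\delta+4\epsilon}$. One trade-off worth noting: the paper's Gray-stability proof extends verbatim to the higher-dimensional Lemma~\ref{diffeo6}, whereas your ansatz cannot work there as stated, since matching the $\kappa_0$ component of $f\lambda_0$ would force $f\equiv 1$ wherever $z\neq 0$, contradicting $f=1-\delta\widehat{\mathscr{A}}>1$ on part of that region. So your method is optimal in dimension three but the paper's is more portable.
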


\begin{proof}
The desired diffeomorphism will be constructed as a composition of two maps. Each of these will be obtained using Gray's Stability Theorem. We begin by refining the standard phrasing of this result to include some relevant quantitative information which is easily extracted from the standard proof. 

\begin{Theorem}[Quantitative Gray Stability]
Let $(\lambda_s)_{s\in [0,1]}$ be a smooth family of contact forms on $M$. Denote their Reeb vector fields by $R_s$ and set 
\begin{equation*}
\label{ }
r_s=i_{R_s}\left( \frac{d}{ds} \lambda_s \right).
\end{equation*}
There exists a one parameter family of diffeomorphisms $(\psi_s)_{s\in[0,1]}$ of $M$ which starts at the identity and satisfies
\begin{equation*}
\label{ }
\psi_s^*(\lambda_s) = \exp\left( \int_0^s r_{\sigma} \circ \psi_{\sigma} \, d\sigma\right) \lambda_0.
\end{equation*}
Hence,  
\begin{equation*}
\label{ }
\psi_1^*(\lambda_1) = f_1 \lambda_0,
\end{equation*}
where 
\begin{equation*}
\label{ }
\min(f_1) \geq  \exp\left( \int_0^1 \min_{M} r_{\sigma}  \, d\sigma\right)
\end{equation*}
and 
\begin{equation*}
\label{ }
\max(f_1) \leq  \exp\left( \int_0^1 \max_{M} r_{\sigma}  \, d\sigma\right).
\end{equation*}
\end{Theorem}

To obtain the first of our maps we apply this version of Gray's Theorem to the family of $1$-forms $$\bar{\lambda}_{s}=(1- s \delta \mathscr{A})dt +x d\theta. $$
Since $$\bar{\lambda}_{s} \wedge d \bar{\lambda}_{s} = (1+\delta s(x\mathscr{A}_x -\mathscr{A})) dt \wedge dx\wedge d\theta$$
this is a family of contact forms for all sufficiently small $\delta>0$. Their Reeb vector fields are given by 
\begin{equation*}
\label{ }
\bar{R}_{s} = \frac{1}{1+\delta s(x\mathscr{A}_x -\mathscr{A})} \left(\p_t + s\delta\mathscr{A}_x \p_{\theta}\right).
\end{equation*}
We then have
\begin{equation*}
\label{ }
\bar{r}_s=i_{\bar{R}_s}\left( \frac{d}{ds} \bar{\lambda}_s \right) = \frac{-\delta \mathscr{A}}{1+\delta s(x\mathscr{A}_x -\mathscr{A})}.
\end{equation*}
Property ($\mathscr{A}$2) implies that the numerator in the last expression is in $[0,\delta)$. It is also clear that for all sufficiently small $\delta>0$ the denominator is greater that $1/2$. Thus for all such small $\delta$ we have 
\begin{equation*}
\label{ }
0\leq \bar{r}_s< 2\delta 
\end{equation*} 
and Gray's Theorem (as stated above) yields a diffeomorphism $\bar{\psi}_1$ such that 
\begin{equation*}
\label{ }
\bar{\psi}_1^*(\bar{\lambda}_1) = \bar{f}_1 \lambda_0,
\end{equation*}
and  
\begin{equation}
\label{barr}
1 \leq \bar{f}_1 < e^{2\delta}.
\end{equation}

To obtain our second map we now consider the family of $1$-forms $$\widehat{\lambda}_{s}=\bar{\lambda}_1+ s(\mathscr{B}-x) d\theta =(1-\delta\mathscr{A})dt + \mathscr{B}^s d\theta$$
where $\mathscr{B}^s= x+ s(\mathscr{B}-x) $. These forms are contact  whenever the functions  $\mathscr{B}^s_x(1 -\delta \mathscr{A})+  \delta\mathscr{A}_x \mathscr{B}^s$ are strictly positive. Arguing as in Lemma \ref{contact4} one can easily verify that this holds for all sufficiently small $\delta$. Assuming this then, the Reeb vector field of  $\widehat{\lambda}_{s}$ is 
\begin{equation*}
\label{ }
\widehat{R}_{s} = \frac{1}{\mathscr{B}^s_x(1 -\delta \mathscr{A})+  \delta\mathscr{A}_x \mathscr{B}^s } \left(\mathscr{B}^s_x\p_t -\mathscr{B}^s_x \p_x+ \delta\mathscr{A}_x \p_{\theta}\right)
\end{equation*}
and the relevant  family of functions  is 
\begin{equation*}
\label{ }
\widehat{r}_s=i_{\widehat{R}_s}\left( \frac{d}{ds} \widehat{\lambda}_s \right) = \frac{(\mathscr{B}-x)\delta \mathscr{A}_x}{\mathscr{B}^s_x(1 -\delta \mathscr{A})+  \delta\mathscr{A}_x \mathscr{B}^s}.
\end{equation*}
Outside the subrectangle  $\left[-\epsilon,\epsilon\right]\times \left[\frac{\epsilon}{2}, \frac{3\epsilon}{2}\right] \subset Q_{\epsilon}$, where $B=x$, we have $\widehat{r}_s=0$.
Inside this subrectangle, we have $\mathscr{A}_x=1$ and so  
\begin{equation}
\label{inside}
\widehat{r}_s = \frac{(\mathscr{B}-x)\delta}{\mathscr{B}^s_x(1 -\delta \mathscr{A})+  \delta \mathscr{B}^s}
\end{equation}
which is nonnegative by property ($\mathscr{B}$2). Thus each function $\widehat{r}_s$ is nonnegative on all of $Q_{\epsilon}$.

To obtain an upper bound for the $\widehat{r}_s$ it suffices to do so inside $\left[-\epsilon,\epsilon\right]\times \left[\frac{\epsilon}{2}, \frac{3\epsilon}{2}\right]$. It follows from \eqref{inside} and  properties ($\mathscr{A}$2) and ($\mathscr{B}$2) that here we have 
\begin{equation*}
\label{ }
\frac{(\mathscr{B}-x)\delta}{\mathscr{B}^s_x(1 -\delta \mathscr{A})+  \delta \mathscr{B}^s}< \frac{2 \epsilon^2 \delta}{\mathscr{B}^s_x+  \delta \mathscr{B}^s}.
\end{equation*} 
We also have $\mathscr{B}^s_x\geq0$  and $\mathscr{B}^s\geq \frac{\epsilon}{2}$ in this subrectangle and so \begin{equation*}
\label{ }
0 \leq \widehat{r}_s<4 \epsilon.
\end{equation*}
Thus, for all sufficiently small $\delta>0$ there is a diffeomorphism $\widehat{\psi}_1$ such that 
\begin{equation*}
\label{ }
\widehat{\psi}_1^*(\widehat{\lambda}_1) = \widehat{f}_1 \bar{\lambda}_1,
\end{equation*}
and  
\begin{equation}
\label{hatt}
1 \leq \widehat{f}_1 < e^{4\epsilon}.
\end{equation}

To conclude we set $\Psi = \widehat{\psi} \circ \bar{\psi}$. Since both factors are isotopic to the identity so is $\Psi$. We  also have 
\begin{eqnarray*}
\Psi^*{\lambda_{\delta, \epsilon}} & = & (\widehat{\psi} \circ \bar{\psi})^*(\widehat{\lambda}_1) \\
{} & = &\bar{\psi}^*(\widehat{f}_1 \bar{\lambda}_1) \\
{} & = &(\widehat{f}_1\circ \bar{\psi}) \bar{f}_1\lambda_0.  
\end{eqnarray*}
By  \eqref{barr} and \eqref{hatt} the function $(\widehat{f}_1\circ \bar{\psi}) \bar{f}_1$ satisfies
\begin{equation*}
\label{ }
1\leq (\widehat{f}_1\circ \bar{\psi}) \bar{f}_1 \leq e^{2\delta+4\epsilon}.
\end{equation*}
\end{proof}

At this point we can finish the proof of Theorem \ref{fast} in the three dimension case. Choose $\delta$ and $\epsilon$ so that  $$2\delta +4\epsilon<c_1$$ and $$2\pi(\epsilon+\epsilon^2) < c_2.$$ Assuming also that $\delta$ is small enough for Lemma \ref{diffeo} to hold we set $$\lambda= \Psi^*{\lambda_{\delta, \epsilon}}.$$ 
Given an orbit  $\gamma_{\delta, \epsilon}$ in the family $\Gamma_{\delta, \epsilon}$ from Lemma \ref{new orbit} the closed curve $\Psi^{-1}(\gamma_{\delta, \epsilon}(t))$ is then a closed Reeb orbit of $\lambda$ with (the same) period, $2\pi(\epsilon+ \epsilon^2)$. Since $\Psi$ is isotopic to the identity, the periodic orbit $\Psi^{-1}(\gamma_{\delta, \epsilon}(t))$ also represents the class $\alpha$ and, with this, we are done.

\begin{Remark}
Rather than looking for specific types of periodic orbits that are  forced to exist by the Reeb condition, as in Theorem \ref{hwz}, one might instead ask (in the spirit of \cite{eg}); What collections of periodic orbits can be generated by  Reeb vector fields on a fixed three manifolds? In this direction the construction above yields the following result.
\begin{Theorem}
 For any cooriented contact three manifold $(M, \xi)$ and any link type $\mathscr{L}$ in $M$, there is a contact form $\lambda \in \Lambda(\xi)$ whose  Reeb vector field has a collection of closed periodic orbits which represent $\mathscr{L}$.
\end{Theorem}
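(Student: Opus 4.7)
The plan is to run the Wilson semi-plug construction of Section \ref{proof2} in parallel, once per component of $\mathscr{L}$, and then restore the conformal class by Gray stability. Fix any contact form $\lambda_0 \in \Lambda(\xi)$ and write $\mathscr{L} = K_1 \sqcup \cdots \sqcup K_N$. By the standard $C^0$-density of Legendrian approximations for links in a cooriented contact three-manifold, I perturb $\mathscr{L}$ to a Legendrian link $\mathscr{L}' = L_1 \sqcup \cdots \sqcup L_N$ in $(M, \xi)$ that is ambient isotopic to $\mathscr{L}$.

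Shrinking $\epsilon > 0$ if necessary, Theorem \ref{normal} supplies pairwise disjoint standard flow-box neighborhoods $P_\epsilon^{(i)}$ of the $L_i$. In each $P_\epsilon^{(i)}$ independently I perform the deformation $\lambda_0 \mapsto \lambda_{\delta, \epsilon}^{(i)}$ from the proof of Theorem \ref{fast}, extending by $\lambda_0$ elsewhere. Because the modifications are supported in disjoint boxes, the resulting global $1$-form $\lambda_{\delta, \epsilon}$ is contact for small $\delta$ by Lemma \ref{contact4} applied boxwise, and Lemma \ref{new orbit} applied in each box produces an $\R/\Z$-family $\Gamma_{\delta, \epsilon}^{(i)} \subset P_\epsilon^{(i)}$ of simple closed Reeb orbits $C^\infty$-close to $L_i$. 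Choosing a representative $\gamma_i$ from each family, the link $\gamma_1 \sqcup \cdots \sqcup \gamma_N$ is ambient isotopic to $\mathscr{L}'$, and hence to $\mathscr{L}$.

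The form $\lambda_{\delta, \epsilon}$ need not lie in $\Lambda(\xi)$, since its kernel differs from $\xi$ inside the plugs. To remedy this I run the quantitative Gray stability argument of Lemma \ref{diffeo} on the two-stage path of contact forms from $\lambda_0$ to $\lambda_{\delta, \epsilon}$, carried out simultaneously and independently in every $P_\epsilon^{(i)}$. This produces a diffeomorphism $\Psi$ of $M$, isotopic to the identity and supported in a neighborhood of $\bigcup_i P_\epsilon^{(i)}$, such that $\lambda := \Psi^*\lambda_{\delta, \epsilon} = f\lambda_0$ for some positive function $f$. The preimages $\Psi^{-1}(\gamma_i)$ are closed Reeb orbits of $\lambda$, and since $\Psi$ is isotopic to the identity, $\bigcup_i \Psi^{-1}(\gamma_i)$ is ambient isotopic to $\mathscr{L}$.

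The argument is essentially a bookkeeping exercise once the single-plug case of Theorem \ref{fast} is in place; the only point requiring attention is ensuring that the plugs do not interact with each other and that the Gray stability diffeomorphism does not disturb the link type. Both are handled automatically by keeping all of the deformations strictly supported in the pairwise disjoint flow boxes $P_\epsilon^{(i)}$, so that $\Psi$ is the identity outside this union and the final ambient isotopy class of $\bigcup_i \Psi^{-1}(\gamma_i)$ agrees with that of $\mathscr{L}$.
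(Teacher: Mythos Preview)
Your proposal is correct and is precisely the argument the paper has in mind: the theorem appears in the paper only as a remark with the sentence ``the construction above yields the following result,'' with no further proof, and what you have written is exactly the natural unpacking of that sentence---Legendrian-approximate the link, insert one semi-plug per component in pairwise disjoint flow boxes, then apply the Gray stability step of Lemma \ref{diffeo} boxwise to return to the conformal class $\Lambda(\xi)$. Your observation that the Gray diffeomorphism is supported in the union of the boxes (because the defining vector fields vanish where the family of contact forms is constant) is the only point the paper leaves entirely to the reader, and you have handled it correctly.
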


\end{Remark}

\medskip

\noindent \textbf{Higher dimensions.} We now show that the proof above for dimension three extends easily to higher dimensions. In practice, this amounts to showing that the extra variables can be cut-off with no meaningful effects. 

Consider then a contact manifold $(M, \lambda_0)$ of dimension $2n-1>3$ and let $L$ be a simple closed curve in $M$ which is everywhere tangent to $\xi$ and represents the class $\alpha$. Theorem \ref{normal} generalizes in the obvious way and yields, for small enough $\epsilon>0$,   a normal neighborhood of $L$ in $M$ of the form 
$$P_{\epsilon}= \{(t,x,\theta,z) \in [-2\epsilon, 2\epsilon] \times [-2 \epsilon, 2\epsilon] \times \R /\Z  \times B^{2n-4}(\epsilon)\}$$  in which  
$$\lambda_0 =dt + x d \theta +\kappa_0.$$
Here, $B^{2n-4}(\epsilon)$ is the open unit ball of radius $\epsilon$ in $\R^{2n-4}$, we have 
 $$
 z = ((q_1,p_1), \dots, (q_{n-2}, p_{n-2})),
 $$
 and $\kappa_0$ is the standard Liouville form 
 \begin{equation*}
\label{ }
\kappa_0=\frac{1}{2}\sum_{i=1}^{n-2} q_i dp_i -p_idq_i.
\end{equation*}

In what follows we will only need to consider functions whose $z$-dependence is radial, i.e., which depend only on 
$$
 \rho= \frac{|z|}{2}^2.
 $$
Our tool to cut-off the extra variables will the be a smooth function $\mathrm{\mathbf{ cut}} \colon [0, \frac{\epsilon^2}{2}] \to [0,1]$ with the following properties:
\begin{itemize}
  \item[($\mathrm{\mathbf{c}}$1)] $\mathrm{\mathbf{ cut}} =0$ near $\frac{\epsilon^2}{2}$,
  \item[($\mathrm{\mathbf{c}}$2)] $\mathrm{\mathbf{ cut}}(\rho)=1-\rho$ near $0$,
  \item[($\mathrm{\mathbf{c}}$3)] $-\frac{4}{\epsilon^2} <\mathrm{\mathbf{ cut}}'(\rho) \leq 0$.
\end{itemize}

Given the functions $\mathscr{A}(t,x)$ and  $\mathscr{B}(t,x)$ from the previous section we set 
\begin{equation*}
\label{ }
\widehat{\mathscr{A}}(t,x,\rho)=\mathrm{\mathbf{ cut}}(\rho)\mathscr{A}(t,x),
\end{equation*}
and 
\begin{equation*}
\label{ }
\widehat{\mathscr{B}}(t,x,\rho)=(1-\mathrm{\mathbf{ cut}}(\rho))x + \mathrm{\mathbf{ cut}}(\rho)\mathscr{B}(t,x).
\end{equation*}
Now $(0,\epsilon,0)$ is the only critical point of $\widehat{\mathscr{B}}$  and the only point at which $\widehat{\mathscr{B}}_x$ fails to be positive.

As before, for $\delta>0$, we consider deformations of $\lambda_0$ of the form,
\begin{equation*}
\label{ }
\lambda_{\delta, \epsilon} = (1-\delta \widehat{\mathscr{A}})dt + \widehat{\mathscr{B}}d\theta + \kappa_0.
\end{equation*}

\begin{Lemma}
\label{contact6}
For all sufficiently small $\delta>0$, the form $\lambda_{\delta, \epsilon}$ is contact. 
\end{Lemma}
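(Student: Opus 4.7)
The plan is to reduce the computation of $\lambda_{\delta,\epsilon}\wedge(d\lambda_{\delta,\epsilon})^{n-1}$ to the three-dimensional contact expression from Lemma \ref{contact4}, modulo a correction produced by the $z$-cut-off. Outside $P_{\epsilon}$ the form equals $\lambda_0$ and is automatically contact, so the task is confined to the interior.

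First I would decompose $\lambda_{\delta,\epsilon} = \widehat{\lambda} + \kappa_0$ with $\widehat{\lambda} = (1-\delta\widehat{\mathscr{A}})dt + \widehat{\mathscr{B}}\,d\theta$, and split $d\widehat{\lambda} = \omega_3 + \omega_\rho$ according to whether $d\rho$ appears. The key observation is that $\omega_3$ lies in the two-form span of the three one-forms $dt, dx, d\theta$, so $\omega_3\wedge\omega_3 = 0$; and $\omega_\rho = d\rho\wedge\eta$ for a one-form $\eta$ involving $dt$ and $d\theta$, so $\omega_\rho\wedge\omega_\rho = 0$. Hence $(d\widehat{\lambda})^2 = 2\omega_3\wedge\omega_\rho$ and $(d\widehat{\lambda})^k = 0$ for $k\geq 3$. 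Binomially expanding $(d\widehat{\lambda}+d\kappa_0)^{n-1}$ and discarding $(d\kappa_0)^{n-1}=0$ leaves only the $k=1$ and $k=2$ contributions. Wedging with $\lambda_{\delta,\epsilon}$ and applying the identities $d\rho\wedge(d\kappa_0)^{n-2}=0$ (forced by dimension, since $d\rho\in\mathrm{span}(dq_i, dp_i)$) and
\begin{equation*}
d\rho\wedge\kappa_0\wedge(d\kappa_0)^{n-3} \;=\; \frac{\rho}{n-2}\,(d\kappa_0)^{n-2}
\end{equation*}
(a direct Darboux-coordinate computation, essentially following from $d(\rho\kappa_0) = d\rho\wedge\kappa_0 + \rho\,d\kappa_0$) collapses everything to
\begin{equation*}
\lambda_{\delta,\epsilon}\wedge(d\lambda_{\delta,\epsilon})^{n-1} \;=\; (n-1)!\,\Phi\cdot dt\wedge dx\wedge d\theta\wedge dq_1\wedge dp_1\wedge\cdots\wedge dq_{n-2}\wedge dp_{n-2},
\end{equation*}
where
\begin{equation*}
\Phi \;=\; (1-\delta\widehat{\mathscr{A}})\widehat{\mathscr{B}}_x + \delta\widehat{\mathscr{A}}_x\widehat{\mathscr{B}} \;+\; \delta\rho\bigl(\widehat{\mathscr{A}}_\rho\widehat{\mathscr{B}}_x - \widehat{\mathscr{A}}_x\widehat{\mathscr{B}}_\rho\bigr).
\end{equation*}

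The first two summands of $\Phi$ are precisely the three-dimensional contact expression of Lemma \ref{contact4}, now evaluated on the cut-off functions, and the positivity argument from that lemma carries over with only cosmetic changes: the only place where these summands drop to $\delta(\epsilon+\epsilon^2)$ is the unique critical point $(t,x,\rho)=(0,\epsilon,0)$ of $\widehat{\mathscr{B}}$, while elsewhere they are bounded below by either a positive constant or by $\delta\,\mathrm{\mathbf{cut}}(\rho)\cdot\epsilon/2$. For the correction I would use the factorizations $\widehat{\mathscr{A}}_\rho = \mathrm{\mathbf{cut}}'(\rho)\mathscr{A}$ and $\widehat{\mathscr{B}}_\rho = \mathrm{\mathbf{cut}}'(\rho)\mathscr{T}(t)(\mathscr{X}(x)-x)$ together with property $(\mathrm{\mathbf{c}}3)$, which combined with $\rho\leq\epsilon^2/2$ on the support of $\mathrm{\mathbf{cut}}'$ yields the key bound $\rho|\mathrm{\mathbf{cut}}'(\rho)|\leq 2$. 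Thus the correction is bounded by $\delta$ times a universal constant and, crucially, vanishes at the critical point because $\rho=0$ there, so for $\delta$ sufficiently small it is dominated by the main part everywhere.

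I expect the main obstacle to be establishing the wedge identity $d\rho\wedge\kappa_0\wedge(d\kappa_0)^{n-3} = \tfrac{\rho}{n-2}(d\kappa_0)^{n-2}$ cleanly and keeping track of signs and combinatorial factors when organizing the binomial expansion; the ensuing positivity analysis is essentially a mild decoration of the three-dimensional argument already carried out in Lemma \ref{contact4}.
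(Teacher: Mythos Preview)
Your approach is essentially the same as the paper's: both compute $\lambda_{\delta,\epsilon}\wedge(d\lambda_{\delta,\epsilon})^{n-1}$ explicitly, invoke the identity relating $d\rho\wedge\kappa_0\wedge(d\kappa_0)^{n-3}$ to $\rho(d\kappa_0)^{n-2}$, and obtain a coefficient of the form $\widehat{\mathscr{B}}_x+\delta\mathscr{E}$ with $\mathscr{E}(0,\epsilon,0)>0$, after which positivity for small $\delta$ follows by continuity. The only visible differences are bookkeeping: your normalization of the wedge identity carries the factor $\tfrac{1}{n-2}$ (which is in fact the correct one, via contraction with the Euler field), and you spell out the positivity analysis by cases rather than appealing to continuity directly, but neither changes the substance of the argument.
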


\begin{proof}
Simple computations yield 
\begin{equation*}
\label{}
d\lambda_{\delta, \epsilon}= \delta \widehat{\mathscr{A}}_xdt \wedge dx+\delta\widehat{\mathscr{A}}_{\rho} dt \wedge d\rho +\widehat{\mathscr{B}}_t dt \wedge d\theta +\widehat{ B}_x dx \wedge d \theta+ \widehat{\mathscr{B}}_{\rho} d\rho \wedge d\theta  +d\kappa_0
\end{equation*}
and 
\begin{eqnarray*}
(d\lambda_{\delta, \epsilon})^{n-1} & = & (n-1)\left(\delta \widehat{\mathscr{A}}_x dt \wedge dx + \widehat{\mathscr{B}}_t dt \wedge d\theta + \widehat{\mathscr{B}}_x dx \wedge d \theta \right) \wedge (d \kappa_0)^{n-2}\\
{} & {} & +(n-1)(n-2) \delta\left(\widehat{\mathscr{A}}_x\widehat{\mathscr{B}}_{\rho}-\widehat{\mathscr{A}}_{\rho}\widehat{\mathscr{B}}_x \right) dt \wedge dx \wedge d\rho \wedge d\theta \wedge \kappa_0 \wedge (d \kappa_0)^{n-3}.
\end{eqnarray*}
Using the identity 
\begin{equation*}
\label{ }
d\rho \wedge \kappa_0 \wedge (d\kappa_0)^{n-3} = \rho (d \kappa_0)^{n-2}
\end{equation*}
we then arrive at the following expression for $\lambda_{\delta, \epsilon} \wedge(d\lambda_{\delta, \epsilon})^{n-1}$,
\begin{equation*}
\label{ }
 (n-1)\left[ ((1-\delta\widehat{\mathscr{A}})\widehat{\mathscr{B}}_x +\delta\widehat{\mathscr{A}}_x\widehat{\mathscr{B}})+ (n-2)\rho \delta(\widehat{\mathscr{A}}_x\widehat{\mathscr{B}}_{\rho}-\widehat{\mathscr{A}}_{\rho}\widehat{\mathscr{B}}_x)\right] dt \wedge dx \wedge d\theta \wedge (d \kappa_0)^{n-2}.
\end{equation*}
The function $((1-\delta\widehat{\mathscr{A}})\widehat{\mathscr{B}}_x +\delta\widehat{\mathscr{A}}_x\widehat{\mathscr{B}})+ (n-2)\rho \delta(\widehat{\mathscr{A}}_x\widehat{\mathscr{B}}_{\rho}-\widehat{\mathscr{A}}_{\rho}\widehat{\mathscr{B}}_x)$ can be rewritten in the form 
$$
\widehat{\mathscr{B}}_x + \delta \mathscr{E}
$$
where $\mathscr{E}(0,\epsilon,0) = \widehat{\mathscr{B}}(0, \epsilon,0)= \epsilon+ \epsilon^2>0$.  Since $\widehat{\mathscr{B}}_x \geq 0$ with equality only at the point $(0, \epsilon,0)$ it follows from continuity that for all sufficiently small $\delta>0$ the form $\lambda_{\delta, \epsilon} \wedge(d\lambda_{\delta, \epsilon})^{n-1}$ is nonvanishing and hence $\lambda_{\delta, \epsilon}$ is a contact form.
\end{proof}

\begin{Lemma}\label{new orbit6}
There is exactly one $\R /\Z$-family of simple periodic Reeb orbits of $\lambda_{\delta, \epsilon}$, $\Gamma_{\delta, \epsilon}$, which is contained in $P_{\epsilon}$. The orbits  in the family $\Gamma_{\delta, \epsilon}$ represent  the class $\alpha$, and their (common) period is $2\pi(\epsilon+\epsilon^2)$.  
\end{Lemma}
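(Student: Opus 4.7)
The plan is to reduce the proof of this lemma to the three-dimensional case already handled in Lemma \ref{new orbit}, by showing that the Reeb flow of $\lambda_{\delta,\epsilon}$ preserves the function $\rho = |z|^2/2$ and admits no closed orbits on any level $\{\rho = \rho_0\}$ with $\rho_0 > 0$. On the remaining level $\{z = 0\} \cap P_\epsilon$ the contact form restricts to $(1 - \delta\mathscr{A})\,dt + \mathscr{B}\,d\theta$, which is precisely the three-dimensional form treated in Lemma \ref{new orbit}, so the higher-dimensional lemma will follow.

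To carry this out I would first compute the Reeb vector field $R$ of $\lambda_{\delta,\epsilon}$ by solving $i_R\,d\lambda_{\delta,\epsilon} = 0$ together with $\lambda_{\delta,\epsilon}(R) = 1$, using the explicit formula for $d\lambda_{\delta,\epsilon}$ derived in the proof of Lemma \ref{contact6}. Decomposing $R$ in the frame $\{\p_t, \p_x, \p_\theta, \p_{q_i}, \p_{p_i}\}$, the vanishing of the $dq_i$ and $dp_i$ components of $i_R\,d\lambda_{\delta,\epsilon}$ (which come entirely from $d\kappa_0$ together with the radial pieces $\delta\widehat{\mathscr{A}}_\rho\,dt \wedge d\rho$ and $\widehat{\mathscr{B}}_\rho\,d\rho \wedge d\theta$) forces the $\p_{q_i}$-coefficient of $R$ to be a common factor in $(t,x,\rho)$ times $-p_i$ and the $\p_{p_i}$-coefficient to be that same factor times $q_i$. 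Summing $\sum_i(q_iu_i + p_iv_i)$ then telescopes to zero, so $\dot\rho \equiv 0$ along the Reeb flow.

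With $\rho$ conserved I would split the analysis by level. For $\rho_0 > 0$, properties $(\mathrm{\mathbf{c}}2)$--$(\mathrm{\mathbf{c}}3)$ give $\mathrm{\mathbf{cut}}(\rho_0) < 1$, and combined with $\mathscr{B}_x \geq 0$ from $(\mathscr{B}1)$ this forces
\[
\widehat{\mathscr{B}}_x(t,x,\rho_0) = (1 - \mathrm{\mathbf{cut}}(\rho_0)) + \mathrm{\mathbf{cut}}(\rho_0)\,\mathscr{B}_x(t,x) \geq 1 - \mathrm{\mathbf{cut}}(\rho_0) > 0
\]
uniformly on the slice $\{\rho = \rho_0\} \cap P_\epsilon$. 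Solving the remaining $(dt, dx, d\theta)$-equations shows the $\p_t$-component of $R$ is a positive multiple of $\widehat{\mathscr{B}}_x$, so $\dot t > 0$ strictly on this slice and no Reeb orbit can close inside $P_\epsilon$. For $\rho_0 = 0$ the coefficients of $\p_{q_i},\p_{p_i}$ identified above vanish because $q_i = p_i = 0$ there, so $R$ is tangent to $\{z = 0\}$ and agrees with the Reeb field of the restricted three-dimensional form. Lemma \ref{new orbit} then produces exactly one simple $\R/\Z$-family of closed orbits with image $S_\epsilon = \{0\} \times \{\epsilon\} \times \R/\Z \times \{0\}$, representing the class $\alpha$ (since $S_\epsilon$ is $C^\infty$-close to the Legendrian $L$) and of common period $2\pi\,\widehat{\mathscr{B}}(0,\epsilon,0) = 2\pi(\epsilon + \epsilon^2)$.

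The only nontrivial point is the identity $\dot\rho \equiv 0$, which requires checking that the $u_i$ and $v_i$ forced by the $dq_i, dp_i$-equations are proportional to $-p_i$ and $q_i$ respectively so that the telescoping occurs; once this is verified the higher-dimensional statement is essentially a corollary of its three-dimensional counterpart. The radial dependence of $\widehat{\mathscr{A}}$ and $\widehat{\mathscr{B}}$ on $z$, and the use of $\kappa_0$ as a Liouville primitive for $d\kappa_0$, are precisely what make the foliation by $\rho$-levels Reeb-invariant.
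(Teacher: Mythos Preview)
Your proposal is correct and rests on the same key mechanism as the paper's proof: the $\p_t$-component of the Reeb direction is a positive multiple of $\widehat{\mathscr{B}}_x$, which vanishes only at $(t,x,z)=(0,\epsilon,0)$, so $t$ is strictly monotone along any orbit not lying on $S_\epsilon$. The organization differs, however. The paper writes down the kernel of $d\lambda_{\delta,\epsilon}$ explicitly as
\[
K=\widehat{\mathscr{B}}_x\,\p_t-\widehat{\mathscr{B}}_t\,\p_x+\delta\widehat{\mathscr{A}}_x\,\p_\theta+\delta\widehat{\mathscr{A}}_x U-\widehat{\mathscr{B}}_x V,
\]
where $U,V$ are the Hamiltonian vector fields on $(B^{2n-4},d\kappa_0)$ of the radial functions $\widehat{\mathscr{B}}$ and $\delta\widehat{\mathscr{A}}$, and then concludes directly from the global positivity of $\widehat{\mathscr{B}}_x$ away from the single point $(0,\epsilon,0)$; it does not separate into $\rho$-levels. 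The identity $d\rho(U)=d\rho(V)=0$ that underlies your $\dot\rho\equiv 0$ does appear in the paper, but only as a step in checking $i_K\,d\lambda_{\delta,\epsilon}=0$. Your level-set decomposition and explicit reduction to Lemma~\ref{new orbit} on $\{z=0\}$ make the inductive passage from dimension three more transparent, at the cost of one extra step; the paper's route is marginally shorter since monotonicity of $t$ alone already isolates $S_\epsilon$ without needing $\rho$-invariance as a standalone conclusion.
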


\begin{proof}
For a fixed $t$ and $x$,  let $V^{t,x}$ be the Hamiltonian vector field on $(B^{2n-4}(\epsilon), d\kappa_0)$ defined by the function $z \mapsto \delta\widehat{\mathscr{A}}(t,x,|z|^2/2)$. That is $V^{t,x}(z)$ is defined by the equation
\begin{equation*}
\label{ }
d\kappa_0(z)(V^{t,x}(z), \cdot) = \mathrm{\mathbf{ cut}}'(|z|^2/2)\delta\mathscr{A}(t,x) d\rho (\cdot).
\end{equation*}
Let  $V(t,x,z,\theta)$ be the vector field whose  projection to $(B^{2n-4}(\epsilon), d\kappa_0)$ is  $V^{t,x}$ and whose other components are trivial. Define the vector field  $U$ by replacing $\delta\widehat{\mathscr{A}}$ above by $\widehat{\mathscr{B}}$. The kernel of $d\lambda_{\delta, \epsilon}$ is then spanned by the vector field
\begin{equation*}
\label{ }
K=\widehat{\mathscr{B}}_x \p_t -\widehat{\mathscr{B}}_t\p_x +\delta\widehat{\mathscr{A}}_x\p_\theta + \delta\widehat{\mathscr{A}}_xU-\widehat{\mathscr{B}}_x V.
\end{equation*}
To see this,  note first that  $d \rho(V) = d \rho(U) =0$ since the corresponding Hamiltonian vector fields are defined by functions of $\rho$. A simple computation then yields
\begin{eqnarray*}
i_{K}d\lambda_{\delta, \epsilon} & = &\delta\widehat{\mathscr{A}}_{\rho}\widehat{\mathscr{B}}_x d\rho-\delta\widehat{\mathscr{B}}_{\rho}\widehat{\mathscr{A}}_x d\rho + i_{\delta\widehat{\mathscr{A}}_xU} d\kappa_0 - i_{{\widehat{\mathscr{B}}}_x V} d\kappa_0 \\
{} & = & \delta\widehat{\mathscr{A}}_{\rho}\widehat{\mathscr{B}}_x d\rho-\delta\widehat{\mathscr{B}}_{\rho}\widehat{\mathscr{A}}_x d\rho + \delta\widehat{\mathscr{A}}_x \widehat{\mathscr{B}}_{\rho }d\rho- \delta{\widehat{\mathscr{B}}}_x \widehat{\mathscr{A}}_{\rho}d \rho\\ 
{} & = & 0.
\end{eqnarray*}

The $t$-component of $K$ vanishes only when $(t,x,z) =(0,\epsilon,0)$. At  this point both $\widehat{\mathscr{B}}_x$ and $\widehat{\mathscr{B}}_t$ vanish as do   $V$ and $U$ since $d \rho=0$ when $z=0$. 
Thus $\lambda_{\delta, \epsilon}$ has exactly one $\R /\Z$-family of simple closed Reeb orbits in $P_{\epsilon}$. These orbits all have image
$$
S_{\epsilon} = \{0\}\times \{\epsilon\} \times \R /\Z \times \{0\}
$$
and period 
\begin{equation*}
\label{}
\left|\int_{S_{\epsilon}} \lambda_{\delta, \epsilon}\right| =2\pi(\epsilon+ \epsilon^2).
\end{equation*}

\end{proof}

\begin{Lemma}\label{diffeo6}
For all sufficiently small $\delta>0$ there exists a diffeomorphism $\Psi$ of $M$ such that $\Psi$ is isotopic to the identity and $\Psi^*{\lambda_{\delta, \epsilon}} =f\lambda_0$ for some positive function $f$ such that $\min(f)=1$ and 
\begin{equation*}
\label{ }
\max(f)< e^{2\delta +4\epsilon}.
\end{equation*}
\end{Lemma}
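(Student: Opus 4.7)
The plan is to mimic the two-step Gray stability argument used for Lemma \ref{diffeo}, with the additional task of checking that the $\kappa_0$-piece and the $\rho$-dependence coming from $\mathrm{\mathbf{cut}}$ do not spoil the estimates on $r_s$ that entered the bounds $e^{2\delta}$ and $e^{4\epsilon}$.

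\smallskip

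First I would interpolate in the $dt$-coefficient via the family $\bar{\lambda}_s = (1-s\delta\widehat{\mathscr{A}})dt + x\,d\theta + \kappa_0$. Arguing as in the proof of Lemma \ref{contact6}, one sees these forms are contact for all sufficiently small $\delta>0$ and their Reeb vector fields have the schematic form $\bar{R}_s = \frac{1}{D_s}(\partial_t + s\delta\widehat{\mathscr{A}}_x \partial_\theta + (\text{terms in the $z$-directions killing $d\widehat{\mathscr{A}}$}))$, where $D_s = 1 + s\delta(x\widehat{\mathscr{A}}_x - \widehat{\mathscr{A}}) + O(\rho\delta)$ tends to $1$ as $\delta \to 0$, uniformly on $P_\epsilon$. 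Since $\frac{d}{ds}\bar{\lambda}_s = -\delta\widehat{\mathscr{A}}\,dt$ has no $\kappa_0$-component and no $d\theta$-component, we have $\bar{r}_s = i_{\bar{R}_s}\frac{d}{ds}\bar{\lambda}_s = -\delta\widehat{\mathscr{A}}/D_s$, which by $(\widehat{\mathscr{A}}2)$ lies in $[0,2\delta)$ for all small enough $\delta>0$. The quantitative Gray's theorem then furnishes $\bar{\psi}_1$ with $\bar{\psi}_1^* \bar{\lambda}_1 = \bar{f}_1 \lambda_0$ and $1 \leq \bar{f}_1 < e^{2\delta}$.

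\smallskip

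Next I would interpolate in the $d\theta$-coefficient via $\widehat{\lambda}_s = (1-\delta\widehat{\mathscr{A}})\,dt + \widehat{\mathscr{B}}^s\,d\theta + \kappa_0$ with $\widehat{\mathscr{B}}^s = x + s(\widehat{\mathscr{B}}-x)$. These are contact for small $\delta$ (same Lemma \ref{contact6} computation, replacing $\widehat{\mathscr{B}}$ by $\widehat{\mathscr{B}}^s$). Now $\frac{d}{ds}\widehat{\lambda}_s = (\widehat{\mathscr{B}}-x)\,d\theta$ is supported where $\mathrm{\mathbf{cut}}(\rho)>0$ and $\mathscr{B}\neq x$, i.e.\ inside $[-\epsilon,\epsilon]\times[\tfrac{\epsilon}{2},\tfrac{3\epsilon}{2}]\times \R/\Z \times B^{2n-4}(\epsilon)$, where $\widehat{\mathscr{A}}_x = \mathrm{\mathbf{cut}}(\rho) \in [0,1]$. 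Pairing with the Reeb vector field (whose $\partial_\theta$-component is $\delta\widehat{\mathscr{A}}_x$ divided by the same density as in Lemma \ref{contact6}), the function $\widehat{r}_s$ takes the form
\[
\widehat{r}_s = \frac{(\widehat{\mathscr{B}}-x)\delta\widehat{\mathscr{A}}_x}{\widehat{\mathscr{B}}^s_x(1-\delta\widehat{\mathscr{A}})+\delta\widehat{\mathscr{A}}_x \widehat{\mathscr{B}}^s + O(\rho\delta)}.
\]
By $(\widehat{\mathscr{B}}2)$ (inherited from $(\mathscr{B}2)$) the numerator is bounded by $2\epsilon^2\delta$, while in the support region $\widehat{\mathscr{B}}^s \geq \epsilon/2 + O(\epsilon^2)$ and $\widehat{\mathscr{B}}^s_x\geq 0$, so $0 \leq \widehat{r}_s < 4\epsilon$ for all small $\delta>0$. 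Gray again provides $\widehat{\psi}_1$ with $\widehat{\psi}_1^*\widehat{\lambda}_1 = \widehat{f}_1 \bar{\lambda}_1$ and $1\leq \widehat{f}_1 < e^{4\epsilon}$. Setting $\Psi = \widehat{\psi}_1 \circ \bar{\psi}_1$ (isotopic to the identity as a composition of Gray isotopies) gives $\Psi^*\lambda_{\delta,\epsilon} = (\widehat{f}_1\circ\bar{\psi}_1)\bar{f}_1\, \lambda_0$, with the product bounded above by $e^{2\delta+4\epsilon}$ and below by $1$.

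\smallskip

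The main (minor) obstacle is bookkeeping: one must verify that the extra $z$-dependent contributions to the Reeb vector fields that appear for $n>2$ (originating from $d\kappa_0$ and from $d\widehat{\mathscr{A}}_\rho\, d\rho$, $d\widehat{\mathscr{B}}_\rho\, d\rho$) contribute only to the denominators of $\bar{r}_s$ and $\widehat{r}_s$, not to the numerators — this is because $\frac{d}{ds}\bar{\lambda}_s$ and $\frac{d}{ds}\widehat{\lambda}_s$ involve only $dt$ and $d\theta$ respectively and vanish when paired with any vector in $\ker(dt)\cap \ker(d\theta)$. Once this is checked, the bounds on $\bar{r}_s$ and $\widehat{r}_s$ match those in the three-dimensional case and the conclusion is identical.
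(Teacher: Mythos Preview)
Your approach is identical to the paper's: the same two families $\bar{\lambda}_s$ and $\widehat{\lambda}_s$, the same quantitative Gray argument, and the same key observation that the $z$-dependent pieces of the Reeb fields contribute only to the denominators of $\bar{r}_s$ and $\widehat{r}_s$.

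One bookkeeping slip to fix: bounding the numerator of $\widehat{r}_s$ by $2\epsilon^2\delta$ (i.e.\ replacing $\widehat{\mathscr{A}}_x=\mathrm{\mathbf{cut}}(\rho)$ by $1$) while bounding the denominator from below using only $\delta\widehat{\mathscr{A}}_x\widehat{\mathscr{B}}^s\geq \delta\,\mathrm{\mathbf{cut}}(\rho)\cdot\epsilon/2$ gives $4\epsilon/\mathrm{\mathbf{cut}}(\rho)$, which blows up as $\mathrm{\mathbf{cut}}(\rho)\to 0$. You must retain the factor $\mathrm{\mathbf{cut}}(\rho)$ in the numerator (it sits in $\widehat{\mathscr{A}}_x$) so that it cancels; the paper also exploits the $(n-1)\geq 2$ prefactor coming from the contact volume to absorb the $O(\rho\delta)$ error. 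Once that correction is made, your estimates and the paper's coincide.
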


\begin{proof}

As in the proof of Lemma \ref{diffeo} we first apply the quantitative version of Gray's Theorem to the family of $1$-forms $$\bar{\lambda}_{s}=(1- s \delta \widehat{\mathscr{A}})dt +x d\theta + \kappa_0.$$
These are contact for all sufficiently small $\delta>0$ and their Reeb vector fields are given by 
\begin{equation*}
\label{ }
\bar{R}_{s} = \frac{1}{(n-1)\left[ 1-s\delta\widehat{\mathscr{A}} +s\delta\widehat{\mathscr{A}}_x x- s(n-2)\rho \delta\widehat{\mathscr{A}}_{\rho}\right]} \left( \p_t +s\delta\widehat{\mathscr{A}}_x\p_\theta + s\delta\widehat{\mathscr{A}}_xY-X\right).
\end{equation*}
The relevant functions are then 
\begin{equation*}
\label{ }
\bar{r}_s=i_{\bar{R}_s}\left( \frac{d}{ds} \bar{\lambda}_s \right) = \frac{-\delta \widehat{\mathscr{A}}}{(n-1)\left[ 1-s\delta\widehat{\mathscr{A}} +s\delta\widehat{\mathscr{A}}_x x- s(n-2)\rho \delta\widehat{\mathscr{A}}_{\rho}\right]}.
\end{equation*}
Property ($\mathscr{A}$2) implies that the numerator in the last expression is in $[0,\delta)$. It is also clear that for all sufficiently small $\delta>0$ the denominator is greater that $1/2$. Thus for all such small $\delta$ we have 
\begin{equation*}
\label{ }
0\leq \bar{r}_s< 2\delta
\end{equation*} 
and Gray's Theorem (as stated above) yields a diffeomorphism $\bar{\psi}_1$ such that 
\begin{equation*}
\label{ }
\bar{\psi}_1^*(\bar{\lambda}_1) = \bar{f}_1 \lambda_0,
\end{equation*}
and  
\begin{equation}
\label{bar}
1 \leq \bar{f}_1 < e^{2\delta}.
\end{equation}

Next we consider the family of $1$-forms $$\widehat{\lambda}_{s}=\bar{\lambda}_1+ s(\widehat{\mathscr{B}}-x) d\theta =(1-\delta\widehat{\mathscr{A}})dt +\widehat{ \mathscr{B}}^s d\theta.$$ They are also contact for all sufficiently small $\delta$ and the Reeb vector field $\widehat{R}_{s}$ of  $\widehat{\lambda}_{s}$ is equal to 
\begin{equation*}
\label{ }
\widehat{\mathscr{B}}^s_x \p_t -\widehat{\mathscr{B}}^s_t\p_x +\delta\widehat{\mathscr{A}}_x\p_\theta + \delta\widehat{\mathscr{A}}_xY-\widehat{\mathscr{B}}^s_x X
\end{equation*}
multiplied by the function
\begin{equation*}
\label{ }
\left((n-1)\left[ ((1-\delta\widehat{\mathscr{A}})\widehat{\mathscr{B}}^s_x +\delta\widehat{\mathscr{A}}_x\widehat{\mathscr{B}}^s)+ (n-2)\rho \delta(\widehat{\mathscr{A}}_x\widehat{\mathscr{B}}^s_{\rho}-\widehat{\mathscr{A}}_{\rho}\widehat{\mathscr{B}}^s_x)\right]\right)^{-1}.
\end{equation*}
This makes the relevant  family of functions 
\begin{equation*}
\label{ }
\widehat{r}_s= \frac{(\widehat{\mathscr{B}}-x)\delta \widehat{\mathscr{A}}_x}{(n-1)\left[ ((1-\delta\widehat{\mathscr{A}})\widehat{\mathscr{B}}^s_x +\delta\widehat{\mathscr{A}}_x\widehat{\mathscr{B}}^s)+ (n-2)\rho \delta(\widehat{\mathscr{A}}_x\widehat{\mathscr{B}}^s_{\rho}-\widehat{\mathscr{A}}_{\rho}\widehat{\mathscr{B}}^s_x)\right]}.
\end{equation*}
Outside of $\left[-\epsilon,\epsilon\right]\times \left[\frac{\epsilon}{2}, \frac{3\epsilon}{2}\right] \times B^{2n-4}(\epsilon)$, where $\widehat{B}=x$, we have $\widehat{r}_s=0$.
Inside this region, we have $\widehat{\mathscr{A}}_x=\mathrm{\mathbf{ cut}}(\rho)$ and so  
\begin{equation*}
\label{inside6}
\widehat{r}_s = \frac{(\widehat{\mathscr{B}}-x)\delta \mathrm{\mathbf{ cut}}(\rho)}{(n-1)\left[ ((1-\delta\widehat{\mathscr{A}})\widehat{\mathscr{B}}^s_x +\delta\mathrm{\mathbf{ cut}}(\rho)\widehat{\mathscr{B}}^s)+ (n-2)\rho \delta(\mathrm{\mathbf{ cut}}(\rho)\widehat{\mathscr{B}}^s_{\rho}-\mathrm{\mathbf{ cut}}'(\rho)\widehat{\mathscr{A}}\widehat{\mathscr{B}}^s_x)\right]}.
\end{equation*}
Property ($\mathscr{B}$2) implies that this expression is nonnegative and so each function $\widehat{r}_s$ is nonnegative on all of $Q_{\epsilon}\times B^{2n-4}(\epsilon)$.

To obtain the desired upper bound for the $\widehat{r}_s$ it suffices to do so inside $\left[-\epsilon,\epsilon\right]\times \left[\frac{\epsilon}{2}, \frac{3\epsilon}{2}\right] \times B^{2n-4}(\epsilon)$. Here we have 
\begin{eqnarray*}
\widehat{r}_s & = &  \frac{(\widehat{\mathscr{B}}-x)\delta \mathrm{\mathbf{ cut}}(\rho)}{(n-1)\left[ (1-\delta\widehat{\mathscr{A}})\widehat{\mathscr{B}}^s_x +\delta\mathrm{\mathbf{ cut}}(\rho)\widehat{\mathscr{B}}^s+ (n-2)\rho \delta(\mathrm{\mathbf{ cut}}(\rho)\widehat{\mathscr{B}}^s_{\rho}-\mathrm{\mathbf{ cut}}'(\rho)\widehat{\mathscr{A}}\widehat{\mathscr{B}}^s_x)\right]} \\
{} & = & \frac{(\widehat{\mathscr{B}}-x)\delta \mathrm{\mathbf{ cut}}(\rho)}{(n-1)\left[ \left(1-\delta\widehat{\mathscr{A}}\left(1+(n-2) \rho\mathrm{\mathbf{ cut}}(\rho)\mathrm{\mathbf{ cut}}'(\rho)\right)\right)\widehat{\mathscr{B}}^s_x +\delta\mathrm{\mathbf{ cut}}(\rho)\left(\widehat{\mathscr{B}}^s+ (n-2)\rho\widehat{\mathscr{B}}^s_{\rho}\right)\right]}. 
\end{eqnarray*}
For all sufficiently small $\delta$ we may assume that the coefficient of $\widehat{\mathscr{B}}^s_x$ in the denominator is greater that $1/2$.
Using this and the formula defining $\widehat{\mathscr{B}}^s$ we get 
\begin{equation*}
\label{ }
\widehat{r}_s \leq  \frac{(\widehat{\mathscr{B}}-x)\delta \mathrm{\mathbf{ cut}}(\rho)}{(n-1)\left[ \frac{1}{2}\widehat{\mathscr{B}}^s_x +\delta\mathrm{\mathbf{ cut}}(\rho)\Big(x+ \left[\mathrm{\mathbf{ cut}}(\rho)+(n-2) \rho \mathrm{\mathbf{ cut}}'(\rho)\right] s(B-x)\Big)\right]}.
\end{equation*}
By  condition ($\mathscr{B}$2) the function $B-x$  (and thus $\widehat{\mathscr{B}}-x$) takes values in $[0, 2\epsilon^2).$ For sufficiently small $\epsilon>0$ we may therefore assume that 
\begin{equation*}
\label{ }
\Big|\left[\mathrm{\mathbf{ cut}}(\rho)+(n-2) \rho \mathrm{\mathbf{ cut}}'(\rho)\right] (B-x)\Big| < \frac{\epsilon}{4}.
\end{equation*}
(Here we have used the fact that  ($\mathrm{\mathbf{c}}$3) implies that $0\geq \rho \mathrm{\mathbf{ cut}}'(\rho) >-2.$) It then follows that on the subset of interest,  $\left[-\epsilon,\epsilon\right]\times \left[\frac{\epsilon}{2}, \frac{3\epsilon}{2}\right] \times B^{2n-4}(\epsilon)$, where $x \geq \frac{\epsilon}{2}$ we have 
\begin{equation*}
\label{ }
\widehat{r}_s <  \frac{2\epsilon^2\delta \mathrm{\mathbf{ cut}}(\rho)}{(n-1)\left[ \frac{1}{2}\widehat{\mathscr{B}}^s_x +\delta\mathrm{\mathbf{ cut}}(\rho)\frac{\epsilon}{4}\right]}.
\end{equation*}
Using the fact that  $\widehat{\mathscr{B}}^s_x \geq 0$ and $n \geq 3$ we arrive at the upper bound
\begin{equation*}
\label{ }
\widehat{r}_s  < 4\epsilon.
\end{equation*}
Thus for all sufficiently small $\delta>0$ and $\epsilon>0$, it follows from Gray's Theorem that  there is a diffeomorphism $\widehat{\psi}_1$ such that 
\begin{equation*}
\label{ }
\widehat{\psi}_1^*(\widehat{\lambda}_1) = \widehat{f}_1 \bar{\lambda}_1,
\end{equation*}
and  
\begin{equation*}
\label{hat}
1 \leq \widehat{f}_1 < e^{4\epsilon}.
\end{equation*}

The desired diffeomorphism is then  $\Psi = \widehat{\psi} \circ \bar{\psi}$. 
\end{proof}

The rest of the proof of Theorem \ref{fast} now follows exactly as it did for dimension three.

\end{document}